\pdfoutput=1
\documentclass{amsart}
\linespread{1.2}
\usepackage[utf8]{inputenc}

\usepackage[margin=1in]{geometry}
\geometry{letterpaper}

\usepackage[expansion=false]{microtype}

\usepackage{amsmath, amsfonts, amssymb, amsthm, amsopn, mathtools}
\usepackage{eucal}

\usepackage{tikz-cd}
\usetikzlibrary{patterns}
\usepackage{bm}
\usepackage{rotating}
\usepackage{xparse}

\usepackage[pdfusetitle,unicode,colorlinks]{hyperref}

\numberwithin{equation}{subsection}

\theoremstyle{plain}
\newtheorem{theorem}[equation]{Theorem}
\newtheorem{proposition}[equation]{Proposition}

\newtheorem{lemma}[equation]{Lemma}
\newtheorem{corollary}[equation]{Corollary}

\theoremstyle{definition}
\newtheorem{definition}[equation]{Definition}
\newtheorem{example}[equation]{Example}
\newtheorem{remark}[equation]{Remark}

\newtheorem{convention}[equation]{Convention}

\let\scr=\mathcal
\let\bb=\mathbf
\let\phi=\varphi

\let\into=\hookrightarrow
\let\onto=\twoheadrightarrow

\def\AA{\scr A}
\def\BB{\scr B}
\def\CC{\scr C}
\def\DD{\scr D}

\def\GG{\scr G}
\def\II{\scr I}
\def\JJ{\scr J}

\def\LL{\scr L}
\def\OO{\scr O}
\def\PP{\scr P}

\def\RR{\scr R}
\def\SS{\scr S}

\def\WW{\scr W}
\def\XX{\scr X}

\def\ZZ{\scr Z}

\def\AAA{\widehat{\AA}}
\def\BBB{\widehat{\BB}}

\def\SSS{\widehat{\SS}}

\def\SSSS{\hathat{\SS}}
\def\BBBB{\hathat{\BB}}

\def\bU{\mathbf{U}}
\def\bV{\mathbf{V}}
\def\bW{\mathbf{W}}

\DeclareMathOperator{\id}{id}
\DeclareMathOperator{\ev}{ev}
\DeclareMathOperator{\Ind}{Ind}
\DeclareMathOperator{\PSh}{PSh}
\DeclareMathOperator{\IPSh}{\mathsf{PSh}}
\DeclareMathOperator{\Cat}{Cat}
\DeclareMathOperator{\Shv}{Sh}
\DeclareMathOperator{\Cart}{Cart}
\DeclareMathOperator{\Cocart}{Cocart}
\DeclareMathOperator{\RFib}{RFib}
\DeclareMathOperator{\LFib}{LFib}
\DeclareMathOperator{\ILFib}{\mathsf{LFib}}
\DeclareMathOperator{\IRFib}{\mathsf{RFib}}
\DeclareMathOperator{\RTop}{RTop}
\DeclareMathOperator{\LTop}{LTop}
\DeclareMathOperator{\Set}{Set}
\DeclareMathOperator{\Sub}{Sub}
\DeclareMathOperator{\Tw}{Tw}
\DeclareMathOperator{\Fun}{Fun}
\DeclareMathOperator{\Map}{map}

\DeclareMathOperator{\Image}{Im}
\DeclareMathOperator{\Grpd}{Grpd}
\DeclareMathOperator{\Grp}{Grp}
\DeclareMathOperator{\Pyk}{Pyk}
\DeclareMathOperator{\const}{const}
\DeclareMathOperator{\diag}{diag}

\DeclareMathOperator{\QCoh}{QCoh}
\DeclareMathOperator{\D}{D}
\DeclareMathOperator{\MH}{H}
\DeclareMathOperator{\SH}{SH}

\DeclareMathOperator{\Sch}{Sch}
\DeclareMathOperator{\pr}{pr}

\newcommand{\op}{\mathrm{op}}
\newcommand{\core}{\simeq}
\newcommand{\gp}{\mathrm{gpd}}
\newcommand{\ord}[1]{\langle{#1}\rangle}
\newcommand{\BigRTop}{\widehat{\RTop}}
\newcommand{\map}[1]{\Map_{#1}}
\newcommand{\Eq}[1]{\mathrm{eq}_{#1}}
\newcommand{\Over}[2]{#1_{\hspace{-1pt}/#2}}
\newcommand{\Under}[2]{#1_{\hspace{-1pt}#2/}}
\newcommand{\sslash}{\mathbin{/\mkern-6mu/}}
\newcommand{\I}[1]{\mathsf{#1}}
\newcommand{\Cech}{\check C}
\newcommand{\Coh}{\bb{H}}
\newcommand{\ibot}{{\begin{sideways}$\!\Vdash$\end{sideways}}}
\renewcommand{\smallint}{\textstyle\int}
\newcommand{\iFun}[2]{{[#1,#2]}}
\newcommand{\Comma}[3]{{#1}\downarrow_{#2}{#3}}
\newcommand{\Cocomma}[3]{{#1}\diamond_{#2}{#3}}
\newcommand{\Simp}[1]{#1_{\Delta}}
\newcommand{\CatS}{\Cat_{\infty}}
\newcommand{\CatSS}{\widehat{\Cat}_{\infty}}
\newcommand{\CatSSS}{\hathat{\Cat}_{\infty}}

\NewDocumentCommand{\Gen}{m o}{%
	\IfNoValueTF{#2}{%
		\langle #1\rangle%
	}{%
		\langle #1\rangle_{#2}%
	}%
}

\NewDocumentCommand{\Univ}{o}{%
	\IfNoValueTF{#1}{%
		\I{\Omega}%
	}{%
		\I{\Omega}_{#1}%
	}%
}
\NewDocumentCommand{\UnivHat}{o}{%
	\IfNoValueTF{#1}{%
		\widehat{\I{\Omega}}%
	}{%
		\widehat{\I{\Omega}}_{#1}%
	}%
}

\let\lim=\relax
\DeclareMathOperator*{\lim}{lim}
\DeclareMathOperator*{\colim}{colim}

\makeatletter
\g@addto@macro\bfseries{\boldmath}
\makeatother

\makeatletter
\newcommand{\hathatInternal}[2]{%
	\begingroup%
	\let\macc@kerna\z@%
	\let\macc@kernb\z@%
	\let\macc@nucleus\@empty%
	\widehat{\raisebox{#2}{\vphantom{\ensuremath{#1}}}\smash{\widehat{#1}}}%
	\endgroup%
}
\makeatother
\newcommand{\hathat}[1]{\mathchoice
	{\hathatInternal{#1}{.3ex}}
	{\hathatInternal{#1}{.2ex}}
	{\hathatInternal{#1}{-1.5pt}}
	{\hathatInternal{#1}{1pt}}
}

\title{Yoneda's lemma for internal higher categories}
\author{Louis Martini}
\address{Norwegian University of Science and Technology (NTNU)\\
Alfred Getz' vei 1\\
7034 Trondheim\\
Norway}
\email{\href{mailto:louis.o.martini@ntnu.no}{louis.o.martini@ntnu.no}}
\date{\today}
\begin{document}
\begin{abstract}
We develop some basic concepts in the theory of higher categories internal to an arbitrary $\infty$-topos. We define internal left and right fibrations and prove a version of the Grothendieck construction and of Yoneda's lemma for internal categories.
\end{abstract}
\maketitle
\setcounter{tocdepth}{1}
\tableofcontents

\section{Introduction}

\subsection*{Motivation}
In various areas of geometry, one of the principal strategies is to study geometric objects by means of algebraic invariants such as cohomology, $K$-theory and (stable or unstable) homotopy groups. Usually these invariants are constructed in a functorial way, in the sense that they define (possibly higher) presheaves on a suitable (higher) category $\CC$ of the geometric objects of interest. Usually, $\CC$ is equipped with a Grothendieck topology that encodes the topological behaviour of the geometric objects contained in $\CC$, and one typically expects reasonable algebraic invariants to respect this topology in that they should define (higher) sheaves on $\CC$. In that way, one can study the global nature of these objects by means of their local behaviour. For example, if $A$ is an abelian group and if $n\geq 0$ is an integer, there is a higher sheaf $\Coh^n(-, A)$ for the \'etale topology on the category $\Sch$ of schemes that takes values in the $\infty$-category $\Grp(\SS)$ of group objects in $\infty$-groupoids, such that for any scheme $X$ the group $\pi_0\Coh^n(X, A)\in \Grp(\Set)$ is the $n$th \'etale cohomology group of $X$ with coefficients in $A$~\cite[\S~7]{htt}.

In modern flavours of geometry, the study of categorified invariants has come more and more into focus. For example, one of the fundamental invariants that one can associate to a scheme $X$ is its unbounded derived category $\D(X)$. In order to turn this construction into a sheaf on $\Sch$, i.e.\ in order to be able to \emph{glue} derived categories, one first passes to an $\infty$-categorical enhancement: there is a sheaf $X\mapsto \QCoh(X)$ on $\Sch$ that sends $X$ to the symmetric monoidal stable $\infty$-category of quasi-coherent sheaves on $X$ such that the homotopy category of $\QCoh(X)$ recovers the derived category $\D(X)$ of $X$. The properties of $\QCoh$ and their relations to the geometry of the scheme $X$ have been investigated by numerous authors, for example by Lurie in~\cite{lurie2018} and by Ben-Zvi, Francis and Nadler in~\cite{benzvi2010}. Furthermore, the sheaf $\QCoh$ plays a fundamental role in geometric representation theory; in~\cite{gaitsgory2014} Gaitsgory studies section-wise module $\infty$-categories over the sheaf $\QCoh$, i.e.\ the sheaf of $\infty$-categories that are acted on by $\QCoh$.
Another categorical invariant on schemes comes from motivic homotopy theory: the assignment that sends a scheme $X$ to its unstable motivic homotopy $\infty$-category $\MH(X)$ defines a sheaf for the Nisnevich topology on $\Sch$, and similarly the assignment $X\mapsto \SH(X)$ in which $\SH(X)$ is the stable motivic homotopy $\infty$-category on $X$ defines a sheaf for the Nisnevich topology on $\Sch$~\cite{hoyois2017}. The latter plays a prominent role in the formalisation of the six operations in motivic homotopy theory, see for example~\cite{cisinski2019b} for an overview.

It has been long known that for many choices of an (a priori higher) category $\AA$, the datum of an $\AA$-valued sheaf on $\Sch$ is equivalent to that of an \emph{$\AA$-object} internal to the $\infty$-topos $\Shv_{\SS}(\Sch)$, or the $1$-topos $\Shv_{\Set}(\Sch)$ in the case that $\AA$ is actually a $1$-category. For example, a sheaf of abelian groups on $\Sch$ is simply an abelian group internal to $\Shv_{\Set}(\Sch)$, and the collection of structure sheaves $\OO_X$ for $X\in\Sch$ are encoded by a single ring object in $\Shv_{\Set}(\Sch)$, the sheaf represented by the affine line. By making use of the internal logic of the $\infty$-topos of sheaves on $\Sch$, one can therefore study such invariants in the same way as one studies their \emph{non-parametrised} counterpart in the $\infty$-category of $\infty$-groupoids, see for example the PhD thesis of Blechschmidt~\cite{blechschmidt2017} for an application of these ideas to algebraic geometry in the $1$-categorical case. The study of higher invariants, i.e.\  sheaves on $\Sch$ that take values in a reasonable $\infty$-category $\AA$, thus naturally leads to the emerging field of homotopy type theory~\cite{hott2013}. In fact, it is now known~\cite{shulman2019} that much of homotopy type theory has a model in an arbitrary $\infty$-topos. 

By extending the same line of thought to the case of categorical invariants on schemes, the datum of a sheaf of $\infty$-categories on $\Sch$ is equivalent to that of a category internal to the $\infty$-topos $\Shv_{\SS}(\Sch)$. One should therefore be able to formulate and study the properties of such sheaves by means of studying their categorical properties when viewed internally in the $\infty$-topos $\Shv_{\SS}(\Sch)$. Such an internal perspective on category theory is not only useful for the study of categorical invariants on schemes, but in other situations as well. For example, work of Barwick, Glasman and Haine~\cite{barwick2020, barwick2019} shows that the pro\'etale $\infty$-topos $S_{\text{pro\'et}}$ of any coherent scheme $S$ can be naturally regarded as a \emph{pyknotic $\infty$-category}, i.e.\ as a category internal to the $\infty$-topos $\Pyk(\SS)$ of pyknotic $\infty$-groupoids. Moreover, Sebastian Wolf shows in~\cite{wolf2020} that internally in $\Pyk(\SS)$ the $\infty$-topos $S_{\text{pro\'et}}$ turns out to simply be given by a presheaf category, which amounts to a significant simplification of the structure of $S_{\text{pro\'et}}$.
Although bare homotopy type theory is insufficient to argue synthetically about such structures (owing to the presence of non-invertible arrows), Riehl and Shulman~\cite{shulman2017} have proposed an additional layer to homotopy type theory that rectifies this problem and that is powerful enough to support a synthetic formulation of basic higher categorical constructions.

In the present paper, our goal is to study sheaves of $\infty$-categories from a semantic point of view: as categories internal to an $\infty$-topos $\BB$. On the one hand, this is meant to provide tools for studying categorical invariants in geometry, such as those that are mentioned above. On the other hand, the study of categories internal to an $\infty$-topos $\BB$ should provide categorical semantics for the type theory developed by Riehl and Shulman. We do not aim to turn this into a formal statement, nevertheless we believe that developing the semantical side of the story might serve as a bridge to foster future development of the syntactic theory as well.

\subsection*{Main results}
The present paper constitutes the first in a series of papers aimed at developing the theory of categories internal to an arbitrary $\infty$-topos $\BB$, hereafter referred to as \emph{$\BB$-categories}. Arguably, Yoneda's lemma is at the very core of any flavour of category theory, so in this paper we will focus on a proof of this result in the context of internal higher categories.

We define the $\infty$-category $\Cat(\BB)$ of $\BB$-categories as the full subcategory of the $\infty$-category of simplicial objects in $\BB$ that satisfy the \emph{Segal condition} and \emph{univalence}, mimicking the definition of complete Segal spaces given by Rezk~\cite{rezk2001} as previously done by Lurie~\cite{lurie2009b} and Rasekh~\cite{rasekh2018}. Since complete Segal spaces provide a model for the $\infty$-category of $\infty$-categories, the latter can be identified with $\CatS$, and one obtains an equivalence $\Cat(\BB)\simeq\Shv_{\CatS}(\BB)$ between the $\infty$-category of $\BB$-categories and the $\infty$-category of $\CatS$-valued sheaves on $\BB$. Consequently, the sheaf $\Over{\BB}{-}$ that sends an object $A\in\BB$ to the slice $\infty$-category $\Over{\BB}{A}$ defines a (large) $\BB$-category $\Univ$ that we will refer to as the \emph{internal universe} of $\BB$ and which represents the reflection of $\BB$ within itself. 
We furthermore show that $\Cat(\BB)$ is cartesian closed, hence that for any two $\BB$-categories $\I{C}$ and $\I{D}$ there is a $\BB$-category $\iFun{\I{C}}{\I{D}}\in\Cat(\BB)$ of functors between $\I{C}$ and $\I{D}$. 

The first of the two main results in this paper is an internal version of the \emph{Grothendieck construction}: For any $\BB$-category $\I{C}$, we define a large $\BB$-category $\ILFib_{\I{C}}$ whose objects are given by left fibrations over $\I{C}$. We then show:
\theoremstyle{plain}
\newtheorem*{thm:GrothendieckConstruction}{Theorem~\ref{thm:internalGrothendieck}}
\begin{thm:GrothendieckConstruction}
	There is a canonical equivalence
	\begin{equation*}
		\iFun{\I{C}}{\Univ}\simeq \ILFib_{\I{C}}
	\end{equation*}
	that is natural in $\I{C}$.
\end{thm:GrothendieckConstruction}
By making use of the Grothendieck construction, we derive our second main result, an internal version of Yoneda's lemma. For any $\BB$-category $\I{C}$, we construct a mapping bifunctor $\map{\I{C}}(-,-)\colon\I{C}^{\op}\times\I{C}\to \Univ$ whose transpose gives rise to the Yoneda embedding $h\colon\I{C}\to\iFun{\I{C}^{\op}}{\Univ}$. Let $\ev\colon \I{C}^{\op}\times {\iFun{\I{C}^{\op}}{\Univ}}\to\Univ$ be the evaluation functor, i.e.\  the counit of the adjunction $\I{C}^{\op}\times- \dashv \iFun{\I{C}^{\op}}{-}$. We then show:
\newtheorem*{thm:Yoneda}{Theorem~\ref{thm:YonedaLemma}}
\begin{thm:Yoneda}
	There is a commutative diagram
	\begin{equation*}
		\begin{tikzcd}
			\I{C}^{\op}\times {\iFun{\I{C}^{\op}}{\Univ}}\arrow[dr, "\ev"'] 
			\arrow[r, "h\times \id"] & {\iFun{\I{C}^{\op}}{\Univ}^{\op}}\times{\iFun{\I{C}^{\op}}{\Univ}}\arrow[d, "{\map{\iFun{\I{C}^{\op}}{\Univ}}(-,-)}"]  \\
			& \Univ
		\end{tikzcd}
	\end{equation*}
	of (large) $\BB$-categories.
\end{thm:Yoneda}
In particular, Theorem~\ref{thm:YonedaLemma} implies that the Yoneda embedding $h$ is a fully faithful functor.

\subsection*{Related work}
The idea of developing category theory internal to some topos is not new; in as early as 1963, Lawvere formulated axioms for a theory of categories~\cite{lawvere1963}, and in subsequent years B\'enabou developed the theory of internal categories in a presheaf topos through the notion of fibred categories. For a detailed exposition of the theory of categories in a $1$-topos, the reader may consult~\cite{johnstone2002}. In the world of higher category theory, Riehl and Shulman~\cite{shulman2017} have proposed a synthetic approach to the theory of $\infty$-categories.  The theory of categories in an $\infty$-topos as presented herein is expected to provide categorical semantics to their type theory (cf.~\cite[Remark~A.14]{shulman2017}). We therefore believe that our version of Yoneda's lemma can be formally deduced from their work. However, our approach is different in that it is derived from the internal Grothendieck construction, which is not featured in~\cite{shulman2017} and which is a useful result in its own right. On the semantic side of the story, Barwick, Dotto, Glasman, Nardin and Shah~\cite{shah2016} have developed the theory of $\infty$-categories that are parametrised by a base $\infty$-category. Such parametrised $\infty$-categories are a special case of the more general notion of categories in an arbitrary $\infty$-topos; they precisely correspond to categories in presheaf $\infty$-topoi. Lastly, we note that Rasekh has previously worked out some aspects of the theory of internal higher categories in~\cite{rasekh2018}. 

\subsection*{Acknowledgments}
I would like to thank my advisor Rune Haugseng for his support and help throughout the process of writing this paper. I furthermore thank Simon Pepin Lehalleur for introducing me to the world of higher category theory and for his invaluable guidance during the early stages of this project. I would also like to thank Mathieu Anel for helpful discussions.

\section{Preliminaries}
\subsection{General conventions and notation}
Throughout this paper we freely make use of the language of higher category theory. We will generally follow a model-independent approach to higher categories. This means that as a general rule, all statements and constructions that are considered herein will be invariant under equivalences in the ambient $\infty$-category, and we will always be working within such an ambient $\infty$-category. For example, this means that all constructions involving $\infty$-categories and functors between $\infty$-categories will be assumed to take place in the $\infty$-category of $\infty$-categories. In the same vein, a set will be a discrete $\infty$-groupoid, and a $1$-category will be an $\infty$-category all of whose mapping $\infty$-groupoids are discrete. These conventions in particular imply that we will understand the adjective \emph{unique} in the homotopical sense, i.e.\  as the condition that there is a contractible $\infty$-groupoid of choices.

We denote by $\Delta$ the simplex category, i.e.\ the category of non-empty totally ordered finite sets with order-preserving maps. Every natural number $n\in\mathbb N$ can be considered as an object in $\Delta$ by identifying $n$ with the totally ordered set $\ord{n}=\{0,\dots n\}$. For $i=0,\dots,n$ we denote by $\delta^i\colon \ord{n-1}\to \ord{n}$ the unique injective map in $\Delta$ whose image does not contain $i$. Dually, for $i=0,\dots n$ we denote by $\sigma^i\colon \ord{n+1}\to \ord{n}$ the unique surjective map in $\Delta$ such that the preimage of $i$ contains two elements. Furthermore, if $S\subset n$ is an arbitrary subset of $k$ elements, we denote by $\delta^S\colon \ord{k}\to \ord{n}$ the unique injective map in $\Delta$ whose image is precisely $S$. In the case that $S$ is an interval, we will denote by $\sigma^S\colon \ord{n}\to \ord{n-k}$ the unique surjective map that sends $S$ to a single object. If $\CC$ is an $\infty$-category, we refer to a functor $C_{\bullet}\colon\Delta^{\op}\to\CC$ as a simplicial object in $\CC$. We write $C_n$ for the image of $n\in\Delta$ under this functor, and we write $d_i$, $s_i$, $d_S$ and $s_S$ for the image of the maps $\delta^i$, $\sigma^i$, $\delta^S$ and $\sigma^S$ under this functor. Dually, a functor $C^{\bullet}\colon \Delta\to\CC$ is referred to as a cosimplicial object in $\CC$. In this case we denote the image of $\delta^i$, $\sigma^i$, $\delta^S$ and $\sigma^S$ by $d^i$, $s^i$, $d^S$ and $\sigma^S$.

The $1$-category $\Delta$ embeds fully faithfully into the $\infty$-category of $\infty$-categories by means of identifying posets as $0$-categories and order-preserving maps between posets with functors between such $0$-categories. We denote by $\Delta^n$ the image of $n\in\Delta$ under this embedding.

\subsection{Set theoretical foundations}
Once and for all we will fix three Grothendieck universes $\bU\in\bV\in\bW$ that contain the first infinite ordinal $\omega$. A set is \emph{small} if it is contained in $\bU$, \emph{large} if it is contained in $\bV$ and \emph{very large} if it is contained in $\bW$. An analogous naming convention will be adopted for $\infty$-categories and $\infty$-groupoids. The large $\infty$-category of small $\infty$-groupoids is denoted by $\SS$, and the very large $\infty$-category of large $\infty$-groupoids by $\SSS$. The (even larger) $\infty$-category of very large $\infty$-groupoids will be denoted by $\SSSS$. Similarly, we denote the large $\infty$-category of small $\infty$-categories by $\CatS$, the very large $\infty$-category of large $\infty$-categories by $\CatSS$, and the even larger $\infty$-category of very large $\infty$-categories by $\CatSSS$.

\subsection{$\infty$-topoi}
A large $\infty$-category $\BB$ is said to be an $\infty$-topos if there exists a small $\infty$-category $\CC$ such that $\BB$ arises as a left exact and accessible localisation of $\PSh_{\SS}(\CC)=\Fun(\CC^{\op},\SS)$, see~\cite[\S~6]{htt} for alternative characterisations and the basic theory. An algebraic morphism between two $\infty$-topoi $\AA$ and $\BB$ is a functor $f^\ast\colon \AA\to\BB$ that commutes with small colimits and finite limits. We denote by $\Fun^\ast(\AA,\BB)$ the full subcategory of $\Fun(\AA,\BB)$ that is spanned by the algebraic morphisms between $\AA$ and $\BB$. Dually, a geometric morphism between $\infty$-topoi is a functor $f_\ast\colon \BB\to\AA$ that admits a left adjoint $f^\ast$ which defines an algebraic morphism (i.e.\ which commutes with finite limits). We denote by $\Fun_\ast(\BB,\AA)$ the full subcategory of $\Fun(\BB,\AA)$ spanned by the geometric morphisms. By the adjoint functor theorem, one finds $\Fun^\ast(\AA,\BB)^{\op}\simeq\Fun_\ast(\BB,\AA)$. We let $\RTop$ be the subcategory of $\CatSS$ that is spanned by the $\infty$-topoi and geometric morphisms, and we denote by $\LTop$ the subcategory of $\CatSS$ that is spanned by the $\infty$-topoi and algebraic morphisms. There is an equivalence $\RTop^{\op}\simeq\LTop$ that sends an $\infty$-topos to itself and a geometric morphism to its left adjoint. The $\infty$-category $\SS$ of small $\infty$-groupoids is a final object in $\RTop$; for any $\infty$-topos $\BB$ we denote by $\Gamma\colon \BB\to\SS$ the unique geometric morphism and refer to this functor as the \emph{global sections} functor. Explicitly, this functor is given by $\map{\BB}(1,-)$ where $1\in\BB$ denotes a final object. Dually, we denote the unique algebraic morphism from $\SS$ to $\BB$ by $\const\colon \SS\to\BB$ and refer to this map as the \emph{constant sheaf functor}.

\subsection{Universe enlargement}
\label{sec:universeEnlargement}
For any two large $\infty$-categories $\CC$ and $\AA$, an \emph{$\AA$-valued presheaf} on $\CC$ is a functor $\CC^{\op}\to\AA$ and an \emph{$\AA$-valued sheaf} an $\AA$-valued presheaf that preserves small limits (whenever they exist). We denote the $\infty$-category of $\AA$-valued presheaves on $\CC$ by $\PSh_{\AA}(\CC)$ and the full subcategory spanned by the $\AA$-valued sheaves on $\CC$ by $\Shv_{\AA}(\CC)$.

For any $\infty$-topos $\BB$, we define its \emph{universe enlargement} $\BBB=\Shv_{\SSS}(\BB)$ as the very large $\infty$-category of $\SSS$-valued sheaves on $\BB$. By~\cite[Remark~6.3.5.17]{htt} this is an $\infty$-topos relative to $\bV$. Moreover, one can make the assignment $\BB\mapsto \BBB$ into a functor as follows:

Consider the functor $\PSh_{\SSS}(-)\colon \RTop\to \CatSSS$ that acts by sending a map $f_\ast\colon \BB\to\AA$ in $\RTop$ to the map $(-)\circ f^\ast\colon \PSh_{\SSS}(\BB)\to\PSh_{\SSS}(\AA)$. Since $f^\ast$ commutes with small colimits, the functor $(-)\circ f^\ast$ restricts to a functor $\BBB\to\AAA$ that we will denote by $f_\ast$ as well. As a consequence, if $\int \PSh_{\SSS}(-)\to \RTop$ is the cocartesian fibration that is classified by the functor $\PSh_{\SSS}(-)$, then the full subcategory of $\int \PSh_{\SSS}(-)$ that is spanned by pairs $(\BB, A)$ with $\BB\in\RTop$ and $A\in \BBB\subset \PSh_{\SSS}(\BB)$ is stable under cocartesian arrows and therefore defines a cocartesian subfibration of $\int \PSh_{\SSS}(-)$ over $\RTop$. Moreover, by making use the adjunction $f^\ast\dashv f_\ast$ one obtains a commutative diagram
\begin{equation*}
\begin{tikzcd}
\BB \arrow[d, hookrightarrow] \arrow[r, "f_\ast"] & \AA \arrow[d, hookrightarrow]\\
\BBB\arrow[r, "\hat{f}_{\ast}"] & \AAA,
\end{tikzcd}
\end{equation*}
hence the same argumentation implies that the full subcategory of $\int \PSh_{\SSS}(-)$ spanned by pairs $(\BB, A)$ with $\BB\in\RTop$ and $A\in \BB$ defines a cocartesian subfibration of $\int\PSh_{\SSS}(-)$ over $\RTop$ too. Consequently, one obtains a functor
\begin{equation*}
\RTop\to \CatSSS,\quad \BB\mapsto \BBB
\end{equation*}
together with a natural transformation
\begin{equation*}
\begin{tikzcd}[column sep={3cm,between origins}]
\RTop\arrow[r, bend left,"\BB\mapsto \BB"{name=U}, start anchor=east, end anchor=west, shift left=.5em]\arrow[r, bend right, "\BB\mapsto \BBB"{name=L, below}, start anchor=east, end anchor=west, shift right=.5em]\arrow[from=U, to=L, Rightarrow, shorten=3mm] & \CatSSS
\end{tikzcd}
\end{equation*}
that is given by the inclusion $\BB\into\BBB$.

By~\cite[Remark~6.4.6.18]{htt} the functor $ f_\ast\colon\BBB\to\AAA$ defines a geometric morphism between $\infty$-topoi relative to the universe $\bV$, and the associated left adjoint $ f^\ast$ is obtained as the restriction of the functor of left Kan extension $(f^\ast)_!\colon\PSh_{\SSS}(\AA)\to\PSh_{\SSS}(\BB)$ to $\AAA$. Since the functor $\BB\mapsto \BBB$ above therefore takes values in the $\infty$-category $\BigRTop$ of $\infty$-topoi relative to the universe $\bV$, passing to opposite $\infty$-categories therefore results in a functor
\begin{equation*}
\LTop\to \CatSSS,\quad \BB\mapsto \BBB
\end{equation*}
that sends the geometric morphism $f^\ast\colon\AA\to\BB$ to $f^\ast\colon\AAA\to\BBB$. By construction, one furthermore obtains a commutative square
\begin{equation*}
\begin{tikzcd}
\BB \arrow[d, hookrightarrow] \arrow[from=r, "f^\ast"'] & \AA \arrow[d, hookrightarrow]\\
\BBB\arrow[from=r, "{f}^{\ast}"'] & \AAA,
\end{tikzcd}
\end{equation*}
hence an analogous argument as above shows that the inclusion $\BB\into\BBB$ defines a natural transformation
\begin{equation*}
\begin{tikzcd}[column sep={3cm,between origins}]
\LTop\arrow[r, bend left,"\BB\mapsto \BB"{name=U}, start anchor=east, end anchor=west, shift left=.5em]\arrow[r, bend right, "\BB\mapsto \BBB"{name=L, below}, start anchor=east, end anchor=west, shift right=.5em]\arrow[from=U, to=L, Rightarrow, shorten=3mm] & \CatSSS.
\end{tikzcd}
\end{equation*}
Note that if $f^\ast$ admits a further left adjoint $f_!$, then the map $ f^\ast\colon\AAA\to\BBB$ is given by precomposition with $f_!$.

\begin{remark}
	\label{rem:transitivityUniverseEnlargement}
	If $\BB$ is an $\infty$-topos, there are a priori two ways to define the universe enlargement $\BBBB$ relative to the universe $\bW$: either by applying the above construction to the pair $\bU\in\bW$, i.e.\ by defining $\BBBB=\smash{\Shv_{\SSSS}(\BB)}$, or by applying this construction first to the pair $\bU\in \bV$ and then to the pair $\bV\in\bW$, i.e.\ by setting $\BBBB=\smash{\Shv_{\SSSS}(\BBB})$, where the right-hand side now denotes the $\infty$-categories of functors $\BBB^{\op}\to\SSSS$ that commute with $\bV$-small limits. It turns out that either approach results in the same object: in fact, upon identifying $\bU$ with a regular cardinal in $\bV$, we may identify $\BB$ with the $\infty$-category $\widehat{\Ind}_{\bU}(\BB)$, hence~\cite[Proposition~5.3.5.10]{htt} implies that the inclusion $\BB\into\BBB$ induces an equivalence
	\begin{equation*}
	 \Fun^{\bU\text{-filt.}}(\BBB, \SSSS^{\op})\simeq \Fun(\BB, \SSSS^{\op})
	\end{equation*}
	in which the left-hand side denotes the full subcategory of $\Fun(\BBB,\SSSS^{\op})$ that is spanned by those functors that preserve $\bU$-filtered colimits. Now~\cite[Proposition~5.5.1.9]{htt} implies that the above equivalence restricts to an equivalence $\smash{\Shv_{\SSSS}(\BBB)\simeq\Shv_{\SSSS}(\BB)}$, noting that its proof does not require $\SSSS^{\op}$ to be presentable (relative to the universe $\bV$) but merely to admit $\bV$-small colimits.
\end{remark}

Recall that the assignment $A\mapsto \Over{\BB}{A}$ defines a fully faithful functor $\BB\into\Over{\RTop}{\BB}$. By~\cite[Corollary~9.9]{haugseng2017} there is a functorial equivalence
\begin{equation*}
\PSh_{\SSS}(\Over{\BB}{-})\simeq \Over{\PSh_{\SSS}(\BB)}{-}
\end{equation*}
of functors $\BB^{\op}\to \CatSSS$ that is given on each object $A\in \BB$ by the left Kan extension of the functor $\Over{\BB}{A}\to \Over{\PSh_{\SSS}(\BB)}{A}$ that is induced by the Yoneda embedding $\BB\into\PSh_{\SSS}(\BB)$ along the Yoneda embedding $\Over{\BB}{A}\into\PSh_{\SSS}(\Over{\BB}{A})$.

\begin{lemma}
	\label{lem:equivalenceSheavesSlices}
	For every $A\in \BB$, the equivalence $\PSh_{\SSS}(\Over{\BB}{A})\simeq \Over{\PSh_{\SSS}(\BB)}{A}$ restricts to an equivalence
	\begin{equation*}
	\widehat{\Over{\BB}{A}}\simeq \Over{\BBB}{A}.
	\end{equation*}
\end{lemma}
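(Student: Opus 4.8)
The plan is to make the inverse of the equivalence $\Phi_A\colon\PSh_{\SSS}(\Over{\BB}{A})\xrightarrow{\ \sim\ }\Over{\PSh_{\SSS}(\BB)}{A}$ explicit and then to compare the two full subcategories in question through this description. By construction $\Phi_A$ is the colimit-preserving extension (i.e.\ the left Kan extension along the Yoneda embedding) of the functor $j_A\colon\Over{\BB}{A}\to\Over{\PSh_{\SSS}(\BB)}{A}$ induced by the Yoneda embedding of $\BB$, so it carries the representable presheaf at $u\in\Over{\BB}{A}$ to $j_A(u)$. Since $\Phi_A$ is an equivalence, $\Phi_A^{-1}$ is its right adjoint, and hence for $(X\xrightarrow{p}A)\in\Over{\PSh_{\SSS}(\BB)}{A}$ and $(B\xrightarrow{f}A)\in\Over{\BB}{A}$ the Yoneda lemma yields
\[
	\Phi_A^{-1}(X\xrightarrow{p}A)\,(B\xrightarrow{f}A)\ \simeq\ \map{\Over{\PSh_{\SSS}(\BB)}{A}}\bigl(j_A(B\xrightarrow{f}A),\,X\xrightarrow{p}A\bigr)\ \simeq\ \mathrm{fib}_{f}\bigl(X(B)\xrightarrow{p_{B}}A(B)\bigr).
\]
Now $\widehat{\Over{\BB}{A}}=\Shv_{\SSS}(\Over{\BB}{A})$ is the full subcategory of $\PSh_{\SSS}(\Over{\BB}{A})$ spanned by the limit-preserving presheaves, while — since $\BB\into\BBB$ is fully faithful and $A$ lies in $\BBB=\Shv_{\SSS}(\BB)$ — the subcategory $\Over{\BBB}{A}\into\Over{\PSh_{\SSS}(\BB)}{A}$ is full with essential image the objects $(X\to A)$ for which $X$ preserves small limits. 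Thus the lemma reduces to the following claim: for $(X\xrightarrow{p}A)\in\Over{\PSh_{\SSS}(\BB)}{A}$, the presheaf $F_{X}:=\Phi_A^{-1}(X\xrightarrow{p}A)$ on $\Over{\BB}{A}$ preserves small limits if and only if $X$ preserves small limits.

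To establish this I would first use that small colimits in $\Over{\BB}{A}$ are computed as in $\BB$ with the induced structure map — a small diagram $(B_i\xrightarrow{f_i}A)_{i}$ has colimit $(\colim_i B_i\xrightarrow{\bar f}A)$, with $\colim_i B_i$ formed in $\BB$ — because the forgetful functor $\Over{\BB}{A}\to\BB$ is a left adjoint and hence preserves small colimits. The forward implication is then a direct computation: if $X$ preserves small limits then so does $A$ (which lies in $\BB\subseteq\BBB$), whence
\[
	F_{X}(\colim_i B_i\to A)\ \simeq\ \mathrm{fib}_{\bar f}\bigl(\lim_i X(B_i)\to\lim_i A(B_i)\bigr)\ \simeq\ \lim_i\mathrm{fib}_{f_i}\bigl(X(B_i)\to A(B_i)\bigr)\ =\ \lim_i F_{X}(B_i\to A),
\]
using that limits commute with limits and with the formation of the fibre over a compatible point. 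For the converse, assume $F_{X}$ preserves small limits and let $C=\colim_i C_i$ in $\BB$; I claim the canonical map $X(C)\to\lim_i X(C_i)$ is an equivalence. Since $A$ is a sheaf this map sits over the equivalence $A(C)\xrightarrow{\ \sim\ }\lim_i A(C_i)$, so it suffices to check that it becomes an equivalence on the fibre over every $f\in A(C)=\map{\BB}(C,A)$. Over $f$ it is identified with the map $F_{X}(C\xrightarrow{f}A)\to\lim_i F_{X}(C_i\xrightarrow{f_i}A)$, where $f_i$ denotes the restriction of $f$ along $C_i\to C$; and since $(C\xrightarrow{f}A)=\colim_i(C_i\xrightarrow{f_i}A)$ in $\Over{\BB}{A}$ and $F_{X}$ preserves small limits, this map is an equivalence. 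Hence $X$ preserves small limits and $(X\to A)\in\Over{\BBB}{A}$.

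I expect the converse implication to be the only genuine obstacle: the presheaf $F_{X}$ only "sees" the object $X$ through morphisms of $\BB$ with target $A$, so one cannot naively recover limit-preservation of $X$ over all of $\BB^{\op}$. The device that makes it work is to test the comparison map $X(C)\to\lim_i X(C_i)$ fibrewise over $A$ and to exploit that $A$ itself is a sheaf; this both supplies the equivalence $A(C)\simeq\lim_i A(C_i)$ along which one tests and disposes of the degenerate case $\map{\BB}(C,A)=\emptyset$, where $X(C)=\emptyset$ is forced and $\lim_i X(C_i)=\emptyset$ because it maps to $A(C)=\emptyset$.
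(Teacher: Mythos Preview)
Your argument is correct and takes a genuinely different route from the paper's proof. The paper identifies $\widehat{\Over{\BB}{A}}$ with $\widehat{\Ind}_{\bU}(\Over{\BB}{A})$ (viewing $\bU$ as a regular cardinal in $\bV$) and invokes the universal property of Ind-completion, \cite[Proposition~5.3.5.10]{htt}, to produce a $\bU$-continuous comparison functor $\phi\colon\widehat{\Over{\BB}{A}}\to\Over{\BBB}{A}$ compatible with the ambient equivalence; fully faithfulness is then automatic, and essential surjectivity is checked by writing every object of $\Over{\BBB}{A}$ as a $\bU$-filtered colimit of objects coming from $\Over{\BB}{A}$.

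Your approach instead unpacks the inverse of the ambient equivalence explicitly as the fibre formula $F_X(B\xrightarrow{f}A)\simeq\mathrm{fib}_f(X(B)\to A(B))$ and then verifies directly that limit-preservation of $X$ is equivalent to limit-preservation of $F_X$, exploiting that $A$ itself is a sheaf and that equivalences in $\SSS_{/Z}$ can be detected fibrewise. This is more elementary in that it avoids the Ind-completion machinery and the identification $\BBB\simeq\widehat{\Ind}_{\bU}(\BB)$ altogether; the paper's argument, by contrast, is more conceptual and makes the compatibility with filtered colimits manifest, which is useful elsewhere in the text (cf.\ Remark~\ref{rem:transitivityUniverseEnlargement}). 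Both arguments ultimately hinge on the fact that the forgetful functor from the slice creates colimits.
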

\begin{proof}
	In the commutative diagram
	\begin{equation*}
	\begin{tikzcd}
	& \widehat{\Over{\BB}{A}}\arrow[r, hookrightarrow]\arrow[d, dashed, "\phi"] & \PSh_{\SSS}(\Over{\BB}{A})\arrow[d, "\simeq"]\\
	\Over{\BB}{A}\arrow[r]\arrow[ur, hookrightarrow] & \Over{\BBB}{A}\arrow[r, hookrightarrow] & \Over{\PSh_{\SSS}(\BB)}{A}
	\end{tikzcd}
	\end{equation*}
	the task is to find the dashed arrow $\phi$ that completes the diagram and to show that this functor is an equivalence of $\infty$-categories. To that end, note that one has $\widehat{\Over{\BB}{A}}\simeq \widehat{\Ind}_{\bU}(\Over{\BB}{A})$, which by~\cite[Proposition~5.3.5.10]{htt} implies that composition with the Yoneda embedding gives rise to an equivalence
	\begin{equation*}
	\Fun^{\bU}(\widehat{\Over{\BB}{A}},\mathcal D)\simeq \Fun(\Over{\BB}{A},\mathcal D)
	\end{equation*}
	for any $\infty$-category $\DD$ which admits $\bU$-small filtered colimits. Here the left-hand side denotes the $\infty$-category of $\bU$-continuous functors, i.e.\ of those functors that commute with $\bU$-filtered colimits. This result implies that the functor $\phi$ in the diagram above is well-defined and makes the diagram indeed commute. By construction, $\phi$ must be fully faithful. On the other hand, combining the equivalence of $\infty$-categories $\BBB\simeq\widehat{\Ind}_{\bU}(\BB)$ with the fact that the projection $\Over{\BBB}{A}\to\BBB$ creates colimits shows that every object in $\Over{\BBB}{A}$ is obtained as the colimit of a functor $\JJ\to \Over{\BB}{A}\to \Over{\BBB}{A}$ where $\JJ$ is a $\bU$-filtered $\infty$-category. Since $\phi$ commutes with $\bU$-filtered colimits, this shows that this functor is essentially surjective.
\end{proof}
\begin{proposition}
	\label{prop:equivalenceSheavesSlices}
	There is a canonical equivalence
	\begin{equation*}
	\widehat{\Over{\BB}{-}}\simeq \Over{\BBB}{-}
	\end{equation*}
	of functors $\BB^{\op}\to\CatSSS$.
\end{proposition}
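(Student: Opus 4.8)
The plan is to deduce the statement from Lemma~\ref{lem:equivalenceSheavesSlices} together with the functorial equivalence $\Psi\colon\PSh_{\SSS}(\Over{\BB}{-})\simeq\Over{\PSh_{\SSS}(\BB)}{-}$ recalled above, by showing that $\Psi$ restricts to an equivalence between appropriate full subfunctors on the two sides. The first step is to record that $\Over{\BBB}{-}$ is a full subfunctor of $\Over{\PSh_{\SSS}(\BB)}{-}$: the transition functor of the latter along a morphism $f\colon A'\to A$ in $\BB$ is base change along $f$ inside $\PSh_{\SSS}(\BB)$, and since $\BBB=\Shv_{\SSS}(\BB)$ is closed under small limits in $\PSh_{\SSS}(\BB)$ and contains $\BB$ (the Yoneda embedding lands in limit-preserving presheaves), the pullback of an object of $\Over{\BBB}{A}$ along $f$ again lies in $\Over{\BBB}{A'}$. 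Thus the inclusion $\Over{\BBB}{-}\into\Over{\PSh_{\SSS}(\BB)}{-}$ is a natural transformation of functors $\BB^{\op}\to\Cat(\SSSS)$.

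Next I would transport this subfunctor along $\Psi$. For each $A\in\BB$, Lemma~\ref{lem:equivalenceSheavesSlices} --- more precisely the functor $\phi$ produced in its proof, which is the corestriction of $\Psi_A$ --- identifies the full subcategory $\widehat{\Over{\BB}{A}}\subseteq\PSh_{\SSS}(\Over{\BB}{A})$ with $\Over{\BBB}{A}\subseteq\Over{\PSh_{\SSS}(\BB)}{A}$. Since $\Psi$ is a natural equivalence and $\Over{\BBB}{-}$ is stable under base change, it follows formally that each full subcategory $\widehat{\Over{\BB}{A}}$ is stable under the transition functors of $\PSh_{\SSS}(\Over{\BB}{-})$; hence these subcategories assemble into a full subfunctor $\widehat{\Over{\BB}{-}}\into\PSh_{\SSS}(\Over{\BB}{-})$, and $\Psi$ restricts to an equivalence $\widehat{\Over{\BB}{-}}\simeq\Over{\BBB}{-}$ of functors $\BB^{\op}\to\Cat(\SSSS)$. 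That this equivalence is compatible with the canonical inclusions of $\Over{\BB}{-}$ into both sides follows from the commutative diagram in the proof of Lemma~\ref{lem:equivalenceSheavesSlices}.

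It then remains to check that the functorial structure on $A\mapsto\widehat{\Over{\BB}{A}}$ obtained this way agrees with the one coming from the universe enlargement functor of Section~\ref{sec:universeEnlargement}, so that the equivalence is genuinely the asserted one. By the remark concluding that section, the transition functor $\widehat{f^\ast}$ of the latter is restriction along the left adjoint $f_!$ of the base change functor $f^\ast\colon\Over{\BB}{A}\to\Over{\BB}{A'}$; on the other hand, a short computation on representable objects, using the explicit description of $\Psi$ and the adjunction $f_!\dashv f^\ast$ in $\PSh_{\SSS}(\BB)$, shows that base change along $f$ on the slices corresponds under $\Psi$ to precisely this restriction functor. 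I expect this last point --- together with the bookkeeping needed to promote an objectwise statement to an equivalence of functors --- to be the only real work beyond Lemma~\ref{lem:equivalenceSheavesSlices}, which carries all of the substantive content.
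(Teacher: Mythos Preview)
Your proposal is correct and follows essentially the same approach as the paper: the paper's proof is a one-liner observing that the functorial equivalence $\PSh_{\SSS}(\Over{\BB}{-})\simeq\Over{\PSh_{\SSS}(\BB)}{-}$ restricts objectwise to the sheaf level by Lemma~\ref{lem:equivalenceSheavesSlices}, which is exactly your argument. You are simply more explicit about why the two sides define full subfunctors and, in particular, about the compatibility of the resulting functorial structure on $A\mapsto\widehat{\Over{\BB}{A}}$ with the one coming from the universe-enlargement construction of Section~\ref{sec:universeEnlargement}; the paper silently takes this bookkeeping for granted.
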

\begin{proof}
	By Lemma~\ref{lem:equivalenceSheavesSlices}, the functorial equivalence
	\begin{equation*}
	\PSh_{\SSS}(\Over{\BB}{-})\simeq \Over{\PSh_{\SSS}(\BB)}{-}
	\end{equation*}
	restricts objectwise to an equivalence on the level of sheaves, hence the result follows.
\end{proof}
We finish this section by discussing the preservation of structure under universe enlargement:
\begin{proposition}
	\label{prop:universeEnlargementStructurePreservation}
	For any $\infty$-topos $\BB$ the inclusion $\BB\into\BBB$ commutes with small limits and colimits, and the internal mapping bifunctor on $\BBB$ restricts to the internal mapping bifunctor on $\BB$.
\end{proposition}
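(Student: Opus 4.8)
The plan is to establish the three assertions in turn, using throughout that the inclusion $\BB\into\BBB$ is the corestriction of the Yoneda embedding $\BB\into\PSh_{\SSS}(\BB)$ — which is legitimate because each representable $\map{\BB}(-,A)\colon\BB^{\op}\to\SSS$ carries small colimits in $\BB$ to limits in $\SSS$, $\BB$ being cocomplete — and that an object $G\in\BBB$ is detected by the presheaf $\map{\BBB}(-,G)$ via the Yoneda lemma.

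Preservation of small limits is essentially formal: limits in $\PSh_{\SSS}(\BB)=\Fun(\BB^{\op},\SSS)$ are computed pointwise, and a pointwise limit of presheaves preserving small limits again preserves small limits, so the limit of a small diagram in $\BBB$ agrees with its limit in $\PSh_{\SSS}(\BB)$; since the Yoneda embedding preserves all limits that exist, the composite $\BB\to\BBB\to\PSh_{\SSS}(\BB)$, and hence $\BB\to\BBB$, preserves small limits. For colimits I would argue by corepresentability: given a small diagram $A_\bullet\colon\JJ\to\BB$ with colimit $A$, the Yoneda lemma gives $\map{\BBB}(\map{\BB}(-,A_j),G)\simeq G(A_j)$ for every $G\in\BBB$, so $\lim_{j\in\JJ^{\op}}\map{\BBB}(\map{\BB}(-,A_j),G)\simeq\lim_j G(A_j)\simeq G(A)\simeq\map{\BBB}(\map{\BB}(-,A),G)$, the middle equivalence being precisely the sheaf condition on $G$ applied to the colimit $A$ in $\BB$. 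By naturality in $G$ and the Yoneda lemma in $\BBB$ this identifies $\map{\BB}(-,A)$ with the colimit of $\map{\BB}(-,A_\bullet)$ in $\BBB$, compatibly with the structure maps. This is the only step at which the definition of the objects of $\BBB$ as sheaves — rather than mere formal properties of Yoneda embeddings, which do not in general preserve colimits — genuinely enters.

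For the internal mapping bifunctor, recall that $\BB$ is an $\infty$-topos and $\BBB$ an $\infty$-topos relative to $\bV$ (by~\cite[Remark~6.3.5.17]{htt}), so both are cartesian closed; write $\iFun{-}{-}$ for the respective internal homs. Fixing $A,B\in\BB$, it suffices to produce an equivalence $\map{\BB}(-,\iFun{A}{B})\simeq\map{\BBB}(-,\iFun{\map{\BB}(-,A)}{\map{\BB}(-,B)})$ of presheaves on $\BBB$ that is natural in $A$ and $B$. Using $\BBB\simeq\widehat{\Ind}_{\bU}(\BB)$, write a given $F\in\BBB$ as a colimit $\colim_j\map{\BB}(-,C_j)$ of representables; since $F\times(-)$ has a right adjoint in the cartesian-closed $\BBB$ and the inclusion preserves products by the previous paragraph, $F\times\map{\BB}(-,A)\simeq\colim_j\map{\BB}(-,C_j\times A)$. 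Applying in turn the tensor--hom adjunction in $\BBB$, full faithfulness of Yoneda, the tensor--hom adjunction in $\BB$, and full faithfulness again, one computes $\map{\BBB}(F,\iFun{\map{\BB}(-,A)}{\map{\BB}(-,B)})\simeq\lim_j\map{\BB}(C_j\times A,B)\simeq\lim_j\map{\BB}(C_j,\iFun{A}{B})\simeq\map{\BBB}(F,\map{\BB}(-,\iFun{A}{B}))$. A last application of the Yoneda lemma in $\BBB$ gives the equivalence, and since the computation is functorial in $A$ and $B$ it assembles into the asserted commutative square of bifunctors.

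I expect the internal-hom assertion to be where the bookkeeping is heaviest: it is the one requiring the reduction to representables and the combination of full faithfulness, preservation of products, and cartesian closedness of $\BBB$. The colimit assertion, though very short, is the conceptual heart, being the one place where one must unwind what it means for the objects of $\BBB$ to be sheaves rather than arbitrary presheaves.
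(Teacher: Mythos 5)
Your proposal is correct, and it takes a genuinely different route from the paper in the part that matters. For preservation of small limits, your argument (Yoneda preserves limits, $\BBB$ is closed under pointwise limits in $\PSh_{\SSS}(\BB)$, and full faithfulness lets one reflect) is essentially the paper's, which simply observes that $\BBB$ is a localisation of $\PSh_{\SSS}(\BB)$ and Yoneda is continuous. For colimits the paper cites~\cite[Remark~6.3.5.17]{htt}; your corepresentability argument — characterising $\map{\BB}(-,\colim_j A_j)$ by its mapping spaces into any $G\in\BBB$ and invoking the sheaf condition exactly once — is a clean, self-contained version of the same fact, and you are right to flag it as the only step where the meaning of ``sheaf'' genuinely enters. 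Where the two proofs diverge is the internal mapping object. The paper first observes that, $\BBB$ being a \emph{left exact} localisation of $\PSh_{\SSS}(\BB)$, it is an exponential ideal there; this collapses the problem to showing that $\iFun{h(A)}{h(B)}$ computed in the ambient presheaf $\infty$-category is representable, which falls to a two-line Yoneda computation in $\PSh_{\SSS}(\BB)$. You instead stay entirely inside $\BBB$, decompose an arbitrary $F\in\BBB$ as a ($\bU$-filtered) colimit of representables via $\BBB\simeq\widehat{\Ind}_{\bU}(\BB)$, distribute the product over that colimit using cartesian closedness of $\BBB$ together with the just-established preservation of products, and compare via the tensor--hom adjunctions on both sides. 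Both are valid; the paper's route is shorter because the exponential-ideal observation does the heavy lifting for free, whereas yours avoids invoking that general fact at the cost of some bookkeeping with the $\Ind$-presentation, and has the mild pedagogical virtue of never leaving $\BBB$.
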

\begin{proof}
	Since the Yoneda embedding $h\colon\BB\into\PSh_{\SSS}(\BB)$ commutes with small limits and since $\BBB$ is a localisation of $\PSh_{\SSS}(\BB)$, the embedding $\BB\into\BBB$ preserves small limits. The case of small colimits is proved in~\cite[Remark~6.3.5.17]{htt}. Lastly, since the product bifunctor on $\BBB$ restricts to the product bifunctor on $\BB$, it suffices to show that for $A,B\in\BB\into\BBB$ their internal mapping object $\iFun{A}{B}\in\BBB$ is contained in $\BB$. Now $\BBB$ is a left exact localisation of $\PSh_{\SSS}(\BB)$ and therefore an exponential ideal in $\PSh_{\SSS}(\BB)$, hence it suffices to show that the internal mapping object $\iFun{h(A)}{h(B)}\in\PSh_{\SSS}(\BB)$ is representable. This follows from the computation
	\begin{equation*}
		\iFun{h(A)}{h(B)}\simeq\map{\PSh_{\SSS}(\BB)}(h(-)\times h(A), h(B))\simeq\map{\BB}(-\times A, B)\simeq\map{\BB}(-, \iFun{A}{B})
	\end{equation*}
	in which we make repeated use of the Yoneda lemma and in which the object $\iFun{A}{B}\in\BB$ on the right-hand side denotes the internal mapping object in $\BB$.
\end{proof}

\subsection{Factorisation systems}
\label{sec:factorisationSystems}
Let $\CC$ be an $\infty$-category. Given two maps $f\colon a\to b$ and $g\colon c\to d$ in $\CC$, we say that $f$ and $g$ are orthogonal if the commutative square
\begin{equation*}
\begin{tikzcd}
\map{\CC}(b, c)\arrow[r, "g_\ast"]\arrow[d, "f^\ast"] & \map{\CC}(b, d)\arrow[d, "f^\ast"]\\
\map{\CC}(a, c)\arrow[r, "g^\ast"]& \map{\CC}(a,d)
\end{tikzcd}
\end{equation*}
is cartesian. We denote the orthogonality relation between $f$ and $g$ by $f\bot g$, and we will say that $f$ is left orthogonal to $g$ and $g$ is right orthogonal to $f$. In particular, $f$ and $g$ being orthogonal implies that any lifting square
\begin{equation*}
\begin{tikzcd}
a\arrow[r]\arrow[d, "f"] & c\arrow[d, "g"]\\
b\arrow[r]\arrow[ur, dotted] & d
\end{tikzcd}
\end{equation*}
has a unique solution.
If $\CC$ is cartesian closed, we furthermore say that $f$ and $g$ are \emph{internally} orthogonal if the square
\begin{equation*}
\begin{tikzcd}
{[b,c]}\arrow[r, "g_\ast"]\arrow[d, "f^\ast"] & {[b,d]}\arrow[d, "f^\ast"]\\
{[a,c]} \arrow[r, "g^\ast"]& {[a,d] }
\end{tikzcd}
\end{equation*}
is cartesian. By definition, this is equivalent to $c\times f\bot g$ for every $c\in \CC$. We will denote the internal orthogonality relation between $f$ and $g$ by $f\ibot g$.

If $\CC$ has a terminal object $1\in \CC$, then an object $c\in\CC$ is said to be \emph{local} with respect to the map $f\colon a\to b$ in $\CC$ if the terminal map $\pi_c\colon c\to 1$ is right orthogonal to $f$, i.e.\ if $f\bot \pi_c$ holds. Similarly, $c$ is \emph{internally local} with respect to $f$ if $f\ibot \pi_c$ holds.

If $S$ is an arbitrary family of maps in $\CC$, we will denote by $S^{\bot}$ the collection of maps that are right orthogonal to any map in $S$, and by $\prescript{\bot}{}{S}$ the collection of maps that are left orthogonal to any map in $S$.
\begin{definition}
	Let $\CC$ be an $\infty$-category. A \emph{factorisation system} is a pair $(\LL,\RR)$ of families of maps in $\CC$ such that
	\begin{enumerate}
		\item Any map $f$ in $\CC$ admits a factorisation $f\simeq rl$ with $r\in \RR$ and $l\in \LL$.
		\item $\LL^{\bot}=\RR$ as well as $\prescript{\bot}{}{\RR}=\LL$.
	\end{enumerate}
\end{definition}
The following proposition summarises some properties of factorisation systems whose proof is a straightforward consequence of the definition:
\begin{proposition}
	\label{prop:propertiesFactorisationSystems}
	Let $\CC$ be an $\infty$-category and let $(\LL,\RR)$ be a factorisation system in $\CC$. Then
	\begin{enumerate}
		\item The intersection $\LL\cap\RR$ is precisely the collection of equivalences in $\CC$;
		\item if $g\in \LL$, then $fg\in \LL$ if and only $f\in \LL$; dually, if $f\in \RR$ then $fg\in\RR$ if and only if $g\in \RR$;
		\item $\RR$ is stable under pullbacks and $\LL$ is stable under pushouts;
		\item both $\RR$ and $\LL$ are stable under taking retracts.
		\item $\RR$ is stable under all limits that exist in $\Fun(\Delta^1,\CC)$, and dually $\LL$ is stable under all colimits that exist in $\Fun(\Delta^1,\CC)$.\qed
	\end{enumerate}
\end{proposition}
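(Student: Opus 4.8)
The plan is to reduce all five statements to two inputs: the formal closure properties of the operations $S\mapsto S^{\bot}$ and $S\mapsto\prescript{\bot}{}{S}$, together with the uniqueness of $(\LL,\RR)$-factorisations. Both of these follow directly from the definition, since $f\bot g$ is by definition the assertion that a certain square of mapping $\infty$-groupoids is cartesian, and a cartesian square encodes a limit condition that behaves well under all the relevant operations.

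First I would record, for an arbitrary family $S$ of maps in $\CC$, the following properties of $S^{\bot}$: it contains every equivalence (a square of mapping $\infty$-groupoids two of whose parallel edges are equivalences is cartesian); it is closed under composition (pasting of cartesian squares of mapping $\infty$-groupoids); it is closed under pullback (the square attached to a pullback $g'$ of $g$ is obtained by base change of the square attached to $g$ along the induced cospan of mapping $\infty$-groupoids, using that each $\map{\CC}(x,-)$ preserves pullbacks); it is closed under retracts (the square attached to a retract of $g$ is a retract of the square attached to $g$, and cartesian squares are closed under retracts, a retract of an equivalence being an equivalence); and it is closed under limits taken in $\Fun(\Delta^1,\CC)$ (each $\map{\CC}(x,-)$ preserves limits, so the square attached to a limit of arrows is the limit of the squares attached to the individual arrows, and a limit of cartesian squares is cartesian). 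Dualising gives the analogous statements for $\prescript{\bot}{}{S}$: it contains the equivalences, and is closed under composition, under pushout, under retracts, and under colimits in $\Fun(\Delta^1,\CC)$. Since $\RR=\LL^{\bot}$ and $\LL=\prescript{\bot}{}{\RR}$, this immediately yields (3), (4), (5), and the ``if'' directions of (2).

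Next I would prove the uniqueness of factorisations: if $f\simeq r\ell\simeq r'\ell'$ with $r,r'\in\RR$ and $\ell,\ell'\in\LL$, then the commutative square whose corners are the common source, the two intermediate objects, and the common target admits a unique filler $\theta$ from the first intermediate object to the second, because its left edge lies in $\LL$ and its right edge in $\RR$; the symmetric filler $\theta'$ together with uniqueness of such fillers forces $\theta'\theta\simeq\id$ and $\theta\theta'\simeq\id$, so $\theta$ is an equivalence identifying the two factorisations. Statement (1) then follows: one inclusion is the fact that every equivalence lies in both $\LL$ and $\RR$; conversely, if $f\in\LL\cap\RR$, then $f\simeq\id\circ f$ and $f\simeq f\circ\id$ are both $(\LL,\RR)$-factorisations of $f$ (the identity lying in $\LL\cap\RR$ by the previous point), so the comparison equivalence between them is a two-sided inverse of $f$, whence $f$ is an equivalence.

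Finally, for the remaining ``only if'' direction of (2): if $g\in\LL$ and $fg\in\LL$, choose a factorisation $f\simeq r\ell$ with $\ell\in\LL$ and $r\in\RR$; then $fg\simeq r(\ell g)$ with $\ell g\in\LL$ by the composition property, so $(r,\ell g)$ and $(\id,fg)$ are two $(\LL,\RR)$-factorisations of $fg$, whence uniqueness forces $r$ to be an equivalence, and therefore $f\simeq r\ell\in\LL$; the case of $\RR$ is dual, factoring $g$ and cancelling $f$ on the other side. I do not anticipate a genuine obstacle here: the only ingredient that is not pure diagram-chasing with cartesian squares of mapping $\infty$-groupoids is the uniqueness of factorisations, and the main point to verify carefully is that its proof invokes only the two defining axioms of a factorisation system, with no circular appeal to the statements being proved.
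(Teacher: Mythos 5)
The paper does not actually give a proof of this proposition; it is stated with a $\qed$ after the remark that the properties are ``a straightforward consequence of the definition.'' So there is no argument to compare yours against, and I will simply assess yours on its own terms.

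Your proof is correct and is the standard route. The reduction to two ingredients---closure properties of $S\mapsto S^{\bot}$ and $S\mapsto\prescript{\bot}{}{S}$ for an arbitrary family $S$, plus uniqueness of $(\LL,\RR)$-factorisations---is clean and avoids any circularity. A few small things worth making sure you have fully internalised, since they are where such arguments usually get sloppy in the $\infty$-categorical setting. For closure under composition you are pasting cartesian squares of mapping $\infty$-groupoids; for closure under pullback you use that $\map{\CC}(x,-)$ preserves pullbacks, so the orthogonality square for a pullback $g'$ of $g$ is a base change of the one for $g$, and cartesian squares are stable under base change; for closure under retracts you use that an equivalence of $\infty$-groupoids is stable under retracts, applied to the comparison map into the pullback; and for closure under limits in $\Fun(\Delta^1,\CC)$ you use that a limit of cartesian squares is cartesian, again because $\map{\CC}(x,-)$ is limit-preserving. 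All of these are as you say. For uniqueness, ``unique filler'' means the $\infty$-groupoid of fillers is contractible, and the argument that $\theta'\theta\simeq\id$ and $\theta\theta'\simeq\id$ follow from two elements of a contractible space being equivalent is fine. Finally, in the ``only if'' direction of~(2) you implicitly use that an equivalence lies in $\LL$ (part of~(1)) and that $\LL$ is closed under composition (the ``if'' direction of~(2)) to conclude $f=r\ell\in\LL$; since you prove~(1) and the ``if'' directions of~(2) before this step, the dependency order is sound. No gaps.
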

If $(\LL,\RR)$ is a factorisation system in an $\infty$-category $\CC$, then $\RR$ defines a full subcategory of the arrow $\infty$-category $\Fun(\Delta^1, \CC)$. The factorisation of maps in $\CC$ then defines a left adjoint to this inclusion~\cite[Lemma~5.2.8.19]{htt}. More precisely, if $f\colon d\to c$ is a map in $\CC$ and if $rl\colon d\to e\to c$ is the factorisation of $f$ into maps $l\in \LL$ and $r\in \RR$, then the assignment $f\mapsto r$ extends to a functor $\Fun(\Delta^1,\CC)\to \RR$ that is left adjoint to the inclusion. The unit of this adjunction is then given by the square
\begin{equation*}
\begin{tikzcd}
d\arrow[r, "l"]\arrow[d, "f"] & e\arrow[d, "r"]\\
c\arrow[r, "\id"] & c.
\end{tikzcd}
\end{equation*}
By dualisation, this also shows that the inclusion $\LL\into \Fun(\Delta^1, \CC)$ admits a right adjoint.

Note that the fact that the inclusion $\RR\into\Fun(\Delta^1,\CC)$ admits a left adjoint moreover proves that for any $c\in \CC$ the induced inclusion $\Over{\RR}{c}\into\Over{\CC}{c}$ is reflective. If $f\colon d\to c$ is an object in $\Over{\CC}{c}$ and if $f\simeq rl$ is its factorisation, then the map $r$ is the image of $f$ under the localisation functor and the map $l$ is the component of the counit of the adjunction at $f$. In particular, this shows that the map $f\colon d\to c$ is contained in $\LL$ if and only if it is sent to the terminal object by the localisation functor $\Over{\CC}{c}\to \Over{\RR}{c}$, and the essential image of the inclusion $\Over{\RR}{c}\into \Over{\CC}{c}$ is spanned by those objects in $\Over{\CC}{c}$ that are local with respect to the class of maps in $\Over{\CC}{c}$ that are sent to $\LL$ via the projection functor $(\pi_c)_!\colon \Over{\CC}{c}\to \CC$.

\begin{remark}
	\label{rem:localEquivalences}
	Suppose that $(\LL,\RR)$ is a factorisation system in an $\infty$-category $\CC$ that has a final object $1\in\CC$, and let $f\colon c\to d$ be a map in $\CC$. Let $L\colon \CC\to \Over{\RR}{1}$ be a left adjoint to the inclusion, and consider the commutative diagram
	\begin{equation*}
		\begin{tikzcd}
		c\arrow[d]\arrow[r, "f"] & d\arrow[d]\\
		L(c)\arrow[d]\arrow[r, "L(f)"] & L(d) \arrow[d] \\
		1\arrow[r, "\id"] & 1
		\end{tikzcd}
	\end{equation*}
	in which the two vertical compositions are determined by the factorisation of the two terminal maps $\pi_c\colon c\to 1$ and $\pi_d\colon d\to 1$ into maps in $\LL$ and $\RR$. If $f$ is contained in $\LL$, then item~(2) of Proposition~\ref{prop:propertiesFactorisationSystems} implies that $L(f)$ must be contained in $\LL$ as well. On the other hand, item~(2) of Proposition~\ref{prop:propertiesFactorisationSystems} also implies that $L(f)$ is contained in $\RR$. Hence $L(f)$ must be an equivalence. In other words, the functor $L$ sends maps in $\LL$ to equivalences in $\Over{\RR}{1}$. The converse is however not true in general, i.e.\ not every map that is sent to an equivalence by $L$ must necessarily be contained in $\LL$. A notable exception is the case where $\pi_d\colon d\to 1$ is already contained in $\LL$. In this case, the map $d\to L(d)$ is an equivalence, hence $L(f)$ being an equivalence does imply that $f$ is contained in $\LL$.
\end{remark}
Lastly, let us discuss how a factorisation system can be \emph{generated} by a set of maps:
\begin{proposition}[{\cite[Proposition~5.5.5.7]{htt}}]
	\label{prop:factorisationSystemGenerated}
	Let $\CC$ be a presentable $\infty$-category and let $S$ be a small set of maps in $\CC$. Then there is a factorisation system $(\LL,\RR)$ in $\CC$ with $\RR=S^{\bot}$ and $\LL=\prescript{\bot}{}{\RR}$.\qed
\end{proposition}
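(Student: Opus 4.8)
The plan is to take $\RR:=S^\bot$ and $\LL:=\prescript{\bot}{}{\RR}=\prescript{\bot}{}{(S^\bot)}$ and to verify the two axioms of a factorisation system separately. The second axiom comes essentially for free with this choice, and in fact for an arbitrary $\infty$-category: by the very definition of $S^\bot$, every map in $S$ is left orthogonal to every map in $S^\bot$, so $S\subseteq\prescript{\bot}{}{(S^\bot)}=\LL$; consequently $\LL^\bot\subseteq S^\bot=\RR$, while the reverse inclusion $\RR\subseteq\LL^\bot$ is immediate from $\LL=\prescript{\bot}{}{\RR}$. Hence $\LL^\bot=\RR$, and $\prescript{\bot}{}{\RR}=\LL$ holds by definition, so only the first axiom — the existence of factorisations — remains, and this is where presentability enters.

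I would then reduce the first axiom to the assertion that every map $f\colon X\to Y$ factors as $f\simeq rl$ with $r\in\RR$ and $l$ in the weakly saturated class $\overline S$ generated by $S$ (the smallest class containing $S$ and closed under pushouts, transfinite composition and retracts): indeed $\overline S\subseteq\prescript{\bot}{}{(S^\bot)}=\LL$, since $\prescript{\bot}{}{T}$ is weakly saturated for any class $T$ — this follows from the same elementary stability arguments as in proposition~\ref{prop:propertiesFactorisationSystems} — and contains $S$. Such a factorisation is produced by a transfinite small object argument. Using presentability and the smallness of $S$, fix a regular cardinal $\kappa$ such that $\CC$ is $\kappa$-presentable and the domains and codomains of all maps in $S$ are $\kappa$-compact, and build a $\kappa$-indexed tower $X=X_0\to X_1\to\cdots$ under $X$ and over $Y$ by setting, at a successor stage,
\begin{equation*}
	X_{\alpha+1}:=X_\alpha\sqcup_{\coprod_{\sigma}A_\sigma}\Big(\coprod_{\sigma}B_\sigma\Big),
\end{equation*}
where $\sigma$ ranges over the space of commutative squares from a map $A_\sigma\to B_\sigma$ in $S$ to the structure map $X_\alpha\to Y$, the map $\coprod_\sigma A_\sigma\to X_\alpha$ assembles the top edges of those squares and $\coprod_\sigma A_\sigma\to\coprod_\sigma B_\sigma$ is the coproduct of the corresponding maps in $S$; at limit stages one takes colimits, and one sets $X_\infty:=\colim_{\alpha<\kappa}X_\alpha$. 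Then $X\to X_\infty$ is a transfinite composite of pushouts of coproducts of maps in $S$, hence lies in $\overline S$, and the $\kappa$-compactness hypotheses force the induced map $r\colon X_\infty\to Y$ into $S^\bot$; taking $l:=(X\to X_\infty)$ gives the factorisation. The identity $\prescript{\bot}{}{(S^\bot)}=\overline S$ then drops out of the standard retract argument: given $l\in\prescript{\bot}{}{(S^\bot)}$, factor it as above, observe that $l$ is orthogonal to its own right factor, and extract a retraction exhibiting $l$ as a retract of the left factor, which lies in $\overline S$.

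The hard part will be the step where one concludes $r\colon X_\infty\to Y\in S^\bot$: what is needed is that $r$ is right \emph{orthogonal} to every map in $S$, i.e.\ that each lifting square against a map of $S$ has a \emph{contractible} space of diagonal fillers, not merely a nonempty set of them. This is why the cells must be attached along the whole mapping space of commutative squares at each stage rather than along a chosen set of lifting problems, and why the verification amounts to showing, via a cofinality argument that uses $\kappa$-compactness of both the domains and codomains of maps in $S$, that after the $\kappa$-filtered colimit the comparison map $\map{\CC}(B,X_\infty)\to\map{\CC}(A,X_\infty)\times_{\map{\CC}(A,Y)}\map{\CC}(B,Y)$ is an equivalence of $\infty$-groupoids — the genuinely $\infty$-categorical part of the argument, absent from the $1$-categorical small object argument. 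An alternative organisation that repackages exactly this difficulty is to note that a map $g$ of $\CC$ lies in $S^\bot$ precisely when, viewed as an object of the presentable $\infty$-category $\Fun(\Delta^1,\CC)$, it is local with respect to the small set $\{\lambda_s\}_{s\in S}$, where for $s\colon a\to b$ in $S$ the map $\lambda_s\colon s\to\id_b$ is given by the square $(s,\id_b)$; by the localisation theorem~\cite[Proposition~5.5.4.15]{htt} this exhibits $S^\bot$ as a reflective accessible localisation of $\Fun(\Delta^1,\CC)$, and one reads the factorisation of $f$ off the unit $f\to Lf$, after checking that the localisation leaves the codomain unchanged (evaluation at the target preserves colimits and sends the $\lambda_s$ to equivalences) and that the left factor is left orthogonal to $S^\bot$.
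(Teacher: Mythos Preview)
Your argument is correct. Note, however, that the paper does not prove this proposition at all: the statement carries a \qed and is simply cited from~\cite[Proposition~5.5.5.7]{htt}, so there is no ``paper's own proof'' to compare against. What you have written is essentially a sketch of Lurie's argument in that reference --- the small object argument adapted to the $\infty$-categorical setting, with the correct emphasis on obtaining contractibility (not mere nonemptiness) of lifting spaces --- together with the alternative packaging via the reflective localisation of $\Fun(\Delta^1,\CC)$, which is also standard. Both routes are valid and well known; nothing is missing.
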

In the situation of Proposition~\ref{prop:factorisationSystemGenerated}, the assignment $S\mapsto \LL$ can be viewed as a certain closure operation that is referred to as \emph{saturation}. Recall the definition of a saturated class:
\begin{definition}
	Let $\CC$ be a presentable $\infty$-category and let $S$ be a class of maps in $\CC$. Then $S$ is \emph{saturated} if 
	\begin{enumerate}
		\item $S$ contains all equivalences in $\CC$ and is closed under composition;
		\item $S$ is closed under small colimits in $\Fun(\Delta^1,\CC)$;
		\item $S$ is closed under pushouts.
	\end{enumerate}
\end{definition}
By Proposition~\ref{prop:propertiesFactorisationSystems}, the left class in any factorisation system is saturated. Now if $S$ is a small set of maps and if $(\LL,\RR)$ is the induced factorisation system in $\CC$ as provided by Proposition~\ref{prop:factorisationSystemGenerated}, then $\LL$ is the \emph{universal} saturated class that contains $S$, in the following sense:
\begin{proposition}
	\label{prop:saturatedClosure}
	Let $\CC$ be a presentable $\infty$-category and let $S$ be a small set of maps in $\CC$. Let $(\LL,\RR)$ be the associated factorisation system. Then $\LL$ is the smallest saturated class of maps that contains $S$.
\end{proposition}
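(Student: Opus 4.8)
The statement has two halves, of which only one requires work. That $\LL$ is saturated is immediate from proposition~\ref{prop:propertiesFactorisationSystems}: it contains all equivalences (item~(1)), is closed under composition (a special case of item~(2)), under pushouts (item~(3)), and under all small colimits in $\Fun(\Delta^1,\CC)$ (item~(5), all such colimits existing since $\CC$, and hence $\Fun(\Delta^1,\CC)$, is presentable). That $S\subseteq\LL$ is also immediate: $\RR=S^{\bot}$ means every $s\in S$ is left orthogonal to every map of $\RR$, i.e.\ $S\subseteq\prescript{\bot}{}{\RR}=\LL$. So the plan is to show that $\LL\subseteq T$ for every saturated class $T$ with $S\subseteq T$.

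The essential input is the shape of the factorisations produced by proposition~\ref{prop:factorisationSystemGenerated}. Its proof proceeds by the small object argument, and one reads off from it that every map $f$ of $\CC$ admits a factorisation $f\simeq rl$ with $r\in\RR$ and with $l$ built from the maps of $S$ using only coproducts, pushouts and transfinite compositions. Each of these is an instance of a pushout or of a small colimit in $\Fun(\Delta^1,\CC)$ (a transfinite composition is a sequential colimit of iterated composites, and a coproduct of maps is a transfinite composite of pushouts of those maps), so all of them preserve membership in any saturated class. Hence, for every saturated $T$ with $S\subseteq T$, the left factor $l$ of any such factorisation lies in $T$; in particular the same holds for the smallest saturated class $\bar S$ generated by $S$.

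It remains to promote this from left factors to all of $\LL$. Let $f\in\LL$ and write $f\simeq rl$ as above, so that $l\in\bar S\subseteq\LL$ and $r\in\RR$. Since $f$ and $l$ both lie in $\LL$, item~(2) of proposition~\ref{prop:propertiesFactorisationSystems} forces $r\in\LL$, whence $r\in\LL\cap\RR$ is an equivalence by item~(1). Therefore the square witnessing the factorisation is an equivalence $l\xrightarrow{\ \simeq\ }f$ in $\Fun(\Delta^1,\CC)$; since any saturated class is stable under retracts, hence under such equivalences, we get $f\in T$ for every saturated $T\supseteq S$. Thus $\LL\subseteq\bar S$, and combined with the first paragraph this gives $\LL=\bar S$.

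The only delicate point is the second paragraph: extracting from the proof of~\cite[Proposition~5.5.5.7]{htt} that the left factor of the small object argument lies in the saturated class generated by $S$, and checking that the transfinite constructions appearing there are genuinely subsumed by closure under pushouts, composition and small colimits in the arrow category. Once that bookkeeping is in place, the remainder is a purely formal consequence of the axioms of a factorisation system.
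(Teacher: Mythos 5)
Your proof is correct, but it follows a genuinely different route from the paper's. The paper argues by pure orthogonality bookkeeping: it claims $\LL\subset T$ is equivalent to $T^{\bot}\subset\RR$, and the latter follows at once from $S\subset T$ and $\RR=S^{\bot}$. You instead open up the small object argument underlying proposition~\ref{prop:factorisationSystemGenerated}: the factorisation $f\simeq rl$ it produces has $l$ a transfinite composite of pushouts of coproducts of maps in $S$, hence $l\in T$ for any saturated $T\supset S$; and if $f\in\LL$, then the cancellation property (item~(2) of proposition~\ref{prop:propertiesFactorisationSystems}) together with $\LL\cap\RR$ consisting of equivalences forces $r$ to be an equivalence, so $f\simeq l\in T$. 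The paper's argument is shorter and cites only the statement of~\cite[Proposition~5.5.5.7]{htt}; yours leans on the shape of the factorisation its proof produces. But the paper's one-line reduction depends on the non-obvious implication $T^{\bot}\subset\RR\Rightarrow\LL\subset T$, which requires knowing that the saturated class in question is recoverable from its right orthogonal complement --- and that is precisely the content your cell-complex-plus-retract argument supplies. Your version is more laborious, but it is also more transparent about where the real work is hidden.
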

\begin{proof}
	To begin with, note that the property of a class being saturated is preserved under taking arbitrary intersections, hence the \emph{smallest} saturated class containing $S$ is well-defined and is explicitly given by the intersection
	\begin{equation*}
	\overline S = \bigcap_{S\subset T} T
	\end{equation*}
	over all saturated classes of maps that contain $S$. We need to show that any saturated $T\supset S$ contains $\LL$ as well. But since $\LL=\prescript{\bot}{}{\RR}$ and as $(\LL, \RR)$ is a factorisation system, this is equivalent to $T^{\bot}\subset \RR$, which in turn follows immediately from $S\subset T$ and $S^{\bot}=\RR$.
\end{proof}

An analogous construction can be carried out when replacing orthogonality by \emph{internal orthogonality} in the case where $\CC$ is cartesian closed:
\begin{proposition}[{\cite[Proposition~3.2.9]{anel2020}}]
	\label{prop:factorisationSystemInternallyGenerated}
	Let $\CC$ be a presentable and cartesian closed $\infty$-category and let $S$ be a small set of maps in $\CC$. Then there is a factorisation system $(\LL,\RR)$ in $\CC$ such that $\RR= S^{\ibot}$ and $\LL=\prescript{\ibot}{}{\RR}=\prescript{\bot}{}{\RR}$.\qed
\end{proposition}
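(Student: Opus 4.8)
The plan is to reduce \emph{internal} orthogonality against $S$ to ordinary orthogonality against a \emph{small} set of maps, and then invoke proposition~\ref{prop:factorisationSystemGenerated} together with proposition~\ref{prop:saturatedClosure}.

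Since $\CC$ is presentable, we may choose a small set $G$ of objects of $\CC$ that generates $\CC$ under small colimits, and put $S'=\{c\times f : c\in G,\ f\in S\}$; this is again a small set of maps in $\CC$. The first claim is that $S^{\ibot}=(S')^{\bot}$. The inclusion $S^{\ibot}\subseteq(S')^{\bot}$ is immediate from the definition of internal orthogonality, since every element of $S'$ is of the form $c\times f$ with $f\in S$. Conversely, fix $g\in(S')^{\bot}$ and $f\in S$, and consider the full subcategory of $\CC$ spanned by those objects $c$ with $c\times f\bot g$. Because $\CC$ is cartesian closed, the functor $-\times f\colon\CC\to\Fun(\Delta^1,\CC)$ preserves small colimits, and the class $\prescript{\bot}{}{\{g\}}$ of maps left orthogonal to $g$ is closed under small colimits in $\Fun(\Delta^1,\CC)$ — a limit of cartesian squares of mapping spaces is cartesian, exactly as in item~(5) of proposition~\ref{prop:propertiesFactorisationSystems}. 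Hence this full subcategory is closed under small colimits, and as it contains $G$ by assumption it must be all of $\CC$. Thus $c\times f\bot g$ for every $c\in\CC$ and $f\in S$, i.e.\ $g\in S^{\ibot}$.

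Applying proposition~\ref{prop:factorisationSystemGenerated} to the small set $S'$ now produces a factorisation system $(\LL,\RR)$ with $\RR=(S')^{\bot}=S^{\ibot}$ and $\LL=\prescript{\bot}{}{\RR}$, and by proposition~\ref{prop:saturatedClosure} this $\LL$ is the smallest saturated class of maps containing $S'$. It remains to check that $\prescript{\ibot}{}{\RR}=\prescript{\bot}{}{\RR}=\LL$. For the terminal object $1\in\CC$ we have $1\times f\simeq f$, so internal orthogonality implies ordinary orthogonality, giving $\prescript{\ibot}{}{\RR}\subseteq\prescript{\bot}{}{\RR}$. For the reverse inclusion, fix $g\in\RR=S^{\ibot}$ and let $M_g=\prescript{\ibot}{}{\{g\}}$ be the class of maps internally left orthogonal to $g$. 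Writing $M_g=\bigcap_{c\in\CC}\{h : c\times h\bot g\}$, each member of the intersection is the preimage of the saturated class $\prescript{\bot}{}{\{g\}}$ under the functor $c\times-\colon\Fun(\Delta^1,\CC)\to\Fun(\Delta^1,\CC)$, which preserves composition, small colimits and pushouts; hence each such preimage is saturated, and therefore so is $M_g$. Moreover $S'\subseteq M_g$, since for $c\in G$, $f\in S$ and any $c'\in\CC$ one has $c'\times(c\times f)\simeq(c'\times c)\times f\bot g$ because $g\in S^{\ibot}$. As $\LL$ is the smallest saturated class containing $S'$, we conclude $\LL\subseteq M_g$: every map of $\LL$ is internally left orthogonal to $g$. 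Since $g\in\RR$ was arbitrary, $\LL\subseteq\prescript{\ibot}{}{\RR}$, which finishes the proof.

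The only genuine obstacle is the first step: the naive candidate generating the factorisation system, namely the collection $\{c\times f : c\in\CC,\ f\in S\}$, is a proper class, and the real content is to cut it down to a small set without enlarging the resulting right class $S^{\ibot}$. This is exactly where presentability (to produce the small colimit-generating set $G$) and cartesian closedness (so that $-\times f$ preserves colimits) enter. The remaining steps are routine formal manipulations with factorisation systems and with the saturated-closure description of the left class.
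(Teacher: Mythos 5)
The paper does not actually prove this proposition — it cites~\cite[Proposition~3.2.9]{anel2020} and closes the theorem environment with a \qed, so there is no paper proof to compare against. Your argument is correct and self-contained. The key move — replacing the large collection $\{c\times f: c\in\CC,\ f\in S\}$ by the small set $S'=\{c\times f: c\in G,\ f\in S\}$ for a small generating set $G$, and proving $(S')^\bot = S^{\ibot}$ by observing that for fixed $f\in S$ and $g\in(S')^\bot$ the full subcategory $\{c : c\times f\bot g\}$ is closed under small colimits (because $-\times f$ preserves colimits and left-orthogonal classes are colimit-closed) — is exactly the right way to engage presentability and cartesian closedness, and it is what makes the appeal to proposition~\ref{prop:factorisationSystemGenerated} legitimate. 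Your second step, showing $\LL\subseteq\prescript{\ibot}{}{\RR}$ by proving $M_g=\prescript{\ibot}{}{\{g\}}$ is saturated (as an intersection of preimages, under the colimit- and pushout-preserving functors $c\times-$, of the saturated class $\prescript{\bot}{}{\{g\}}$) and contains $S'$, then invoking proposition~\ref{prop:saturatedClosure}, is likewise correct; together with the trivial inclusion $\prescript{\ibot}{}{\RR}\subseteq\prescript{\bot}{}{\RR}$ via $c=1$ this closes the argument. As a byproduct your proof also re-derives proposition~\ref{prop:saturatedClosureInternal}, since it shows the saturation of $S'$ coincides with the saturation of the full (large) family $\{c\times f: c\in\CC,\ f\in S\}$.
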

Since a map $r$ in a cartesian closed $\infty$-category $\CC$ is internally right orthogonal to a map $l$ if and only if $r$ is right orthogonal to $c\times l$ for any $c\in \CC$, Proposition~\ref{prop:factorisationSystemGenerated} implies:
\begin{proposition}
	\label{prop:saturatedClosureInternal}
	Let $\CC$ be a presentable and cartesian closed $\infty$-category and let $S$ be a small set of maps in $\CC$. Let $(\LL, \RR)$ be the factorisation system provided by Proposition~\ref{prop:factorisationSystemInternallyGenerated}. Then $\LL$ is the smallest saturated class of maps that contains the set $\{c\times f~\vert~c\in \CC,~f\in S\}$.\qed
\end{proposition}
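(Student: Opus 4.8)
The plan is to reduce the statement to proposition~\ref{prop:saturatedClosure} by exhibiting a \emph{small} set of maps that generates the same factorisation system as the (generally proper-class-sized) family $T=\{c\times f\mid c\in\CC,\ f\in S\}$. To set things up, I would first unwind the definitions: by proposition~\ref{prop:factorisationSystemInternallyGenerated} one has $\RR=S^{\ibot}$, and since $r$ is internally right orthogonal to $f$ exactly when $c\times f\bot r$ for all $c\in\CC$, this says precisely $\RR=T^{\bot}$. Consequently $\LL=\prescript{\bot}{}{\RR}$ contains $T$, and being the left class of a factorisation system it is saturated by proposition~\ref{prop:propertiesFactorisationSystems}; so $\LL$ is a saturated class containing $T$ and the only thing left to prove is its minimality.

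Now I would use presentability of $\CC$ to fix a small set $G$ of objects such that every object of $\CC$ is a colimit of a diagram valued in $G$ (for instance, representatives of the $\kappa$-compact objects for a suitable regular cardinal $\kappa$), and put $T_0=\{g\times f\mid g\in G,\ f\in S\}$, which is a small set of maps. The key claim is that $T_0^{\bot}=T^{\bot}$. One inclusion is obvious from $T_0\subseteq T$. For the other, fix $r\in T_0^{\bot}$; the class $\prescript{\bot}{}{\{r\}}$ of maps left orthogonal to $r$ is saturated — this follows from the same mapping-space manipulations that prove proposition~\ref{prop:propertiesFactorisationSystems}, the point being that it is in particular closed under colimits in $\Fun(\Delta^1,\CC)$ — and it contains $T_0$. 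Given arbitrary $c\in\CC$ and $f\colon a\to b$ in $S$, writing $c\simeq\colim_i g_i$ with $g_i\in G$ and using that $-\times a$ and $-\times b$ preserve colimits (as $\CC$ is cartesian closed) yields $c\times f\simeq\colim_i(g_i\times f)$ in $\Fun(\Delta^1,\CC)$; hence $c\times f\in\prescript{\bot}{}{\{r\}}$. This gives $r\in T^{\bot}$ and hence $T_0^{\bot}=T^{\bot}=\RR$.

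Finally, I would apply proposition~\ref{prop:factorisationSystemGenerated} to the small set $T_0$: it produces the factorisation system $(\prescript{\bot}{}{(T_0^{\bot})},T_0^{\bot})=(\LL,\RR)$, and proposition~\ref{prop:saturatedClosure} identifies $\LL$ as the smallest saturated class containing $T_0$. Since any saturated class containing $T$ also contains $T_0$, it must contain $\LL$; together with the first paragraph this shows that $\LL$ is the smallest saturated class containing $T$, as desired. I expect the main obstacle to be precisely the size reduction of the second paragraph — checking that replacing $T$ by the small family $T_0$ leaves the right orthogonal class unchanged — which is exactly where presentability and cartesian closedness of $\CC$ enter; everything else is formal.
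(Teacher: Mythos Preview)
Your proof is correct. The paper treats this proposition as immediate (it carries a \qed\ with no argument), the lead-in sentence indicating that once one observes $\RR=S^{\ibot}=\{c\times f\}^{\bot}$ the conclusion is meant to follow directly from the earlier propositions. Your argument is essentially the same reduction, but you make explicit the size-reduction step---replacing the large family $T$ by a small subfamily $T_0$ with the same right orthogonal, using presentability and cartesian closedness---so that proposition~\ref{prop:saturatedClosure} can be invoked literally rather than ``by the same argument''; the paper leaves this implicit (it is effectively hidden in the cited proof of proposition~\ref{prop:factorisationSystemInternallyGenerated}).
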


\begin{example}
	\label{ex:effectiveEpimorphismMonomorphism}
	Let $\CC$ be a presentable and cartesian closed $\infty$-category. We say that a map $f\colon c\to d$ is a \emph{monomorphism} if it is internally right orthogonal to the codiagonal $1\sqcup 1\to 1$, where $1$ is the final object in $\CC$. By construction, a map in $\CC$ is a monomorphism if and only if its diagonal is an equivalence. Dually, we say that a map is a \emph{strong epimorphism} if it is internally left orthogonal to every monomorphism. By Proposition~\ref{prop:factorisationSystemInternallyGenerated}, one obtains a factorisation system in which the left class are strong epimorphisms and the right class are monomorphisms. If $\CC$ is an $\infty$-topos, then strong epimorphisms are precisely \emph{covers}, i.e.\ effective epimorphisms in the terminology of~\cite{htt}. This follows from the fact that covers and monomorphisms form a factorisation system in any $\infty$-topos~\cite[Example~5.2.8.16]{htt}, combined with the fact that the left class of a factorisation system is uniquely determined by the right class.
\end{example}

\section{$\BB$-categories}
In this chapter, we introduce the language and some basic concepts of the theory of categories internal to an $\infty$-topos $\BB$. We will confine ourselves to discussing only those aspects of the theory that will be needed for the discussion of the Grothendieck construction and Yoneda's lemma in chapter~\ref{chap:yoneda}. We set up the general framework of higher internal category theory in \S~\ref{sec:simpObjects}--\ref{sec:categoriesAsSheaves}. In \S~\ref{sec:objectsMorphisms} we feature a brief discussion of the objects and morphisms of a $\BB$-category. We define and investigate the universe $\Univ$ for $\BB$-groupoids in \S~\ref{sec:universe}, and finally \S~\ref{sec:fullyFaithful} and \S~\ref{sec:subcategories} contain a discussion of fully faithful functors and full subcategories.

\subsection{Simplicial objects in an $\infty$-topos}
\label{sec:simpObjects}
Let $\BB$ be an arbitrary $\infty$-topos and let $\Simp{\BB}$ denote the $\infty$-topos of simplicial objects in $\BB$. By postcomposition with the adjunction $(\const\dashv \Gamma)\colon\BB\to\SS$ one obtains an induced adjunction $(\const\dashv\Gamma)\colon \Simp{\BB}\to \Simp\SS$ on the level of simplicial objects. This defines a functor
\begin{equation*}
	\Simp{(-)}\colon \RTop\to \Over{\RTop}{{\Simp\SS}}
\end{equation*}
from the $\infty$-category of $\infty$-topoi with geometric morphisms as maps to the slice $\infty$-category of $\infty$-topoi over $\Simp\SS$.

We define the \emph{tensoring} of $\Simp{\BB}$ over $\SS_{\Delta}$ as the bifunctor
\begin{equation*}
-\otimes -\colon \Simp\SS\times\Simp\BB\to\Simp\BB
\end{equation*}
that is given by the composition $(-\times -)\circ (\const\times \id_{\Simp\BB})$. Dually, we define the \emph{powering} of $\Simp\BB$ over $\SS_{\Delta}$ as the bifunctor
\begin{equation*}
(-)^{(-)}\colon \SS_{\Delta}^{\op}\times\Simp\BB\to \Simp\BB
\end{equation*}
that is given by the composition $\iFun{-}{-}\circ (\const\times\id_{\Simp\BB})$, where $\iFun{-}{-}$ denotes the internal mapping object in $\Simp{\BB}$. Let $\Fun_{\BB}(-,-)\colon\Simp\BB^{\op}\times\Simp\BB\to\Simp\SS$ be the bifunctor given by $\Gamma\circ\iFun{-}{-}$. We then obtain equivalences
\begin{equation*}
\map{\Simp\BB}(-, (-)^{(-)})\simeq \map{\Simp\BB}(-\otimes-, -)\simeq \map{\SS_{\Delta}}(-, \Fun_{\BB}(-,-)).
\end{equation*}

\begin{remark}
	\label{rem:InternalHomConstantSimplicialObjects}
	For any object $A$ in the $\infty$-topos $\BB$, we may regard $A$ as a constant simplicial object via the diagonal functor $\BB\into \BB_{\Delta}$. Since products in $\Simp{\BB}$ are computed objectwise, the endofunctor $A\times -$ on $\Simp{\BB}$ is equivalent to the functor that is given by postcomposing simplicial objects with the product functor $A\times -\colon \BB\to \BB$. The latter admits a right adjoint $\iFun{A}{-}\colon \BB\to \BB$, and since postcomposition with an adjunction induces an adjunction on the level of functor $\infty$-categories, the uniqueness of adjoints implies that the internal mapping objects functor $\iFun{A}{-}\colon \Simp{\BB}\to\Simp{\BB}$ is obtained by applying the internal mapping objects functor of $\BB$ levelwise to simplicial objects in $\BB$. More precisely, the restriction of the internal mapping object bifunctor $\iFun{-}{-}$ on $\Simp{\BB}$ along the inclusion $\BB^{\op}\times\Simp{\BB}\into \Simp{\BB}^{\op}\times\Simp{\BB}$ is equivalent to the transpose of the composite functor
	\begin{equation*}
	\BB^{\op}\times\Simp{\BB}\times\Delta^{\op}\xrightarrow{\id_{\BB^{\op}}\times\ev_{\Delta^{\op}}} \BB^{\op}\times\BB\xrightarrow{\iFun{-}{-}} \BB
	\end{equation*}
	in which $\ev_{\Delta^{\op}}$ denotes the evaluation functor (i.e\ the counit of the adjunction $-\times \Delta^{\op}\dashv \Fun(\Delta^{\op}, -)$) and in which the second functor is the internal mapping object bifunctor on $\BB$.
	
	In particular, this argument shows that the internal mapping objects bifunctor on $\Simp{\BB}$ restricts to the internal mapping objects bifunctor on $\BB$, which justifies our choice of notation.
\end{remark}

Observe that the diagonal functor $\iota\colon\BB\into \Simp{\BB}$ admits a right adjoint that is given by the evaluation functor $(-)_0\colon \Simp{\BB}\to\BB$. Let us denote by $\Delta^{\bullet}\colon\Delta\into\SS_{\Delta}$ the Yoneda embedding. Restricting the powering bifunctor along $\Delta^{\bullet}$ then defines a functor $(-)^{\Delta^{\bullet}}\colon\Simp{\BB}\to \PSh_{\Simp{\BB}}(\Delta)$. The computation
\begin{align*}
	\map{\BB}(-, ((-)^{\Delta^{\bullet}})_0)&\simeq\map{\Simp{\BB}}(\iota(-), (-)^{\Delta^\bullet})\\
	&\simeq \map{\SS_{\Delta}}(\Delta^{\bullet}, \Fun_{\Simp{\BB}}(\iota(-),-))\\
	&\simeq \Gamma\iFun{\iota(-)}{(-)_{\bullet}}\\
	&\simeq \map{\BB}(-, (-)_{\bullet})
\end{align*}
in which the penultimate equivalence follows from Yoneda's lemma and Remark~\ref{rem:InternalHomConstantSimplicialObjects} now shows :
\begin{proposition}
	\label{prop:simplicialPowering}
	The composite functor
	\begin{equation*}
	\Simp{\BB}\xrightarrow{(-)^{\Delta^{\bullet}}} \PSh_{\Simp{\BB}}(\Delta)\xrightarrow{(-)_0} \Simp{\BB}
	\end{equation*}
	in which the second arrow denotes postcomposition with the evaluation functor $(-)_0\colon \Simp{\BB}\to \BB$ is equivalent to the identity functor on $\Simp{\BB}$.\qed
\end{proposition}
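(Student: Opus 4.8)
The displayed chain of equivalences immediately preceding the statement already carries out the essential computation, so the plan is to explain why it proves the proposition. The strategy is to test the claimed equivalence against the functor of points: I would show that for every $A\in\BB$, every simplicial object $X$ in $\BB$ and every $n\in\Delta$ there is an equivalence
\begin{equation*}
	\map{\BB}\big(A, (X^{\Delta^n})_0\big)\simeq \map{\BB}(A, X_n),
\end{equation*}
natural in all three arguments, and then invoke the (degreewise) Yoneda lemma to upgrade this to an equivalence $(-)_0\circ(-)^{\Delta^{\bullet}}\simeq\id_{\Simp\BB}$ of functors $\Simp\BB\to\Simp\BB$.

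To produce the displayed equivalence of mapping spaces I would chain together four adjunctions. First, since the evaluation functor $(-)_0\colon\Simp\BB\to\BB$ is right adjoint to the diagonal $\iota\colon\BB\into\Simp\BB$, the left-hand side becomes $\map{\Simp\BB}(\iota(A), X^{\Delta^n})$. Second, unwinding the definition of the powering and using that the internal mapping object in $\Simp\BB$ is right adjoint to the product, together with the identity $\iota(A)\times\const(\Delta^n)\simeq \Delta^n\otimes\iota(A)$, turns this into $\map{\Simp\BB}(\Delta^n\otimes\iota(A), X)$, which by the tensor--powering adjunction equals $\map{\SS_{\Delta}}(\Delta^n, \Fun_{\BB}(\iota(A), X))$. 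Third, Yoneda's lemma in $\SS_{\Delta}=\PSh_{\SS}(\Delta)$ identifies this with the $n$-th level $\Fun_{\BB}(\iota(A), X)_n=\Gamma\big(\iFun{\iota(A)}{X}_n\big)$, using that $\Gamma$ is applied levelwise to simplicial objects. Fourth, remark~\ref{rem:InternalHomConstantSimplicialObjects} gives $\iFun{\iota(A)}{X}_n\simeq\iFun{A}{X_n}$ with the internal mapping object now formed in $\BB$, and unravelling $\Gamma=\map{\BB}(1,-)$ together with cartesian closedness of $\BB$ yields $\Gamma\iFun{A}{X_n}\simeq\map{\BB}(1\times A, X_n)\simeq\map{\BB}(A, X_n)$, as desired.

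The one point that requires genuine care — rather than being a mechanical composite of standard adjunctions and Yoneda — is naturality: each of the equivalences above must hold simultaneously and coherently in $A\in\BB$, in $X\in\Simp\BB$ and in the simplicial degree $n\in\Delta$, so that the chain assembles into an equivalence of $\SS$-valued functors on $\BB^{\op}\times\Simp\BB\times\Delta^{\op}$ and not merely a collection of objectwise equivalences. I would handle this by keeping every step at the level of (bi)functors: the adjunction $\iota\dashv(-)_0$ and the tensor--powering adjunction are adjunctions of bifunctors, remark~\ref{rem:InternalHomConstantSimplicialObjects} already supplies a natural identification of the relevant restricted internal-hom bifunctor, and the Yoneda lemma is invoked naturally in the presheaf. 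Once the natural equivalence of functors of points is in hand, the fact that the degreewise Yoneda functor $\Simp\BB=\PSh_{\BB}(\Delta)\into\PSh_{\SS_{\Delta}}(\BB)$ is fully faithful — because the Yoneda embedding $\BB\into\PSh_{\SS}(\BB)$ is — lets one conclude the claimed equivalence $(-)_0\circ(-)^{\Delta^{\bullet}}\simeq\id_{\Simp\BB}$.
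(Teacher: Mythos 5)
Your proof is correct and follows essentially the same route as the paper, which records the same four-step chain $\map{\BB}(-,((-)^{\Delta^\bullet})_0)\simeq\map{\Simp\BB}(\iota(-),(-)^{\Delta^\bullet})\simeq\map{\SS_\Delta}(\Delta^\bullet,\Fun_\BB(\iota(-),-))\simeq\Gamma\iFun{\iota(-)}{(-)_\bullet}\simeq\map{\BB}(-,(-)_\bullet)$ just before the statement, citing the same ingredients (the adjunction $\iota\dashv(-)_0$, the tensor--powering adjunction, Yoneda in $\SS_\Delta$, and remark~\ref{rem:InternalHomConstantSimplicialObjects}). You merely make explicit what the paper leaves implicit in its use of $(-)$-slots, namely that the equivalences are natural in all three variables and that the final passage from the mapping-space identity to an equivalence of functors $\Simp\BB\to\Simp\BB$ goes through full faithfulness of the levelwise Yoneda embedding.
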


\begin{lemma}
	\label{lem:iteratedLocalisationGenerators}
	Let $S$ be a saturated class of maps in $\BB$ that contains the maps $s^0\colon\Delta^1\otimes C\to C$ for every $C\in\Simp\BB$. Then $S$ contains the projection $\Delta^n\otimes A\to  A$ for every $n\geq 0$ and every $A\in \BB$.
\end{lemma}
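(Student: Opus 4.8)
The plan is to induct on $n$, working throughout in $\Simp{\BB}$ (where the maps in question live). For $n=0$ the projection $\Delta^0\otimes A\to A$ is an equivalence, since $\Delta^0$ is the terminal object of $\SS_\Delta$ and the functor $-\otimes A$ preserves terminal objects; hence it lies in $S$. So assume $n\geq 1$ and that $\Delta^{n-1}\otimes A\to A$ lies in $S$ for every $A\in\BB$.

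The geometric input I would use is that $\Delta^n$ is a retract of $\Delta^1\times\Delta^{n-1}$ in $\SS_\Delta$. This comes from an explicit retraction of posets $\ord{n}\rightrightarrows\ord{1}\times\ord{n-1}$: let $i\colon\ord{n}\to\ord{1}\times\ord{n-1}$ be the ``staircase'' map with $i(k)=(0,k)$ for $k<n$ and $i(n)=(1,n-1)$, and let $r\colon\ord{1}\times\ord{n-1}\to\ord{n}$ be given by $r(0,j)=j$ and $r(1,j)=j+1$; both are order-preserving and $ri=\id_{\ord{n}}$. Passing to nerves (the nerve functor preserves products and sends $\ord{m}$ to $\Delta^m$) produces maps $\Delta^n\to\Delta^1\times\Delta^{n-1}\to\Delta^n$ in $\SS_\Delta$ whose composite is the identity.

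Next I would tensor this retract with $A$ and exhibit it over $A$. Under the canonical identification $\Delta^1\otimes(\Delta^{n-1}\otimes A)\simeq(\Delta^1\times\Delta^{n-1})\otimes A$, the map $s^0$ associated to $C=\Delta^{n-1}\otimes A$ is identified with the tensor of the projection $\Delta^1\times\Delta^{n-1}\to\Delta^{n-1}$ with $A$; since $ri=\id$ and the composite of this projection with $i$ is the degeneracy $\Delta^n\to\Delta^{n-1}$ collapsing $\{n-1,n\}$, a direct diagram chase shows that $i\otimes A$ and $r\otimes A$, together with the identity on $A$, exhibit the projection $\Delta^n\otimes A\to A$ as a retract, in $\Fun(\Delta^1,\Simp{\BB})$, of the composite
\begin{equation*}
\Delta^1\otimes(\Delta^{n-1}\otimes A)\xrightarrow{s^0}\Delta^{n-1}\otimes A\longrightarrow A.
\end{equation*}
This composite lies in $S$: its first factor is an instance of the hypothesis, its second factor lies in $S$ by the inductive hypothesis, and $S$ is closed under composition. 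Finally, $S$ is a saturated class in the presentable, hence idempotent complete, $\infty$-category $\Simp{\BB}$, so it is closed under retracts (any retract of $g$ is a sequential colimit of the idempotent endomorphism of $g$ that the retract datum induces, and $S$ is closed under such colimits in $\Fun(\Delta^1,\Simp{\BB})$). Therefore $\Delta^n\otimes A\to A$ lies in $S$, which closes the induction.

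I expect the only genuine work to be the verification in the third paragraph: after tensoring with $A$, one must check that the two squares in the retract diagram in $\Fun(\Delta^1,\Simp{\BB})$ commute, i.e.\ that the retraction of simplices is compatible with the projections down to $A$. Spotting the staircase retraction of posets is the one step requiring any idea; everything else is formal from the saturation axioms.
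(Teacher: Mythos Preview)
Your proof is correct and follows essentially the same approach as the paper: induction on $n$, exhibiting $\Delta^n$ as a retract of $\Delta^1\times\Delta^{n-1}$ via explicit poset maps, and then tensoring with $A$ to realize the projection $\Delta^n\otimes A\to A$ as a retract of a composite already known to lie in $S$. The only cosmetic differences are that the paper starts the induction at $n=1$ and uses a different explicit retraction pair (its $\alpha$ sends $0\mapsto(0,0)$ and $k\mapsto(1,k-1)$ for $k\geq 1$, with $\beta(0,k)=0$, $\beta(1,k)=k+1$), and that you supply an argument for closure of saturated classes under retracts which the paper takes for granted.
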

\begin{proof}
	We will use induction on $n$, the case $n=1$ being true by assumption. Let us therefore assume that for an arbitrary integer $n\geq 1$ the projection $s^0\colon\Delta^n\otimes A\to A$ is contained in $S$. Then the composition $(\Delta^1\times\Delta^n)\otimes A\to \Delta^n\otimes A\to A$ in which the first map is induced by $s^0\colon\Delta^1\to\Delta^0$ is contained in $S$ as well. Let $\alpha\colon \Delta^{n+1}\to \Delta^1\times\Delta^n$ be the map that is defined by $\alpha(0)=(0,0)$ and $\alpha(k)=(1,k-1)$ for all $1\leq k\leq n+1$, and let $\beta\colon \Delta^1\times\Delta^n\to \Delta^{n+1}$ be defined by $\beta(0,k)=0$ and $\beta(1,k)=k+1$ for all $0\leq k\leq n$. Then the composition $\beta\alpha$ is equivalent to the identity on $\Delta^{n+1}$, and we therefore obtain a retract diagram
	\begin{equation*}
		\begin{tikzcd}
			\Delta^{n+1}\otimes A\arrow[d] \arrow[r, "\alpha\otimes\id"] & (\Delta^1\times\Delta^n)\otimes A\arrow[d]\arrow[r, "\beta\otimes\id"] & \Delta^{n+1}\otimes A\arrow[d]\\
			A\arrow[r, "\id"] & A\arrow[r, "\id"] & A
		\end{tikzcd}
	\end{equation*}
	which shows that the map $\Delta^{n+1}\otimes A\to A$ is contained in $S$ as well.
\end{proof}

\begin{proposition}
	\label{prop:characterisationGroupoids}
	For any simplicial object $C\in \Simp\BB$, the following are equivalent:
	\begin{enumerate}
		\item $C$ is contained in the essential image of the diagonal functor $\iota\colon \BB\into\Simp\BB$.
		\item $C$ is internally local with respect to the map $s^0\colon\Delta^1\to\Delta^0$.
	\end{enumerate}
\end{proposition}
\begin{proof}
	If $C$ is internally local with respect to $s^0\colon\Delta^1\to\Delta^0$, then Lemma~\ref{lem:iteratedLocalisationGenerators} and Proposition~\ref{prop:simplicialPowering} imply that the simplicial maps $C_0\to C_n$ are equivalences in $\BB$ for all $n\geq 0$, which implies that $C$ is a constant simplicial object and therefore in the essential image of the diagonal functor. Conversely, let $A\in\BB$ be an arbitrary object. By making use of the adjunction $\colim_{\Delta^{\op}}\dashv \iota$ and the fact that the functor $\colim_{\Delta^{\op}}$ commutes with finite products as $\Delta^{\op}$ is a sifted category, the object $\iota(A)$ is internally local to $s^0$ whenever $A$ is internally local to $\colim_{\Delta^{\op}}(s^0)\colon \colim_{\Delta^{\op}}\Delta^1\to \colim_{\Delta^{\op}}\Delta^0$ in $\BB$. Since the colimit of a representable presheaf on a small $\infty$-category is always the final object $1\in\SS$, the latter map must be an equivalence, hence the result follows.
\end{proof}

\begin{definition}
	A simplicial object in $\BB$ is said to be a \emph{$\BB$-groupoid} if it satisfies the equivalent conditions from Proposition~\ref{prop:characterisationGroupoids}. We denote by $\Grpd(\BB)\into\Simp\BB$ the full subcategory spanned by the $\BB$-groupoids.
\end{definition}

\subsection{Categories in an $\infty$-topos}
We now proceed by defining a $\BB$-category to be a simplicial object in $\BB$ that satisfies the \emph{Segal conditions} and \emph{univalence}. To that end, let us recall the following combinatorial constructions:
\begin{definition}
	\label{def:walkingMorphismEquivalence}
	For any $n\geq 1$, let $I^n=\Delta^1\sqcup_{\Delta^0}\cdots\sqcup_{\Delta^0}\Delta^1\subset\Delta^n$ denote the \emph{spine} of $\Delta^n$, i.e\ the simplicial sub-$\infty$-groupoid of $\Delta^n$ that is spanned by the inclusions $d^{\{i-1,i\}}\colon\Delta^1\subset\Delta^n$ for $i=1,\dots,n$.
	
	Furthermore, let $E^1$ be the simplicial $\infty$-groupoid that is defined by the pushout square
	\begin{equation*}
	\begin{tikzcd}
	\Delta^1\sqcup \Delta^1\arrow[r, "s^0\sqcup s^0"] \arrow[d, "d^{\{0,2\}}\sqcup d^{\{1,3\}}"']\arrow[dr, "\lrcorner", very near end, phantom] & \Delta^0\sqcup \Delta^0\arrow[d]\\
	\Delta^3\arrow[r] & E^1.
	\end{tikzcd}
	\end{equation*}
	We refer to $E^1$ as the \emph{walking equivalence}.
\end{definition}
\begin{remark}
	Many authors define the walking equivalence as the simplicial set that arises as the nerve of the category with two objects and a unique isomorphism between them. The simplicial set $E^1$ from Definition~\ref{def:walkingMorphismEquivalence} is different in that it is comprised of a map together with \emph{separate} left and right inverses.
\end{remark}
\begin{remark}
	\label{rem:HomotopyColimitsSimplicialSets}
	In the situation of Definition~\ref{def:walkingMorphismEquivalence}, the colimits that define $I^n$ and $E^1$ can be computed as the ordinary (i.e\ $1$-categorical) pushouts in the $1$-category of simplicial sets. Indeed, colimits in $\Simp\SS$ are computed levelwise and so are ordinary colimits in $\Simp\Set$, hence it suffice to show that for any integer $k\geq 0$ the colimits in $\SS$ of the diagrams in $\Set\into\SS$ that define $I^n_k$ and $E^1_k$ can be computed by the ordinary colimits of these diagrams in $\Set$. As the Quillen model structure on the $1$-category of simplicial sets is left proper, ordinary pushouts along monomorphisms of simplicial sets present homotopy pushouts in $\SS$. Now $I^n$ is an iterated pushout along monomorphisms in $\Simp\SS$, hence $I^n_k$ is an iterated pushout along monomorphisms in $\Set\into\SS$, hence the claim follows for $I^n$. Regarding the simplicial $\infty$-groupoid $E^1$, note that $E^1$ fits into the commutative diagram
	\begin{equation*}
	\begin{tikzcd}
	& \Delta^1\arrow[r]\arrow[d, hookrightarrow,"d^{\{1,3\}}"] \arrow[dr, phantom, "\lrcorner", very near end]& \Delta^0\arrow[d]\\
	\Delta^1\arrow[r, hookrightarrow, "d^{\{0,2\}}"] \arrow[d]\arrow[dr, phantom, "\lrcorner", very near end]& \Delta^3\arrow[r] \arrow[d]\arrow[dr, phantom, "\lrcorner", very near end]& \Delta^3\sqcup_{\Delta^1}\Delta^0 \arrow[d]\\
	\Delta^0\arrow[r] & \Delta^0\sqcup_{\Delta^1}\Delta^3\arrow[r] & E^1
	\end{tikzcd}
	\end{equation*}
	of simplicial $\infty$-groupoids. Since the pushouts in the lower left and in the upper right corner are computed by the ordinary pushouts of simplicial sets, the claim now follows from the straightforward observation that the composition $\Delta^1\into\Delta^3\to \Delta^3\sqcup_{\Delta^1}\Delta^0$ is a monomorphism.
\end{remark}

\begin{definition}
	An object $C\in \Simp\BB$ is a \emph{$\BB$-category} if
	\begin{description}
		\item[Segal conditions] $C$ is internally local with respect to the map $I^2\into\Delta^2$, and
		\item[univalence] $C$ is internally local with respect to the map $E^1\to \Delta^0$.
	\end{description}
	 We denote the full subcategory of $\Simp\BB$ that is spanned by the $\BB$-categories by $\Cat(\BB)$.
\end{definition}

\begin{lemma}
	\label{lem:internalExternalSegalConditions}
	The following sets generate the same saturated class of maps in $\Simp\BB$:
	\begin{enumerate}
	\item $S=\{I^2\otimes K\into \Delta^2\otimes K~\vert~K\in\Simp\BB\}$;
	\item $T=\{I^n\otimes A\into\Delta^n\otimes A~\vert~n\geq2,~A\in\BB\}$;
	\item $U=\{I^n\otimes K\into\Delta^n\otimes K~\vert~n\geq2,~K\in\Simp\BB\}$.
	\end{enumerate}
\end{lemma}
\begin{proof}
	Let $\CC$ be a small $\infty$-category such that $\BB$ arises as a left exact accessible localisation of $\PSh_{\SS}(\CC)$. Then $\Simp{\BB}$ is a localisation of $\PSh_{\SS}(\Delta\times\CC)$, which means that any simplicial object $K\in\Simp{\BB}$ is a small colimit of objects in $\Delta\times\CC$. We may therefore always assume without loss of generality that $K$ is of the form $\Delta^k\otimes A$ for some $A\in \BB$ and some $k\geq 0$. Moreover, note that if $i\colon K\to L$ is a map in $\Simp\SS$, a map $f\colon C\to D$ in $\Simp\BB$ is right orthogonal to $i\otimes\id_{A}$ if and only if $f_\ast\colon \Fun_{\BB}(A, C)\to\Fun_{\BB}(A,D)$ is right orthogonal to $i$. This allows us to reduce to the case where $\BB\simeq \SS$, where $A\simeq 1$ and where $K\simeq\Delta^k$ for an arbitrary $k\geq 0$.
	
	We now claim that a map in $\Simp\SS$ is contained in the saturation $\overline{T}$ of $T$ if and only if it is contained in the saturation $\overline{T}^\prime$ of the set
	\begin{equation*}
	T^\prime=\{\Lambda^n_i\into\Delta^n~\vert~ n\geq 2,0<i<n\}
	\end{equation*}
	of inner horn inclusions. In fact, by~\cite[Proposition~2.13]{joyal2008} the spine inclusions $I^n\into\Delta^n$ are contained in $\overline{T}^\prime$, and the converse is proved in~\cite[Lemma~3.5]{Joyal2007}. As a consequence, every inner anodyne map between simplicial sets is contained in $\overline{T}$. Therefore,~\cite[Proposition~2.3.2.4]{lurie2009b} implies that for any $n\geq 2$ and any $k\geq 0$, the map $I^n\times \Delta^k\into\Delta^n\times\Delta^k$ is contained in $\overline{T}$ as well. To complete the proof, we therefore only need to show that $\overline{T}$ is contained in the saturation $\overline{S}$ of $S$. To that end, observe that $\overline{S}$ contains all maps of the form $I^2\times\partial\Delta^k\into \Delta^2\times\partial \Delta^k$ and therefore all maps of the form
	\begin{equation*}
		(I^2\times\Delta^k)\sqcup_{I^2\times \partial\Delta^k} (\Delta^2\times\partial\Delta^k)\into\Delta^2\times\Delta^k
	\end{equation*}
	as well. By~\cite[Proposition~2.3.2.1]{htt}, this implies that $T^\prime$ is contained in $\overline{S}$ as well, hence the claim follows.
\end{proof}
\begin{lemma}
	\label{lem:internalExternalUnivalence}
	Let $S$ be a strongly saturated class of maps in $\BB$ that contains the maps $E^1\otimes A\to A$ and $I^n\otimes A\into\Delta^n\otimes A$ for all $A\in \BB$ and all $n\geq 0$. Then $S$ contains the maps $E^1\otimes C\to C$ for all $C\in \Simp\BB$. 
\end{lemma}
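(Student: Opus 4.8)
The plan is to remove the ambient $\infty$-topos $\BB$ and reduce the statement to a fact about simplicial $\infty$-groupoids, which one then settles using the theory of complete Segal spaces. Since $\Simp\BB$ is an $\infty$-topos, the functor $E^1\otimes-\simeq\const(E^1)\times-\colon\Simp\BB\to\Simp\BB$ admits a right adjoint, namely the powering $(-)^{E^1}$, hence preserves small colimits, and so does the identity. Writing $\BB$ as a left exact localisation of $\PSh_\SS(\CC)$ for a small $\CC$, every $C\in\Simp\BB$ is a small colimit of objects of the form $\Delta^n\otimes A$ with $A\in\BB$ and $n\geq 0$, exactly as in the proof of lemma~\ref{lem:internalExternalSegalConditions}. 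Therefore the arrow $E^1\otimes C\to C$ is, in $\Fun(\Delta^1,\Simp\BB)$, the colimit of the arrows $(E^1\times\Delta^n\to\Delta^n)\otimes A$ arising from such a presentation of $C$; since $S$ is closed under colimits in $\Fun(\Delta^1,\Simp\BB)$, it suffices to prove that $(E^1\times\Delta^n\to\Delta^n)\otimes A\in S$ for all $n\geq0$ and $A\in\BB$. Now fix $A$ and let $T$ be the smallest strongly saturated class of maps in $\Simp\SS$ that contains the spine inclusions $I^n\into\Delta^n$ and the completeness map $E^1\to\Delta^0$. The functor $-\otimes A\colon\Simp\SS\to\Simp\BB$, $K\mapsto\const(K)\times A$, preserves small colimits (as do both $\const$ and $\times A$), so it carries $T$ into the smallest strongly saturated class of maps in $\Simp\BB$ containing $\{I^n\otimes A\into\Delta^n\otimes A\}\cup\{E^1\otimes A\to A\}$, which by hypothesis is contained in $S$. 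Hence it remains to show that the projection $E^1\times\Delta^n\to\Delta^n$ lies in $T$ for every $n\geq0$.

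This last point is the crux, and the step I expect to be the main obstacle. By Rezk's work~\cite{rezk2001}, $T$ is the class of local equivalences of the localisation of $\Simp\SS$ at $\{I^n\into\Delta^n\}\cup\{E^1\to\Delta^0\}$, whose local objects are precisely the complete Segal spaces, and this localisation is presented by Rezk's complete Segal space model structure, which is cartesian. Now $E^1$, $\Delta^0$ and $\Delta^n$ are all Reedy cofibrant (being simplicial sets, viewed as discrete simplicial spaces), so $\Delta^n\times-$ is a left Quillen endofunctor and hence, by Ken Brown's lemma, preserves weak equivalences between Reedy cofibrant objects; applying it to the weak equivalence $E^1\to\Delta^0$ — which is a weak equivalence by construction — shows that $E^1\times\Delta^n\to\Delta^n$ is a weak equivalence, i.e.\ lies in $T$. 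The only external input is this product-compatibility of the completeness localisation, the exact analogue for the completeness condition of the product-compatibility of the Segal condition used via~\cite[Lemma~10.3]{rezk2001} in the proof of lemma~\ref{lem:internalExternalSegalConditions}. One could also avoid invoking it by first iterating the retraction $\Delta^{n+1}\rightleftarrows\Delta^1\times\Delta^n$ from the proof of lemma~\ref{lem:iteratedLocalisationGenerators} to exhibit $E^1\times\Delta^n\to\Delta^n$ as a retract, in $\Fun(\Delta^1,\Simp\SS)$, of $E^1\times(\Delta^1)^{\times n}\to(\Delta^1)^{\times n}$ and, using that the product spine inclusions belong to $T$, reducing to the case $n=1$; the latter can then be checked by a direct, if somewhat lengthy, simplicial computation with $E^1\times\Delta^1$.
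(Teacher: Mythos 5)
Your primary argument is correct, and it takes a genuinely different route from the paper in the crucial final step. Both proofs begin with the same colimit reduction: write $C\in\Simp\BB$ as a colimit of tensors $\Delta^n\otimes A$, use closure of $S$ under colimits in $\Fun(\Delta^1,\Simp\BB)$ to reduce to $(E^1\times\Delta^n\to\Delta^n)\otimes A$, and transfer along $-\otimes A\colon\Simp\SS\to\Simp\BB$ to work entirely in $\Simp\SS$ — your verification that $(-\otimes A)^{-1}(S)$ is strongly saturated and hence contains $T$ is the right way to make this precise. At that point the proofs diverge. The paper reduces the projection $E^1\times\Delta^n\to\Delta^n$ to $E^1\times I^n\to I^n$ using the cancellation property of the left class (since $E^1\times I^n\into E^1\times\Delta^n$ lies in $S$ by lemma~\ref{lem:internalExternalSegalConditions}), then decomposes $I^n$ into copies of $\Delta^0$ and $\Delta^1$ to reach the single case $E^1\times\Delta^1\to\Delta^1$, and cites a specific proposition of Rezk. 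You instead identify $T$ with the class of local (complete-Segal-space) equivalences via the ``strongly saturated $=$ $S$-equivalences'' identification and then invoke cartesianness of the complete Segal space model structure plus Ken Brown's lemma to conclude at once that $E^1\times\Delta^n\to\Delta^n$ is a local equivalence for every $n$. Your route avoids the detour through $I^n$ and is arguably cleaner; note that both arguments quietly rest on the ``strongly'' in the hypothesis (the full $2$-out-of-$3$ property), since otherwise the saturated class generated by the localising maps need not coincide with the class of local equivalences and the Rezk input (which produces a local equivalence, not a priori a member of a smaller left class) would not suffice. Your secondary sketch via iterated retraction and a ``direct, if somewhat lengthy'' computation is not carried out and as written the reduction from $(\Delta^1)^{\times n}$ to $\Delta^1$ is unclear, but this is moot given the primary argument is complete.
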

\begin{proof}
	Using a similar argumentation as in Lemma~\ref{lem:internalExternalSegalConditions}, we may assume without loss of generality that $\BB\simeq \SS$, that $A$ is the final simplicial $\infty$-groupoid and furthermore that $C\simeq\Delta^n$. As furthermore by Lemma~\ref{lem:internalExternalSegalConditions} the map $E^1\times I^n\into E^1\times\Delta^n$ is contained in $S$ for all $n\geq 0$, we need only show that the induced map $E^1\times I^n\to I^n$ is contained in $S$. Using that $I^n$ is a colimit of a diagram involving only $\Delta^0$ and $\Delta^1$, we may further restrict to the case $n=1$, in which case the statement was proven by Rezk in~\cite[Proposition~12.1]{rezk2001}.
\end{proof}

\begin{proposition}
	\label{prop:ConditionsInternalCategories}
	Let $C$ be a simplicial object in $\BB$. The following conditions are equivalent:
	\begin{enumerate}
		\item $C$ is a $\BB$-category;
		\item \label{cond2} for all $n\geq 2$ the maps
		\begin{equation*}
		C_n\to C_1\times_{C_0} \cdots\times_{C_0} C_1
		\end{equation*}
		as well as the map
		\begin{equation*}
		C_0\to (C_0\times C_0)\times_{C_1\times C_1} C_3
		\end{equation*}
		are equivalences.
	\end{enumerate}
\end{proposition}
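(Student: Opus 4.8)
The plan is to unwind the two internal-locality conditions that define a category in $\BB$ — internal locality with respect to the spine inclusion $I^2\into\Delta^2$ and with respect to $E^1\to\Delta^0$, both regarded as maps in $\Simp\BB$ via $\const$ — into the level-wise statements of~(2), using the powering over $\SS_{\Delta}$, Proposition~\ref{prop:simplicialPowering}, and the comparison Lemmas~\ref{lem:internalExternalSegalConditions} and~\ref{lem:internalExternalUnivalence}. The key preliminary observation is that for a map $f\colon K\to L$ in $\SS_{\Delta}$ the object $C$ is internally local with respect to $\const(f)$ precisely when $\pi_C\colon C\to 1$ is right orthogonal to $f\otimes J\colon K\otimes J\to L\otimes J$ for every $J\in\Simp\BB$; since $1$ is terminal, this just says that $\map{\Simp\BB}(L\otimes J,C)\to\map{\Simp\BB}(K\otimes J,C)$ is an equivalence for all $J$. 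I will use repeatedly that the class of maps left orthogonal to $\pi_C$ is saturated (so that right-orthogonality to a set of maps coincides with right-orthogonality to its saturation) and the identification $\map{\Simp\BB}(\Delta^k\otimes A,C)\simeq\map{\BB}(A,(C^{\Delta^k})_0)\simeq\map{\BB}(A,C_k)$ provided by the adjunction $\iota\dashv(-)_0$ together with Proposition~\ref{prop:simplicialPowering}.

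For the Segal part, Lemma~\ref{lem:internalExternalSegalConditions} and the saturation remark show that $C$ is internally local with respect to $I^2\into\Delta^2$ if and only if $\pi_C$ is right orthogonal to $I^n\otimes A\into\Delta^n\otimes A$ for all $n\geq 0$ and $A\in\BB$, i.e.\ if and only if $\map{\Simp\BB}(\Delta^n\otimes A,C)\to\map{\Simp\BB}(I^n\otimes A,C)$ is an equivalence for all such $n,A$. The domain is $\map{\BB}(A,C_n)$; to compute the codomain, write $I^n=\Delta^1\sqcup_{\Delta^0}\cdots\sqcup_{\Delta^0}\Delta^1$ as an iterated pushout in $\SS_{\Delta}$ (Remark~\ref{rem:HomotopyColimitsSimplicialSets}) and use that $\const$ preserves colimits, that $\iFun{-}{-}$ sends colimits in the first variable to limits, and that $(-)_0$ preserves limits, to obtain $(C^{I^n})_0\simeq C_1\times_{C_0}\cdots\times_{C_0}C_1$. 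By the Yoneda lemma the condition therefore holds for all $A$ exactly when the Segal map $C_n\to C_1\times_{C_0}\cdots\times_{C_0}C_1$ is an equivalence; the cases $n\leq 1$ are trivial, so this is precisely the first family of conditions in~(2).

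For univalence, assume in addition that $C$ satisfies the Segal condition. The class $S$ of maps in $\Simp\BB$ left orthogonal to $\pi_C$ is saturated, and by the Segal part (via Lemma~\ref{lem:internalExternalSegalConditions}) it contains all maps $I^n\otimes A\into\Delta^n\otimes A$; hence Lemma~\ref{lem:internalExternalUnivalence} applies and shows that $S$ contains $E^1\otimes J\to J$ for all $J\in\Simp\BB$ as soon as it contains $E^1\otimes A\to A$ for all $A\in\BB$. Thus, granting the Segal condition, internal locality with respect to $E^1\to\Delta^0$ is equivalent to $\pi_C$ being right orthogonal to $E^1\otimes A\to A$ for every $A\in\BB$, i.e.\ to $\map{\BB}(A,C_0)\simeq\map{\Simp\BB}(A,C)\to\map{\Simp\BB}(E^1\otimes A,C)$ being an equivalence for all $A$. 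Writing $E^1$ as the pushout of the span $\Delta^0\sqcup\Delta^0\leftarrow\Delta^1\sqcup\Delta^1\to\Delta^3$ from Definition~\ref{def:walkingMorphismEquivalence} and arguing as in the Segal part, the codomain is $\map{\BB}(A,(C_0\times C_0)\times_{C_1\times C_1}C_3)$, the transition map being induced by the canonical map $C_0\to(C_0\times C_0)\times_{C_1\times C_1}C_3$; by Yoneda this holds for all $A$ if and only if that map is an equivalence.

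It remains to assemble the two halves: $C$ is a category if and only if it is internally local with respect to both $I^2\into\Delta^2$ and $E^1\to\Delta^0$, which by the above is equivalent to the conjunction of the Segal-map conditions together with the equivalence $C_0\xrightarrow{\ \sim\ }(C_0\times C_0)\times_{C_1\times C_1}C_3$, i.e.\ to condition~(2). (For the implication (1)$\Rightarrow$(2) only the trivial forward direction of the univalence step is needed; Lemma~\ref{lem:internalExternalUnivalence} is used only for (2)$\Rightarrow$(1), and there its hypotheses are exactly the Segal conditions contained in~(2).) I expect the main obstacle to be the bookkeeping in these two reductions — verifying that internal locality unwinds to right-orthogonality against the tensored maps, that $S$ is saturated, and that Lemmas~\ref{lem:internalExternalSegalConditions} and~\ref{lem:internalExternalUnivalence} may be invoked with $\Simp\BB$ in place of $\BB$ where appropriate; once this is in place, the identifications of $(C^{I^n})_0$ and $(C^{E^1})_0$ via Proposition~\ref{prop:simplicialPowering} are routine colimit-to-limit manipulations.
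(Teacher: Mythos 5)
Your proof is correct and follows essentially the same route as the paper: unwind internal locality to right-orthogonality against tensored maps, apply Lemma~\ref{lem:internalExternalSegalConditions} and Lemma~\ref{lem:internalExternalUnivalence} to trade powering by arbitrary $K\in\Simp\BB$ for powering by $A\in\BB$, and then identify $(C^{I^n})_0$ and $(C^{E^1})_0$ via Proposition~\ref{prop:simplicialPowering} and Yoneda. The one place you are more explicit than the paper — noting that Lemma~\ref{lem:internalExternalUnivalence} needs the spine inclusions in $S$, so the univalence half of $(2)\Rightarrow(1)$ must be run only after the Segal half is established — is a genuine subtlety that the paper's ``similarly'' glosses over, and you handle it correctly.
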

\begin{proof}
	A simplicial object $C$ in $\BB$ satisfies the Segal condition if and only if it is local with respect to the collection of maps $I^2\otimes E\into\Delta^2\otimes E$ for any simplicial object $E\in\Simp\BB$. On the other hand, the first map of condition~(2) is an equivalence if and only if $C$ is local with respect to all maps $I^n\otimes A\into\Delta^n\otimes A$ for arbitrary $A\in\BB$. By Lemma~\ref{lem:internalExternalSegalConditions}, these two conditions are equivalent. Similarly, Lemma~\ref{lem:internalExternalUnivalence} implies that $C$ is univalent if and only if the second map of condition~(2) is an equivalence.
\end{proof}

\begin{remark}
	\label{rem:generalizedDefinitionCategory}
	Proposition~\ref{prop:ConditionsInternalCategories} allows us to make sense of the notion of a $\CC$-category for any $\infty$-category $\CC$ with finite limits. That is, we may define a $\CC$-category to be a simplicial object $C\in\Simp\CC$ that satisfies the second condition of Proposition~\ref{prop:ConditionsInternalCategories}.
\end{remark}

Since $\Cat(\BB)$ is a localisation of $\Simp{\BB}$ at a small set of morphisms, one finds:
\begin{proposition}
	\label{prop:categoriesAccessiblelocalisation}
	The inclusion $\Cat(\BB)\into\Simp\BB$ exhibits $\Cat(\BB)$ as an accessible localisation of $\Simp\BB$. In particular, the $\infty$-category $\Cat(\BB)$ is presentable.\qed
\end{proposition}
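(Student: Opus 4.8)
The plan is to exhibit $\Cat(\BB)$ as the full subcategory of $\Simp\BB$ spanned by the objects that are local --- in the ordinary sense --- with respect to a \emph{small} set of maps, and then to invoke the standard fact that such localisations of presentable $\infty$-categories are accessible. The ambient category $\Simp\BB$ is an $\infty$-topos, hence presentable and cartesian closed, so the factorisation-system machinery of Section~\ref{sec:factorisationSystems} is available.

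First I would set $S:=\{I^2\into\Delta^2,\ E^1\to\Delta^0\}$ and apply Proposition~\ref{prop:factorisationSystemInternallyGenerated} to obtain a factorisation system $(\LL,\RR)$ on $\Simp\BB$ with $\RR=S^{\ibot}$. By the discussion following Proposition~\ref{prop:propertiesFactorisationSystems}, applied with the terminal object $1\in\Simp\BB$, the inclusion $\Over{\RR}{1}\into\Over{\Simp\BB}{1}\simeq\Simp\BB$ is reflective; and $\Over{\RR}{1}$ is precisely the full subcategory of those $C$ whose terminal map lies in $S^{\ibot}$, i.e.\ those $C$ that are internally local with respect to both $I^2\into\Delta^2$ and $E^1\to\Delta^0$. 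In other words $\Over{\RR}{1}=\Cat(\BB)$, which already exhibits $\Cat(\BB)$ as a reflective subcategory of $\Simp\BB$.

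It remains to upgrade this to an \emph{accessible} localisation. By Proposition~\ref{prop:saturatedClosureInternal} the maps inverted by the reflector saturate the class $\{C\times f\mid C\in\Simp\BB,\ f\in S\}$, which is a priori only a proper class, so to apply \cite[Proposition~5.5.4.15]{htt} I first have to replace it by a set. For this I would note that an object $C$ is internally local with respect to a map $f\colon A\to B$ exactly when $f^\ast\colon\iFun{B}{C}\to\iFun{A}{C}$ is an equivalence in $\Simp\BB$, and that --- after fixing a small set $\GG$ of generators of the presentable $\infty$-category $\Simp\BB$ --- this equivalence may be tested by mapping out of the objects of $\GG$; via the tensor--hom adjunction this says exactly that $C$ is local, in the ordinary sense, with respect to the small set $\{G\times f\mid G\in\GG,\ f\in S\}$. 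Hence $\Cat(\BB)$ is the localisation of the presentable $\infty$-category $\Simp\BB$ at a small set of maps, and \cite[Proposition~5.5.4.15]{htt} yields that the inclusion $\Cat(\BB)\into\Simp\BB$ is an accessible localisation and that $\Cat(\BB)$ is presentable.

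The only genuine obstacle is this last reduction: the definition of $\Cat(\BB)$ is phrased via \emph{internal} orthogonality, which involves the proper class of maps $\{C\times f\}$, whereas the accessibility statement needs an honest set; the passage through the tensor--hom adjunction and a generating set of $\Simp\BB$ is what bridges the two, and the rest is a formal unwinding of the results of Section~\ref{sec:factorisationSystems}.
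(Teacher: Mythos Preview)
Your proposal is correct and follows essentially the same approach as the paper: the paper gives no proof at all (the proposition carries a \qed), simply prefacing it with ``Since $\Cat(\BB)$ is a localisation of $\Simp{\BB}$ at a small set of morphisms, one finds:''. What you have written is a careful unpacking of precisely this sentence --- reducing the internal locality condition to ordinary locality against the small set $\{G\times f\mid G\in\GG,\ f\in S\}$ via the tensor--hom adjunction --- so you have supplied the details the paper omits.
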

\begin{remark}
	\label{rem:limitsColimitsofCategories}
	As $\Cat(\BB)$ is an accessible localisation of $\Simp\BB$, the inclusion $\Cat(\BB)\into\Simp\BB$ creates small limits. On the other hand, colimits of small diagrams are usually not created (or even preserved) by the inclusion, as these can be computed by applying the localisation functor $\Simp\BB\to\Cat(\BB)$ to the colimits of the underlying diagrams of simplicial objects in $\BB$. \emph{Filtered} colimits, on the other hand, are created by the inclusion $\Cat(\BB)\into\Simp\BB$. In fact, as such colimits commute with finite limits, it is immediate from Proposition~\ref{prop:ConditionsInternalCategories} that the colimit in $\Simp\BB$ of a small filtered diagram of $\BB$-categories is contained in $\Cat(\BB)$ and is therefore the colimit of this diagram in $\Cat(\BB)$. In particular, as colimits are universal in $\Simp\BB$, this observation implies that \emph{filtered} colimits are universal in $\Cat(\BB)$.
\end{remark}

As $\BB$-categories are \emph{internally} local with respect to the maps $I^2\into\Delta^2$ and $E^1\to 1$, the $\infty$-category $\Cat(\BB)$ is cartesian closed. More precisely, the construction of $\Cat(\BB)$ implies:
\begin{proposition}
	\label{prop:categoriesExponentialIdeal}
	The full subcategory $\Cat(\BB)\into \BB_{\Delta}$ is an \emph{exponential ideal} and therefore in particular a cartesian closed $\infty$-category. In other words, if $\I{C}$ is a $\BB$-category and $D$ is a simplicial object in $\BB$, the internal mapping object $\iFun{D}{\I{C}}$ is a $\BB$-category.\qed
\end{proposition}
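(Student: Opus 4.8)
The plan is to prove the exponential ideal property directly from the definition of $\Cat(\BB)$, and then to read off cartesian closure formally. Recall that $\I{C}\in\Simp\BB$ being a category means precisely that $\I{C}$ is internally local in $\Simp\BB$ with respect to the two maps $I^2\into\Delta^2$ and $E^1\to\Delta^0$ of simplicial $\infty$-groupoids. Unwinding the definition of internal locality, together with the fact that $\iFun{-}{1}\simeq 1$, this says exactly that for \emph{every} $c\in\Simp\BB$ the maps $I^2\otimes c\into\Delta^2\otimes c$ and $E^1\otimes c\to c$ are orthogonal to $\pi_{\I{C}}\colon\I{C}\to 1$, i.e.\ become equivalences after applying $\map{\Simp\BB}(-,\I{C})$. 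So the task reduces to showing that if $\I{C}$ satisfies this condition and $D\in\Simp\BB$ is arbitrary, then $\iFun{D}{\I{C}}$ does too. (In fact the same argument shows that for any class $S$ of maps the full subcategory of objects internally local with respect to $S$ is an exponential ideal; applied to $S=\{s^0\colon\Delta^1\to\Delta^0\}$ this re-proves that $\Grpd(\BB)$ is an exponential ideal.)

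The key step is a currying identity. Using the tensor–power adjunction $\map{\Simp\BB}(K\otimes(-),-)\simeq\map{\Simp\BB}(-,\iFun{\const K}{-})$ — equivalently, $K\otimes c=\const(K)\times c$ and the cartesian closure of $\Simp\BB$ — one obtains, for every $K\in\SS_\Delta$ and $c\in\Simp\BB$, a natural equivalence
\[
\map{\Simp\BB}(K\otimes c,\iFun{D}{\I{C}})\simeq\map{\Simp\BB}(\const(K)\times c\times D,\I{C})\simeq\map{\Simp\BB}\bigl(K\otimes(c\times D),\I{C}\bigr).
\]
Taking $K=\Delta^2$ and $K=I^2$ and using naturality in $K$, the map $\map{\Simp\BB}(\Delta^2\otimes c,\iFun{D}{\I{C}})\to\map{\Simp\BB}(I^2\otimes c,\iFun{D}{\I{C}})$ is identified with $\map{\Simp\BB}(\Delta^2\otimes(c\times D),\I{C})\to\map{\Simp\BB}(I^2\otimes(c\times D),\I{C})$, which is an equivalence since $\I{C}$ is internally local with respect to $I^2\into\Delta^2$ (apply the condition of the previous paragraph with the object $c\times D$ in place of $c$). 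The identical argument with $E^1\to\Delta^0$ replacing $I^2\into\Delta^2$ shows that $\iFun{D}{\I{C}}$ is univalent. Hence $\iFun{D}{\I{C}}\in\Cat(\BB)$, which is the asserted exponential ideal property.

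For cartesian closure: by proposition~\ref{prop:categoriesAccessiblelocalisation} and remark~\ref{rem:limitsColimitsofCategories} the inclusion $\Cat(\BB)\into\Simp\BB$ creates small limits, so finite products of categories in $\BB$ computed in $\Simp\BB$ again lie in $\Cat(\BB)$; thus $\Cat(\BB)$ has finite products. Given $\I{C},\I{D}\in\Cat(\BB)$, the exponential ideal property gives $\iFun{\I{D}}{\I{C}}\in\Cat(\BB)$, and the chain of equivalences
\[
\map{\Cat(\BB)}(\I{E}\times\I{D},\I{C})\simeq\map{\Simp\BB}(\I{E}\times\I{D},\I{C})\simeq\map{\Simp\BB}(\I{E},\iFun{\I{D}}{\I{C}})\simeq\map{\Cat(\BB)}(\I{E},\iFun{\I{D}}{\I{C}}),
\]
natural in $\I{E}\in\Cat(\BB)$, exhibits $\iFun{\I{D}}{-}$ as a right adjoint to $-\times\I{D}$ on $\Cat(\BB)$. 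I do not expect a genuine obstacle here: the argument is formal, and the only points requiring care are the bookkeeping in the tensor–power adjunction (so that the currying equivalence is applied in the correct variable) and the observation that internal locality, once curried, is stable under pulling back the test object along $c\mapsto c\times D$.
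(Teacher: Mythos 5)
Your argument is correct, and it is exactly the unwinding of the one-line justification the paper gives (the \qed reflects that the paper treats this as an immediate consequence of the general fact that the internally local objects with respect to any family of maps form an exponential ideal). The currying identity you spell out, reducing locality of $\iFun{D}{\I{C}}$ at a test object $c$ to locality of $\I{C}$ at the test object $c\times D$, is precisely the content behind that general fact.
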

\begin{corollary}
	\label{cor:localisationCategoriesFiniteProducts}
	The localisation functor $\Simp\BB\to \Cat(\BB)$ preserves finite products.\qed
\end{corollary}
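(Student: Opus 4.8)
The plan is to deduce this from the content of Proposition~\ref{prop:categoriesExponentialIdeal}, namely that $\Cat(\BB)\into\Simp\BB$ is a reflective exponential ideal, together with the general principle that the reflector of a reflective exponential ideal preserves finite products. Write $L\colon\Simp\BB\to\Cat(\BB)$ for the localisation functor and $i$ for the (fully faithful) inclusion, so that $i$ is right adjoint to $L$ and in particular preserves all limits. The two things to check are that $L$ preserves the terminal object and that $L$ preserves binary products.

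First, $L$ preserves the terminal object: since $i$ is a right adjoint it preserves limits, so the terminal object of $\Simp\BB$ lies in $\Cat(\BB)$ and is therefore local, whence $L(1)\simeq 1$. Next, fix simplicial objects $C,D\in\Simp\BB$. The inclusion $\Cat(\BB)\into\Simp\BB$ creates limits (Remark~\ref{rem:limitsColimitsofCategories}), so the product $L(C)\times L(D)$ formed in $\Simp\BB$ again lies in $\Cat(\BB)$ and coincides with their product there; consequently the two units $C\to iL(C)$ and $D\to iL(D)$ assemble into a map $C\times D\to L(C)\times L(D)$ which, by the universal property of $L$, factors through a canonical comparison map $\theta\colon L(C\times D)\to L(C)\times L(D)$. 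It remains to show $\theta$ is an equivalence, and by the Yoneda lemma it suffices to prove that $\theta^{\ast}\colon\map{\Simp\BB}(L(C)\times L(D),E)\to\map{\Simp\BB}(L(C\times D),E)$ is an equivalence for every $E\in\Cat(\BB)$.

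For this one uses the exponential ideal property twice. Since $E$ is a category in $\BB$, Proposition~\ref{prop:categoriesExponentialIdeal} guarantees that $\iFun{L(D)}{E}$ and $\iFun{C}{E}$ are again categories in $\BB$, hence local, so that
\begin{align*}
	\map{\Simp\BB}(L(C)\times L(D), E)&\simeq \map{\Simp\BB}(L(C), \iFun{L(D)}{E})\simeq\map{\Simp\BB}(C, \iFun{L(D)}{E})\\
	&\simeq \map{\Simp\BB}(L(D), \iFun{C}{E})\simeq \map{\Simp\BB}(D, \iFun{C}{E})\\
	&\simeq\map{\Simp\BB}(C\times D, E)\simeq \map{\Simp\BB}(L(C\times D), E),
\end{align*}
where the tensor--hom adjunction of the cartesian closed $\infty$-category $\Simp\BB$ together with the symmetry of the product accounts for the equivalences that merely rearrange factors, and the universal property of $L$ accounts for those along a unit map (using locality of $\iFun{L(D)}{E}$, $\iFun{C}{E}$ and $E$ respectively).

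The only genuine bookkeeping is to confirm that this composite equivalence is precomposition by $\theta$. Unwinding the construction of $\theta$, the map $\theta^{\ast}$ corresponds, under the natural equivalence $\map{\Simp\BB}(L(X),E)\simeq\map{\Simp\BB}(X,E)$ valid for $E\in\Cat(\BB)$, to restriction along the canonical map $C\times D\to L(C)\times L(D)$ built from the two units, and tracing through the displayed chain shows that it composes to exactly this restriction; hence $\theta^{\ast}$, and therefore $\theta$, is an equivalence. I expect this naturality verification to be the main obstacle, though it is purely formal; the rest is a routine manipulation with the universal properties at hand.
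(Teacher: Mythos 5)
Your proof is correct and takes exactly the approach the paper intends: the corollary is marked \qed in the text precisely because it is the standard consequence of $\Cat(\BB)$ being an exponential ideal in $\Simp\BB$ (Proposition~\ref{prop:categoriesExponentialIdeal}), and your argument is the usual derivation of that fact, written out in detail.
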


Recall that the inclusion $\iota\colon \BB\into\BB_{\Delta}$ admits a right adjoint $(-)_0$ and a left adjoint $\colim_{\Delta^{\op}}(-)$. It is moreover immediate from Proposition~\ref{prop:ConditionsInternalCategories} that $\iota$ factors through the inclusion $\Cat(\BB)\into\BB_{\Delta}$, i.e.\ that for any object $A\in \BB$ the associated $\BB$-groupoid defines a $\BB$-category. Therefore, we obtain functors
\begin{equation*}
\begin{tikzcd}[column sep={7em,between origins}]
\Grpd(\BB)\arrow[r, hookrightarrow, ""{name=M}] \arrow[from=r, bend right, shift right, "(-)^\core"'{name=R}, start anchor=north west, end anchor=north east]\arrow[from=r, bend left, shift left, "(-)^\gp"{name=L}, start anchor=south west, end anchor=south east] 
\arrow[from=M, to=R, "\top" description, phantom, near start]\arrow[from=L, to=M, "\top" description, phantom]& \Cat(\BB).
\end{tikzcd}
\end{equation*}
The functor $(-)^{\simeq}$ is referred to as the \emph{core $\BB$-groupoid} functor, and the functor $(-)^\gp$ is referred to as the \emph{groupoidification} functor. If $\I{C}$ is a $\BB$-category, the $\BB$-groupoid $\I{C}^{\core}$ is to be thought of as the maximal $\BB$-groupoid that is contained in $\I{C}$ (i.e.\ the subcategory spanned by all objects and equivalences in $\I{C}$), whereas the $\BB$-groupoid $\I{C}^{\gp}$ should be regarded as the result of formally inverting all morphisms in $\I{C}$.
\begin{proposition}
	The groupoidification functor $(-)^{\gp}\colon \Cat(\BB)\to\Grpd(\BB)$ commutes with small colimits and finite products.
\end{proposition}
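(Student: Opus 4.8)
The plan is to split the statement into the two assertions and handle them by adjunction-theoretic arguments. For the preservation of small colimits: since $(-)^{\gp}\colon\Cat(\BB)\to\Grpd(\BB)$ is by construction the left adjoint of the inclusion $\Grpd(\BB)\into\Cat(\BB)$, it automatically commutes with all colimits that exist in $\Cat(\BB)$, and by proposition~\ref{prop:categoriesAccessiblelocalisation} the $\infty$-category $\Cat(\BB)$ is presentable, hence cocomplete. So this half requires only the observation that a left adjoint preserves colimits; there is essentially nothing to prove beyond citing the adjunction displayed just above the statement.

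The substantive content is that $(-)^{\gp}$ preserves \emph{finite products}. First I would unwind the definition: $(-)^{\gp}$ is the composite of the localisation functor $L\colon\Simp\BB\to\Cat(\BB)$ with the functor $\colim_{\Delta^{\op}}(-)\colon\Simp\BB\to\BB$ followed by the inclusion $\BB\simeq\Grpd(\BB)$ — more precisely, for $\I{C}\in\Cat(\BB)$ one has $\I{C}^{\gp}\simeq\iota\bigl(\colim_{\Delta^{\op}}\I{C}\bigr)$, since the left adjoint of the inclusion $\BB\into\Simp\BB$ is $\colim_{\Delta^{\op}}$ and the left adjoint of $\Grpd(\BB)\into\Cat(\BB)$ must be compatible with these. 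The key input is the fact already used in the proof of proposition~\ref{prop:characterizationGroupoids}: the category $\Delta^{\op}$ is sifted, so the colimit functor $\colim_{\Delta^{\op}}\colon\Simp\BB\to\BB$ commutes with finite products. Given two categories $\I{C},\I{D}\in\Cat(\BB)$, their product in $\Cat(\BB)$ agrees with their product in $\Simp\BB$ (the inclusion creates limits by remark~\ref{rem:limitsColimitsofCategories}), which is computed levelwise. Therefore
\begin{equation*}
(\I{C}\times\I{D})^{\gp}\simeq\iota\Bigl(\colim_{\Delta^{\op}}(\I{C}\times\I{D})\Bigr)\simeq\iota\Bigl(\colim_{\Delta^{\op}}\I{C}\times\colim_{\Delta^{\op}}\I{D}\Bigr)\simeq\iota\Bigl(\colim_{\Delta^{\op}}\I{C}\Bigr)\times\iota\Bigl(\colim_{\Delta^{\op}}\I{D}\Bigr)\simeq\I{C}^{\gp}\times\I{D}^{\gp},
\end{equation*}
where the last equivalence uses that the diagonal $\iota\colon\BB\into\Simp\BB$ preserves products (products in $\Simp\BB$ are levelwise) and that it factors through $\Grpd(\BB)$ with the product in $\Grpd(\BB)$ inherited from $\Simp\BB$. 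One also checks the nullary case: $\colim_{\Delta^{\op}}$ sends the terminal simplicial object to the terminal object of $\BB$ since $\Delta^{\op}$ is weakly contractible (sifted categories are connected), so $(-)^{\gp}$ preserves the terminal object.

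The main obstacle is purely bookkeeping: one must be careful that the left adjoint $(-)^{\gp}$ of $\Grpd(\BB)\into\Cat(\BB)$ really is computed by $\iota\circ\colim_{\Delta^{\op}}$ rather than by first applying the localisation $L\colon\Simp\BB\to\Cat(\BB)$ and then some other functor. This is resolved by uniqueness of adjoints: the composite $\Simp\BB\xrightarrow{\colim_{\Delta^{\op}}}\BB\xrightarrow{\iota}\Grpd(\BB)$ is left adjoint to $\Grpd(\BB)\into\Cat(\BB)\into\Simp\BB$, and since $\Cat(\BB)\into\Simp\BB$ and $\Grpd(\BB)\into\Cat(\BB)$ are both fully faithful, restricting this composite adjunction along $\Cat(\BB)\into\Simp\BB$ exhibits $\iota\circ\colim_{\Delta^{\op}}|_{\Cat(\BB)}$ as the desired left adjoint. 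Once this identification is in place, the product-preservation is exactly the siftedness of $\Delta^{\op}$, and no further work is needed.
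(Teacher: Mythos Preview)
Your proposal is correct and follows essentially the same approach as the paper: left adjointness handles colimits, and siftedness of $\Delta^{\op}$ handles binary products, with the terminal object treated separately. The paper's proof is terser because the identification of $(-)^{\gp}$ with the restriction of $\colim_{\Delta^{\op}}$ is already implicit in the preceding discussion that defines $(-)^{\gp}$, so the extra bookkeeping you supply is not strictly needed.
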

\begin{proof}
	As the groupoidification functor is a left adjoint, it commutes with small colimits.
	Moreover, since the final object $\I{1}\in\Cat(\BB)$ is given by the constant simplicial object on $1\in\BB$ and since groupoidification is a localisation functor, there is an equivalence $\I{1}^{\gp}\simeq 1$.  It therefore suffices to consider the case of binary products, which follows from $\Delta^{\op}$ being a sifted $\infty$-category. 
\end{proof}
\begin{proposition}
	The core $\BB$-groupoid functor $(-)^{\core}\colon\Cat(\BB)\to\Grpd(\BB)$ commutes with small filtered colimits and small limits.
\end{proposition}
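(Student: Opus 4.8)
The strategy is to give an explicit formula for the core groupoid functor and then deduce both claims from elementary properties of the functor $\infty$-category $\Simp\BB$. Recall that $(-)^{\core}$ is by definition the right adjoint of the inclusion $\Grpd(\BB)\into\Cat(\BB)$, and that by proposition~\ref{prop:characterizationGroupoids} the diagonal functor $\iota\colon\BB\into\Simp\BB$ restricts to an equivalence $\BB\simeq\Grpd(\BB)$ under which this inclusion is identified with the corestriction $\iota\colon\BB\into\Cat(\BB)$. Since $\iota\colon\BB\into\Simp\BB$ admits $(-)_0$ as a right adjoint and since $\Cat(\BB)\into\Simp\BB$ is fully faithful, for $A\in\BB$ and $\I{C}\in\Cat(\BB)$ one has natural equivalences $\map{\Cat(\BB)}(\iota A,\I{C})\simeq\map{\Simp\BB}(\iota A,\I{C})\simeq\map{\BB}(A,\I{C}_0)$. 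Hence, by uniqueness of adjoints, under the equivalence $\BB\simeq\Grpd(\BB)$ the core groupoid functor is computed by the composite of the inclusion $\Cat(\BB)\into\Simp\BB$ with the evaluation functor $(-)_0\colon\Simp\BB\to\BB$, that is, by $\I{C}\mapsto\I{C}_0$.

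Granting this identification, the proposition reduces to two observations. First, the evaluation functor $(-)_0\colon\Simp\BB\to\BB$ preserves all small limits and all small colimits, since both are computed pointwise in $\Simp\BB=\Fun(\Delta^{\op},\BB)$. Second, by remark~\ref{rem:limitsColimitsofCategories} the inclusion $\Cat(\BB)\into\Simp\BB$ creates small limits and small filtered colimits. Composing, the functor $\I{C}\mapsto\I{C}_0$ — and therefore $(-)^{\core}$, as the equivalence $\BB\simeq\Grpd(\BB)$ preserves all limits and colimits — preserves small limits and small filtered colimits, which is the assertion. (The statement about limits can alternatively be read off directly from the fact that $(-)^{\core}$ is a right adjoint.)

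The only genuinely substantive point is the identification of $(-)^{\core}$ with $\I{C}\mapsto\I{C}_0$; everything after that is bookkeeping with results already established. I expect the main obstacle to be a matter of care rather than difficulty: one must make sure that restricting the right adjoint $(-)_0$ of $\iota\colon\BB\into\Simp\BB$ along the fully faithful inclusion $\Cat(\BB)\into\Simp\BB$ indeed yields a right adjoint of the corestricted diagonal $\iota\colon\BB\into\Cat(\BB)$ — which is exactly the chain of natural equivalences displayed above — and that the identification of $\Grpd(\BB)$ with the essential image of $\iota$ coming from proposition~\ref{prop:characterizationGroupoids} is compatible with these adjunctions. Once this is in place, nothing further is required.
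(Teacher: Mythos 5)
Your proof is correct and takes essentially the same approach as the paper's, which likewise appeals to right adjointness for the limit claim and reduces the filtered-colimit claim via remark~\ref{rem:limitsColimitsofCategories} to the fact that $(-)_0\colon\Simp\BB\to\BB$ preserves such colimits. The only difference is that you spell out the identification of $(-)^{\core}$ with the restriction of $(-)_0$ to $\Cat(\BB)$ under the equivalence $\Grpd(\BB)\simeq\BB$, a point the paper's proof leaves implicit.
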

\begin{proof}
	As the core $\BB$-groupoid functor is a right adjoint, it commutes with small limits. By~\ref{rem:limitsColimitsofCategories}, small filtered colimits between $\BB$-categories are created by the inclusion $\Cat(\BB)\into\Simp\BB$, hence it suffices to show that $(-)_0\colon\Simp\BB\to\BB$ commutes with such  colimits, which is immediate.
\end{proof}

\begin{example}
	\label{ex:categoriesInS}
	Proposition~\ref{prop:ConditionsInternalCategories} shows that an  $\SS$-category is precisely a \emph{complete Segal space} as developed by Rezk~\cite{rezk2001}. By a theorem of Joyal and Tierney~\cite{Joyal2007} the $\infty$-category of complete Segal spaces is a model for the $\infty$-category of $\infty$-categories $\CatS$, i.e.\ the left Kan extension of the inclusion $\Delta\into\CatS$ determines an equivalence $\Cat(\SS)\simeq\CatS$.
\end{example}

\subsection{Functoriality and base change}
\label{sec:functoriality}
In this section we discuss how to change the base $\infty$-topos for internal higher category theory. What makes this possible is the following general lemma:
\begin{lemma}
	\label{lem:functorialityInternalCategories}
	Let $F\colon\CC\to\DD$ be a functor between $\infty$-categories that admit finite limits such that $F$ preserves pullbacks and such that for any map $f\colon c\to c^\prime$ in $\CC$ the commutative square
	\begin{equation*}
	\begin{tikzcd}[column sep=large]
	F(c\times c)\arrow[r, "F(f\times f)"]\arrow[d] & F(c^\prime\times c^\prime)\arrow[d]\\
	F(c)\times F(c)\arrow[r, "F(f)\times F(f)"] & F(c^\prime)\times F(c^\prime)
	\end{tikzcd}
	\end{equation*}
	is cartesian. Then the induced functor $\Simp\CC\to \Simp\DD$ that is given by postcomposition with $F$ sends $\CC$-categories to $\DD$-categories and therefore restricts to a functor
	\begin{equation*}
	F\colon \Cat(\CC)\to\Cat(\DD).
	\end{equation*} 
	In particular, any left exact functor $F$ between finitely complete $\infty$-categories induces a functor on the level of categories.
\end{lemma}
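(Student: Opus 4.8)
The plan is to check directly, using the intrinsic characterisation of categories from Proposition~\ref{prop:ConditionsInternalCategories} (valid in any $\infty$-category with finite limits by Remark~\ref{rem:generalizedDefinitionCategory}), that postcomposition with $F$ carries a category in $\CC$ to a category in $\DD$; the restriction to a functor $\Cat(\CC)\to\Cat(\DD)$ is then automatic. So fix a category $C\in\Simp\CC$ and write $FC\in\Simp\DD$ for the simplicial object $F\circ C$. Both the Segal maps and the completeness map of a simplicial object are induced by its face and degeneracy maps, and postcomposition with $F$ commutes with those; the only real task is to show that $F$ sends the targets of the comparison maps of $C$ to the targets of the corresponding comparison maps of $FC$, hence the comparison maps themselves to those of $FC$.

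For the Segal condition this is straightforward: the iterated fibre product $C_1\times_{C_0}\cdots\times_{C_0}C_1$ is built from $C_0$ and $C_1$ by finitely many successive pullbacks, each of which $F$ preserves by hypothesis, so $F$ carries it to $(FC)_1\times_{(FC)_0}\cdots\times_{(FC)_0}(FC)_1$. Since $C$ is a category the map $C_n\to C_1\times_{C_0}\cdots\times_{C_0}C_1$ is an equivalence, and $F$ preserves equivalences, so the Segal map of $FC$ is an equivalence.

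The completeness condition is the only place where the extra hypothesis enters, the point being that $F$ is not assumed to preserve the terminal object and hence may fail to preserve binary products. The target of the completeness map of $C$ is $(C_0\times C_0)\times_{C_1\times C_1}C_3$, formed along $s_0\times s_0\colon C_0\times C_0\to C_1\times C_1$ and $(d_{\{0,2\}},d_{\{1,3\}})\colon C_3\to C_1\times C_1$. Since $F$ preserves pullbacks its image is $F(C_0\times C_0)\times_{F(C_1\times C_1)}F(C_3)$. Applying the hypothesis to the map $f=s_0\colon C_0\to C_1$ exhibits $F(C_0\times C_0)$ as $(F(C_0)\times F(C_0))\times_{F(C_1)\times F(C_1)}F(C_1\times C_1)$; pasting that cartesian square beneath the pullback square defining $F(C_0\times C_0)\times_{F(C_1\times C_1)}F(C_3)$ and invoking the pasting lemma for pullbacks identifies the latter with $(F(C_0)\times F(C_0))\times_{F(C_1)\times F(C_1)}F(C_3)$, the two legs over $F(C_1)\times F(C_1)$ being $F(s_0)\times F(s_0)$ and $(F(d_{\{0,2\}}),F(d_{\{1,3\}}))$, that is, exactly the target of the completeness map of $FC$. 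As $F$ sends the completeness equivalence of $C$ to an equivalence, and this is the completeness map of $FC$ under the above identification, $FC$ is complete, hence a category in $\DD$.

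The one genuinely substantive step is the pasting argument in the completeness case: this is precisely what the hypothesis on $F(f\times f)$ is tailored to supply, and preservation of pullbacks alone would not suffice. The rest, that the comparison maps really are carried to the comparison maps of $FC$, is routine naturality bookkeeping. Finally, a left exact functor preserves finite products, so for such an $F$ the vertical maps in the hypothesised square are equivalences and the square is trivially cartesian; thus the last assertion is a special case.
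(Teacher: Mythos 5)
Your proof is correct and follows essentially the same route as the paper's: preservation of pullbacks handles the Segal conditions, and the extra hypothesis applied to $f=s_0\colon C_0\to C_1$, pasted with the $F$-image of the univalence pullback, handles completeness. The paper simply states the resulting cartesian square directly and leaves the pasting implicit, whereas you spell it out.
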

\begin{proof}
	Since $F$ preserves pullbacks the functor $\Simp\CC\to \Simp\DD$ that is given by postcomposition with $F$ preserves the Segal conditions, and by assumption on $F$ the commutative square
	\begin{equation*}
	\begin{tikzcd}
	F(\I{C}_0)\arrow[r]\arrow[d] & F(\I{C}_3)\arrow[d, "{(d_{\{0,2\}},d_{\{1,3\}})}"]\\
	F(\I{C}_0)\times F(\I{C}_0)\arrow[r, "{(s_0,s_0)}"] & F(\I{C}_1)\times F(\I{C}_1)
	\end{tikzcd}
	\end{equation*}
	is cartesian for every $\CC$-category $\I{C}$, hence the claim follows.
\end{proof}
Let $\ZZ$ denote the subcategory of $\CatSSS$ spanned by the $\infty$-categories that admit finite limits, together with those functors that satisfy the conditions of Lemma~\ref{lem:functorialityInternalCategories}. Let $\int \Simp{(-)}\to \ZZ$ be the cocartesian fibration that classifies the functor $\Simp{(-)}\colon \ZZ\to \CatSSS$. Lemma~\ref{lem:functorialityInternalCategories} then implies that the full subcategory of $\int \Simp{(-)}$ that is spanned by the pairs $(\CC, \I{C})$ with $\CC\in \ZZ$ and $\I{C}$ a $\CC$-category is stable under taking cocartesian arrows and therefore defines a cocartesian subfibration of $\int \Simp{(-)}$ over $\ZZ$. Hence one obtains a functor
\begin{equation*}
\Cat \colon \ZZ\to \CatSSS.
\end{equation*}
Since both the forgetful functor $\RTop\to\CatSSS$ and the universe enlargement functor $\BB\mapsto \BBB$ for $\infty$-topoi factor through $\ZZ$ and since moreover the inclusion $\BB\into\BBB$ commutes with small limits, postcomposition with the functor $\Cat$ gives rise to functors $\BB\mapsto \Cat(\BB)$ as well as $\BB\mapsto \Cat(\BBB)$ together with a natural transformation
\begin{equation*}
\begin{tikzcd}[column sep={3cm,between origins}]
\RTop\arrow[r, bend left,"\BB\mapsto \Cat(\BB)"{name=U}, start anchor=east, end anchor=west, shift left=.5em]\arrow[r, bend right, "\BB\mapsto \Cat(\BBB)"{name=L, below}, start anchor=east, end anchor=west, shift right=.5em]\arrow[from=U, to=L, Rightarrow, shorten=3mm] & \CatSSS
\end{tikzcd}
\end{equation*}
that is given by the inclusion $\Cat(\BB)\into\Cat(\BBB)$. Analogously, one obtains two functors $\LTop\rightrightarrows\CatSSS$ together with a natural transformation
\begin{equation*}
\begin{tikzcd}[column sep={3cm,between origins}]
\LTop\arrow[r, bend left,"\BB\mapsto \Cat(\BB)"{name=U}, start anchor=east, end anchor=west, shift left=.5em]\arrow[r, bend right, "\BB\mapsto \Cat(\BBB)"{name=L, below}, start anchor=east, end anchor=west, shift right=.5em]\arrow[from=U, to=L, Rightarrow, shorten=3mm] & \CatSSS.
\end{tikzcd}
\end{equation*}
As the inclusion $\CC\into \Simp{\CC}$ and its right adjoint $(-)_0\colon \Simp{\CC}\to\CC$ are clearly functorial in $\CC\in\ZZ$, we may now conclude:
\begin{proposition}
	\label{prop:universeEnlargementCategories}
	There are two commutative squares
	\begin{equation*}
	\begin{tikzcd}[column sep=large]
	\Grpd(\BB)\arrow[r, hookrightarrow]\arrow[d, hookrightarrow] & \Grpd(\BBB)\arrow[d, hookrightarrow]\\
	\Cat(\BB)\arrow[r, hookrightarrow]\arrow[u, "(-)^{\core}"', bend right, start anchor=north, end anchor=south, shift right=1em] & \Cat(\BBB)\arrow[u, "(-)^{\core}"', bend right, start anchor=north, end anchor=south, shift right=1em]
	\end{tikzcd}
	\end{equation*}
	that are functorial in $\BB$ both with respect to maps in $\RTop$ and maps in $\LTop$.\qed
\end{proposition}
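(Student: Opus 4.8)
The plan is to express each arrow appearing in the two squares in terms of the diagonal inclusion $\iota$ and the evaluation-at-$0$ functor $(-)_0$, both of which are already functorial in $\CC\in\ZZ$ by the discussion preceding the proposition, and then to obtain the asserted squares by precomposing the resulting package of functors and natural transformations along the two functors $\RTop\to\ZZ$ (resp.\ $\LTop\to\ZZ$) that send $\BB$ to $\BB$ and to $\BBB$.

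First I would record that the diagonal $\iota_\CC\colon\CC\into\Simp\CC$ assembles into a natural transformation $\iota\colon\id_{\ZZ}\Rightarrow\Simp{(-)}$ of functors $\ZZ\to\Cat(\SSSS)$ with fully faithful components, and that by proposition~\ref{prop:ConditionsInternalCategories} each $\iota_\CC$ factors through $\Cat(\CC)\into\Simp\CC$, indeed through $\Grpd(\CC)\into\Cat(\CC)$. Restricting the cocartesian fibration $\int\Simp{(-)}\to\ZZ$ to the full subcategory on pairs $(\CC,\iota_\CC(c))$ — which is stable under cocartesian edges, since $(\Simp f)(\iota_\CC(c))\simeq\iota_{\CC'}(f(c))$ for $f\colon\CC\to\CC'$ — therefore produces a functor $\Grpd\colon\ZZ\to\Cat(\SSSS)$ (equivalent via $\iota$ to $\id_{\ZZ}$, with inverse $(-)_0$) together with natural fully faithful transformations $\Grpd\Rightarrow\Cat$ and $\Grpd\Rightarrow\Simp{(-)}$.

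The one step that requires an idea is the identification of the core groupoid functor. For $\CC\in\ZZ$, $A\in\CC$ and $\I{C}\in\Cat(\CC)\subset\Simp\CC$, the adjunction $\iota_\CC\dashv(-)_0$ together with full faithfulness of $\Cat(\CC)\into\Simp\CC$ and of $\Grpd(\CC)\into\Simp\CC$ gives
\begin{align*}
\map{\Cat(\CC)}(\iota_\CC(A),\I{C}) &\simeq \map{\Simp\CC}(\iota_\CC(A),\I{C}) \simeq \map{\CC}(A,\I{C}_0)\\
&\simeq \map{\Grpd(\CC)}\bigl(\iota_\CC(A),\iota_\CC(\I{C}_0)\bigr),
\end{align*}
naturally in $A$, so that $\I{C}\mapsto\iota_\CC(\I{C}_0)$ is right adjoint to $\Grpd(\CC)\into\Cat(\CC)$. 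By uniqueness of adjoints this exhibits $(-)^{\core}$ as the composite $\Cat(\CC)\into\Simp\CC\xrightarrow{(-)_0}\CC\xrightarrow{\iota_\CC}\Grpd(\CC)$; since each factor is functorial in $\CC\in\ZZ$, so is $(-)^{\core}$, giving a natural transformation $\Cat\Rightarrow\Grpd$ of functors $\ZZ\to\Cat(\SSSS)$ whose composites with $\Grpd\Rightarrow\Cat$ satisfy the triangle identities componentwise. In other words, the adjunction $(-)^{\core}\colon\Cat(\CC)\rightleftarrows\Grpd(\CC)$ is natural in $\CC\in\ZZ$.

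Finally I would precompose this data with the two functors $\RTop\to\ZZ$, $\BB\mapsto\BB$ and $\BB\mapsto\BBB$ — both factor through $\ZZ$ by section~\ref{sec:universeEnlargement}, and $\BB\into\BBB$ is a morphism in $\ZZ$ because it is left exact (proposition~\ref{prop:universeEnlargementStructurePreservation}, lemma~\ref{lem:functorialityInternalCategories}). Whiskering the natural transformations $\Grpd\Rightarrow\Cat$ and $(-)^{\core}\colon\Cat\Rightarrow\Grpd$ along the natural transformation given objectwise by $\BB\into\BBB$ yields precisely the two commutative squares of the proposition, functorial in $\BB\in\RTop$; replacing $\RTop$ by $\LTop$ throughout, and using that $\BB\into\BBB$ is natural with respect to algebraic morphisms as well (again section~\ref{sec:universeEnlargement}), gives the $\LTop$-functorial statement. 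Beyond the identification $\I{C}^{\core}\simeq\iota(\I{C}_0)$ of the third paragraph — which is exactly what makes the functoriality of $(-)^{\core}$ transparent — I expect no real obstacle; the alternative, arguing abstractly that objectwise right adjoints to a map of cocartesian fibrations over $\ZZ$ assemble into a relative right adjoint, would require a Beck--Chevalley condition that one would in the end verify by this same computation.
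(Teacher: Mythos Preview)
Your proposal is correct and follows exactly the approach the paper intends: the proposition is stated with a \qed because the preceding sentence already observes that $\iota\colon\CC\into\Simp\CC$ and its right adjoint $(-)_0$ are functorial in $\CC\in\ZZ$, which is precisely what you unpack. Your explicit identification $(-)^{\core}\simeq\iota\circ(-)_0\vert_{\Cat(\CC)}$ is the content behind the paper's one-line justification, and the subsequent precomposition with $\RTop\to\ZZ$ (resp.\ $\LTop\to\ZZ$) and whiskering along $\BB\into\BBB$ is exactly the mechanism the paper has set up in the paragraphs before the proposition.
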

Let $\WW$ be the full subcategory of $\ZZ$ that is spanned by the $\infty$-categories that admit colimits indexed by $\Delta^{\op}$ and the functors that preserve such colimits. Then the map $\colim_{\Delta^{\op}}\colon \Simp\CC\to\CC$ is functorial in $\CC\in\WW$, and as universe enlargement preserves small colimits one finds:
\begin{proposition}
	\label{prop:universeEnlargementCategoriesGroupoidification}
	There is a commutative square
	\begin{equation*}
	\begin{tikzcd}[column sep=large]
	\Grpd(\BB)\arrow[r, hookrightarrow]& \Grpd(\BBB)\\
	\Cat(\BB)\arrow[r, hookrightarrow]\arrow[u, "(-)^{\gp}"'] & \Cat(\BBB)\arrow[u, "(-)^{\gp}"']
	\end{tikzcd}
	\end{equation*}
	that is functorial in $\BB$ with respect to maps in $\LTop$.\qed
\end{proposition}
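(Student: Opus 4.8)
The plan is to identify, for \emph{every} $\infty$-category $\CC\in\WW$, the groupoidification functor $(-)^{\gp}\colon\Cat(\CC)\to\Grpd(\CC)$ with the restriction of $\colim_{\Delta^{\op}}\colon\Simp\CC\to\CC$ along the inclusion $\Cat(\CC)\into\Simp\CC$, composed with the equivalence $\CC\xrightarrow{\ \sim\ }\Grpd(\CC)$; once this is available the proposition follows formally, since each functor occurring in this description has already been shown to be natural in $\CC\in\WW$. For the identification, recall that $\iota\colon\CC\into\Simp\CC$ is fully faithful with left adjoint $\colim_{\Delta^{\op}}$, that it factors through $\Cat(\CC)$ by proposition~\ref{prop:ConditionsInternalCategories}, and that $\Grpd(\CC)$ is by definition its essential image. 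Since the inclusions $\Grpd(\CC)\into\Cat(\CC)\into\Simp\CC$ are fully faithful, for a category $\I{C}$ in $\CC$ and an object $A\in\CC$ one has
\begin{equation*}
\map{\Cat(\CC)}(\I{C},\iota(A))\simeq\map{\Simp\CC}(\I{C},\iota(A))\simeq\map{\CC}\bigl(\colim_{\Delta^{\op}}\I{C},A\bigr),
\end{equation*}
so by uniqueness of adjoints the left adjoint $(-)^{\gp}$ of the inclusion $\Grpd(\CC)\into\Cat(\CC)$ is computed by the composite $\Cat(\CC)\into\Simp\CC\xrightarrow{\colim_{\Delta^{\op}}}\CC\xrightarrow{\ \iota\ }\Grpd(\CC)$, naturally in $\I{C}$.

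Next I would assemble this into a natural transformation over $\WW$. The inclusion $\Cat(-)\Rightarrow\Simp{(-)}$ is natural over $\ZZ$, being the straightening of the inclusion of the cocartesian subfibration of categories into $\int\Simp{(-)}$; the transformation $\iota\colon\id\Rightarrow\Simp{(-)}$ is natural over $\ZZ$ and, having $\Grpd(-)$ as essential image, corestricts to a natural equivalence $\id_{\ZZ}\Rightarrow\Grpd(-)$; and $\colim_{\Delta^{\op}}\colon\Simp{(-)}\Rightarrow\id$ is natural over $\WW$ by the very definition of $\WW$. Restricting the first two transformations along $\WW\subseteq\ZZ$ and composing all three over $\WW$ produces a natural transformation $g\colon\Cat(-)\Rightarrow\Grpd(-)$ of functors $\WW\to\Cat(\SSSS)$ whose component at $\CC$ is, by the previous paragraph, the groupoidification functor.

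Then I would specialise. Both $\BB\mapsto\BB$ and $\BB\mapsto\BBB$, as functors $\LTop\to\Cat(\SSSS)$, factor through $\WW$: an $\infty$-topos and its $\bV$-enlargement admit all small (resp.\ all $\bV$-small) colimits, in particular $\Delta^{\op}$-indexed ones, and algebraic (resp.\ $\bV$-algebraic) morphisms preserve them, while left-exactness secures the remaining conditions defining $\ZZ$. By proposition~\ref{prop:universeEnlargementStructurePreservation} the inclusion $\BB\into\BBB$ is left exact and preserves small colimits, hence is a morphism in $\WW$, so the natural transformation $\BB\mapsto(\BB\into\BBB)$ from section~\ref{sec:universeEnlargement} refines to a natural transformation $\eta$ between the two induced functors $\LTop\to\WW$. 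Whiskering $g$ with $\eta$ then gives a commutative square of functors $\LTop\to\Cat(\SSSS)$ whose vertical maps are, by the identification above, the groupoidification functors and whose horizontal maps are the inclusions $\Cat(\BB)\into\Cat(\BBB)$ and $\Grpd(\BB)\into\Grpd(\BBB)$; evaluating at $\BB$ recovers the square in the statement, and the whiskering construction is precisely the asserted functoriality in $\BB\in\LTop$.

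The main obstacle — really the only step that is not a routine transport of structure — is the first one: one must check that $\Grpd(\CC)\subseteq\Cat(\CC)\subseteq\Simp\CC$ is a compatible chain of reflective subcategories, so that the reflection onto $\Grpd(\CC)$ can be computed inside $\Simp\CC$ and therefore coincides with $\colim_{\Delta^{\op}}$ restricted to $\Cat(\CC)$. Everything after that amounts to carrying naturality statements already recorded in the excerpt along $\WW\subseteq\ZZ$ and along $\LTop\to\WW$.
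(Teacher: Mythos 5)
Your proposal is correct and follows the same route as the paper: the paper's proof is precisely the one-line observation immediately preceding the proposition — that $\colim_{\Delta^{\op}}\colon\Simp{(-)}\Rightarrow\id$ is natural over $\WW$ and that universe enlargement preserves small colimits — together with the (implicit) identification of $(-)^{\gp}$ with $\iota\circ\colim_{\Delta^{\op}}$ restricted to $\Cat(\CC)$, which you spell out via the adjunction calculation. One small caveat: your closing remark frames the key step as verifying that $\Grpd(\CC)\subseteq\Cat(\CC)\subseteq\Simp\CC$ is a ``compatible chain of reflective subcategories,'' but reflectivity of $\Cat(\CC)$ in $\Simp\CC$ is neither needed nor guaranteed for an arbitrary $\CC\in\WW$ (the paper only establishes it for $\infty$-topoi); your actual computation
\begin{equation*}
\map{\Cat(\CC)}(\I{C},\iota(A))\simeq\map{\Simp\CC}(\I{C},\iota(A))\simeq\map{\CC}\bigl(\colim_{\Delta^{\op}}\I{C},A\bigr)
\end{equation*}
only uses that $\Cat(\CC)$ is a \emph{full} subcategory and that $\colim_{\Delta^{\op}}\dashv\iota$, which is all that is required, so the argument is sound even if the framing sentence overstates what is being checked.
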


\begin{convention}
	We refer to the objects in $\Cat(\BBB)$ as \emph{large} $\BB$-categories (or as $\BBB$-categories) and to the objects in $\Cat(\BB)$ as \emph{small} $\BB$-categories. If not specified otherwise, every $\BB$-category is small. Note, however, that by replacing the universe $\bU$ with the larger universe $\bV$ (i.e.\ by working internally to $\BBB$), every statement about $\BB$-categories carries over to one about large $\BB$-categories as well. Also, we will sometimes omit specifying the relative size of a $\BB$-category if it is evident from the context.
\end{convention}

Base change along \emph{\'etale} geometric morphisms is particularly well-behaved, as we will discuss now.
Let $\Over{\Cat(\BB)}{-}\colon \BB^{\op}\to \CatSS$ be the functor that classifies the cartesian fibration
\begin{equation*}
\Fun(\Delta^1,\Cat(\BB))\times_{\Cat(\BB)}\BB\to \BB.
\end{equation*}
One now finds:
\begin{proposition}
	\label{prop:etaleBaseChangeInternalCategories}
	There is an equivalence
	\begin{equation*}
	\Cat(\Over{\BB}{-})\simeq \Over{\Cat(\BB)}{-}
	\end{equation*}
	of functors $\BB^{\op}\to \CatSS$ that fits into a commutative square
	\begin{equation*}
		\begin{tikzcd}
		\Cat(\Over{\BB}{-})\arrow[d, "\simeq"]\arrow[r, hookrightarrow] & \Cat(\widehat{\Over{\BB}{-}})\arrow[d, "\simeq"]\\
		\Over{\Cat(\BB)}{-}\arrow[r, hookrightarrow] & \Over{\Cat(\BBB)}{-}
		\end{tikzcd}
	\end{equation*}
\end{proposition}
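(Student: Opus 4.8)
The plan is to deduce this from the (essentially formal) analogue for simplicial objects, together with a pointwise comparison of the defining conditions. Throughout I write $\iota(A)\in\Simp{\BB}$ for the constant simplicial object on $A$, and $(\pi_A)_!\colon\Over{\BB}{A}\to\BB$ for the projection, which is left adjoint to $A\times-\colon\BB\to\Over{\BB}{A}$. The first step is the reduction to simplicial objects. For any $\infty$-category $\DD$ and object $d\in\DD$ the slice $\Over{\DD}{d}$ is the fibre over $d$ of the target-evaluation $\Fun(\Delta^1,\DD)\to\DD$, and since $\Fun(\Delta^{\op},-)$ preserves limits one obtains a natural equivalence $\Fun(\Delta^{\op},\Over{\DD}{d})\simeq\Over{\Fun(\Delta^{\op},\DD)}{\const d}$. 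Taking $\DD=\BB$, this gives an equivalence $\Simp{(\Over{\BB}{-})}\simeq\Over{\Simp{\BB}}{-}$ of functors $\BB^{\op}\to\Cat(\SSS)$, under which an object $p\colon C\to\iota(A)$ of the target corresponds to the simplicial object $\tilde C$ in $\Over{\BB}{A}$ with $(\pi_A)_!\tilde C=C$; since $\iota(A)$ is constant, every simplicial structure map of $C$ commutes with the induced projections $p_n\colon C_n\to A$. Both $\Cat(\Over{\BB}{-})\into\Simp{(\Over{\BB}{-})}$ and $\Over{\Cat(\BB)}{-}\into\Over{\Simp{\BB}}{-}$ are subfunctors that are fibrewise full-subcategory inclusions (the second one because $\Cat(\BB)$ is closed under limits in $\Simp{\BB}$, so pullbacks of categories in $\BB$ along maps $\iota(f)$ stay categories in $\BB$), so it suffices to prove that for every $A\in\BB$ the equivalence above matches $\Cat(\Over{\BB}{A})$ with the full subcategory of those $p\colon C\to\iota(A)$ having $C\in\Cat(\BB)$.

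For the easy direction I would observe that $(\pi_A)_!$ preserves pullbacks and, via the one-line identity $(\pi_A)_!(c\times c)=c\times_A c=(c\times c)\times_{A\times A}A$, satisfies the square hypothesis of lemma~\ref{lem:functorialityInternalCategories}; hence it carries categories in $\Over{\BB}{A}$ to categories in $\BB$, so $\tilde C\in\Cat(\Over{\BB}{A})$ forces $C=(\pi_A)_!\tilde C\in\Cat(\BB)$. For the converse, given $C\in\Cat(\BB)$ with a map $p$ to $\iota(A)$, I would check the criterion of proposition~\ref{prop:ConditionsInternalCategories} for $\tilde C$ inside the $\infty$-topos $\Over{\BB}{A}$; as a map in $\Over{\BB}{A}$ is an equivalence exactly when $(\pi_A)_!$ makes it one, it is enough to check the Segal and univalence maps after applying $(\pi_A)_!$. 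The Segal maps are unproblematic, because iterated fibre products over $\tilde C_0=C_0$ agree in $\Over{\BB}{A}$ and in $\BB$, so they are sent to the Segal maps of $C$.

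The one genuinely non-formal point, and the step I expect to be the main obstacle, is the univalence condition: $(\pi_A)_!$ sends the univalence map of $\tilde C$ to
\[ C_0\to(C_0\times_A C_0)\times_{C_1\times_A C_1}C_3, \]
which at first sight differs from the univalence map $C_0\to(C_0\times C_0)\times_{C_1\times C_1}C_3$ of $C$ because the ambient products are taken over $A$ rather than over the terminal object. I would resolve this by noting that $C_3\to C_1\times C_1$ factors through $C_1\times_A C_1=(C_1\times C_1)\times_{A\times A}A$ (the face maps $d_{\{0,2\}}$ and $d_{\{1,3\}}$ become equal after projecting to $A$, both being $p_3$) and that the composite $C_0\times C_0\xrightarrow{s_0\times s_0}C_1\times C_1\to A\times A$ is simply $p_0\times p_0$; pulling back along the diagonal $A\into A\times A$ then yields $(C_0\times C_0)\times_{C_1\times C_1}(C_1\times_A C_1)\simeq C_0\times_A C_0$, and hence a canonical equivalence
\[ (C_0\times_A C_0)\times_{C_1\times_A C_1}C_3\simeq(C_0\times C_0)\times_{C_1\times C_1}C_3 \]
compatible with the maps out of $C_0$. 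Thus the two univalence maps are identified, $\tilde C\in\Cat(\Over{\BB}{A})$, and combining this with the easy direction and the naturality from the first step produces the equivalence $\Cat(\Over{\BB}{-})\simeq\Over{\Cat(\BB)}{-}$.

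Finally, for the commutative square I would run the same argument over the $\bV$-topos $\BBB$ and restrict along $\BB^{\op}\into\BBB^{\op}$ to obtain $\Cat(\Over{\BBB}{-})\simeq\Over{\Cat(\BBB)}{-}$, then rewrite the left-hand side using the equivalence $\widehat{\Over{\BB}{-}}\simeq\Over{\BBB}{-}$ of proposition~\ref{prop:equivalenceSheavesSlices} (promoting it to categories via lemma~\ref{lem:functorialityInternalCategories}) to get $\Cat(\widehat{\Over{\BB}{-}})\simeq\Over{\Cat(\BBB)}{-}$. Commutativity of the square then amounts to the observation that both composites send a category $\tilde C$ in $\Over{\BB}{A}$ to the arrow $(\pi_A)_!\tilde C\to\iota(A)$ in $\Over{\Cat(\BBB)}{A}$, and that $(\pi_A)_!$ is computed identically in $\BB$ and in $\BBB$ because the inclusion $\BB\into\BBB$ is fully faithful and preserves the limits and colimits involved (proposition~\ref{prop:universeEnlargementStructurePreservation}).
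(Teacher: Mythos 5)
Your argument is correct and follows the same overall route as the paper: pass to the equivalence $\Simp{(\Over{\BB}{-})}\simeq\Over{(\Simp\BB)}{-}$ at the level of simplicial objects, and then check fibrewise that this identifies $\Cat(\Over{\BB}{A})$ with the full subcategory of $\Over{(\Simp\BB)}{\iota(A)}$ whose underlying simplicial object lies in $\Cat(\BB)$. (You derive the simplicial-level equivalence directly as the fibre of the arrow fibration, using that $\Fun(\Delta^{\op},-)$ preserves limits; the paper instead cites lemma~\ref{lem:PShCotensored} and remark~\ref{rem:tensoringCotensoringSheaves}, but the outcome is the same.) Where you genuinely add something is in verifying \emph{both} inclusions. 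The paper only records the direction coming from lemma~\ref{lem:functorialityInternalCategories} applied to $(\pi_A)_!$, namely that a category in $\Over{\BB}{A}$ has a category as its underlying simplicial object in $\BB$, and leaves the converse implicit; but the converse is what essential surjectivity requires, and it is not formal precisely at the point you isolate. Your resolution is right: for $p\colon C\to\iota(A)$ a simplicial map, the two faces $d_{\{0,2\}},d_{\{1,3\}}\colon C_3\rightrightarrows C_1$ both project to $p_3$, so $C_3\to C_1\times C_1$ factors through $C_1\times_A C_1$, and then the pasting lemma (together with $C_1\times_A C_1\simeq(C_1\times C_1)\times_{A\times A}A$ and the corresponding identity for $C_0$) identifies $(C_0\times_A C_0)\times_{C_1\times_A C_1}C_3$ with $(C_0\times C_0)\times_{C_1\times C_1}C_3$, so the univalence condition is the same whether tested in $\Over{\BB}{A}$ or in $\BB$. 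The Segal maps are handled exactly as you say. Your treatment of the commutative square via proposition~\ref{prop:equivalenceSheavesSlices} and proposition~\ref{prop:universeEnlargementStructurePreservation} matches the paper's appeal to lemma~\ref{lem:equivalenceSheavesSlices}.
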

\begin{proof}
	On account of the natural equivalence $\Fun(\Delta^1, \Simp\BB)\simeq \Simp{\Fun(\Delta^1, \BB)}$ and by Lemma~\ref{lem:PShCotensored} and Remark~\ref{rem:tensoringCotensoringSheaves} below, one obtains an equivalence
	\begin{equation*}
	\Simp{(\Over{\BB}{-})}\simeq \Over{(\Simp\BB)}{-}
	\end{equation*}
	of functors $\BB^{\op}\to \CatSS$. In order to provide an equivalence $\Cat(\Over{\BB}{-})\simeq \Over{\Cat(\BB)}{-}$, it therefore suffices to show that this equivalence of functors sends $\Over{\BB}{A}$-categories to objects in $\Over{(\Simp\BB)}{A}$ whose underlying simplicial object in $\BB$ is a $\BB$-category. By construction, the component of the above equivalence at $A\in \BB$ sits inside the commutative diagram
	\begin{equation*}
	\begin{tikzcd}[column sep=.1em]
	\Simp{(\Over{\BB}{A})}\arrow[dr, "(\pi_A)_!\circ(-)"']\arrow[rr, "\simeq"] && \Over{(\Simp\BB)}{A}\arrow[dl, "(\pi_A)_!"]\\
	& \Simp\BB,&
	\end{tikzcd}
	\end{equation*}
	hence the claim follows from the fact that the forgetful functor $(\pi_A)_!\colon \Over{\BB}{A}\to\BB$ satisfies the conditions of Lemma~\ref{lem:functorialityInternalCategories}. Lastly, the existence of a commutative square as in the statement of the proposition follows from the construction of the equivalence $\Cat(\Over{\BB}{-})\simeq \Over{\Cat(\BB)}{-}$ and Lemma~\ref{lem:equivalenceSheavesSlices}.
\end{proof}
\subsection{The $(\infty,2)$-categorical structure of $\Cat(\BB)$}
\label{sec:infty2}
For any $\infty$-topos $\BB$, we denote by
\begin{equation*}
\iFun{-}{-}\colon \Cat(\BB)^{\op}\times \Cat(\BB)\to \Cat(\BB)
\end{equation*}
the internal mapping bifunctor in $\Cat(\BB)$. By Proposition~\ref{prop:categoriesExponentialIdeal} this bifunctor is obtained by restricting the internal mapping bifunctor of $\Simp\BB$ to the full subcategory $\Cat(\BB)\into\Simp\BB$. As the product bifunctor on $\Cat(\BB)$ is obtained in the same fashion, the three bifunctors that are defined in the beginning of \S~\ref{sec:simpObjects} restrict to bifunctors on the level of $\BB$-categories and in $\SS$. Explicitly, one obtains a tensoring bifunctor
\begin{equation*}
-\otimes -\colon \CatS\times\Cat(\BB)\to\Cat(\BB)
\end{equation*}
which is given by the composition $(-\times -)\circ (\const\times \id_{\Cat(\BB)})$, a powering bifunctor
\begin{equation*}
(-)^{(-)}\colon \CatS^{\op}\times\Cat(\BB)\to \Cat(\BB)
\end{equation*}
that is given by the composition $\iFun{-}{-}\circ (\const\times\id_{\Cat(\BB)})$, and a functor $\infty$-category bifunctor
\begin{equation*}
	\Fun_{\BB}(-,-)\colon\Cat(\BB)^{\op}\times\Cat(\BB)\to\CatS
\end{equation*}
which is defined as the composition $\Gamma\circ\iFun{-}{-}$. These functors are equipped with equivalences
\begin{equation*}
\map{\Cat(\BB)}(-, (-)^{(-)})\simeq \map{\Cat(\BB)}(-\otimes-, -)\simeq \map{\CatS}(-, \Fun_{\BB}(-,-)).
\end{equation*}
In particular, the second equivalence implies that postcomposing $\Fun_{\BB}(-,-)$ with the core $\infty$-groupoid functor $(-)^{\core}\colon\CatS\to\SS$ recovers the mapping $\infty$-groupoid bifunctor $\map{\Cat(\BB)}(-,-)$. 

The above constructions are well-behaved with respect to universe enlargement:
\begin{proposition}
	\label{prop:internalMappingObjectUniverseEnlargement}
	The internal mapping bifunctor on $\Cat(\BBB)$ restricts to the internal mapping bifunctor on $\Cat(\BB)$.
\end{proposition}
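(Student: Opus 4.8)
The plan is to reduce the statement to the level of simplicial objects, where one can exploit that $\Simp{\BBB}$ is generated under small colimits by $\Simp{\BB}$, and then run a corepresentability argument. By proposition~\ref{prop:categoriesExponentialIdeal}, applied both to $\BB$ and to $\BBB$, the internal mapping bifunctor on $\Cat(\BB)$ (resp.\ $\Cat(\BBB)$) is the restriction of the internal mapping bifunctor on $\Simp{\BB}$ (resp.\ $\Simp{\BBB}$) along the full inclusion $\Cat(\BB)\into\Simp{\BB}$ (resp.\ $\Cat(\BBB)\into\Simp{\BBB}$). As the evident square of inclusions commutes, it thus suffices to show that the internal mapping bifunctor on $\Simp{\BBB}$, restricted along $j^{\op}\times j$ with $j\colon\Simp{\BB}\into\Simp{\BBB}$ the inclusion, takes values in $\Simp{\BB}$ and is equivalent to the internal mapping bifunctor on $\Simp{\BB}$; afterwards one restricts again along $\Cat(\BB)^{\op}\times\Cat(\BB)\into\Simp{\BB}^{\op}\times\Simp{\BB}$ and invokes proposition~\ref{prop:categoriesExponentialIdeal} once more.

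Since $j$ is obtained by postcomposition with $\BB\into\BBB$ and limits, colimits and products of simplicial objects are computed levelwise, proposition~\ref{prop:universeEnlargementStructurePreservation} shows that $j$ is fully faithful and preserves small limits, small colimits and finite products. Moreover, since $\BB\into\BBB$ preserves small colimits and finite limits, the constant sheaf functor into $\BBB$ factors through $\BB$; hence the tensoring $-\otimes-$ of $\Simp{\BBB}$ over $\Simp{\SS}$ restricts along $j$ to the tensoring of $\Simp{\BB}$, so that $\Delta^n\otimes\iota(B)\in\Simp{\BB}$ for all $n\geq 0$ and $B\in\BB$. Now the evaluation functors $(-)_n\colon\Simp{\BBB}\to\BBB$ are jointly conservative and the left adjoint of $(-)_n$ sends $A$ to $\Delta^n\otimes\iota(A)$, so every object of $\Simp{\BBB}$ is a small colimit of objects $\Delta^n\otimes\iota(A)$ with $n\geq 0$ and $A\in\BBB$. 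By remark~\ref{rem:transitivityUniverseEnlargement} we may identify $\BBB$ with $\widehat{\Ind}_{\bU}(\BB)$, so each such $A$ is a filtered colimit of objects of $\BB$; as $\Delta^n\otimes\iota(-)$ preserves colimits and carries $\BB$ into $\Simp{\BB}$, it follows that every object of $\Simp{\BBB}$ is a small colimit of objects in the image of $j$.

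Now fix $C,D\in\Simp{\BB}$ and let $Y=\iFun{C}{D}$ be the internal mapping object computed in $\Simp{\BB}$. Given $E\in\Simp{\BBB}$, pick as in the previous paragraph a small colimit presentation $E\simeq\colim_i j(E_i)$ with $E_i\in\Simp{\BB}$. Using full faithfulness of $j$, its compatibility with products, the defining property of $Y$ in $\Simp{\BB}$, and universality of colimits in $\Simp{\BBB}$, we obtain equivalences
\begin{align*}
	\map{\Simp{\BBB}}(E,j(Y)) &\simeq \lim\nolimits_i\map{\Simp{\BBB}}(j(E_i),j(Y))\\
	&\simeq \lim\nolimits_i\map{\Simp{\BB}}(E_i,Y) \simeq \lim\nolimits_i\map{\Simp{\BB}}(E_i\times C,D)\\
	&\simeq \lim\nolimits_i\map{\Simp{\BBB}}(j(E_i)\times j(C),j(D))\\
	&\simeq \map{\Simp{\BBB}}\bigl(\colim\nolimits_i\,(j(E_i)\times j(C)),\,j(D)\bigr)\\
	&\simeq \map{\Simp{\BBB}}(E\times j(C),j(D))
\end{align*}
that are natural in $E$, $C$ and $D$. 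Hence $j(Y)$ represents the functor $E\mapsto\map{\Simp{\BBB}}(E\times j(C),j(D))$, so $\iFun{j(C)}{j(D)}\simeq j(\iFun{C}{D})$; in particular the restriction of $\iFun{-}{-}$ on $\Simp{\BBB}$ along $j^{\op}\times j$ factors through $\Simp{\BB}$, and by the naturality just recorded together with the Yoneda lemma the resulting bifunctor is equivalent to $\iFun{-}{-}$ on $\Simp{\BB}$. Together with the first paragraph this proves the proposition.

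I expect the only step needing genuine care to be the verification that $\Simp{\BBB}$ is generated under small colimits by the image of $\Simp{\BB}$, since this is precisely what makes the corepresentability computation go through; everything else is a formal consequence of proposition~\ref{prop:universeEnlargementStructurePreservation}. One could instead avoid this reduction altogether by first identifying the universe enlargement $\widehat{\Simp{\BB}}$ with $\Simp{\BBB}$ — compatibly with the inclusions of $\Simp{\BB}$ — and then quoting the corresponding clause of proposition~\ref{prop:universeEnlargementStructurePreservation} directly.
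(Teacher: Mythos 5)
Your proof is correct but takes a genuinely different route from the paper's. Both begin by invoking the exponential ideal property (proposition~\ref{prop:categoriesExponentialIdeal}) to reduce the claim to the level of simplicial objects. The paper then argues \emph{level by level}: the powering identity $\iFun{\I{C}}{\I{D}}_n\simeq\iFun{\Delta^n\otimes\I{C}}{\I{D}}_0$ from proposition~\ref{prop:simplicialPowering} reduces the claim to level zero, after which $\I{C}$ is decomposed into the generators $\Delta^n\otimes A$ and the explicit computation $\iFun{\Delta^n\otimes A}{\I{D}}_0\simeq\iFun{A}{\I{D}_n}$ closes the argument via proposition~\ref{prop:universeEnlargementStructurePreservation}. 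You instead show directly that $j(\iFun{C}{D})$ corepresents the relevant functor on all of $\Simp{\BBB}$, exploiting that $\Simp{\BBB}$ is generated under $\bV$-small colimits by the image of $j\colon\Simp{\BB}\into\Simp{\BBB}$. Your route avoids any level-by-level bookkeeping and establishes the stronger, more modular statement $\iFun{j(C)}{j(D)}\simeq j(\iFun{C}{D})$ in one stroke; the price is the extra generation input, which you correctly extract from the identification $\BBB\simeq\widehat{\Ind}_{\bU}(\BB)$ of remark~\ref{rem:transitivityUniverseEnlargement}, whereas the paper only ever decomposes objects internally to $\Simp{\BBB}$ in the standard way. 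One small point that deserves an explicit sentence in your write-up: because $j$ preserves finite products there is a canonical comparison map $j(\iFun{C}{D})\to\iFun{j(C)}{j(D)}$, and what you want to show is that \emph{this} map is an equivalence; your chain of equivalences does agree with it (each step is natural in all variables), but as written you produce an abstract equivalence rather than identifying the canonical one, which leaves a small gap between the computation and the invocation of Yoneda.
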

\begin{proof}
	As the product bifunctor on $\Cat(\BBB)$ clearly restricts to the product bifunctor on $\Cat(\BB)$, it suffices to show that for any two \emph{small} $\BB$-categories $\I{C},\I{D}\in\Cat(\BB)\into\Cat(\BBB)$ their internal mapping object $\iFun{\I{C}}{\I{D}}$ in $\Cat(\BBB)$ is small as well. It suffices to show this on the level of simplicial objects, i.e. we need to show that the simplicial object $\iFun{\I{C}}{\I{D}}\in \Simp\BBB$ is contained in $\Simp\BB$. Using Proposition~\ref{prop:simplicialPowering} one finds
	\begin{equation*}
		\iFun{\I{C}}{\I{D}}_n\simeq \iFun{\Delta^n\otimes \I{C}}{\I{D}}_0,
	\end{equation*}
	hence it suffices to show that $\iFun{\I{C}}{\I{D}}_0$ is contained in $\BB$. As in the proof of Proposition~\ref{prop:ConditionsInternalCategories}, using that the functor $\iFun{-}{\I{D}}_0$ sends colimits in $\Simp\BBB$ to limits in $\BBB$, we may assume without loss of generality $\I{C}\simeq\Delta^n\otimes A$. In this case one computes
	\begin{equation*}
		\iFun{\Delta^n\otimes A}{\I{D}}_0\simeq \iFun{A}{\I{D}}_n\simeq \iFun{A}{\I{D}_n}
	\end{equation*}
	in which the last step follows from Remark~\ref{rem:InternalHomConstantSimplicialObjects}, therefore the claim is a consequence of Proposition~\ref{prop:universeEnlargementStructurePreservation}.
\end{proof}
Combining Proposition~\ref{prop:internalMappingObjectUniverseEnlargement} with Proposition~\ref{prop:universeEnlargementCategories}, one now easily deduces:
\begin{corollary}
	\label{cor:universeEnlargementEnrichedStructure}
	The tensoring, powering and mapping $\infty$-category bifunctors on $\Cat(\BBB)$ restrict to the tensoring, powering and mapping $\infty$-category bifunctors on $\Cat(\BB)$.\qed
\end{corollary}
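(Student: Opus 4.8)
The plan is to unwind the definitions and reduce the statement to the compatibility of a few building blocks with universe enlargement. Recall that on $\Cat(\BBB)$ the tensoring is $(-\times-)\circ(\const\times\id)$, the powering is $\iFun{-}{-}\circ(\const\times\id)$, and the mapping $\infty$-category bifunctor is $\Gamma\circ\iFun{-}{-}$, where now $\const\colon\Cat(\SSS)\to\Cat(\BBB)$ and $\Gamma\colon\Cat(\BBB)\to\Cat(\SSS)$ are induced by the constant sheaf and global sections functors of the $\infty$-topos $\BBB$. Since $\widehat\SS=\SSS$ is the terminal object of $\BigRTop$, these two functors are precisely the universe enlargements (in the sense of Section~\ref{sec:universeEnlargement}) of $\const\colon\SS\to\BB$ and $\Gamma\colon\BB\to\SS$. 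Hence it suffices to show that, along the inclusions $\Cat(\BB)\into\Cat(\BBB)$ and $\Cat(\SS)\into\Cat(\SSS)$, the product bifunctor, the internal mapping bifunctor, $\const$ and $\Gamma$ of $\BBB$ each restrict to the corresponding datum of $\BB$.

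The first two are exactly the content of Proposition~\ref{prop:internalMappingObjectUniverseEnlargement} (for the product one may also argue directly from Proposition~\ref{prop:universeEnlargementStructurePreservation}, products of categories being computed levelwise). For the remaining two I would invoke the functoriality in $\BB$ of the squares in Proposition~\ref{prop:universeEnlargementCategories}: the constant sheaf functor $\const\colon\SS\to\BB$ is a morphism of $\LTop$ and the global sections functor $\Gamma\colon\BB\to\SS$ is a morphism of $\RTop$, so evaluating the naturality of the edge $\Cat(-)\into\Cat(\widehat{-})$ at these two morphisms produces commutative squares
\begin{equation*}
\begin{tikzcd}
\Cat(\SS)\arrow[r, "\const"]\arrow[d, hookrightarrow] & \Cat(\BB)\arrow[d, hookrightarrow]\\
\Cat(\SSS)\arrow[r, "\const"] & \Cat(\BBB)
\end{tikzcd}
\qquad\text{and}\qquad
\begin{tikzcd}
\Cat(\BB)\arrow[r, "\Gamma"]\arrow[d, hookrightarrow] & \Cat(\SS)\arrow[d, hookrightarrow]\\
\Cat(\BBB)\arrow[r, "\Gamma"] & \Cat(\SSS),
\end{tikzcd}
\end{equation*}
that is, $\const$ and $\Gamma$ of $\BBB$ restrict to $\const$ and $\Gamma$ of $\BB$ as claimed.

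It then remains only to observe that a composite of functors each of which carries the relevant subcategory into the corresponding one again does so: for $\I X\in\Cat(\SS)$ and $\I C,\I D\in\Cat(\BB)$ one has $\I X\otimes\I C=\const(\I X)\times\I C\in\Cat(\BB)$, $\I C^{\I X}=\iFun{\const(\I X)}{\I C}\in\Cat(\BB)$ and $\Fun_\BB(\I C,\I D)=\Gamma\iFun{\I C}{\I D}\in\Cat(\SS)$, each computed indifferently in $\Cat(\BB)$ or in $\Cat(\BBB)$. This is essentially the whole argument. The step requiring the most care is the identification of the structure functors of $\BBB$ with the universe enlargements of those of $\BB$; but this too is immediate from the universal property of the terminal $\infty$-topos together with the functoriality of universe enlargement on $\RTop$ and $\LTop$ recorded in Section~\ref{sec:universeEnlargement}, so the corollary is a matter of bookkeeping with the cited results.
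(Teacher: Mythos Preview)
Your proposal is correct and follows essentially the same approach as the paper: the paper simply says the corollary is obtained by ``combining proposition~\ref{prop:internalMappingObjectUniverseEnlargement} with proposition~\ref{prop:universeEnlargementCategories}'' and leaves it at that, while you unpack exactly what that combination amounts to (restricting $\const$, $\Gamma$, the product and the internal hom separately and then composing). The only extra step you make explicit is the identification $\widehat{\SS}\simeq\SSS$, which the paper takes for granted.
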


\begin{remark}
The bifunctor $\Fun_{\BB}(-,-)$ gives rise to an $(\infty,2)$-categorical enhancement of $\Cat(\BB)$. More precisely, on account of $\Cat(\BB)$ being cartesian closed, this $\infty$-category is canonically enriched over itself~\cite[\S~7]{gepner2015}. The functor $\Gamma\colon\Cat(\BB)\to\CatS$ can then be used to change enrichment from $\Cat(\BB)$ to $\CatS$, so that $\Cat(\BB)$ becomes a $\CatS$-enriched $\infty$-category, which is one of the known models for $(\infty,2)$-categories~\cite{haugseng2015}.
\end{remark}

\subsection{$\CatS$-valued sheaves on an $\infty$-topos}
\label{sec:categoriesAsSheaves}
Categories in an $\infty$-topos $\BB$ can be alternatively regarded as sheaves of $\infty$-categories on $\BB$. To see this, first recall from the discussion in \S~\ref{sec:universeEnlargement} that the Yoneda embedding induces a commutative square
\begin{equation*}
	\begin{tikzcd}
	\BB\arrow[d, hookrightarrow]\arrow[r, hookrightarrow] & {\BBB}\arrow[d, hookrightarrow]\\
	\PSh_{\SS}(\BB)\arrow[r, hookrightarrow] &\PSh_{\SSS}(\BB)	\end{tikzcd}
\end{equation*}
that is natural in $\BB$ both with respect to maps in $\RTop$ and in $\LTop$. Postcomposition with the functor $\Simp{(-)}$ therefore gives rise to a natural commutative square
\begin{equation*}
	\begin{tikzcd}
	\Simp\BB\arrow[d, hookrightarrow]\arrow[r, hookrightarrow] & \Simp{\BBB}\arrow[d, hookrightarrow]\\
	\PSh_{\Simp\SS}(\BB)\arrow[r, hookrightarrow] &\PSh_{\Simp\SSS}(\BB)	\end{tikzcd}
\end{equation*}
in which the essential image of the two vertical maps is spanned by the collection of $\Simp\SS$-valued and $\Simp\SSS$-valued sheaves, respectively. Using Proposition~\ref{prop:ConditionsInternalCategories} it is immediate that this square further restricts to a natural commutative square
\begin{equation*}
\begin{tikzcd}
\Cat(\BB)\arrow[d, hookrightarrow]\arrow[r, hookrightarrow] & \Cat(\BBB)\arrow[d, hookrightarrow]\\
\PSh_{\CatS}(\BB)\arrow[r, hookrightarrow] &\PSh_{\CatSS}(\BB).
\end{tikzcd}
\end{equation*}
As limits in $\CatS$ and in $\CatSS$ are computed on the level of the underlying simplicial $\infty$-groupoids, the essential image of the two vertical maps is spanned by the collection of $\CatS$-valued and $\CatSS$-valued sheaves, respectively. One therefore obtains:
\begin{proposition}
	\label{prop:internalCategoriesSheaves}
	The inclusions $\Cat(\BB)\into\PSh_{\CatS}(\BB)$ and $\Cat(\BBB)\into\PSh_{\CatSS}(\BB)$ induce a commutative square
	\begin{equation*}
	\begin{tikzcd}
	\Cat(\BB)\arrow[d, "\simeq"]\arrow[r, hookrightarrow] & \Cat(\BBB)\arrow[d, "\simeq"]\\
	\Shv_{\CatS}(\BB)\arrow[r, hookrightarrow] &\Shv_{\CatSS}(\BB)
	\end{tikzcd}
	\end{equation*}
	that is natural in $\BB$ both with respect to maps in $\RTop$ and in $\LTop$.\qed
\end{proposition}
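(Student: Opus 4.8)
The plan is to read off, from the commutative square of presheaf categories constructed just above, that the left vertical embedding $\Cat(\BB)\into\PSh_{\Cat(\SS)}(\BB)$ has essential image exactly $\Shv_{\Cat(\SS)}(\BB)$ and the right one exactly $\Shv_{\Cat(\SSS)}(\BB)$; the resulting square of equivalences then commutes and is natural in $\RTop$ and in $\LTop$ automatically, because every functor involved is induced by the Yoneda embedding together with postcomposition by $\Simp{(-)}$ and by $\Cat$, all of which have already been shown to be natural in section~\ref{sec:universeEnlargement} and in the functoriality discussion above.

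First I would recall why $\Simp\BB\simeq\Shv_{\Simp\SS}(\BB)$ naturally in $\BB$. Transposing functor $\infty$-categories identifies $\PSh_{\Simp\SS}(\BB)$ with $\Simp{\PSh_{\SS}(\BB)}$, and under this identification a presheaf $\BB^{\op}\to\Simp\SS$ preserves small limits if and only if each of its levels does, since limits in $\Simp\SS=\Fun(\Delta^{\op},\SS)$ are computed levelwise. As the Yoneda embedding identifies $\BB$ with the limit-preserving presheaves on $\BB$ (standard for the presentable $\infty$-category $\BB$), i.e.\ $\Shv_{\SS}(\BB)\simeq\BB$, this says precisely that the essential image of $\Simp\BB\into\PSh_{\Simp\SS}(\BB)$ is $\Shv_{\Simp\SS}(\BB)$; working $\bV$-relatively and using that $\BBB=\Shv_{\SSS}(\BB)$ by definition, the same argument gives $\Simp\BBB\simeq\Shv_{\Simp\SSS}(\BB)$.

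The heart of the matter is the middle step: under $\Simp\BB\simeq\Shv_{\Simp\SS}(\BB)$, a simplicial object $C$ of $\BB$ lies in $\Cat(\BB)$ if and only if the associated sheaf $b\mapsto\map{\BB}(b,C_\bullet)$ factors through the inclusion $\Cat(\SS)\into\Simp\SS$. By Proposition~\ref{prop:ConditionsInternalCategories}, the condition that $C$ be a category in $\BB$ amounts to requiring that certain maps built from finite limits of the objects $C_n$ be equivalences in $\BB$; applying the family of functors $\map{\BB}(b,-)$, which preserve finite limits and are jointly conservative, this holds in $\BB$ if and only if for every $b\in\BB$ the corresponding maps for $\map{\BB}(b,C_\bullet)$ are equivalences in $\SS$, i.e.\ (by Proposition~\ref{prop:ConditionsInternalCategories} and Example~\ref{ex:categoriesInS} applied to the base $\SS$) if and only if $\map{\BB}(b,C_\bullet)$ is a category in $\SS$ for every $b$. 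Hence, inside $\PSh_{\Simp\SS}(\BB)$, we have $\Cat(\BB)=\Simp\BB\cap\PSh_{\Cat(\SS)}(\BB)$, and likewise $\Cat(\BBB)=\Simp\BBB\cap\PSh_{\Cat(\SSS)}(\BB)$.

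To finish, I would note that the inclusion $\Cat(\SS)\into\Simp\SS$ exhibits $\Cat(\SS)$ as an accessible localisation of $\Simp\SS$ (Proposition~\ref{prop:categoriesAccessiblelocalisation}), hence is fully faithful and preserves small limits, from which a short formal argument gives that a functor $\BB^{\op}\to\Cat(\SS)$ preserves small limits if and only if its composite into $\Simp\SS$ does; that is, $\Shv_{\Cat(\SS)}(\BB)=\PSh_{\Cat(\SS)}(\BB)\cap\Shv_{\Simp\SS}(\BB)$ inside $\PSh_{\Simp\SS}(\BB)$. Combining the three identities yields $\Cat(\BB)=\Simp\BB\cap\PSh_{\Cat(\SS)}(\BB)=\Shv_{\Simp\SS}(\BB)\cap\PSh_{\Cat(\SS)}(\BB)=\Shv_{\Cat(\SS)}(\BB)$, and the same reasoning with $(\SS,\BB)$ replaced by $(\SSS,\BBB)$ gives $\Cat(\BBB)=\Shv_{\Cat(\SSS)}(\BB)$. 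Since these identifications are obtained by passing to essential images inside the already-natural square over $\PSh$, the resulting square of equivalences is natural in $\BB$ both with respect to maps in $\RTop$ and in $\LTop$. I do not expect a serious obstacle; the one point that deserves genuine, if brief, care is the middle step, where the finite-limit reformulation of Proposition~\ref{prop:ConditionsInternalCategories} is exactly what lets the internally local conditions defining $\Cat(\BB)$ be matched with the levelwise complete Segal space conditions on the associated sheaf.
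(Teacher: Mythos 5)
Your proof is correct and follows essentially the same route as the paper's. Both arguments proceed by postcomposing the functorial square $\BB\into\BBB$, $\PSh_{\SS}(\BB)\into\PSh_{\SSS}(\BB)$ with $\Simp{(-)}$ to land inside $\PSh_{\Simp\SS}(\BB)$, then restricting to the full subcategories of categories using Proposition~\ref{prop:ConditionsInternalCategories} to translate the Segal/univalence conditions into finite-limit conditions, and finally identifying the essential image of $\Cat(\BB)\into\PSh_{\Cat(\SS)}(\BB)$ with the sheaves via the observation that limits in $\Cat(\SS)$ are computed in $\Simp\SS$. Where the paper writes that the restriction to categories is ``immediate'' from Proposition~\ref{prop:ConditionsInternalCategories}, you spell out the underlying reason — joint conservativity and left-exactness of the functors $\map{\BB}(b,-)$ — which is exactly the right justification and makes the step more transparent; this is useful elaboration but not a different argument. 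Your closing remark on naturality (the square of equivalences inherits naturality from the presheaf-level square because the equivalences are obtained by passing to essential images preserved by all transition functors) matches how the paper obtains the $\RTop$- and $\LTop$-naturality through the cocartesian-subfibration constructions of Sections~2.4 and~3.3.
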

\begin{remark}
	In what follows, we will often implicitly identify a $\BB$-category $\I{C}$ with the associated $\CatS$-valued sheaf on $\BB$. In particular, if $A\in\BB$ is an arbitrary object we will write $\I{C}(A)$ for the $\infty$-category of $A$-sections of $\I{C}$.
\end{remark}
\begin{remark}
	\label{rem:categoriesAsFibrations}
	On account of the natural equivalence $\PSh_{\CatS}(\CC)\simeq \Cart(\CC)$ between the $\infty$-category of $\CatS$-valued presheaves on a small $\infty$-category $\CC$ and the $\infty$-category of cartesian fibrations over $\CC$ established by straightening and unstraightening, the inclusion $\Cat(\BBB)\into \PSh_{\CatSS}(\BB)$ gives rise to an embedding
		\begin{equation*}
			\Cat(\BBB)\into \Cart(\BB)
		\end{equation*}
	that is natural in $\BB$.
	For any (large) $\BB$-category $\I{C}$ we will denote the image of $\I{C}$ under this functor by $\int \I{C}\to \BB$.
\end{remark}

The equivalence in Proposition~\ref{prop:internalCategoriesSheaves} can also be formulated using the mapping $\infty$-category bifunctor $\Fun_{\BB}(-,-)$: If $\iota\colon \BB\into \Cat(\BB)$ denotes the natural inclusion, the computation
\begin{equation*}
	\map{\CatS}(\Delta^{\bullet}, \Fun_{\BB}(\iota(-), -))\simeq\map{\Cat(\BB)}(\iota(-), (-)^{\Delta^{\bullet}})\simeq \map{\BB}(-, (-)_{\bullet})
\end{equation*}
in which the last equivalence follows from Proposition~\ref{prop:simplicialPowering} shows that the transpose of the bifunctor
\begin{equation*}
	\Fun_{\BB}(\iota(-),-)\colon \BB^{\op}\times\Cat(\BB)\to \CatS
\end{equation*}
recovers the natural inclusion $\Cat(\BB)\into \PSh_{\CatS}(\BB)$ and in particular the equivalence $\Cat(\BB)\simeq\Shv_{\CatS}(\BB)$ from Proposition~\ref{prop:internalCategoriesSheaves}. It is therefore reasonable to define:
\begin{definition}
	\label{def:localSectionsFunctor}
	For any $\infty$-topos $\BB$ and any object $A\in \BB$, the \emph{local sections functor over $A$}  is defined as the functor $\Fun_{\BB}(A,-)\colon \Cat(\BB)\to\CatS$.
\end{definition}

\begin{remark}
	In the context of Definition~\ref{def:localSectionsFunctor}, the local sections functor over an object $A\in \BB$ is equivalently given by the composite
	\begin{equation*}
		\Cat(\BB)\xrightarrow{\pi_A^\ast} \Cat(\Over{\BB}{A})\xrightarrow{\Gamma_{\Over{\BB}{A}}} \CatS.
	\end{equation*}
	In fact, the equivalence of functors $-\times(\pi_A)_!(-)\simeq(\pi_A)_!(\pi_A^\ast(-)\times_A -)$ gives rise to the following chain of equivalences
	\begin{align*}
		\map{\Cat(\BB)}(-\otimes (\pi_A)_!(-), -)&\simeq\map{\Cat(\BB)}((\pi_A)_!(-\otimes -), -)\\
		&\simeq\map{\Cat(\Over{\BB}{A})}(-\otimes -, \pi_A^\ast(-))
	\end{align*}
	that induces an equivalence of functors
	\begin{equation*}
		\Fun_{\BB}((\pi_A)_!(-),-)\simeq\Fun_{\Over{\BB}{A}}(-, \pi_A^\ast(-)).
	\end{equation*}
\end{remark}

\begin{remark}
	\label{rem:coreGroupoidificationSheaves}
	By construction, the equivalence $\Cat(\BB)\simeq \Shv_{\CatS}(\BB)$ fits into two commutative squares
	\begin{equation*}
		\begin{tikzcd}[column sep=large]
		\Grpd(\BB)\arrow[r, "\simeq"]\arrow[d, hookrightarrow] & \Shv_{\SS}(\BB)\arrow[d, hookrightarrow]\\
		\Cat(\BB)\arrow[r, "\simeq"]\arrow[u, "(-)^{\core}"', bend right, start anchor=north, end anchor=south, shift right=1em] & \Shv_{\CatS}(\BB)\arrow[u, "(-)^{\core}"', bend right, start anchor=north, end anchor=south, shift right=1em]
		\end{tikzcd}
	\end{equation*}
	that are functorial in $\BB$ with respect to maps both in $\RTop$ and in $\LTop$. Here the two vertical maps on the right-hand side are given by postcomposition with the adjunction $(\iota\dashv (-)^\simeq)\colon \SS\leftrightarrows\CatS$. One moreover has a commutative square
	\begin{equation*}
		\begin{tikzcd}[column sep=large]
		\Grpd(\BB)\arrow[r, "\simeq"]& \Shv_{\SS}(\BB)\\
		\Cat(\BB)\arrow[r, "\simeq"]\arrow[u, "(-)^{\gp}"'] & \Shv_{\CatS}(\BB)\arrow[u, "(-)^{\gp}"']
		\end{tikzcd}
	\end{equation*}
	that is functorial in $\BB$ with respect to maps in $\LTop$, where the vertical map on the right is given by postcomposition with the groupoidification functor $(-)^{\gp}\colon \CatS\to\SS$.
\end{remark}
\begin{remark}
	\label{rem:tensoringCotensoringSheaves}
	The fact that the inclusion $\Cat(\BB)\into\PSh_{\CatS}(\BB)$ is obtained by the functor $\Fun_{\BB}(\iota(-),-)$ implies that there is a commutative square
	\begin{equation*}
		\begin{tikzcd}[column sep=huge]
		\CatS^{\op}\times\Cat(\BB)\arrow[r, "{(-)^{(-)}}"]\arrow[d, hookrightarrow] & \Cat(\BB)\arrow[d, hookrightarrow]\\
		\CatS^{\op}\times\PSh_{\CatS}(\BB)\arrow[r, "{\Fun_{\SS}(-, -)}\circ (-)"] & \PSh_{\CatS}(\BB)
		\end{tikzcd}
	\end{equation*}
	in which the lower horizontal arrow is to be understood as the functor that sends a pair $(\XX, F)$ to the presheaf $\Fun_{\SS}(\XX,-)\circ F$. In other words, the $\CatS$-valued presheaf that underlies the powering $\I{C}^{\XX}$ of a $\BB$-category $\I{C}$ by an $\infty$-category $\XX\in\CatS$ is given by the functor $\Fun_{\SS}(\XX, \I{C}(-))$.
	
	By contrast, the $\CatS$-valued presheaf that underlies the \emph{tensoring} $\XX\otimes \I{C}$ of the $\BB$-category $\I{C}$ by the $\infty$-category $\XX$ is \emph{not} given by the functor $\XX\times \I{C}(-)$. In fact, the latter is in general not a sheaf. However, one can show that the presheaf that is associated with $\XX\otimes \I{C}$ is given by the sheafification of the presheaf $\XX\times \I{C}(-)$, i.e.\ by the image of $\XX\times\I{C}(-)$ under the left adjoint of the inclusion $\Shv_{\CatS}(\BB)\into \PSh_{\CatS}(\BB)$.
\end{remark}
\subsection{Objects and morphisms}
\label{sec:objectsMorphisms}
Let $\I{C}$ be a $\BB$-category, and let $A\in \BB$ be an arbitrary object. An \emph{object $c$ of $\I{C}$ in context $A$} is defined to be a local section $c\colon A\to \I{C}$, which is equivalently determined by a map $c\colon A\to \I{C}_0$ since $A$ is a $\BB$-groupoid. A \emph{morphism in $\I{C}$ with context $A$} is defined as an object $f$ of $\I{C}^{\Delta^1}$, i.e.\  as a map $f\colon \Delta^1\otimes A\to \I{C}$, or equivalently as a map $f\colon A\to \I{C}_1$. Similarly one defines the notion of an $n$-morphism for any $n\geq 1$ as a map $\Delta^n\otimes A\to \I{C}$. Any morphism $f$ has a source and a target which are obtained by precomposing $f\colon \Delta^1\otimes A\to \I{C}$ with $d^1\colon A\to \Delta^1\otimes A$ and with $d^0\colon A\to \Delta^1\otimes A$, respectively. If $c$ and $d$ are the source and target of such a morphism $f$, we also use the familiar notation $f\colon c\to d$. For any object $c$ in $\I{C}$ in context $A$, there is a morphism $\id_c$ that is defined by the composite $cs^0\colon \Delta^1\otimes A\to A\to \I{C}$.

\begin{remark}
	On account of the adjunction $(\pi_A)_!\dashv\pi_A^\ast$, specifying an object $c\colon A\to\I{C}$ in context $A\in\BB$ is tantamount to specifying an object $c\colon 1\to\pi_A^\ast\I{C}$ in context $1\in\Over{\BB}{A}$. A similar observation can be made for morphisms in a $\BB$-category.
\end{remark}

Given two objects $c,d$ in $\I{C}$ in context $A$, we define the \emph{mapping $\BB$-groupoid} $\map{\I{C}}(c,d)\in\Over{\BB}{A}$ as the pullback
\begin{equation*}
	\begin{tikzcd}
	\map{\I{C}}(c,d)\arrow[d]\arrow[r] & \I{C}_1\arrow[d, "{(d_1,d_0)}"]\\
	A\arrow[r, "{(c,d)}"] & \I{C}_0\times\I{C}_0.
	\end{tikzcd}
\end{equation*}
Equivalently, this object can be defined by the pullback
\begin{equation*}
\begin{tikzcd}
\map{\I{C}}(c,d)\arrow[d]\arrow[r] & \I{C}^{\Delta^1}\arrow[d, "{(d_1,d_0)}"]\\
A\arrow[r, "{(c,d)}"] & \I{C}\times\I{C},
\end{tikzcd}
\end{equation*}
see \S~\ref{sec:leftFibrations} below. By construction, sections $A\to \map{\I{C}}(c,d)$ over $A$ correspond to morphisms $f\colon c\to d$ in $\I{C}$ in context $A$. Two maps $f,g\colon c\rightrightarrows d$ are said to be equivalent if they are equivalent as sections $A\rightrightarrows \map{\I{C}}(c,d)$ over $A$, in which case we write $f\simeq g$.

Similarly to the case of two objects, if $c_0,\dots,c_n$ are objects in context $A$ in $\I{C}$, one writes $\map{\I{C}}(c_0,\dots,c_n)$ for the pullback of $(d_n,\dots,d_0)\colon \I{C}_n\to \I{C}_0^{n+1}$ along the map $(c_0,\dots,c_n)\colon A\to \I{C}_0^{n+1}$. Using the Segal conditions, one obtains an equivalence
\begin{equation*}
	\map{\I{C}}(c_0,\dots,c_n)\simeq\map{\I{C}}(c_0,c_1)\times_A\cdots\times_A\map{\I{C}}(c_{n-1},c_n).
\end{equation*}
By combining this identification with the map $\map{\I{C}}(c_0,\dots,c_n)\to\map{\I{C}}(c_0,c_n)$ that is induced by the map $d_{\{0,n\}}\colon \I{C}_n\to\I{C}_1$, one obtains a composition map
\begin{equation*}
	\map{\I{C}}(c_0,c_1)\times_A\cdots\times_A\map{\I{C}}(c_{n-1},c_n)\to\map{\I{C}}(c_0,c_n).
\end{equation*}
Given maps $f_i\colon c_{i-1}\to c_i$ in $\I{C}$ for $i=1,\dots, n$, we write $f_1\cdots f_n$ for their composition. By making use of the simplicial identities, it is straightforward to verify that composition is associative and unital, i.e.\ that the relations $f(gh)\simeq (fg)h$ and $f\id\simeq f\simeq \id f$ as well as their higher analogues hold whenever they make sense, see~\cite[Proposition~5.4]{rezk2001} for a proof.

A morphism $f\colon c\to d$ in $\I{C}$ is an \emph{equivalence} if there are maps $g\colon c\to d$ and $h\colon c\to d$ (all in context $A$) such that $gf\simeq \id_c$ and $fh\simeq \id_d$. Let $\Delta^1\to E^1$ be the map that is induced by the inclusion $d^{\{1,2\}}\colon\Delta^1\into\Delta^3$. One then obtains the following characterisation of equivalences in $\I{C}$:
\begin{proposition}
	\label{prop:walkingEquivalenceClassifiesEquivalences}
	A map $f\colon c\to d$ in context $A$ is an equivalence if and only if the map $\Delta^1\otimes A\to \I{C}$ that is determined by $f$ factors through the map $\Delta^1\otimes A\to E^1\otimes A$.
\end{proposition}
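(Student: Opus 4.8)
The plan is to reduce the statement to an unwinding of the space $\map{\Simp\BB}(E^1\otimes A,\I{C})$ by means of the pushout square defining $E^1$. The morphism $f$ factors through the map $\Delta^1\otimes A\to E^1\otimes A$ (induced by $d^{\{1,2\}}$) precisely when the fibre over $f$ of the restriction map $\map{\Simp\BB}(E^1\otimes A,\I{C})\to\map{\Simp\BB}(\Delta^1\otimes A,\I{C})$ is inhabited, so it suffices to describe this fibre. Now $\const$ is an algebraic morphism, hence preserves colimits, and therefore so does the induced functor $\Simp\SS\to\Simp\BB$; moreover $-\times A$ preserves colimits since $\Simp\BB$ is an $\infty$-topos. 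Thus $-\otimes A$ carries the defining pushout of $E^1$ to a pushout in $\Simp\BB$, and $\map{\Simp\BB}(E^1\otimes A,\I{C})$ is identified with the pullback of
\[
\map{\Simp\BB}(\Delta^3\otimes A,\I{C})\xrightarrow{(d_{\{0,2\}},\,d_{\{1,3\}})}\map{\Simp\BB}((\Delta^1\sqcup\Delta^1)\otimes A,\I{C})\xleftarrow{(s_0,\,s_0)}\map{\Simp\BB}((\Delta^0\sqcup\Delta^0)\otimes A,\I{C}).
\]
By the definitions in section~\ref{sec:objectsMorphisms} a map $\Delta^n\otimes A\to\I{C}$ is an $n$-morphism with context $A$ (equivalently a map $A\to\I{C}_n$), so a point of this pullback consists of a $3$-morphism $\sigma$ with context $A$ together with objects $a,b$ with context $A$ and equivalences $d_{\{0,2\}}\sigma\simeq\id_a$ and $d_{\{1,3\}}\sigma\simeq\id_b$ over $A$; and since $d^{\{1,2\}}\colon\Delta^1\to E^1$ factors through $\Delta^3$, the restriction map to $\map{\Simp\BB}(\Delta^1\otimes A,\I{C})$ sends such a point to $d_{\{1,2\}}\sigma$.

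Next I would feed in the Segal condition. By proposition~\ref{prop:ConditionsInternalCategories} the spine map $\I{C}_3\xrightarrow{\sim}\I{C}_1\times_{\I{C}_0}\I{C}_1\times_{\I{C}_0}\I{C}_1$ is an equivalence, so a $3$-morphism $\sigma$ with context $A$ is the same as a composable chain of morphisms $g=d_{\{0,1\}}\sigma$, $f'=d_{\{1,2\}}\sigma$, $h=d_{\{2,3\}}\sigma$ with context $A$; furthermore, using the Segal condition and the composition law from section~\ref{sec:objectsMorphisms}, the edges $d_{\{0,2\}}\sigma$ and $d_{\{1,3\}}\sigma$ are identified with the composites $gf'$ and $f'h$ respectively. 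Comparing sources and targets, the existence of equivalences $gf'\simeq\id_a$ and $f'h\simeq\id_b$ forces $a$ to be the target of $f'$ and $b$ its source, and forces $g$ and $h$ to be morphisms from the target of $f'$ to its source. Restricting to the fibre over a fixed morphism $f\colon c\to d$ with context $A$ (so that $f'=f$), we conclude that a factorisation of $f$ through $\Delta^1\otimes A\to E^1\otimes A$ is the same datum as a pair of morphisms $g,h\colon d\to c$ with context $A$ together with equivalences $gf\simeq\id_d$ and $fh\simeq\id_c$ over $A$.

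Comparing with the definition preceding the proposition, this description shows that $f$ factors through $\Delta^1\otimes A\to E^1\otimes A$ if and only if $f$ admits a left and a right inverse with context $A$, i.e.\ if and only if $f$ is an equivalence: in one direction one simply reads off $g$, $h$ and the two equivalences from a given factorisation, and in the other direction, given $g,h\colon d\to c$ with $gf\simeq\id_d$ and $fh\simeq\id_c$, the composable chain $d\xrightarrow{g}c\xrightarrow{f}d\xrightarrow{h}c$ determines through the Segal equivalence a $3$-morphism $\sigma$ with context $A$ whose $\{1,2\}$-edge is $f$ and with $d_{\{0,2\}}\sigma\simeq gf\simeq\id_d$ and $d_{\{1,3\}}\sigma\simeq fh\simeq\id_c$, which assembles with the objects $d$, $c$ and these two equivalences into a point of $\map{\Simp\BB}(E^1\otimes A,\I{C})$ lying over $f$. (Only the Segal condition is used; univalence plays no role.) I expect the only real friction here to be organisational: one must verify that a path in $\map{\BB}(A,\I{C}_1)$ between two morphisms that share their boundary corresponds to an equivalence of the associated sections of the mapping groupoid over $A$, and one must keep the left-to-right composition convention consistent throughout — both routine, so the main obstacle is bookkeeping rather than a genuine difficulty.
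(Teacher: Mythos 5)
Your argument is correct and follows essentially the same route as the paper: both proofs reduce the statement to the pushout square defining $E^1$ and the Segal decomposition of a $3$-simplex, identifying a factorisation of $f$ through $\Delta^1\otimes A\to E^1\otimes A$ with the data of a composable triple having $f$ as its middle edge and identities as its $\{0,2\}$- and $\{1,3\}$-edges. The only difference is organisational: you describe the homotopy fibre of the restriction map $\map{\Simp\BB}(E^1\otimes A,\I{C})\to\map{\Simp\BB}(\Delta^1\otimes A,\I{C})$ once and read off the equivalence in both directions, whereas the paper constructs the factorisation and extracts the inverse pair in two separate implications; note also that your derived conditions $gf\simeq\id_d$ and $fh\simeq\id_c$ agree with what the paper's proof actually uses (the definition preceding the proposition as printed contains a typographical slip in the orientations of $g,h$ and the placement of $\id_c$, $\id_d$).
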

\begin{proof}
	Suppose that there are $g,h\colon d\rightrightarrows c$ together with equivalences $gf\simeq \id_c$ and $fh\simeq\id_d$ that witness $f$ as an equivalence in $\I{C}$. The triple $(h,f,g)$ then determines a map $I^3\otimes A\to \I{C}$ which can be uniquely extended to a map $\Delta^3\otimes A\to \I{C}$ since $\I{C}$ is a $\BB$-category. By construction, the restriction of this map along the inclusions $d^{\{0,2\}}\colon\Delta^1\otimes A\to\Delta^3\otimes A$ and $d^{\{1,3\}}\colon \Delta^1\otimes A\to\Delta^3\otimes A$ are equivalent to $\id_d$ and $\id_c$, respectively. By definition, this means that $\Delta^3\otimes A\to\I{C}$ extends along the map $\Delta^3\otimes A\to E^1\otimes A$.
	
	Conversely, if the map $\Delta^1\otimes A\to \I{C}$ that is determined by $f$ factors through the map $\Delta^1\otimes A\to E^1\otimes A$, it in particular determines a map $\Delta^3\otimes A\to\I{C}$ whose restriction along $d^{\{0,1\}}\colon \Delta^1\otimes A\to\Delta^3\otimes A$ and $d^{\{2,3\}}\colon\Delta^1\otimes A\to\Delta^3\otimes A$ gives rise to two maps $h,g\colon c\rightrightarrows d$ in $\I{C}$. By construction of $E^1$ and the definition of composition, the composites $fh$ and $gf$ factor through $d\colon A\to \I{C}$ and $c\colon A\to \I{C}$, respectively, which means that these composites are equivalent to $\id_c$ and $\id_d$.
\end{proof}
\begin{corollary}
	\label{cor:equivalencesIdentities}
	A map $f\colon A\to\I{C}_1$ defines an equivalence in $\I{C}$ if and only if it factors through the map $s_0\colon \I{C}_0\to\I{C}_1$.
\end{corollary}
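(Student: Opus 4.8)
The plan is to reduce the statement to proposition~\ref{prop:walkingEquivalenceClassifiesEquivalences} together with the univalence condition on $\I{C}$. Let $\iota\colon\Delta^1\to E^1$ be the map appearing in proposition~\ref{prop:walkingEquivalenceClassifiesEquivalences} (induced by $d^{\{1,2\}}\colon\Delta^1\into\Delta^3$), and let $p\colon E^1\to\Delta^0$ be the unique map to the terminal simplicial $\infty$-groupoid. Since $\Delta^0$ is terminal, the composite $p\iota\colon\Delta^1\to\Delta^0$ is forced to be the image of the degeneracy $\sigma^0\colon\ord{1}\to\ord{0}$ under the embedding $\Delta\into\SS_{\Delta}$. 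The first step is then purely formal: I would apply proposition~\ref{prop:walkingEquivalenceClassifiesEquivalences} to see that $f$ is an equivalence in $\I{C}$ if and only if the morphism $\Delta^1\otimes A\to\I{C}$ that it determines factors through $\iota\otimes\id_A\colon\Delta^1\otimes A\to E^1\otimes A$, and then transpose along the tensoring--powering adjunction, so that this becomes the condition that the transpose $\bar f\colon A\to\I{C}^{\Delta^1}$ factors through the powering map $\I{C}^{\iota}\colon\I{C}^{E^1}\to\I{C}^{\Delta^1}$.

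The second step invokes univalence. By definition, $\I{C}$ being univalent means exactly that $\I{C}$ is internally local with respect to $E^1\to\Delta^0$, i.e.\ that the powering map $\I{C}^{p}\colon\I{C}\simeq\I{C}^{\Delta^0}\to\I{C}^{E^1}$ is an equivalence. Since precomposing with an equivalence does not affect whether a given map factors through another, $\bar f$ factors through $\I{C}^{\iota}$ if and only if it factors through $\I{C}^{\iota}\circ\I{C}^{p}=\I{C}^{p\iota}\colon\I{C}^{\Delta^0}\to\I{C}^{\Delta^1}$, which by the previous paragraph is the powering of $\I{C}$ along (the image of) $\sigma^0$.

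The final step passes back to sections in context $A$: combining the adjunction $\iota\dashv(-)_0$ with proposition~\ref{prop:simplicialPowering}, which identifies $(\I{C}^{\Delta^n})_0$ with $\I{C}_n$ naturally in $n\in\Delta$, I would obtain a natural equivalence $\map{\Simp\BB}(A,\I{C}^{\Delta^n})\simeq\map{\BB}(A,\I{C}_n)$ carrying $\bar f$ to $f$ and carrying the powering map $\I{C}^{\sigma^0}$ to the simplicial degeneracy $s_0\colon\I{C}_0\to\I{C}_1$. Hence $\bar f$ factors through $\I{C}^{\sigma^0}$ exactly when $f$ factors through $s_0$, which together with the first two steps gives the claim. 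I do not expect a genuine obstacle here, since all of the content sits in proposition~\ref{prop:walkingEquivalenceClassifiesEquivalences} and in univalence; the one point requiring care is the bookkeeping in this last step --- in particular, verifying via proposition~\ref{prop:simplicialPowering} that the level-$0$ component of $\I{C}^{\sigma^0}$ really is the degeneracy $s_0$ (and not a face map), and checking that ``factors through'' behaves well under the relevant adjunction equivalences.
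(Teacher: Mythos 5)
Your proof is correct and takes the same essential route as the paper: combine proposition~\ref{prop:walkingEquivalenceClassifiesEquivalences} with univalence of $\I{C}$. The paper's one-sentence argument works directly on the tensoring side, noting that by univalence every map $E^1\otimes A\to\I{C}$ extends uniquely along $E^1\otimes A\to A$ and then invoking proposition~\ref{prop:walkingEquivalenceClassifiesEquivalences}; your version transposes the whole argument to the powering side (univalence as ``$\I{C}^p$ is an equivalence''), which is the same content with the bookkeeping made explicit via proposition~\ref{prop:simplicialPowering}.
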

\begin{proof}
	Since $\I{C}$ is a $\BB$-category, any map $E^1\otimes A\to\I{C}$ extends uniquely along the projection $E^1\otimes A\to A$, hence the result follows from Proposition~\ref{prop:walkingEquivalenceClassifiesEquivalences}.
\end{proof}
As a consequence of Corollary~\ref{cor:equivalencesIdentities}, given two objects $c,d$ in $\I{C}$ in context $A\in \BB$, we may define the \emph{$\BB$-groupoid of equivalences} $\Eq{\I{C}}(c,d)\in\Over{\BB}{A}$ via the pullback square
\begin{equation*}
	\begin{tikzcd}
		{\Eq{\I{C}}(c,d)}\arrow[r]\arrow[d]& \I{C}_0\arrow[d, "{(d_1,d_0)}"]\\
		A\arrow[r, "{(c,d)}"] & \I{C}_0\times\I{C}_0.
	\end{tikzcd}
\end{equation*}
By construction, sections $A\to \Eq{\I{C}}(c,d)$ over $A$ correspond to equivalences $f\colon c\to d$ in $\I{C}$ in context $A$.
We will say that two objects $c,d\colon A\rightrightarrows \I{C}$ are \emph{equivalent} if there is an equivalence $c\simeq d$, i.e.\ a section $A\to\Eq{\I{C}}(c,d)$ over $A$. This is equivalent to the condition that $c$ and $d$ are equivalent as objects in $\map{\BB}(A,\I{C}_0)$.

\begin{remark}
	Even if we are dealing with a \emph{large} $\BB$-category $\I{C}$, we will usually only need to consider \emph{small} contexts $A\in\BB$ when speaking about objects or maps in $\I{C}$. Essentially, this is possible since $\BBB$ is generated by $\BB$ under large colimits, which implies that every \emph{large} context $A\in\BBB$ admits a cover by small contexts. Another way of saying this is that $\I{C}$ is uniquely specified by its value at small contexts, cf.\ Remark~\ref{rem:transitivityUniverseEnlargement}.
\end{remark}

\subsection{The universe for groupoids}
\label{sec:universe}
Recall that for any large $\infty$-category $\CC$ that admits pullbacks one can define a $\CatSS$-valued presheaf $\Over{\CC}{-}$ as the functor that classifies the codomain fibration $d_0\colon\Fun(\Delta^1,\CC)\to\CC$. If $\CC$ is an $\infty$-topos, then this presheaf is a \emph{sheaf}~\cite[Proposition~6.1.3.10]{htt}. One may therefore define:
\begin{definition}
	The \emph{universe for $\BB$-groupoids} is defined to be the large $\BB$-category $\Univ$ that corresponds to the $\CatSS$-valued sheaf $\Over{\BB}{-}$ on $\BB$.
\end{definition}

\begin{remark}
	\label{rem:BCUniverse}
	For any object $A\in \BB$ there is a canonical equivalence $\pi_A^\ast\Univ\simeq \Univ[\Over{\BB}{A}]$. In fact, the $\CatSS$-valued sheaf associated with $\pi_A^\ast\Univ$ is given by $\Fun_{\BB}((\pi_A)_!(-), \Univ)$, hence the claim follows from the observation that since $(\pi_A)_!$ is a right fibration, the commutative square
	\begin{equation*}
	\begin{tikzcd}
		\Fun(\Delta^1, \Over{\BB}{A})\arrow[r] \arrow[d, "d_1"] & \Fun(\Delta^1, \BB)\arrow[d, "d_1"]\\
		\Over{\BB}{A}\arrow[r, "(\pi_A)_!"] & \BB
	\end{tikzcd}
	\end{equation*}
	is cartesian.
\end{remark}

The universe for small $\BB$-groupoids $\Univ$ plays a role akin to the $\infty$-category $\SS$ of $\infty$-groupoids in higher category theory. It should therefore be regarded as a reflection of the $\infty$-topos $\BB$ within itself. This is supported by  the observation that there is an equivalence
\begin{equation*}
	\Fun_{\BB}(A,\Univ)\simeq\Over{\BB}{A}
\end{equation*}
for every $A\in \BB$, which in particular shows that objects $A\to \Univ$ correspond to objects in the slice $\infty$-topos $\Over{\BB}{A}$. Our goal is to obtain a similar characterisation for the maps in $\Univ$. More precisely, we will show the following result:
\begin{proposition}
	\label{prop:mappingObjectsInternalUniverse}
	For any two objects $g,h$ in $\Univ$ in context $A\in\BB$ that correspond to objects $P,Q\in\Over{\BB}{A}$, there is an equivalence
	\begin{equation*}
		\map{\Univ}(g,h)\simeq \Over{\iFun{P}{Q}}{A}
	\end{equation*}
	in $\Over{\BB}{A}$, where the right-hand side denotes the internal mapping object in $\Over{\BB}{A}$.
\end{proposition}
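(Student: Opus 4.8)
The plan is to compute both sides as objects of the $\infty$-topos $\Over{\BBB}{A}$, which by lemma~\ref{lem:equivalenceSheavesSlices} is identified with $\widehat{\Over{\BB}{A}}\simeq\Shv_{\SSS}(\Over{\BB}{A})$. Since $\Over{\BB}{A}$ is cartesian closed, the internal mapping object $\Over{\iFun{P}{Q}}{A}$ already lies in the essential image of $\Over{\BB}{A}\into\widehat{\Over{\BB}{A}}$, so it suffices to produce an equivalence, natural in an object $u\colon B\to A$ of $\Over{\BB}{A}$, between the $\infty$-groupoids of sections over $u$ of the two objects. By the Yoneda lemma this forces $\map{\Univ}(g,h)$ to lie in $\Over{\BB}{A}$ as well and identifies it there with $\Over{\iFun{P}{Q}}{A}$.

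First I would unwind the left-hand side. By definition $\map{\Univ}(g,h)$ is the pullback of $(d_1,d_0)\colon\Univ_1\to\Univ_0\times\Univ_0$ along $(g,h)\colon A\to\Univ_0\times\Univ_0$, with structure map to $A$ the induced projection. Hence the $\infty$-groupoid of sections over $u\colon B\to A$ is that of maps $B\to\Univ_1$ in $\BBB$ whose composite with $(d_1,d_0)$ is $(g,h)\circ u$, i.e.\ the fibre of $(d_1,d_0)_\ast\colon\map{\BBB}(B,\Univ_1)\to\map{\BBB}(B,\Univ_0)^{\times 2}$ over $(g\circ u,\,h\circ u)$. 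Under the identification of $\Univ$ with the $\Cat(\SSS)$-valued sheaf $\Over{\BB}{-}$ from proposition~\ref{prop:internalCategoriesSheaves}, one has $\map{\BBB}(B,\Univ_1)\simeq\Fun(\Delta^1,\Over{\BB}{B})^{\core}$ and $\map{\BBB}(B,\Univ_0)\simeq(\Over{\BB}{B})^{\core}$, the map $(d_1,d_0)$ becomes evaluation at the two endpoints (the source and target of an arrow of $\Over{\BB}{B}$), and---by naturality in $B$ of the equivalence $\map{\BBB}(B,\Univ_0)\simeq(\Over{\BB}{B})^{\core}$---the points $g\circ u$ and $h\circ u$ correspond to $u^\ast P$ and $u^\ast Q$. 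Thus the fibre in question is the mapping $\infty$-groupoid $\map{\Over{\BB}{B}}(u^\ast P,u^\ast Q)$, naturally in $u$.

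For the right-hand side, the defining adjunction of the internal mapping object in $\Over{\BB}{A}$ shows that the $\infty$-groupoid of sections of $\Over{\iFun{P}{Q}}{A}$ over $u\colon B\to A$ is $\map{\Over{\BB}{A}}\big((B\xrightarrow{u}A)\times P,\;Q\big)$. The product in $\Over{\BB}{A}$ is $u_!u^\ast P$, where $u_!\colon\Over{\BB}{B}\to\Over{\BB}{A}$ (composition with $u$) is left adjoint to the pullback functor $u^\ast$, so this mapping $\infty$-groupoid is equivalent to $\map{\Over{\BB}{B}}(u^\ast P,u^\ast Q)$, again naturally in $u$. This agrees with the computation of the previous paragraph; by the Yoneda lemma the two objects of $\widehat{\Over{\BB}{A}}$ are therefore equivalent, and since $\Over{\iFun{P}{Q}}{A}$ lies in $\Over{\BB}{A}$, so does $\map{\Univ}(g,h)$.

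The manipulations with adjoints above are routine; the step requiring care is the middle paragraph, namely turning the fact that $\Univ$ corresponds to the sheaf $\Over{\BB}{-}$ into explicit descriptions of $\Univ_0$, $\Univ_1$ and the face maps $d_0,d_1$, and verifying naturality in $B$ of $\map{\BBB}(B,\Univ_0)\simeq(\Over{\BB}{B})^{\core}$ so that the vertices $g,h$ genuinely restrict to $u^\ast P,u^\ast Q$ along $u$. Alternatively one could base change along $\pi_A$ to reduce to the case $A\simeq 1$, replacing $\BB$ by $\Over{\BB}{A}$, but that case still requires exactly this computation.
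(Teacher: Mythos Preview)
Your argument is correct and takes a genuinely different, more direct route than the paper's. The paper first reduces to $A\simeq 1$ via lemma~\ref{lem:baseChangeUniverse}, then invokes the explicit description of the simplicial object $\Univ_\bullet$ in terms of the cartesian fibrations $\Fun(\Delta^{\bullet+1},\BB)\to\BB$ (corollary~\ref{cor:RightFibrationInternalUniverse}), and finally manufactures auxiliary simplicial sets $K$ and $L$ to compare $\int\iFun{P}{Q}$ with the pullback of $\Fun(\Delta^2,\BB)\to\Fun(\Lambda^2_2,\BB)$. By contrast, you stay entirely at the level of sections: using only the sheaf identification $\Univ\simeq\Over{\BB}{-}$ together with remark~\ref{rem:tensoringCotensoringSheaves} to read off $\map{\BBB}(B,\Univ_1)\simeq\Fun(\Delta^1,\Over{\BB}{B})^{\core}$, you compute both sides as the functor $u\mapsto\map{\Over{\BB}{B}}(u^\ast P,u^\ast Q)$ and conclude by Yoneda. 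Your approach is shorter and more conceptual for this particular statement; the paper's machinery (proposition~\ref{prop:CotensoringInternalUniverse} and corollary~\ref{cor:RightFibrationInternalUniverse}) is developed partly for its own sake and gets reused later, e.g.\ in corollary~\ref{cor:explicitDescriptionUniversalLeftFibration}.
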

Before giving a proof of Proposition~\ref{prop:mappingObjectsInternalUniverse}, we need a few preparatory steps. We begin by characterising the powering functor on the $\infty$-category of $\BB$-categories. To that end, if $\I{C}$ is a (large) $\BB$-category and if $p\colon\int \I{C}\to \BB$ is the associated cartesian fibration, postcomposing the powering functor $\I{C}^{(-)}$ with the Grothendieck construction $\int\colon \Cat(\BBB)\into\Cart(\BB)\into\Over{(\CatSS)}{\BB}$ gives rise to a functor $\int \I{C}^{(-)}\in\Fun(\CatSS^{\op},\Over{(\CatSS)}{\BB})$. On account of the equivalence
\begin{equation*}
	\Fun(\CatSS^{\op},\Over{(\CatSS)}{\BB})\simeq \Over{\Fun(\CatSS^{\op},\CatSS)}{\BB},
\end{equation*}
one may equivalently regard $\int \I{C}^{(-)}$ as a functor $\CatSS^{\op}\to\CatSS$ together with a map $\int\I{C}^{(-)}\to\BB$.
\begin{lemma}
	\label{lem:PShCotensored}
	The map $\int \I{C}^{(-)}\to \BB$ fits into a cartesian square
	\begin{equation*}
	\begin{tikzcd}
			\int \I{C}^{(-)}\arrow[d]\arrow[r] & \Fun(-, \int \I{C})\arrow[d, "p_\ast"]\\
			\BB \arrow[r, "\diag"] & \Fun(-, \BB)
		\end{tikzcd}
	\end{equation*}
	in $\Fun(\CatSS^{\op}, \CatSS)$, where $\diag$ denotes the diagonal functor.
\end{lemma}
\begin{proof}
	Let $F\colon\CatSS^{\op}\to\CatSS$ be the pullback of $p_\ast$ along $\diag$. By postcomposition with the Yoneda embedding one obtains a cartesian square
	\begin{equation*}
		\begin{tikzcd}
			\map{\CatSS}(-,F(-))\arrow[d]\arrow[r] & \map{\CatSS}(-\times -, \int \I{C})\arrow[d]\\
			\map{\CatSS}(-,\BB) \arrow[r, "(-)\circ \pr_0"] & \map{\CatSS}(-\times -, \BB)
		\end{tikzcd}
	\end{equation*}
	in which $\pr_0$ denotes the projection onto the first factor. 
	By making use of the adjunction $(\pi_{\BB})_!\dashv \pi_{\BB}^\ast$, this shows that there is an equivalence
	\begin{equation*}
		\map{/\BB}(-,F(-))\simeq\map{/\BB}(-\times_{\BB} \pi_{\BB}^{\ast}(-), \smallint \I{C})
	\end{equation*}
	of presheaves on $\Over{(\CatSS)}{\BB}\times\CatSS$.
	As by \cite[Corollary~3.2.2.12]{htt} the functor $F\colon \CatSS^{\op}\to \Over{(\CatSS)}{\BB}$ factors through the subcategory $\Cart(\BB)$ and so does $\pi_{\BB}^\ast$, we conclude the proof with the chain of equivalences
	\begin{equation*}
		\map{\Cart(\BB)}(-,F(-))\simeq \map{\Cart(\BB)}(-\times_{\BB} \pi_{\BB}^\ast(-), \smallint \I{C}) \simeq \map{\Cart(\BB)}(-, \smallint \I{C}^{(-)})
	\end{equation*}
	that gives rise to the desired equivalence of functors $F\simeq \int \I{C}^{(-)}$.
\end{proof}
Let $(-)^\triangleright\colon \CatS \to \CatS$ denote the functor that sends an $\infty$-category $\CC$ to the pushout
\begin{equation*}
	\begin{tikzcd}[column sep=large]
		\CC\arrow[r, "d^0"]\arrow[d] \arrow[dr, very near end, "\lrcorner", phantom] & \CC\times\Delta^1 \arrow[d]\\
		\Delta^0\arrow[r, "\epsilon"] & \CC^\triangleright,
	\end{tikzcd}
\end{equation*}
and note that the map $\epsilon$ in the above square allows us to consider $(-)^\triangleright$ as a functor $\CatS\to \Under{(\CatS)}{\Delta^0}$. Postcomposition with $\Fun(-,\BB)$ then gives rise to a functor
\begin{equation*}
	\Fun((-)^\triangleright,\BB)\colon \CatSS^{\op}\to \Over{(\CatSS)}{\BB}.
\end{equation*}
\begin{proposition}
	\label{prop:CotensoringInternalUniverse}
	The functor
	\begin{equation*}
		\Fun((-)^\triangleright, \BB)\colon \CatSS^{\op}\to \Over{(\CatSS)}{\BB}
	\end{equation*}
	factors through the inclusion $\Cart(\BB)\hookrightarrow \Over{(\CatSS)}{\BB}$ such that there is a commutative square
	\begin{equation*}
		\begin{tikzcd}[column sep=large]
			\CatSS^{\op}\arrow[d, "\Univ^{(-)}"'] \arrow[r, "{\Fun((-)^\triangleright, \BB)}"] & \Cart(\BB)\arrow[d, "\simeq"]\\
			\Cat(\BBB)\arrow[r, hookrightarrow]&\PSh_{\CatSS}(\BB).
		\end{tikzcd}
	\end{equation*}
\end{proposition}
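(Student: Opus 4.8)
The plan is to compute the powering $\Univ^{(-)}$ via Lemma~\ref{lem:PShCotensored}, after first identifying the Grothendieck construction $\int\Univ$ with the codomain fibration over $\BB$. Concretely, by Remark~\ref{rem:categoriesAsFibrations} the object $\int\Univ\in\Cart(\BB)$ is obtained from $\Univ$ by passing through $\Cat(\BBB)\simeq\Shv_{\Cat(\SSS)}(\BB)\into\PSh_{\Cat(\SSS)}(\BB)\simeq\Cart(\BB)$, so it unstraightens the $\Cat(\SSS)$-valued presheaf underlying $\Univ$, which by definition is $\Over{\BB}{-}$. Since $\Over{\BB}{-}$ is by construction the straightening of the codomain fibration $d_0\colon\Fun(\Delta^1,\BB)\to\BB$, we obtain an equivalence $\int\Univ\simeq\Fun(\Delta^1,\BB)$ over $\BB$.

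Feeding $\I{C}=\Univ$ and $p=d_0$ into Lemma~\ref{lem:PShCotensored} then yields a cartesian square
\begin{equation*}
\begin{tikzcd}
\int\Univ^{(-)}\arrow[d]\arrow[r]\arrow[dr, phantom, very near start, "\ulcorner"] & \Fun(-,\Fun(\Delta^1,\BB))\arrow[d, "(d_0)_\ast"]\\
\BB\arrow[r, "\diag"] & \Fun(-,\BB)
\end{tikzcd}
\end{equation*}
in $\Fun(\Cat(\SSS)^{\op},\Cat(\SSS))$. Using the natural exponential equivalence $\Fun(\CC,\Fun(\Delta^1,\BB))\simeq\Fun(\CC\times\Delta^1,\BB)$, the map $(d_0)_\ast$ becomes restriction along the inclusion $\id_\CC\times d_0\colon\CC\into\CC\times\Delta^1$ of the copy of $\CC$ over the vertex that $d_0$ singles out, and $\diag$ becomes restriction along $\CC\to\Delta^0$. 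Applying the limit-preserving functor $\Fun(-,\BB)$ to the pushout square defining $\CC\diamond\Delta^0$ produces precisely the pullback of this same cospan, naturally in $\CC$ and compatibly with the structure maps to $\BB$ — the one induced by $\epsilon$ on the one side and the pullback projection on the other. By uniqueness of pullbacks this gives an equivalence $\Fun(-\diamond\Delta^0,\BB)\simeq\int\Univ^{(-)}$ of functors $\Cat(\SSS)^{\op}\to\Cat(\SSS)$ over $\BB$.

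To finish, I would note that $\Univ^{(-)}$ takes values in $\Cat(\BBB)$ — the powering of a category in $\BBB$ by any $\infty$-category is again a category, by proposition~\ref{prop:categoriesExponentialIdeal} — and that $\int$ factors through $\Cart(\BB)$; hence the equivalence above shows that $\Fun(-\diamond\Delta^0,\BB)$ factors through $\Cart(\BB)\into\Over{\Cat(\SSS)}{\BB}$, and straightening it produces the commutative square in the statement, whose bottom--left composite is the presheaf underlying $\Univ^{(-)}$ and whose top--right composite is the straightening of $\int\Univ^{(-)}$. The one step that is not purely formal, and the part I expect to need care, is the vertex bookkeeping: one must check that the vertex of $\Delta^1$ collapsed in $\CC\diamond\Delta^0$ is exactly the one picked out by the codomain projection $d_0$, so that $\epsilon^\ast\colon\Fun(\CC\diamond\Delta^0,\BB)\to\BB$ really is the structure map and not the domain projection. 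Everything else reduces to manipulating (co)limits in functor $\infty$-categories and invoking uniqueness of pullbacks.
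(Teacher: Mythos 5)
Your proof is correct and follows essentially the same route as the paper's: feed $\I{C}=\Univ$ into Lemma~\ref{lem:PShCotensored} with $\int\Univ$ identified as the codomain fibration $\Fun(\Delta^1,\BB)\to\BB$, transport via the exponential equivalence $\Fun(\CC,\Fun(\Delta^1,\BB))\simeq\Fun(\CC\times\Delta^1,\BB)$, and recognise the resulting pullback square as the image of the pushout defining $-\diamond\Delta^0$ under the limit-preserving functor $\Fun(-,\BB)$. You are slightly more explicit than the paper about why the target functor lands in $\Cart(\BB)$ and about the vertex bookkeeping for $d_0$ versus $d^0$, but these are exactly the points the paper leaves implicit, and your handling of them is right.
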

\begin{proof}
	By Lemma~\ref{lem:PShCotensored}, the functor $\int\Univ^{(-)}$ fits into the pullback square
	\begin{equation*}
		\begin{tikzcd}
			\int \Univ^{(-)} \arrow[d] \arrow[r] & \Fun(-\times\Delta^1, \BB)\arrow[d, "(\id\times d_0)^\ast"]\\
			\BB\arrow[r, "\diag"] & \Fun(-, \BB)
		\end{tikzcd}
	\end{equation*}
	in $\Fun(\CatSS^{\op}, \CatSS)$, which is precisely the square obtained by applying the functor $\Fun(-,\BB)$ to the pushout square that defines $(-)^\triangleright$.
\end{proof}

For any $n\geq 0$ there is a canonical equivalence $(\Delta^n)^\triangleright\simeq\Delta^{n+1}$ in $\CatS$. Hence postcomposing the cosimplicial $\infty$-category $\Delta^{\bullet}$ with the functor $(-)^\triangleright$ gives rise to the cosimplicial object $\Delta^{\bullet+1}$ in $\Under{(\CatS)}{\Delta^0}$. Further postcomposing this functor with $\Fun(-,\BB)$ for any $\infty$-topos $\BB$ then results in the simplicial object
\begin{equation*}
	\Delta^{\op}\to \Over{(\CatSS)}{\BB},\quad \ord{n}\mapsto \Fun(\Delta^{n+1},\BB).
\end{equation*}
By Proposition~\ref{prop:CotensoringInternalUniverse}, this simplicial object takes values in $\Cart(\BB)$.

Let $\RFib(\BB)\subset\Cart(\BB)$ be the full subcategory spanned by the right fibrations into $\BB$, and recall that the Grothendieck construction restricts to an equivalence $\RFib(\BB)\simeq\PSh_{\SSS}(\BB)$ with respect to which the core functor $(-)^{\core}\colon\PSh_{\CatSS}(\BB)\to\PSh_{\SSS}(\BB)$ corresponds to the functor that is given by restricting a cartesian fibration $\PP\to\BB$ to the the subcategory of $\PP$ spanned by the cartesian edges. On account of Proposition~\ref{prop:simplicialPowering}, Proposition~\ref{prop:CotensoringInternalUniverse} now implies:
\begin{corollary}
	\label{cor:RightFibrationInternalUniverse}
	The core functor $\Cart(\BB)\to \RFib(\BB)\simeq\PSh_{\SSS}(\BB)$ sends the simplicial object $\Fun(\Delta^{\bullet +1},\BB)$ in $\Cart(\BB)$ to the universe $\Univ\in\Cat(\BBB)\into\Simp{\PSh_{\SSS}(\BB)}$.\qed
\end{corollary}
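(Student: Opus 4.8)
The plan is to obtain the statement by feeding proposition~\ref{prop:simplicialPowering} into proposition~\ref{prop:CotensoringInternalUniverse}; no genuinely new input is required beyond unwinding the Grothendieck equivalence. First I would restrict the commutative square of proposition~\ref{prop:CotensoringInternalUniverse} along the cosimplicial $\infty$-category $\Delta^{\bullet}\colon\Delta\to\Cat(\SSS)$. Invoking the identifications $\Delta^n\diamond\Delta^0\simeq\Delta^{n+1}$ recorded above, this shows that the simplicial object $\ord{n}\mapsto\Fun(\Delta^{n+1},\BB)$ of $\Cart(\BB)$ corresponds, under the Grothendieck equivalence $\Cart(\BB)\simeq\PSh_{\Cat(\SSS)}(\BB)$, to the simplicial object $\ord{n}\mapsto\Univ^{\Delta^n}$ obtained by restricting the powering functor $\Univ^{(-)}$ along $\Delta^{\bullet}$ and composing with the inclusion $\Cat(\BBB)\into\PSh_{\Cat(\SSS)}(\BB)$.

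Next I would apply the core functor levelwise. As recalled immediately before the statement, under the Grothendieck equivalence the functor $\Cart(\BB)\to\RFib(\BB)\simeq\PSh_{\SSS}(\BB)$ is postcomposition of $\Cat(\SSS)$-valued presheaves with the core functor $(-)^{\core}\colon\Cat(\SSS)\to\SSS$, so the simplicial object of interest is sent to $\ord{n}\mapsto(\Univ^{\Delta^n})^{\core}$ in $\PSh_{\SSS}(\BB)$. Since $\Univ^{\Delta^n}$ is a (large) category in $\BBB$, taking the core groupoid of the associated $\Cat(\SSS)$-valued sheaf agrees with evaluating $\Univ^{\Delta^n}$, viewed as a simplicial object of $\BBB$, at $\ord{0}$ — this is the general identification of the core groupoid functor with the functor $(-)_0\colon\Simp{\BBB}\to\BBB$ on categories, compatible with remark~\ref{rem:coreGroupoidificationSheaves}. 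Hence the image is $\ord{n}\mapsto(\Univ^{\Delta^n})_0$. Finally, proposition~\ref{prop:simplicialPowering}, applied to the simplicial object $\Univ$ in the $\infty$-topos $\BBB$, provides a natural equivalence $(\Univ^{\Delta^n})_0\simeq\Univ_n$; assembling these equivalences over $\Delta^{\op}$ yields the desired equivalence $\Fun(\Delta^{\bullet+1},\BB)^{\core}\simeq\Univ$ of simplicial objects in $\PSh_{\SSS}(\BB)$.

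I do not expect a real obstacle, since the mathematical content is already carried by the two cited propositions. The points that require a little care are purely bookkeeping: checking that the categorical powering $\Univ^{\Delta^n}$ of $\Cat(\BBB)$ over $\Cat(\SSS)$ coincides with the simplicial powering that appears in proposition~\ref{prop:simplicialPowering} — which holds because the two powerings agree on the representables $\Delta^n$ — and making sure that the levelwise equivalences produced above are natural in the simplicial variable, so that they glue to an equivalence of simplicial objects rather than merely a degreewise identification.
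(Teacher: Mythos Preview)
Your proposal is correct and is exactly the argument the paper has in mind: the corollary is stated with a \qed immediately after the sentence ``On account of proposition~\ref{prop:simplicialPowering}, proposition~\ref{prop:CotensoringInternalUniverse} now implies'', and your write-up simply spells out that implication. The only content is precisely the two reductions you make---identifying $\Fun(\Delta^{\bullet+1},\BB)$ with $\Univ^{\Delta^\bullet}$ via proposition~\ref{prop:CotensoringInternalUniverse} and then invoking proposition~\ref{prop:simplicialPowering} after applying the core---so nothing further is needed.
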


\begin{proof}[Proof of Proposition~\ref{prop:mappingObjectsInternalUniverse}]
	We have to show that there is a cartesian square
	\begin{equation*}
		\begin{tikzcd}[column sep=large]
			\iFun{P}{Q} \arrow[d]\arrow[r] & \Univ_1\arrow[d, "{(d_1, d_0)}"] \\
			A\arrow[r, "{(g,h)}"] & \Univ_0\times\Univ_0 
		\end{tikzcd}
	\end{equation*}
	in $\BBB$ or equivalently in $\PSh_{\SSS}(\BB)$. As Remark~\ref{rem:BCUniverse} implies that the pullback functor $\pi_A^{\ast}\colon \BBB\to\widehat{\Over{\BB}{A}}$
	carries the universe of $\BB$ to the universe of $\Over{\BB}{A}$, we may assume without loss of generality that $A\simeq 1$ holds.
	By, Corollary~\ref{cor:RightFibrationInternalUniverse}, the map $(d_1, d_0)\colon\Univ_1\to\Univ_0\times\Univ_0$ corresponds to the map of right fibrations obtained by restricting the functor $\Fun(\Delta^2,\BB)\to\Fun(\Lambda^2_2,\BB)$
	to the subcategories spanned by the cartesian edges.
	Since the core functor $(-)^{\core}\colon \PSh_{\CatS}(\BB)\to\PSh_{\SSS}(\BB)$ commutes with limits, it therefore suffices to show that there is a cartesian square
	\begin{equation*}
		\begin{tikzcd}
			\int \iFun{P}{Q} \arrow[d]\arrow[r] & \Fun(\Delta^2,\BB)\arrow[d]\\
			\BB\arrow[r] & \Fun(\Lambda^2_2, \BB)
		\end{tikzcd}
	\end{equation*}
	of $\infty$-categories, where the lower horizontal map sends an object $A\in\BB$ to the diagram
	\begin{equation*}
		\begin{tikzcd}
			& A\times Q\arrow[d, "\pr_0"]\\
			A\times P\arrow[r, "\pr_0"] & {A}.
		\end{tikzcd}
	\end{equation*}
	By definition, the right fibration $\int \iFun{G}{H}\to\BB$ arises as the pullback
	\begin{equation*}
		\begin{tikzcd}
			\int \iFun{P}{Q}\arrow[d]\arrow[r] & \Fun(\Delta^1,\BB)\arrow[d, "{(d_1,d_0)}"]\\
			\BB\arrow[r, "{(-\times G, H)}"] & \BB\times\BB.
		\end{tikzcd}
	\end{equation*}
	By factoring the functor $-\times G$ into the composite $\BB\to \Fun(\Delta^1,\BB)\to\BB$
	in which the first arrow acts by sending $A\in\BB$ to the projection $\pr_0\colon A\times G\to A$ and the second map is given by $d^1$, we conclude that there is a pullback square
	\begin{equation*}
		\begin{tikzcd}
			\int \iFun{P}{Q}\arrow[d]\arrow[r] & \Fun(\Lambda^2_0,\BB)\arrow[d, "{(d_{\{0,2\}}, d_{\{1\}})}"] \\
			\BB\arrow[r] & \Fun(\Delta^1\sqcup\Delta^0,\BB).
		\end{tikzcd}
	\end{equation*}
	Let $K$ be the simplicial set that is obtained by the pushout
	\begin{equation*}
		\begin{tikzcd}
			\Delta^0\arrow[d, "d^{\{1\}}"]\arrow[r, "d^1"] \arrow[dr, phantom, very near end, "\lrcorner"] & \Delta^1\arrow[d]\\
			\Lambda^2_2\arrow[r] & K
		\end{tikzcd}
	\end{equation*}
	and let $\iota\colon \Delta^1\sqcup \Delta^0\into K$ be the map that is determined by the two inclusions $d^{\{0,2\}}\colon \Delta^1\to \Lambda^2_2$ and $d^0\colon \Delta^0\to\Delta^1$. Then the functor $\BB\to \Fun(\Lambda^2_2,\BB)$ that is determined by the global section $(g,h)\colon 1\to \Univ_0\times\Univ_0$ can be decomposed into the composition
	\begin{equation*}
			\BB\to \Fun(\Delta^1\sqcup\Delta^0,\BB)\xrightarrow{\iota_\ast} \Fun(K, \BB)\to \Fun(\Lambda^2_2,\BB) 
	\end{equation*}
	in which $\iota_\ast$ denotes the functor of right Kan extension along $\iota$. Let $L$ be the simplicial set that is defined by the pushout
	\begin{equation*}
		\begin{tikzcd}
			\Delta^0\arrow[d, "d^{\{1\}}"]\arrow[r, "d^1"] \arrow[dr, phantom, very near end, "\lrcorner"] & \Delta^1\arrow[d]\\
			\Delta^2\arrow[r] & L.
		\end{tikzcd}
	\end{equation*}
	One then obtains a commutative diagram
	\begin{equation*}
		\begin{tikzcd}
			\Fun(\Lambda^2_0,\BB) \arrow[d, "{(d_{\{0,2\}}, d_{\{1\}})}"']\arrow[r, hookrightarrow] & \Fun(L, \BB)\arrow[d]\arrow[r]& \Fun(\Delta^2,\BB)\arrow[d]\\
			\Fun(\Delta^1\sqcup \Delta^0, \BB)\arrow[r,hookrightarrow, "\iota_\ast"] & \Fun(K, \BB)\arrow[r]& \Fun(\Lambda^2_2,\BB).
		\end{tikzcd}
	\end{equation*}
	in $\CatSS$ in which both squares are cartesian, hence the result follows.
\end{proof}

\subsection{Fully faithful and essentially surjective functors}
\label{sec:fullyFaithful}
A \emph{functor} between two $\BB$-categories $\I{C}$ and $\I{D}$ is simply defined to be a map $f\colon \I{C}\to\I{D}$ in $\Cat(\BB)$. Using Proposition~\ref{prop:categoriesExponentialIdeal}, there is a $\BB$-category $\iFun{\I{C}}{\I{D}}$ whose objects in context $A\in \BB$ are given by the functors $\pi_A^\ast\I{C}\to \pi_A^\ast\I{D}$. A map in $\iFun{\I{C}}{\I{D}}$ is referred to as a \emph{morphism of functors} or alternatively as a \emph{natural transformation}, the datum of such a morphism (in context $A$) is given by a map $\Delta^1\otimes\pi_A^\ast\I{C}\to \pi_A^\ast\I{D}$ in $\Cat(\Over{\BB}{A})$.

\begin{definition}
	\label{def:fullyFaithfulEssentiallySurjective}
	A functor $\I{C}\to  \I{D}$ between $\BB$-categories is said to be \emph{fully faithful} if it is internally right orthogonal to the map $(d^1,d^0)\colon\Delta^0\sqcup\Delta^0\to\Delta^1$. Dually, a functor is \emph{essentially surjective} if is (internally) left orthogonal to the class of fully faithful functors.
\end{definition}
\begin{remark}
	A functor $f\colon\CC\to\DD$ between $\infty$-categories is essentially surjective in the sense of Definition~\ref{def:fullyFaithfulEssentiallySurjective} if and only if every object in $\DD$ is equivalent to an object in the image of $f$, i.e.\ if and only if it is essentially surjective in the usual sense of the term. We will show this in~\ref{cor:esoCoverCore} below.
\end{remark}

By Proposition~\ref{prop:factorisationSystemInternallyGenerated}, the two classes of essentially surjective and fully faithful functors form an orthogonal factorisation system in $\Cat(\BB)$. In particular, one obtains:
\begin{proposition}
	\label{prop:fundamentalThmInternalCategories}
	Let $f\colon  \I{C}\to \I{D}$ be a functor between $\BB$-categories. Then $f$ is an equivalence if and only if $f$ is fully faithful and essentially surjective.\qed
\end{proposition}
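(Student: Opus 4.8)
The plan is to observe that this statement is a formal consequence of the factorisation-system machinery already in place, so there is essentially nothing to do beyond citing the right results. First I would record that $\Cat(\BB)$ is presentable by proposition~\ref{prop:categoriesAccessiblelocalisation} and cartesian closed by proposition~\ref{prop:categoriesExponentialIdeal}, so that proposition~\ref{prop:factorisationSystemInternallyGenerated} applies to the singleton set $S=\{(d^1,d^0)\colon\Delta^0\sqcup\Delta^0\to\Delta^1\}$. This produces a factorisation system $(\LL,\RR)$ in $\Cat(\BB)$ with $\RR=S^{\ibot}$ and $\LL=\prescript{\ibot}{}{\RR}=\prescript{\bot}{}{\RR}$. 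By definition~\ref{def:fullyFaithfulEssentiallySurjective}, $\RR$ is precisely the class of fully faithful functors and $\LL$ is precisely the class of essentially surjective functors; here one uses that the two notions of orthogonality invoked — internal left orthogonality to a class of maps versus ordinary left orthogonality to that class — coincide, which is exactly the content of the equality $\prescript{\ibot}{}{\RR}=\prescript{\bot}{}{\RR}$ in proposition~\ref{prop:factorisationSystemInternallyGenerated}.

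With the factorisation system identified, the proposition is immediate from item~(1) of proposition~\ref{prop:propertiesFactorisationSystems}: in any factorisation system $(\LL,\RR)$ the intersection $\LL\cap\RR$ is exactly the class of equivalences. Thus a functor $f\colon\I{C}\to\I{D}$ is an equivalence in $\Cat(\BB)$ if and only if it lies in both $\LL$ and $\RR$, i.e.\ if and only if it is both essentially surjective and fully faithful.

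There is no real obstacle here; the only point requiring any care is the bookkeeping that the two classes coming out of proposition~\ref{prop:factorisationSystemInternallyGenerated} are named correctly, which follows verbatim from definition~\ref{def:fullyFaithfulEssentiallySurjective}. Accordingly I would keep the proof to a single sentence citing proposition~\ref{prop:propertiesFactorisationSystems}, as is done for the companion statements in this subsection.
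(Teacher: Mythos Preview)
Your proposal is correct and matches the paper's approach exactly: the proposition is marked \qed with no written proof, and the sentence immediately preceding it states that essentially surjective and fully faithful functors form a factorisation system by proposition~\ref{prop:factorisationSystemInternallyGenerated}, so the result is the formal consequence of item~(1) of proposition~\ref{prop:propertiesFactorisationSystems} that you describe.
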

Since fully faithful functors are by definition \emph{internally} right orthogonal to the map $(d^1,d^0)\colon\Delta^0\sqcup\Delta^0\to\Delta^1$, the class of essentially surjective functors is stable under products with arbitrary $\BB$-categories. Dually, this means that the class of fully faithful functors must be stable under exponentiation. We therefore conclude:
\begin{proposition}
	If $f\colon \I{C}\to \I{D}$ is a fully faithful functor of $\BB$-categories and if $E$ is an arbitrary simplicial object in $\BB$, the induced functor $f_\ast\colon \iFun{E}{\I{C}}\to \iFun{E}{\I{D}}$ is fully faithful as well.\qed
\end{proposition}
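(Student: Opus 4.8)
The plan is to deduce the statement from the fact that full faithfulness is an \emph{internal} right-orthogonality condition, so that it transposes well across the tensor--hom adjunction of $\Cat(\BB)$. The first step would be to reduce to the case where $E$ is itself a category in $\BB$. Let $L\colon\Simp\BB\to\Cat(\BB)$ denote the localisation functor. Since $\I{C}$ and $\I{D}$ are local objects and $L$ preserves finite products by corollary~\ref{cor:localisationCategoriesFiniteProducts}, for every category $D\in\Cat(\BB)$ one has
\begin{equation*}
\map{\Cat(\BB)}(D,\iFun{E}{\I{C}})\simeq\map{\Simp\BB}(D\times E,\I{C})\simeq\map{\Cat(\BB)}(L(D\times E),\I{C})\simeq\map{\Cat(\BB)}(D\times L(E),\I{C})\simeq\map{\Cat(\BB)}(D,\iFun{L(E)}{\I{C}}),
\end{equation*}
and likewise for $\I{D}$; by the Yoneda lemma these equivalences identify $\iFun{E}{f}$ with $\iFun{\I{B}}{f}$, where $\I{B}:=L(E)\in\Cat(\BB)$. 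Hence we may assume $E=\I{B}$ is a category in $\BB$.

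With this reduction in hand, I would argue as follows. By definition, $f_\ast=\iFun{\I{B}}{f}$ is fully faithful as soon as $\I{X}\times(d^1,d^0)\bot\iFun{\I{B}}{f}$ holds for every $\I{X}\in\Cat(\BB)$, where $(d^1,d^0)\colon\Delta^0\sqcup\Delta^0\to\Delta^1$. Applying the adjunction $-\times\I{B}\dashv\iFun{\I{B}}{-}$ to the cartesian square of mapping $\infty$-groupoids defining this orthogonality relation, and using that products in $\Cat(\BB)$ commute and distribute over the coproduct $\Delta^0\sqcup\Delta^0$, one sees that the square for $\I{X}\times(d^1,d^0)\bot\iFun{\I{B}}{f}$ is identified with the square for $(\I{X}\times\I{B})\times(d^1,d^0)\bot f$. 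Since $f$ is fully faithful, i.e.\ internally right orthogonal to $(d^1,d^0)$, and since $\I{X}\times\I{B}$ is again a category in $\BB$, this latter relation holds, which completes the argument. (Equivalently, one may phrase Step~2 via the factorisation system: $f$ is right orthogonal to every essentially surjective functor, for any essentially surjective $h$ the functor $h\times\id_{\I{B}}$ is again essentially surjective by the observation preceding the proposition, and $h\bot\iFun{\I{B}}{f}$ is equivalent to $(h\times\id_{\I{B}})\bot f$ by the exponential adjunction.)

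I do not expect a genuine obstacle here; the single point requiring attention is the reduction in the first step, which is precisely what guarantees that $\I{X}\times\I{B}$ is an honest category in $\BB$ rather than a bare simplicial object, so that the internal-orthogonality hypothesis on $f$ can be invoked. Everything else is a formal manipulation of the tensor--hom adjunction, and in fact the whole proof is the transpose, under exponentiation, of the statement recorded just before the proposition that essentially surjective functors are stable under products with an arbitrary category in $\Cat(\BB)$.
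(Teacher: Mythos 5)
Your proof is correct and takes essentially the same approach the paper does, namely transposing internal right orthogonality through the tensor--hom adjunction, which the paper phrases dually as ``essentially surjective functors are stable under products with categories, hence fully faithful functors are stable under exponentiation.'' Your first step, replacing $E$ by $L(E)$ via the Yoneda argument using that $L$ preserves finite products and that $\I{C},\I{D}$ are local, is a useful elaboration of a point the paper glosses over: the preceding observation about products literally only concerns products with \emph{categories}, whereas the proposition allows $E$ to be an arbitrary simplicial object, so the reduction (or an equivalent observation that right orthogonality to $f$ only depends on the $L$-image of the test map) is genuinely needed to close the argument.
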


Let $f\colon \I{C}\to \I{D}$ be a functor between $\BB$-categories. By definition, $f$ being fully faithful precisely means that the square
\begin{equation*}
	\begin{tikzcd}
	\I{C}^{\Delta^1}\arrow[r, "f^{\Delta^1}"]\arrow[d] & \I{D}^{\Delta^1}\arrow[d]\\
	\I{C}\times\I{C}\arrow[r, "f\times f"] & \I{D}\times\I{D}
	\end{tikzcd}
\end{equation*}
is cartesian. Applying the core functor, this implies that the square
\begin{equation*}
	\begin{tikzcd}
	\I{C}_1\arrow[r, "f_1"]\arrow[d] & \I{D}_1\arrow[d]\\
	\I{C}_0\times \I{C}_0\arrow[r, "f_0\times f_0"] & \I{D}_0\times \I{D}_0
	\end{tikzcd}
\end{equation*}
is cartesian as well. In fact, the latter square being cartesian is even a sufficient criterion for $f$ to be fully faithful. The proof of this statement requires the following combinatorial lemma:
\begin{lemma}
	\label{lem:essentiallySurjectiveStabilitySimplices}
	Lett $A\in \BB$ be an arbitrary object and let $S$ be a saturated class of morphisms in $\Simp\BB$ that contains the maps $I^n\otimes A\into\Delta^n\otimes A$ for all $n\geq 0$. If $S$ contains the map $(d^1,d^0)\colon A\sqcup A\to\Delta^1\otimes A$, then it also contains the map $(d^1,d^0)\colon(\Delta^n\otimes A)\sqcup(\Delta^n\otimes A)\to(\Delta^1\times\Delta^n)\otimes A$ for any integer $n\geq 0$.
\end{lemma}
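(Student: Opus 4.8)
The plan is to fix $A$ and reduce the statement to a purely combinatorial fact about simplicial sets, which is then transported to $\Simp\BB$ along the functor $-\otimes A=\const(-)\times A\colon\Simp\SS\to\Simp\BB$. Write $j_n\colon(\{0\}\times\Delta^n)\sqcup(\{1\}\times\Delta^n)\into\Delta^1\times\Delta^n$ for the inclusion of the two ends of the prism, regarded as a map in $\Simp\SS$; then $j_n\otimes A$ is precisely the map appearing in the statement, and the hypothesised map $A\sqcup A\to\Delta^1\otimes A$ is $(\partial\Delta^1\into\Delta^1)\otimes A$. Since $\const$ and $-\times A$ both preserve colimits (colimits being universal in the $\infty$-topos $\Simp\BB$), so does $-\otimes A$; hence the preimage under $-\otimes A$ of any saturated class is again saturated, and $-\otimes A$ carries the saturation of a small set of maps into the saturation of its image. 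It therefore suffices to prove that $j_n$ lies in the saturation $W$ in $\Simp\SS$ of $\{I^m\into\Delta^m\mid m\geq 0\}\cup\{\partial\Delta^1\into\Delta^1\}$: then $j_n\otimes A$ lies in the saturation of $\{I^m\otimes A\into\Delta^m\otimes A\mid m\ge 0\}\cup\{A\sqcup A\to\Delta^1\otimes A\}$, which is contained in $S$. Two facts about $W$ will be used: it is the left class of a factorisation system (proposition~\ref{prop:factorisationSystemGenerated}), so by proposition~\ref{prop:propertiesFactorisationSystems} it has the property that if $g$ and $fg$ both lie in $W$ then so does $f$; and, as established in the proof of lemma~\ref{lem:internalExternalSegalConditions}, the saturation of the spine inclusions — a fortiori $W$ — contains every inner horn inclusion $\Lambda^m_i\into\Delta^m$ with $0<i<m$.

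First I would show, by induction on $m\geq 1$, that $W$ contains every boundary inclusion $\partial\Delta^m\into\Delta^m$. The base case $m=1$ is the generator $\partial\Delta^1\into\Delta^1$. For $m\geq 2$, the face $\partial_1\Delta^m$ meets the inner horn $\Lambda^m_1$ precisely in its own boundary, so $\Lambda^m_1\into\partial\Delta^m$ is a pushout of $\partial\Delta^{m-1}\into\Delta^{m-1}$; this is a pushout along a monomorphism of simplicial sets and hence is computed correctly in $\Simp\SS$ (cf.\ remark~\ref{rem:HomotopyColimitsSimplicialSets}), and it lies in $W$ by the inductive hypothesis. As the composite $\Lambda^m_1\into\partial\Delta^m\into\Delta^m$ is the inner horn inclusion and hence lies in $W$, the cancellation property yields $\partial\Delta^m\into\Delta^m\in W$.

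Next I would present $j_n$ via the skeletal filtration of $\Delta^1\times\Delta^n$ relative to its two ends. Because every vertex of $\Delta^1\times\Delta^n$ already lies in $(\{0\}\times\Delta^n)\sqcup(\{1\}\times\Delta^n)$, adjoining the non-degenerate simplices of $\Delta^1\times\Delta^n$ in order of increasing dimension exhibits $j_n$ as a finite composite of pushouts of coproducts of boundary inclusions $\partial\Delta^k\into\Delta^k$ with $1\leq k\leq n+1$ — and these pushouts, being along monomorphisms, are again unaffected by passing to $\Simp\SS$. Each factor lies in $W$ by the previous step, and $W$ is closed under coproducts, pushouts and composition, so $j_n\in W$, which completes the proof.

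The main obstacle is not a single hard computation but identifying the correct combinatorial backbone of the argument: recognising that the one extra generator beyond the spine inclusions is $(\partial\Delta^1\into\Delta^1)\otimes A$, that $j_n$ is a monomorphism which is bijective on vertices, and that the two pushout decompositions above — $\Lambda^m_1\into\partial\Delta^m$ out of $\partial\Delta^{m-1}\into\Delta^{m-1}$, and the relative skeletal filtration of the prism — are exactly what is needed to propagate membership in $W$ from the inner horns (which the spine inclusions generate, by lemma~\ref{lem:internalExternalSegalConditions}) and the single arrow $\partial\Delta^1\into\Delta^1$ to all boundary inclusions and thence to $j_n$. Everything else is routine bookkeeping with saturated classes, using throughout — as elsewhere in this section — that a pushout of simplicial sets along a monomorphism presents a homotopy pushout and is therefore unchanged under the inclusion $\Simp\Set\into\Simp\SS$.
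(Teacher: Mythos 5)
Your proof is correct, but it takes a genuinely different route from the paper's. The paper never mentions boundary inclusions or horns in this lemma: it first uses the assumption that $I^n\sqcup I^n\into\Delta^n\sqcup\Delta^n$ lies in $S$ together with left-cancellation to reduce the target map to $I^n\sqcup I^n\into\Delta^1\times\Delta^n$, then peels off $\Delta^1\times I^n\into\Delta^1\times\Delta^n$ (already in $S$ by lemma~\ref{lem:internalExternalSegalConditions}) to reduce to $I^n\sqcup I^n\into\Delta^1\times I^n$, then collapses $I^n$ as an iterated pushout of $\Delta^1$'s to reduce to $n=1$, and finishes with a bare-hands gluing decomposition of $\Delta^1\sqcup\Delta^1\into\Delta^1\times\Delta^1$ via $\Delta^2\sqcup_{\Delta^1}\Delta^2$. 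You instead prove the strictly stronger intermediate fact that $W$ contains \emph{all} boundary inclusions $\partial\Delta^m\into\Delta^m$ for $m\geq 1$ --- by induction, using the inner horn $\Lambda^m_1\into\Delta^m$ (available from lemma~\ref{lem:internalExternalSegalConditions}) and left-cancellation against $\Lambda^m_1\into\partial\Delta^m$ expressed as a pushout of $\partial\Delta^{m-1}\into\Delta^{m-1}$ --- and then exhibit $j_n$ as a relative cell complex built from such boundary inclusions, using that $j_n$ is bijective on vertices so the filtration starts at dimension $1$. Your argument is more conceptual and yields a reusable byproduct (that the saturated class internally generated by essential surjectivity data contains every boundary inclusion, hence every monomorphism that is bijective on vertices), at the cost of invoking the general relative skeletal filtration of simplicial sets; the paper's argument is more economical, staying entirely within spine inclusions and the specific small colimit decompositions already in play in section~3.2. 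Both reductions to $\Simp\SS$ (yours via $-\otimes A$ preserving colimits, the paper's via slicing over $A$ and the essential image of $\const$) are valid and essentially equivalent.
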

\begin{proof}
	By replacing $\BB$ with $\Over{\BB}{A}$, we may assume without loss of generality $A\simeq 1$. As all maps in the statement of the lemma are contained in the essential image of $\const\colon\SS\to\BB$, we may further assume $\BB\simeq \SS$. Furthermore, as the inclusion $I^n\sqcup I^n\into\Delta^n\sqcup \Delta^n$ is contained in $S$, it suffices to show that the map $I^n\sqcup I^n\into \Delta^1\times\Delta^n$ is contained in $S$ as well. By Lemma~\ref{lem:internalExternalSegalConditions} the map $\Delta^1\times I^n\into\Delta^1\times\Delta^n$ is contained in $S$, hence we need only show that also the map $I^n\sqcup I^n\into \Delta^1\times I^n$ is an element of $S$. Now $I^n$ being defined as the colimit $\Delta^1\sqcup_{\Delta^0}\cdots\sqcup_{\Delta^0}\Delta^1$, we can assume without loss of generality $n=1$. Using the decomposition $\Delta^1\times\Delta^1\simeq\Delta^2\sqcup_{\Delta^1}\Delta^2$, one easily sees that this map is the pushout in $\Fun(\Delta^1,\Simp{\SS})$ that is obtained by glueing the two maps
	\begin{equation*}
		(d^{\{0,1\}}, d^{\{2\}})\colon \Delta^1\sqcup\Delta^0\into\Delta^2
	\end{equation*}
	and
	\begin{equation*}
		(d^{\{0\}}, d^{\{1,2\}})\colon\Delta^0\sqcup\Delta^1\into\Delta^2
	\end{equation*}
	along the morphism $(d^1,d^0)\colon\Delta^0\sqcup\Delta^0\into\Delta^1$. The proof is therefore finished once we show that the two maps above are contained in $S$. We show this for the first one, the case of the second one is completely analogous. Making use once more of the assumption that the spine inclusion $I^2\into\Delta^2$ is contained in $S$, it suffices to show that the map
	\begin{equation*}
		(d^{\{0,1\}}, d^{\{2\}})\colon \Delta^1\sqcup\Delta^0\into I^2
	\end{equation*}
	is contained in $S$. This follows from the observation that this map is obtained by glueing $\Delta^0\sqcup\Delta^0\into\Delta^1$ and $\id_{\Delta^1}$ along $\id_{\Delta^0}$ in $\Fun(\Delta^1, \Simp\SS)$.
\end{proof}

\begin{proposition}
	\label{prop:equivalentConditionsFullyFaithful}
	A functor $f\colon \I{C}\to\I{D}$ between $\BB$-categories is fully faithful if and only if the square
	\begin{equation*}
	\begin{tikzcd}
	\I{C}_1\arrow[r, "f_1"]\arrow[d] & \I{D}_1\arrow[d]\\
	\I{C}_0\times \I{C}_0\arrow[r, "f_0\times f_0"] & \I{D}_0\times \I{D}_0
	\end{tikzcd}
	\end{equation*}
	is cartesian.
\end{proposition}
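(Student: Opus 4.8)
\emph{The only-if direction} is already explained in the discussion preceding the statement: fully faithfulness of $f$ says that the square $[\I{C}^{\Delta^1}\to\I{D}^{\Delta^1};\ \I{C}\times\I{C}\to\I{D}\times\I{D}]$ is cartesian, and applying the core functor, which preserves limits, produces the square in the statement. So the plan is to prove the converse.

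\emph{First I would reformulate both conditions as external orthogonality statements in $\Simp\BB$.} Write $m$ for the map $(d^1,d^0)\colon\Delta^0\sqcup\Delta^0\to\Delta^1$ in $\Simp\BB$, noting $\Delta^0\sqcup\Delta^0\simeq 1\sqcup 1$. By definition~\ref{def:fullyFaithfulEssentiallySurjective}, $f$ is fully faithful iff it is internally right orthogonal to $m$, i.e.\ iff the square of internal mapping objects $[\I{C}^{\Delta^1}\to\I{D}^{\Delta^1};\ \I{C}\times\I{C}\to\I{D}\times\I{D}]$ is cartesian. By proposition~\ref{prop:categoriesExponentialIdeal} these internal mapping objects agree whether formed in $\Simp\BB$ or in $\Cat(\BB)$, and the inclusion $\Cat(\BB)\hookrightarrow\Simp\BB$ creates limits; hence $f$ is fully faithful iff $c\times m\bot f$ holds in $\Simp\BB$ for every simplicial object $c$. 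On the other hand, by the tensor--hom adjunction in $\Simp\BB$, the adjunction $\iota\dashv(-)_0$, and proposition~\ref{prop:simplicialPowering} (which identifies $(\I{C}^{\Delta^1})_0$ with $\I{C}_1$), for every $A\in\BB$ there are natural identifications $\map{\Simp\BB}(\Delta^1\otimes A,\I{C})\simeq\map{\BB}(A,\I{C}_1)$ and $\map{\Simp\BB}(A\sqcup A,\I{C})\simeq\map{\BB}(A,\I{C}_0\times\I{C}_0)$, under which the map induced by $(d^1,d^0)\colon A\sqcup A\to\Delta^1\otimes A$ corresponds to $(d_1,d_0)$. So by the Yoneda lemma, the hypothesis that the displayed square is cartesian in $\BB$ is equivalent to saying that $f$ is right orthogonal in $\Simp\BB$ to the map $(d^1,d^0)\colon A\sqcup A\to\Delta^1\otimes A$ for every $A\in\BB$.

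\emph{Next I would feed in the category axioms and the combinatorial lemma.} Let $S$ be the class of morphisms in $\Simp\BB$ that are left orthogonal to $f$; the left orthogonal complement of a single morphism is a saturated class. Since $\I{C}$ and $\I{D}$ are categories, they are local with respect to every spine inclusion $I^n\otimes A\hookrightarrow\Delta^n\otimes A$ (a category is internally local with respect to $I^2\hookrightarrow\Delta^2$, and lemma~\ref{lem:internalExternalSegalConditions} identifies the resulting saturated class with the one generated by all $I^n\otimes A\hookrightarrow\Delta^n\otimes A$), and any morphism between two $w$-local objects is right orthogonal to $w$; so $S$ contains all the spine inclusions. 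By the previous paragraph $S$ also contains every map $A\sqcup A\to\Delta^1\otimes A$. Now lemma~\ref{lem:essentiallySurjectiveStabilitySimplices}, applied to $S$ for each $A$, shows that $S$ contains $(d^1,d^0)\colon(\Delta^n\otimes A)\sqcup(\Delta^n\otimes A)\to(\Delta^1\times\Delta^n)\otimes A$ for all $n\ge 0$ and $A\in\BB$ — and this map is exactly $(\Delta^n\otimes A)\times m$.

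\emph{Finally I would conclude by colimit generation.} Since every simplicial object is a small colimit of objects $\Delta^n\otimes A$, the functor $c\mapsto c\times m$ preserves colimits, and $S$ is closed under colimits in $\Fun(\Delta^1,\Simp\BB)$, it follows that $c\times m\in S$ for every $c\in\Simp\BB$. By the reformulation in the second paragraph, $f$ is fully faithful. The step I expect to require the most care is the translation between external orthogonality in $\Simp\BB$ and internal orthogonality in $\Cat(\BB)$ — in particular, justifying that the category axioms on $\I{C}$ and $\I{D}$ enter the saturated class $S$ through the principle that local objects are right orthogonal to their localizing maps — together with checking that lemma~\ref{lem:essentiallySurjectiveStabilitySimplices} really is applicable (that is, that $S$ contains the spine inclusions and the maps $A\sqcup A\to\Delta^1\otimes A$), since this lemma is the combinatorial engine of the whole argument.
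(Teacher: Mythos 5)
Your proof is correct and follows essentially the same route as the paper: reformulate the hypothesis as right orthogonality to the maps $A\sqcup A\to\Delta^1\otimes A$, invoke lemma~\ref{lem:essentiallySurjectiveStabilitySimplices} to upgrade this to the maps $(\Delta^n\otimes A)\sqcup(\Delta^n\otimes A)\to(\Delta^1\times\Delta^n)\otimes A$, and conclude internal orthogonality. The only cosmetic difference is that the paper translates the latter into a levelwise pullback check on $(\I{C}^{\Delta^1})_n\to(\I{D}^{\Delta^1})_n$ while you close up under colimits in $\Simp\BB$; your treatment of where the spine inclusions enter (via $\I{C}$ and $\I{D}$ being local and item~(2) of proposition~\ref{prop:propertiesFactorisationSystems}) is actually a bit more explicit than the paper's reference to ``the saturation of $S$ in $\Cat(\BB)$.''
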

\begin{proof}
	We already observed above that $f$ being fully faithful implies that the square is cartesian. Conversely, the square being cartesian is equivalent to $f$ being right orthogonal to the set of maps
	\begin{equation*}
		S=\{(\Delta^0\sqcup\Delta^0)\otimes A\to \Delta^1\otimes A~\vert~A\in \BB\}
	\end{equation*}
	in $\Cat(\BB)$. By Lemma~\ref{lem:essentiallySurjectiveStabilitySimplices}, the saturation of $S$ in $\Cat(\BB)$ contains the maps $(\Delta^n\sqcup\Delta^n)\otimes A\to (\Delta^1\times\Delta^n)\otimes A$ for $A\in \BB$ and $n\geq 0$, which translates into the statement that the induced square
	\begin{equation*}
		\begin{tikzcd}
		(\I{C}^{\Delta^1})_n\arrow[r, "f^{\Delta^1}_n"]\arrow[d] &( \I{D}^{\Delta^1})_n\arrow[d]\\
		\I{C}_n\times \I{C}_n\arrow[r, "f_n\times f_n"] & \I{D}_n\times \I{D}_n
		\end{tikzcd}
	\end{equation*}
	is a pullback square for all $n\geq 0$. As limits in $\Cat(\BB)$ can be computed on the underlying simplicial objects, this shows that $f$ is fully faithful.
\end{proof}

\begin{proposition}
	\label{prop:characterisationInternalFullyFaithfulMappingGroupoids}
	Let $f \colon  \I{C}\to  \I{D}$ be a functor between large $\BB$-categories. Then the following are equivalent:
	\begin{enumerate}
		\item The functor $ f $ is fully faithful;
		\item for any $A\in\BB$ and any two objects $c_0,c_1\colon A\to \I{C}$ in context $A$, the morphism
		\begin{equation*}
			\map{\I{C}}(c_0,c_1)\to\map{\I{D}}(f(c_0), f(c_1))
		\end{equation*}
		that is induced by $ f $ is an equivalence in $\Over{\BBB}{A}$;
		\item for every $A\in \BB$ the functor $f(A)\colon \I{C}(A)\to\I{D}(A)$ of $\infty$-categories is fully faithful;
		\item the map of cartesian fibrations over $\BB$ that is determined by $ f $ is fully faithful.
	\end{enumerate}
\end{proposition}
\begin{proof}
	The equivalence of the last two conditions follows from Lemma~\ref{lem:PShCotensored} and Remark~\ref{rem:tensoringCotensoringSheaves}. As moreover limits in $\Shv_{\CatSS}(\BB)$ are computed objectwise, it is clear that the first and the third condition are equivalent.
	
	Suppose now that $f$ is fully faithful. For any object $A\in\BB$ and any two objects $c_0,c_1\colon A\to \I{C}$  in context $A$, the map
	\begin{equation*}
	\map{\I{C}}(c_0,c_1)\to \map{\I{D}}(f(c_0), f(c_1))
	\end{equation*}
	is defined by the commutative diagram
	\begin{equation*}
	\begin{tikzcd}[column sep={5em,between origins},row sep={2em,between origins}]
	& \map{\I{C}}(c_0,c_1)\arrow[dl]\arrow[rr]\arrow[dd] & & \map{\I{D}}(f(c_0), f(c_1))\arrow[dl]\arrow[dd] \\
	\I{C}_1 \arrow[rr, crossing over, "f_1", near end]\arrow[dd]& & \I{D}_1 & \\
	& A \arrow[rr, "\id", near start]\arrow[dl] & & A\arrow[dl] \\
	\I{C}_0\times \I{C}_0 \arrow[rr, "f_0\times f_0"]& & \I{D}_0\times \I{D}_0. \arrow[from=uu,crossing over]
	\end{tikzcd}
	\end{equation*}
	in which the two vertical squares on the left and on the right are cartesian. As $f$ is fully faithful, the square in the front is cartesian, hence the square in the back must be cartesian as well, which implies that the second condition holds. Conversely, suppose that $f$ induces an equivalence on mapping groupoids. 
	As the object $C_0\times C_0$ in $\BBB$ is obtained as the colimit of the diagram $\Over{\BB}{C_0\times C_0}\to\BB\into\BBB$, we obtain a cover
	\begin{equation*}
		\bigsqcup_{A\to \I{C}_0\times\I{C}_0} A\onto \I{C}_0\times\I{C}_0
	\end{equation*}
	in $\BBB$. By assumption, pasting the front square in the above diagram with the pullback square
	\begin{equation*}
		\begin{tikzcd}
			\bigsqcup_{A\to \I{C}_0\times\I{C}_0} \map{\I{C}}(c_0,c_1)\arrow[d]\arrow[r, twoheadrightarrow] & \I{C}_1\arrow[d]\\
			\bigsqcup_{A\to \I{C}_0\times\I{C}_0} A\arrow[r, twoheadrightarrow] & \I{C}_0\times\I{C}_0
		\end{tikzcd}
	\end{equation*}
	results in a pullback square, hence the claim follows from the fact that the \'etale base change along a cover in an $\infty$-topos constitutes a conservative algebraic morphism.
\end{proof}

\begin{lemma}
	\label{lem:localisationIsSurjective}
	The map $\Delta^1\to\Delta^0$ is essentially surjective as a functor of $\BB$-categories.
\end{lemma}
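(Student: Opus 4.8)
The plan is to verify directly that $\Delta^1\to\Delta^0$ is internally left orthogonal to every fully faithful functor, which by definition~\ref{def:fullyFaithfulEssentiallySurjective} is precisely what essential surjectivity asks. So fix a fully faithful functor $f\colon\I{C}\to\I{D}$ of categories in $\BB$. Unravelling the definition of internal orthogonality --- recalling that the internal mapping objects out of $\Delta^0$ and $\Delta^1$ are the powerings $\I{C}^{\Delta^0}\simeq\I{C}$ and $\I{C}^{\Delta^1}$ --- the relation that $\Delta^1\to\Delta^0$ is internally left orthogonal to $f$ is exactly the statement that the square
\begin{equation*}
\begin{tikzcd}
\I{C}\arrow[r,"f"]\arrow[d,"s"'] & \I{D}\arrow[d,"s"]\\
\I{C}^{\Delta^1}\arrow[r,"f^{\Delta^1}"] & \I{D}^{\Delta^1}
\end{tikzcd}
\end{equation*}
is cartesian in $\Cat(\BB)$, where $s$ is the image of $\Delta^1\to\Delta^0$ under the powering functors $\I{C}^{(-)}$, $\I{D}^{(-)}$ (concretely, the degeneracy picking out identity morphisms).

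The key step is to reduce the cartesianness of this square to a property of $f$ alone. Being fully faithful, $f$ exhibits $\I{C}^{\Delta^1}$ as the pullback $(\I{C}\times\I{C})\times_{\I{D}\times\I{D}}\I{D}^{\Delta^1}$ formed along $f\times f$ and the source--target projection $\I{D}^{\Delta^1}\to\I{D}\times\I{D}$, as recorded just before proposition~\ref{prop:equivalentConditionsFullyFaithful}. Pasting this pullback with the pullback $\I{D}\times_{\I{D}^{\Delta^1}}\I{C}^{\Delta^1}$, and observing that $s$ followed by the source--target projection is the diagonal of $\I{D}$ (identity morphisms have coinciding source and target), one identifies $\I{D}\times_{\I{D}^{\Delta^1}}\I{C}^{\Delta^1}$ with $(\I{C}\times\I{C})\times_{\I{D}\times\I{D}}\I{D}\simeq\I{C}\times_{\I{D}}\I{C}$, and the comparison map $\I{C}\to\I{D}\times_{\I{D}^{\Delta^1}}\I{C}^{\Delta^1}$ with the diagonal $\I{C}\to\I{C}\times_{\I{D}}\I{C}$. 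Hence the square above is cartesian if and only if $f$ is a monomorphism in $\Cat(\BB)$.

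It therefore remains to observe that every fully faithful functor of categories in $\BB$ is a monomorphism. By proposition~\ref{prop:characterizationInternalFullyFaithfulMappingGroupoids}, $f$ being fully faithful means that $f(A)\colon\I{C}(A)\to\I{D}(A)$ is a fully faithful functor of $\infty$-categories for every $A\in\BB$, and a fully faithful functor of $\infty$-categories is a monomorphism, since it factors as an equivalence onto its essential image followed by the inclusion of a full subcategory, both of which are monomorphisms in $\Cat(\SS)$. Because limits in $\Cat(\BB)\simeq\Shv_{\Cat(\SS)}(\BB)$ are computed section-wise (proposition~\ref{prop:internalCategoriesSheaves}), the diagonal $\I{C}\to\I{C}\times_{\I{D}}\I{C}$ is then a section-wise equivalence, hence an equivalence, so $f$ is a monomorphism; this finishes the proof. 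The only delicate point is the bookkeeping in the pasting step of the second paragraph, in particular checking that the induced comparison map really is the diagonal; everything else is formal. A slicker repackaging of the first two paragraphs is available: the map $(d^1,d^0)\colon\Delta^0\sqcup\Delta^0\to\Delta^1$ is essentially surjective (it lies in the left class of the factorisation system by proposition~\ref{prop:saturatedClosureInternal}) and its composite with $\Delta^1\to\Delta^0$ is the codiagonal $\Delta^0\sqcup\Delta^0\to\Delta^0$, so proposition~\ref{prop:propertiesFactorisationSystems}(2) reduces the lemma to the essential surjectivity of the codiagonal, which unwinds to the very same assertion that fully faithful functors are monomorphisms.
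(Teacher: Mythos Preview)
Your proof is correct and takes a genuinely different route from the paper's.

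The paper argues combinatorially: after reducing to $\BB\simeq\SS$, it works in $\Simp\SS$ and shows that $\Delta^1\to\Delta^0$ lies in the saturated class generated by $\Delta^0\sqcup\Delta^0\hookrightarrow\Delta^1$, the spine inclusions, and $E^1\to 1$, by building auxiliary simplicial sets $K$, $L$, $H$ through iterated pushouts and tracking membership through retracts. Your argument instead observes that internal left orthogonality of $\Delta^1\to\Delta^0$ against a fully faithful $f$ unwinds (via the pullback description of $\I{C}^{\Delta^1}$ coming from full faithfulness) to the statement that $f$ is a monomorphism, and then proves this last fact directly from the section-wise characterisation in proposition~\ref{prop:characterizationInternalFullyFaithfulMappingGroupoids} together with the standard fact that fully faithful functors of $\infty$-categories are monomorphisms in $\Cat(\SS)$.

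What each approach buys: the paper's combinatorial argument stays entirely at the level of simplicial objects and does not invoke the sheaf description of $\Cat(\BB)$; yours is shorter and more conceptual, and it makes transparent that lemma~\ref{lem:localisationIsSurjective} and proposition~\ref{prop:FFMonomorphism} are really the same statement. Indeed, the paper \emph{derives} proposition~\ref{prop:FFMonomorphism} from the present lemma, whereas you have effectively reversed the logical order by giving an independent proof of the monomorphism property and then reading off essential surjectivity of $\Delta^1\to\Delta^0$ from it. There is no circularity, since proposition~\ref{prop:characterizationInternalFullyFaithfulMappingGroupoids} does not rely on the present lemma. Your ``slicker repackaging'' at the end is a nice way to compress the first two paragraphs and makes the equivalence with proposition~\ref{prop:FFMonomorphism} explicit via the factorisation-system cancellation property.
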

\begin{proof}
	Algebraic morphisms preserve essential surjectivity since dually geometric morphisms preserve full faithfulness. We may therefore assume without loss of generality $\BB\simeq \SS$.
	Let $S$ be the saturated class of maps in $\Simp\SS$ that is generated by $\Delta^0\sqcup\Delta^0\into\Delta^1$, the spine inclusions $I^n\into\Delta^n$ for $n\geq 0$ as well as the map $E^1\to 1$, where $E^1$ denotes the walking equivalence. It suffices to show that $\Delta^1\to\Delta^0$ is contained in $S$. Let $K\into\Delta^3$ be the unique map of simplicial $\infty$-groupoids that fits into the commutative diagram
	\begin{equation*}
		\begin{tikzcd}
		\Delta^1\arrow[d, "d^{\{1,2\}}"']\arrow[r, "d^{\{0,1\}}"]\arrow[dr, phantom, "\lrcorner", very near end] & \Delta^2\arrow[d]\arrow[ddr, "d^{\{1,2,3\}}", bend left]& \\
		\Delta^2\arrow[r] \arrow[drr, "d^{\{0,1,2\}}"',bend right]& K \arrow[dr, hookrightarrow]& \\
		& & \Delta^3.
		\end{tikzcd}
	\end{equation*}
	The inclusion $K\into\Delta^3$ is contained in $S$: in fact, as the inclusion $I^3\into \Delta^3$ is an element of $S$ and factors through $K\into\Delta^3$, it suffices to observe that the map $I^3\into K$ can be obtained by glueing two copies of the inclusion $I^2\into\Delta^2$ along the $\id_{\Delta^1}$ in $\Fun(\Delta^1,\Simp\SS)$. We now obtain a commutative diagram
	\begin{equation*}
		\begin{tikzcd}
		\Delta^1\sqcup\Delta^1\arrow[d] \arrow[dr, phantom, "\lrcorner", very near end]\arrow[r, hookrightarrow]& K\arrow[r, hookrightarrow]\arrow[d]\arrow[dr, phantom, "\lrcorner", very near end] & \Delta^3\arrow[d]\\
		\Delta^0\sqcup\Delta^0\arrow[r] & L\arrow[r, hookrightarrow] & E^1\\
		\end{tikzcd}
	\end{equation*}
	in which the upper left horizontal map is induced by postcomposing the inclusion $d^{\{0,2\}}\colon\Delta^1\into\Delta^2$ with the two maps $\Delta^2\rightrightarrows K$ that are defined by the pushout square. As $K\into\Delta^3$ is contained in $S$, we conclude that the induced map $L\into E^1$ must be in $S$ as well. As a consequence, the terminal map $L\to \Delta^0$ is an element of $S$ too. Let $\Delta^1\to K$ be the composite map in the pushout square that defines $K$. Postcomposing with the map $K\to L$ from the previous diagram gives rise to a map $\Delta^1\to L$. We finish the proof by showing that this map is contained in $S$. Let $H$ be defined by the pushout square
	\begin{equation*}
		\begin{tikzcd}
		\Delta^1\arrow[d,hookrightarrow, "d^{\{0,2\}}"'] \arrow[r]\arrow[dr, phantom, "\lrcorner", very near end] & \Delta^0\arrow[d]\\
		\Delta^2\arrow[r] & H.
		\end{tikzcd}
	\end{equation*}
	Then the map $\Delta^1\to L$ is recovered as the composite map in the cocartesian square
	\begin{equation*}
		\begin{tikzcd}
		\Delta^1\arrow[d, "\alpha"]\arrow[r, "\beta"]\arrow[dr, phantom, "\lrcorner", very near end] & H\arrow[d]\\
		H\arrow[r] & L,
		\end{tikzcd}
	\end{equation*}
	in which the two maps $\alpha$ and $\beta$ are given by postcomposing $d^{\{1,2\}}\colon\Delta^1\into\Delta^2$ and $d^{\{0,1\}}\colon\Delta^1\into\Delta^2$, respectively, with the map $\Delta^2\to H$. As a consequence, we only need to verify that $\alpha,\beta\in S$. We will show this for $\alpha$, the case of the map $\beta$ is analogous. Consider the commutative diagram
	\begin{equation*}
		\begin{tikzcd}
		\Delta^0 \arrow[r, hookrightarrow, "d^0"]\arrow[d,hookrightarrow, "d^0"]\arrow[dr, phantom, "\lrcorner", very near end]& \Delta^1 \arrow[r]\arrow[d, hookrightarrow]\arrow[dr, phantom, "\lrcorner", very near end] & \Delta^0\arrow[d, hookrightarrow, "d^0"]\\
		\Delta^1\arrow[r, hookrightarrow]\arrow[dr, hookrightarrow, "d^{\{1,2\}}"'] & \Lambda^2_0\arrow[r]\arrow[d, hookrightarrow]\arrow[dr, phantom, "\lrcorner", very near end] & \Delta^1\arrow[d, "\alpha"]\\
		 & \Delta^2\arrow[r] & H
		\end{tikzcd}
	\end{equation*}
	in which the composite of the two vertical maps in the middle is given by the inclusion $d^{\{0,2\}}\colon\Delta^1\into\Delta^2$. As maps in $S$ are stable under pushouts, we only need to show that the inclusion $\Lambda^2_0\into\Delta^2$ is contained in $S$. Consider the commutative square
	\begin{equation*}
		\begin{tikzcd}[column sep=huge]
		\Delta^0\sqcup\Delta^1\arrow[r, hookrightarrow]\arrow[d, hookrightarrow] & \Lambda^2_0\arrow[d, hookrightarrow] \\
		I^2\arrow[r, hookrightarrow] & \Delta^2
		\end{tikzcd}
	\end{equation*}
	that is uniquely determined by the condition that the composite map is induced by $d^{\{0\}}\colon\Delta^0\into\Delta^2$ and $d^0\colon\Delta^1\into\Delta^2$. As the lower horizontal map is contained in $S$ by assumption on $S$, it suffices to show that the two maps from $\Delta^0\sqcup\Delta^1$ are in $S$ as well. This follows immediately from the observation that both of these maps can be obtained as a pushout of the map $\Delta^0\sqcup\Delta^0\into\Delta^1$.
\end{proof}

\begin{proposition}
	\label{prop:stabilityEsoFFCoreGroupoidification}
	 The groupoidification functor preserves essential surjectivity when viewed as a functor $(-)^{\gp}\colon\Cat(\BB)\to\Cat(\BB)$ . Dually, the core $\BB$-groupoid functor $(-)^{\core}\colon\Cat(\BB)\to\Cat(\BB)$ preserves full faithfulness.
\end{proposition}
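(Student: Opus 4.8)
The plan is to treat the two assertions separately: although they are formally dual, essential surjectivity is a ``left-class'' property that one recognises by a saturation argument, whereas full faithfulness can be tested sectionwise, so the two proofs proceed along different lines. For the groupoidification functor, regard $(-)^{\gp}$ as a functor $\Cat(\BB)\to\Cat(\BB)$ by composing the groupoidification $\Cat(\BB)\to\Grpd(\BB)$ with the inclusion $\Grpd(\BB)\into\Cat(\BB)$; it is then a composite of left adjoints, hence preserves small colimits, and it preserves finite products. By proposition~\ref{prop:factorisationSystemInternallyGenerated} together with proposition~\ref{prop:saturatedClosureInternal}, the class $\LL$ of essentially surjective functors is the smallest saturated class of maps in $\Cat(\BB)$ containing the set $\{\I{C}\times g\mid\I{C}\in\Cat(\BB)\}$, where $g$ denotes $(d^1,d^0)\colon\Delta^0\sqcup\Delta^0\to\Delta^1$. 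Since $(-)^{\gp}$ preserves small colimits and composition, the preimage $\LL'=\{f\mid f^{\gp}\in\LL\}$ is again saturated, so it suffices to check that $\LL'$ contains the generating set, i.e.\ that $(\I{C}\times g)^{\gp}$ is essentially surjective for every $\I{C}$.

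Because $(-)^{\gp}$ preserves finite products and coproducts, $(\I{C}\times g)^{\gp}\simeq\I{C}^{\gp}\times g^{\gp}$, so by stability of essentially surjective functors under products with arbitrary categories it is enough to show that $g^{\gp}$ is essentially surjective. Here is the key point: by proposition~\ref{prop:characterizationGroupoids}, $\Grpd(\BB)$ is the reflective subcategory of $\Cat(\BB)$ spanned by the categories that are internally local with respect to $s^0\colon\Delta^1\to\Delta^0$, so the localisation functor $(-)^{\gp}$ inverts $s^0$; since moreover $(\Delta^0)^{\gp}\simeq\Delta^0$, it follows that $(\Delta^1)^{\gp}\simeq\Delta^0$. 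Hence $g^{\gp}$ is equivalent to the (unique) map $\Delta^0\sqcup\Delta^0\to\Delta^0$, namely the codiagonal $\nabla$, which factors as $\Delta^0\sqcup\Delta^0\xrightarrow{g}\Delta^1\xrightarrow{s^0}\Delta^0$: the first map lies in $\LL$ by construction of the factorisation system and the second lies in $\LL$ by lemma~\ref{lem:localisationIsSurjective}, so $\nabla\in\LL$. This shows $\LL\subseteq\LL'$, i.e.\ $(-)^{\gp}$ preserves essential surjectivity.

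For the core groupoid functor I would argue sectionwise. By proposition~\ref{prop:characterizationInternalFullyFaithfulMappingGroupoids}, a functor $f\colon\I{C}\to\I{D}$ between (large) categories in $\BB$ is fully faithful if and only if $f(A)\colon\I{C}(A)\to\I{D}(A)$ is fully faithful for every $A\in\BB$, and by remark~\ref{rem:coreGroupoidificationSheaves} the core groupoid functor corresponds to postcomposition with $(-)^{\core}\colon\Cat(\SS)\to\SS$, so that $\I{C}^{\core}(A)\simeq\I{C}(A)^{\core}$ and $f^{\core}(A)\simeq f(A)^{\core}$. It therefore suffices to establish the purely $\infty$-categorical fact that the core functor on $\Cat(\SS)$ carries fully faithful functors to fully faithful functors: if $F\colon\CC\to\DD$ is fully faithful then for objects $x,y$ the equivalence $\map{\CC}(x,y)\simeq\map{\DD}(Fx,Fy)$ restricts to an equivalence on the subspaces of equivalences --- a fully faithful functor is conservative --- and these subspaces are exactly the mapping spaces of $\CC^{\core}$ and $\DD^{\core}$. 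Applying this with $F=f(A)$ and invoking proposition~\ref{prop:characterizationInternalFullyFaithfulMappingGroupoids} once more yields that $f^{\core}$ is fully faithful.

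The step I expect to be the main obstacle is the identification of $(\Delta^1)^{\gp}$ with $\Delta^0$ (equivalently, of $g^{\gp}$ with the codiagonal) carried out genuinely internally to $\BB$, together with the verification that the preimage $\LL'$ is saturated; both hinge on $(-)^{\gp}$ being a left adjoint, so once that is in place the remainder is bookkeeping with the factorisation-system formalism of section~\ref{sec:factorisationSystems} and, in the second half, with a routine property of the core functor on $\Cat(\SS)$.
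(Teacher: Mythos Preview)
Your proof is correct, but both halves proceed along different lines from the paper's argument.

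For the first statement, the paper does not compute $g^{\gp}$ directly. Instead it observes that in the naturality square
\[
\begin{tikzcd}
\I{C}\arrow[r, "f"]\arrow[d] & \I{D}\arrow[d]\\
\I{C}^{\gp}\arrow[r, "f^{\gp}"] & \I{D}^{\gp}
\end{tikzcd}
\]
the vertical unit maps are essentially surjective because they lie in the class internally generated by $\Delta^1\to\Delta^0$, which is essentially surjective by lemma~\ref{lem:localisationIsSurjective}; the left cancellation property of the left class (proposition~\ref{prop:propertiesFactorisationSystems}(2)) then forces $f^{\gp}$ to be essentially surjective. Your saturation-preimage argument is equally valid and is perhaps closer to how one would discover the statement, but the paper's route is shorter and avoids having to verify that $(-)^{\gp}$ preserves products.

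For the second statement the difference is sharper. The paper derives it formally from the first part via the adjunction $(-)^{\gp}\dashv(-)^{\core}$ of endofunctors on $\Cat(\BB)$: one has $g\bot f^{\core}$ if and only if $g^{\gp}\bot f$, so if $f$ is fully faithful and $g$ is essentially surjective then $g^{\gp}$ is essentially surjective by part one and hence $g\bot f^{\core}$. Your sectionwise argument via proposition~\ref{prop:characterizationInternalFullyFaithfulMappingGroupoids} and remark~\ref{rem:coreGroupoidificationSheaves} is correct and self-contained, but it does not exploit the duality between the two statements. Incidentally, an even quicker variant of your approach would be: $f$ fully faithful implies $f$ is a monomorphism (proposition~\ref{prop:FFMonomorphism}), $(-)^{\core}$ is a right adjoint and hence preserves monomorphisms, and a monomorphism between groupoids is fully faithful (corollary~\ref{cor:FFMonomorphismsGroupoids}).
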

\begin{proof}
	Let $f\colon \I{C}\to\I{D}$ be an essentially surjective functor, and consider the commutative diagram
	\begin{equation*}
		\begin{tikzcd}
		\I{C}\arrow[r, "f"]\arrow[d] & \I{D}\arrow[d]\\
		\I{C}^{\gp}\arrow[r, "f^{\gp}"] & \I{D}^{\gp}
		\end{tikzcd}
	\end{equation*}
	in which the two vertical maps are obtained from the adjunction unit. In order to show that $f^{\gp}$ is essentially surjective, it suffices to show that the adjunction unit $\I{C}\to\I{C}^{\gp}$ is essentially surjective for every $\BB$-category $\I{C}$. As this map is contained in the saturated class internally generated by $\Delta^1\to\Delta^0$, it suffices to show that the map $\Delta^1\to\Delta^0$ is essentially surjective, which is the content of Lemma~\ref{lem:localisationIsSurjective}.
	
	Suppose now that $f\colon\I{C}\to\I{D}$ is fully faithful. To show that $f^{\core}$ is fully faithful as well, we need to show that it is contained in the right complement of the class of essentially surjective maps. But by the adjunction $(-)^{\gp}\dashv (-)^{\core}$ (viewing both as endofunctors on $\Cat(\BB)$) a map $g$ is left orthogonal to $f^{\core}$ if and only if $g^{\gp}$ is left orthogonal to $f$, hence the claim follows from the first part of the proof.
\end{proof}
\begin{proposition}
	\label{prop:FFMonomorphism}
	Every fully faithful functor of $\BB$-categories is a monomorphism in $\Cat(\BB)$. 
\end{proposition}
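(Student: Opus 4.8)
The plan is to reduce the proposition to a single assertion about the codiagonal of the terminal object. Since $\Cat(\BB)$ is presentable and cartesian closed (propositions~\ref{prop:categoriesAccessiblelocalisation} and~\ref{prop:categoriesExponentialIdeal}), example~\ref{ex:effectiveEpimorphismMonomorphism} applies: a functor $f\colon\I{C}\to\I{D}$ is a monomorphism in $\Cat(\BB)$ exactly when it is internally right orthogonal to the codiagonal $\nabla\colon\Delta^0\sqcup\Delta^0\to\Delta^0$, where $\Delta^0$ is the terminal object of $\Cat(\BB)$ (the constant simplicial object on the terminal object of $\BB$). On the other hand, by definition~\ref{def:fullyFaithfulEssentiallySurjective} a functor is essentially surjective precisely when it is internally left orthogonal to every fully faithful functor. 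So the whole proposition follows once I show that $\nabla$ is essentially surjective: then $\nabla$ is internally left orthogonal to every fully faithful $f$, hence every such $f$ is internally right orthogonal to $\nabla$, i.e.\ is a monomorphism.

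It then remains to prove that $\nabla$ is essentially surjective. First I would factor it as the composite $\Delta^0\sqcup\Delta^0\xrightarrow{(d^1,d^0)}\Delta^1\to\Delta^0$; this is indeed the codiagonal, because $\Delta^0$ is terminal and hence each composite $\Delta^0\to\Delta^1\to\Delta^0$ is the identity. Next, $(d^1,d^0)$ is essentially surjective: by proposition~\ref{prop:saturatedClosureInternal} the class of essentially surjective functors is the smallest saturated class containing $\{\I{C}\times(d^1,d^0)\mid\I{C}\in\Cat(\BB)\}$, and taking $\I{C}$ to be the terminal object exhibits $(d^1,d^0)$ itself as a member of this class. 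The map $\Delta^1\to\Delta^0$ is essentially surjective by lemma~\ref{lem:localisationIsSurjective}. Since a saturated class is closed under composition, $\nabla$ is essentially surjective, which completes the argument.

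The proof is short, and there is no serious combinatorial obstacle; the one point requiring care is lining up the two inputs correctly — the characterization of monomorphisms via internal right orthogonality to the codiagonal from example~\ref{ex:effectiveEpimorphismMonomorphism}, and the definition of essentially surjective functors as those internally left orthogonal to fully faithful functors — and observing that the codiagonal of the terminal object factors through $\Delta^1$ as a composite of two maps already known to be essentially surjective, namely the generator $(d^1,d^0)$ and the localization map $\Delta^1\to\Delta^0$ of lemma~\ref{lem:localisationIsSurjective}. One should also double-check that $\Delta^0$ really is the terminal object of $\Cat(\BB)$, so that the codiagonal in the sense of example~\ref{ex:effectiveEpimorphismMonomorphism} agrees with the map $\Delta^0\sqcup\Delta^0\to\Delta^0$ used above; this is immediate from the description of the terminal object as the constant simplicial object on $1\in\BB$.
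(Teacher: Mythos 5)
Your proposal is correct and follows the paper's own argument: factor the codiagonal $\Delta^0\sqcup\Delta^0\to\Delta^0$ through $\Delta^1$ as a composite of the generator $(d^1,d^0)$ with the map $\Delta^1\to\Delta^0$, invoke lemma~\ref{lem:localisationIsSurjective} for the second factor, and conclude from example~\ref{ex:effectiveEpimorphismMonomorphism}. You have merely spelled out the intermediate steps that the paper's proof leaves implicit (in particular that $(d^1,d^0)$ is itself essentially surjective and that the composite is too).
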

\begin{proof}
	By Lemma~\ref{lem:localisationIsSurjective}, the map $\Delta^1\to\Delta^0$ is essentially surjective, hence the map $\Delta^0\sqcup\Delta^0\to\Delta^0$ is essentially surjective as well. Since monomorphisms are precisely those maps that are internally right orthogonal to this map, the claim follows.
\end{proof}

For fully faithful functors between \emph{groupoids} in an $\infty$-topos, the converse of Proposition~\ref{prop:FFMonomorphism} is true as well:
\begin{corollary}
	\label{cor:FFMonomorphismsGroupoids}
	For a map $f\colon \I{G}\to \I{H}$ between $\BB$-groupoids, the following are equivalent:
	\begin{enumerate}
		\item $f$ is fully faithful;
		\item $f$ is a monomorphism.
	\end{enumerate}
\end{corollary}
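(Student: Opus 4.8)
The plan is to reduce the statement to the ambient $\infty$-topos $\BB$, exploiting that the composite $\iota\colon\BB\into\Grpd(\BB)\into\Cat(\BB)$ is fully faithful and preserves limits (it has the left adjoint $\colim_{\Delta^{\op}}$ and the right adjoint $(-)_0$). Since $\I{G}$ and $\I{H}$ are groupoids, proposition~\ref{prop:characterizationGroupoids} lets me write them as constant simplicial objects $\I{G}\simeq\iota(G)$ and $\I{H}\simeq\iota(H)$, and by full faithfulness of $\iota$ the functor $f$ is of the form $\iota(\bar f)$ for an essentially unique map $\bar f\colon G\to H$ in $\BB$. The implication $(1)\Rightarrow(2)$ is already proposition~\ref{prop:FFMonomorphism}, so the real task is $(2)\Rightarrow(1)$; in fact I would prove both at once via a chain of equivalent conditions.

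First I would record that $f$ is a monomorphism in $\Cat(\BB)$ if and only if $\bar f$ is a monomorphism in $\BB$. By example~\ref{ex:effectiveEpimorphismMonomorphism}, $f$ is a monomorphism precisely when its diagonal $\I{G}\to\I{G}\times_{\I{H}}\I{G}$ is an equivalence; since $\iota$ preserves pullbacks, this diagonal is $\iota$ applied to the diagonal $G\to G\times_H G$ of $\bar f$, and as $\iota$ is fully faithful hence conservative, the one is an equivalence if and only if the other is.

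Next I would invoke proposition~\ref{prop:equivalentConditionsFullyFaithful}: $f$ is fully faithful if and only if the square
\begin{equation*}
\begin{tikzcd}
\I{G}_1\arrow[r, "f_1"]\arrow[d] & \I{H}_1\arrow[d]\\
\I{G}_0\times\I{G}_0\arrow[r, "f_0\times f_0"] & \I{H}_0\times\I{H}_0
\end{tikzcd}
\end{equation*}
is cartesian. But $\I{G}$ and $\I{H}$ are constant simplicial objects, so $\I{G}_1\simeq\I{G}_0\simeq G$ and $\I{G}_0\times\I{G}_0\simeq G\times G$ with the vertical map being the diagonal of $G$ (all face maps of a constant simplicial object are identities), and likewise for $\I{H}$. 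Thus the displayed square is precisely the naturality square of the diagonal applied to $\bar f$, which is cartesian if and only if the comparison map $G\to G\times_H G$ is an equivalence, i.e.\ if and only if $\bar f$ is a monomorphism in $\BB$. Chaining this with the previous paragraph yields that $f$ is fully faithful $\iff$ $\bar f$ is a monomorphism in $\BB$ $\iff$ $f$ is a monomorphism in $\Cat(\BB)$, which is what we want.

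I do not expect a serious obstacle here; the only points that need a little care are confirming that $\iota$ preserves the relevant pullbacks and is conservative (both immediate from its being a fully faithful right adjoint), and correctly identifying the fully-faithfulness square of proposition~\ref{prop:equivalentConditionsFullyFaithful} with the diagonal square of $\bar f$, which only requires unwinding that a groupoid in $\BB$ is a constant simplicial object.
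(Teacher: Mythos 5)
Your argument is correct, and it takes a genuinely different route from the paper's. The paper's proof is a short, abstract orthogonality argument: it notes that the map $\Delta^0\sqcup\Delta^0\to\Delta^0$ (whose internal right complement is the class of monomorphisms) is the image of $\Delta^0\sqcup\Delta^0\to\Delta^1$ (whose internal right complement is the class of fully faithful functors) under the groupoidification endofunctor $(-)^{\gp}$, and then transfers the orthogonality relation across the adjunction $(-)^{\gp}\dashv(-)^{\core}$ using that $f$ lies in $\Grpd(\BB)$. You instead reduce to the ambient $\infty$-topos by exploiting that $\iota\colon\BB\simeq\Grpd(\BB)\into\Cat(\BB)$ is fully faithful and limit-preserving (so monomorphisms transport along $\iota$), and then unwind proposition~\ref{prop:equivalentConditionsFullyFaithful} for a constant simplicial object, where the square $(d_1,d_0)\colon\I{G}_1\to\I{G}_0\times\I{G}_0$ literally becomes the naturality square of the diagonal. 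Both are clean; the paper's version has the advantage of being a one-line instance of a reusable ``transfer orthogonality along an adjunction'' principle (it is dualized almost verbatim in the proof of corollary~\ref{cor:esoCoverGroupoids}), whereas yours is the more elementary and self-contained reduction to $\BB$, at the small cost of needing to track that pullbacks in $\Cat(\BB)$ are created in $\Simp\BB$ and hence computed levelwise. One minor imprecision in your write-up: the left adjoint of $\iota\colon\BB\into\Cat(\BB)$ is $(-)^{\gp}$, which is the restriction of $\colim_{\Delta^{\op}}$ to $\Cat(\BB)$ rather than $\colim_{\Delta^{\op}}$ itself; this does not affect your argument since all you use is that $\iota$ is fully faithful and preserves limits.
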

\begin{proof}
	By Proposition~\ref{prop:FFMonomorphism}, if $f$ is fully faithful, then $f$ is a monomorphism in $\Cat(\BB)$.
	
	Conversely, $f$ being a monomorphism precisely means that $f$ is internally right orthogonal to the map $\Delta^0\sqcup\Delta^0\to\Delta^0$. As the latter is the image of the map $\Delta^0\sqcup\Delta^0\to\Delta^1$ under the groupoidification functor $(-)^{\gp}\colon\Cat(\BB)\to\Cat(\BB)$, the claim follows by adjunction from the assumption that $f$ is internally right orthogonal to $\Delta^0\sqcup\Delta^0\to\Delta^0$.
\end{proof}

Dually, one finds for essentially surjective functors between groupoids:
\begin{corollary}
	\label{cor:esoCoverGroupoids}
	For a map $f\colon \I{G}\to \I{H}$ between $\BB$-groupoids, the following are equivalent:
	\begin{enumerate}
		\item $f$ is essentially surjective;
		\item $f_0$ is a cover in $\BB$.
	\end{enumerate}
\end{corollary}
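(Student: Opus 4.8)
The plan is to reduce the statement to the analogous fact for fully faithful functors established in Corollary~\ref{cor:FFMonomorphismsGroupoids}, by matching the (essentially surjective, fully faithful) factorisation system on $\Cat(\BB)$ with the (cover, monomorphism) factorisation system on $\BB\simeq\Grpd(\BB)$. The technical core is the following claim $(\ast)$: if $g\colon\I{G}\to\I{H}$ is a functor between groupoids in $\BB$ whose underlying map $g_0$ is a cover in $\BB$, then $g$ is essentially surjective as a functor in $\Cat(\BB)$. Granting $(\ast)$, the implication $(2)\Rightarrow(1)$ is immediate. For $(1)\Rightarrow(2)$, let $f\colon\I{G}\to\I{H}$ be essentially surjective and factor its underlying map $f_0$ in $\BB$ as $\I{G}_0\xrightarrow{e}I\xrightarrow{m}\I{H}_0$ with $e$ a cover and $m$ a monomorphism, using the factorisation system of Example~\ref{ex:effectiveEpimorphismMonomorphism}. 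Since the diagonal inclusion $\iota\colon\BB\simeq\Grpd(\BB)\into\Cat(\BB)$ is fully faithful and preserves small colimits and finite limits, this yields a factorisation $f\simeq\bar m\bar e$ inside $\Cat(\BB)$ in which $\bar e$ is essentially surjective by $(\ast)$ and $\bar m$ is fully faithful by Corollary~\ref{cor:FFMonomorphismsGroupoids}. Because $f$ itself is essentially surjective, item~(2) of Proposition~\ref{prop:propertiesFactorisationSystems} forces $\bar m$ into the class of essentially surjective functors as well, so $\bar m$ lies in the intersection of the essentially surjective and fully faithful classes, which by item~(1) of the same proposition is the class of equivalences; hence $m$ is an equivalence in $\BB$ and $f_0\simeq e$ is a cover.

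It remains to prove $(\ast)$. By Example~\ref{ex:effectiveEpimorphismMonomorphism} the class of covers in $\BB$ is the left class of the factorisation system internally generated by the codiagonal $1\sqcup 1\to 1$, so Proposition~\ref{prop:saturatedClosureInternal} identifies it with the smallest saturated class of maps in $\BB$ containing the set $\{C\sqcup C\to C~\vert~C\in\BB\}$. On the other hand, the diagonal inclusion $\iota$ is fully faithful and preserves small colimits (it has the right adjoint $(-)^{\core}$), so the preimage under $\iota$ of the class of essentially surjective functors --- which is saturated by Proposition~\ref{prop:propertiesFactorisationSystems}, being the left class of a factorisation system --- is again a saturated class of maps in $\BB$. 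This preimage contains each generator $C\sqcup C\to C$: indeed $\iota$ carries it to the codiagonal $\iota C\sqcup\iota C\to\iota C$, which equals the product $\iota C\times(\Delta^0\sqcup\Delta^0\to\Delta^0)$ because $\iota C\times(-)$ preserves coproducts and $\Delta^0$ is terminal in $\Cat(\BB)$; and $\Delta^0\sqcup\Delta^0\to\Delta^0$ is essentially surjective by (the proof of) Proposition~\ref{prop:FFMonomorphism}, while the class of essentially surjective functors, being internally left orthogonal to the fully faithful functors, is stable under products with arbitrary categories in $\BB$. Therefore the saturated class $\iota^{-1}(\text{essentially surjective})$ contains every cover in $\BB$, which is precisely $(\ast)$.

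The main obstacle is $(\ast)$; within its proof the only real content beyond bookkeeping is the passage from the external generators $C\sqcup C\to C$ of the class of covers to the statement for all covers, which is handled by checking that $\iota^{-1}$ of a saturated class is saturated, using that $\iota$ preserves the pushouts and the colimits in $\Fun(\Delta^1,\Cat(\BB))$ that occur in the definition of saturation. One should note that it is important to factor $f_0$ in $\BB$ rather than to factor $f$ inside $\Cat(\BB)$: a fully faithful functor with groupoid target need not manifestly have groupoid source, so only after transporting the cover/monomorphism factorisation of $f_0$ to $\Cat(\BB)$ can Corollary~\ref{cor:FFMonomorphismsGroupoids} be invoked directly.
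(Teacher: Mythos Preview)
Your proof is correct, but it takes a different route from the paper's. The paper argues both directions by directly transferring orthogonality through the adjunction $\iota\dashv(-)_0$: for $(1)\Rightarrow(2)$, a monomorphism $g$ in $\BB$ gives a fully faithful $\iota(g)$ by Corollary~\ref{cor:FFMonomorphismsGroupoids}, so $f\bot\iota(g)$ and hence $f_0\bot g$ by adjunction; for $(2)\Rightarrow(1)$, a fully faithful $g$ in $\Cat(\BB)$ has $g_0$ a monomorphism (since $g$ is a monomorphism by Proposition~\ref{prop:FFMonomorphism} and $(-)_0$ is a right adjoint), so $f_0\bot g_0$ and hence $f=\iota(f_0)\bot g$ by adjunction. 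This is shorter because it never needs the generator description of covers or the uniqueness of factorisations.

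Your approach instead matches the two factorisation systems via their \emph{left} classes: you identify the covers as the saturation of $\{C\sqcup C\to C\}$ and then show $\iota$ carries each generator into the essentially surjective class, using that $\iota$ preserves colimits. This is more constructive and perhaps more robust if one later wants to compare other factorisation systems along adjunctions, but here it is heavier machinery than needed. One small remark: your final caveat about factoring in $\BB$ rather than $\Cat(\BB)$ is unnecessary --- the essential image of a functor into a groupoid is always a groupoid (this follows from Proposition~\ref{prop:subcategoriesGeneration}, or more directly from the fact that a full subcategory of a constant simplicial object is itself constant), so factoring $f$ directly in $\Cat(\BB)$ would also work.
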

\begin{proof}
Let $\iota\colon\BB\into\Cat(\BB)$ be the inclusion. If $f$ is essentially surjective and if $g$ is a monomorphism in $\BB$, then $\iota(g)$ is a monomorphism between groupoids and therefore fully faithful by Corollary~\ref{cor:FFMonomorphismsGroupoids}. Since $f_0$ is left orthogonal to $g$ if and only if $f$ is left orthogonal to $\iota(g)$, this shows that $f_0$ is a cover.

Conversely, assuming $f_0$ is a cover and that $g$ is a fully faithful map between $\BB$-categories, then $g_0$ is a monomorphism by Proposition~\ref{prop:FFMonomorphism}, hence $f_0$ is left orthogonal to $g_0$. Since $f\simeq \iota(f_0)$, the adjunction between $(-)_0$ and $\iota$ implies that this is equivalent to $f$ being left orthogonal to $g$, i.e.\ to $f$ being essentially surjective.
\end{proof}

\subsection{Full subcategories}
\label{sec:subcategories}
Let $\I{D}$ be a $\BB$-category. A map $f\colon \I{C}\to\I{D}$ is a monomorphism in $\Cat(\BB)$ if and only if $f$ is a $(-1)$-truncated object in $\Over{\Cat(\BB)}{\I{D}}$. By~\cite[Proposition~6.2.1.3]{htt} the full subcategory of $(-1)$-truncated objects in $\Over{\Cat(\BB)}{\I{D}}$ forms a small partially ordered set that we denote by $\Sub(\I{D})$.
By Proposition~\ref{prop:FFMonomorphism}, every fully faithful functor is a monomorphism. We may therefore define:
\begin{definition}
	Let $\I{D}$ be a $\BB$-category. A \emph{full} subcategory of $\I{D}$ is a fully faithful functor $\I{C}\into\I{D}$. The collection of full subcategories of $\I{D}$ spans a partially ordered subset of $\Sub(\I{D})$ that we denote by $\Sub^{\mathrm{full}}(\I{D})$.
\end{definition}

As in ordinary category theory, a full subcategory of a $\BB$-category should be uniquely specified by the collection of objects that are contained in it. Hereafter our goal is to turn this heuristic into a precise statement.
To that end, note that the functor $(-)_0\colon \Simp\BB\to \BB$ admits a right adjoint $\Cech(-)$ that sends an object $A\in\BB$ to its \emph{\v Cech nerve $\Cech(A)$}. Now if $\I{D}$ is an arbitrary $\BB$-category, the functor $(-)_0\colon \Over{(\Simp\BB)}{\I{D}}\to \Over{\BB}{\I{D}_0}$ admits a right adjoint $\Gen{-}[\I{D}]$ that is given by the composition
\begin{equation*}
	\Over{\BB}{\I{D}_0}\xrightarrow{\Cech} \Over{(\Simp\BB)}{\Cech(\I{D}_0)}\xrightarrow{\eta^\ast} \Over{(\Simp\BB)}{\I{D}}
\end{equation*}
in which $\eta\colon \I{D}\to \Cech(\I{D}_0)$ denotes the adjunction unit. As $\Cech$ is fully faithful, so is the functor $\Gen{-}[\I{D}]$.
\begin{lemma}
	\label{lem:generatedSubcategory}
	For every $\BB$-category $\I{D}$ and any monomorphism $P\into\I{D}_0$, the simplicial object $\Gen{P}[\I{D}]$ is a $\BB$-category, and the functor $\Gen{P}[\I{D}]\to \I{D}$ is fully faithful.
\end{lemma}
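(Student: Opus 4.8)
The plan is to compute the simplicial object $\Gen{P}[\I{D}]$ levelwise and then apply the criteria of proposition~\ref{prop:ConditionsInternalCategories} together with the characterisation of fully faithful functors in proposition~\ref{prop:equivalentConditionsFullyFaithful}. First I would record the explicit description: since $\Cech$ is right adjoint to $(-)_0$, one has $\Cech(A)_n\simeq A^{\times(n+1)}$ with faces and degeneracies the projections and diagonals, and the unit $\eta\colon\I{D}\to\Cech(\I{D}_0)$ is in level $n$ the map $\I{D}_n\to\I{D}_0^{\times(n+1)}$ assembled from the $n+1$ vertices. As pullbacks in $\Simp\BB$ are computed levelwise, it follows that $\Gen{P}[\I{D}]\simeq\I{D}\times_{\Cech(\I{D}_0)}\Cech(P)$ and hence
\begin{equation*}
	\Gen{P}[\I{D}]_n\simeq \I{D}_n\times_{\I{D}_0^{\times(n+1)}}P^{\times(n+1)},
\end{equation*}
where the first map is the vertices map and the second the $(n+1)$-fold power of $P\into\I{D}_0$. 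In particular $\Gen{P}[\I{D}]_0\simeq P$ and $\Gen{P}[\I{D}]_1\simeq\I{D}_1\times_{\I{D}_0\times\I{D}_0}(P\times P)$.

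Next I would verify that $\Gen{P}[\I{D}]$ is a category in $\BB$ using the two conditions of proposition~\ref{prop:ConditionsInternalCategories}. For the Segal condition, I note that every \v Cech nerve $\Cech(A)$ satisfies it (the comparison $A^{\times(n+1)}\to A^{\times 2}\times_A\cdots\times_A A^{\times 2}$ is an equivalence), that $\I{D}$ satisfies it by hypothesis, and that the class of simplicial objects whose Segal maps are equivalences is stable under levelwise pullback because limits commute with limits; applying this to $\I{D}\times_{\Cech(\I{D}_0)}\Cech(P)$ yields the Segal condition for $\Gen{P}[\I{D}]$. For univalence I would substitute the levelwise formula into the pullback $(\Gen{P}[\I{D}]_0\times\Gen{P}[\I{D}]_0)\times_{\Gen{P}[\I{D}]_1\times\Gen{P}[\I{D}]_1}\Gen{P}[\I{D}]_3$ and reorganise the resulting iterated pullback. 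The constraints imposed by $s_0$, $d_{\{0,2\}}$ and $d_{\{1,3\}}$ force the $P$-coordinates to collapse to a single copy of $P\times P$ mapping to $\I{D}_0\times\I{D}_0$ in the evident way, so the whole expression is identified with $(P\times P)\times_{\I{D}_0\times\I{D}_0}\bigl((\I{D}_0\times\I{D}_0)\times_{\I{D}_1\times\I{D}_1}\I{D}_3\bigr)$. Univalence of $\I{D}$ collapses the right-hand factor to $\I{D}_0$, included by the diagonal, so this becomes $P\times_{\I{D}_0}P$, and the univalence map of $\Gen{P}[\I{D}]$ is thereby identified with the diagonal $P\to P\times_{\I{D}_0}P$. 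This is an equivalence precisely because $P\into\I{D}_0$ is a monomorphism — which is the one place where the hypothesis is used.

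Finally, full faithfulness of $\Gen{P}[\I{D}]\to\I{D}$ is immediate from proposition~\ref{prop:equivalentConditionsFullyFaithful} once $\Gen{P}[\I{D}]$ is known to be a category: the square whose corners are $\Gen{P}[\I{D}]_1$, $\I{D}_1$, $\Gen{P}[\I{D}]_0\times\Gen{P}[\I{D}]_0\simeq P\times P$ and $\I{D}_0\times\I{D}_0$ is, by the levelwise description, exactly the defining pullback square for $\Gen{P}[\I{D}]_1$, hence cartesian. I expect the main obstacle to be the univalence step: carrying out the reorganisation of the iterated pullback cleanly and isolating the point at which the monomorphism hypothesis is both needed and sufficient, namely the equivalence $P\times_{\I{D}_0}P\simeq P$; the remaining steps are formal manipulations with limits. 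As an alternative one could observe that the construction $P\mapsto\Gen{P}[\I{D}]$ commutes with left exact functors and with evaluation at the objects of a small site, and thereby reduce the whole statement to the corresponding well-known facts about full subcategories of complete Segal spaces, but the direct verification above appears shorter.
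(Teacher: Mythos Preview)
Your proof is correct, but the paper takes a different and somewhat slicker route. Rather than computing levelwise, the paper exploits the adjunction $(-)_0\dashv\Cech$ together with the orthogonality formalism. Since $\Gen{P}[\I{D}]$ is defined as the pullback of $\Cech(P)\to\Cech(\I{D}_0)$ along $\eta\colon\I{D}\to\Cech(\I{D}_0)$, and $\I{D}$ is already a category, it suffices to show that the map $\Cech(P)\to\Cech(\I{D}_0)$ is internally right orthogonal to $I^2\into\Delta^2$ and $E^1\to 1$ (to conclude that $\Gen{P}[\I{D}]$ is a category) and to $\Delta^0\sqcup\Delta^0\to\Delta^1$ (for full faithfulness). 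Via the adjunction, each of these conditions is equivalent to internal right orthogonality of $P\into\I{D}_0$ in $\BB$ to the respective map on level~$0$. These level-$0$ maps turn out to be an equivalence, a cover, and an equivalence; since monomorphisms are right orthogonal to covers, all three conditions follow immediately.

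Your approach has the virtue of making the role of the monomorphism hypothesis completely explicit: it is exactly what forces the diagonal $P\to P\times_{\I{D}_0}P$ to be an equivalence, which you correctly identify as the univalence map of $\Gen{P}[\I{D}]$. The paper's approach, by contrast, avoids any explicit limit manipulation and treats the Segal, univalence, and full-faithfulness conditions uniformly, but it requires unpacking what $(E^1)_0\to 1$ looks like (a two-point set mapping to a point) to see where the monomorphism hypothesis enters.
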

\begin{proof}
	By construction, the map $\Gen{P}[\I{D}]\to\I{D}$ fits into a cartesian square
	\begin{equation*}
		\begin{tikzcd}
		\Gen{P}[\I{D}]\arrow[r]\arrow[d] & \I{D}\arrow[d, "\eta"]\\
		\Cech(P)\arrow[r] & \Cech(\I{D}_0).
		\end{tikzcd}
	\end{equation*}
	To show that $\Gen{P}[\I{D}]$ is a $\BB$-category, it therefore suffices to show that the map $\Cech(P)\to\Cech(\I{D}_0)$ is internally right orthogonal to the two maps $E^1\to 1$ and $I^2\into\Delta^2$. This is in turn equivalent to the map $P\into\I{D}_0$ being internally right orthogonal (in $\BB$) to the two maps $(E^1)_0\to 1$ and $(I^2)_0\into (\Delta^2)_0$. As the first one is a cover in $\BB$ and the second one is an equivalence, this is immediate. By the same argumentation, the functor $\Gen{P}[\I{D}]\to \I{D}$ is fully faithful precisely if the map $P\into\I{D}_0$ is internally right orthogonal to $(\Delta^0\sqcup\Delta^0)_0\to(\Delta^1)_0$, which follows from the observation that this map is an equivalence in $\BB$.
\end{proof}
As a consequence of Lemma~\ref{lem:generatedSubcategory}, the functor $\Gen{-}[\I{D}]$ restricts to an embedding
\begin{equation*}
\Gen{-}[\I{D}]\colon \Sub(\I{D}_0)\into \Sub^{\mathrm{full}}(\I{D})
\end{equation*}
of partially ordered sets.
\begin{proposition}
	\label{prop:subcategoriesGeneration}
	For any $\BB$-category $\I{D}$, the map $\Gen{-}[\I{D}]\colon \Sub(\I{D}_0)\into \Sub^{\mathrm{full}}(\I{D})$ is an equivalence.
\end{proposition}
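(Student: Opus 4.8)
The plan is to show that the adjunction $(-)_0\dashv\Gen{-}[\I{D}]$ on $\Over{(\Simp\BB)}{\I{D}}$ restricts to an adjoint equivalence between the posets $\Sub(\I{D}_0)$ and $\Sub^{\mathrm{full}}(\I{D})$, with $(-)_0$ providing the inverse. By lemma~\ref{lem:generatedSubcategory} we already know that $\Gen{-}[\I{D}]$ carries $\Sub(\I{D}_0)$ into $\Sub^{\mathrm{full}}(\I{D})$ and is an order-embedding, so the only thing left is surjectivity of $\Gen{-}[\I{D}]$ onto $\Sub^{\mathrm{full}}(\I{D})$; the argument below in fact produces an explicit homotopy inverse.

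First I would check that $(-)_0$ restricts to a map $\Sub^{\mathrm{full}}(\I{D})\to\Sub(\I{D}_0)$: if $f\colon\I{C}\into\I{D}$ is fully faithful then $f$ is a monomorphism in $\Cat(\BB)$ by proposition~\ref{prop:FFMonomorphism}, and since both the inclusion $\Cat(\BB)\into\Simp\BB$ and the evaluation $(-)_0\colon\Simp\BB\to\BB$ are right adjoints (hence preserve monomorphisms), the map $f_0\colon\I{C}_0\into\I{D}_0$ is a monomorphism. Next, the counit of $(-)_0\dashv\Gen{-}[\I{D}]$ is an equivalence because $\Gen{-}[\I{D}]$ is fully faithful; concretely, taking $0$-simplices in the defining pullback square of $\Gen{P}[\I{D}]$ identifies $(\Gen{P}[\I{D}])_0$ with $P\times_{\I{D}_0}\I{D}_0\simeq P$ over $\I{D}_0$, so the composite $\Sub(\I{D}_0)\xrightarrow{\Gen{-}[\I{D}]}\Sub^{\mathrm{full}}(\I{D})\xrightarrow{(-)_0}\Sub(\I{D}_0)$ is the identity.

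The crux is to show that for every full subcategory $f\colon\I{C}\into\I{D}$ the unit $\eta_{\I{C}}\colon\I{C}\to\Gen{\I{C}_0}[\I{D}]$ of the adjunction is an equivalence (it is automatically a morphism over $\I{D}$). From the triangle identity $\epsilon_{\I{C}_0}\circ(\eta_{\I{C}})_0=\id_{\I{C}_0}$ and the fact that the counit $\epsilon_{\I{C}_0}$ is an equivalence, we deduce that $(\eta_{\I{C}})_0$ is an equivalence in $\BB$. On the other hand, $\eta_{\I{C}}$ is a morphism over $\I{D}$ between two fully faithful functors into $\I{D}$ — the target $\Gen{\I{C}_0}[\I{D}]\into\I{D}$ is fully faithful by lemma~\ref{lem:generatedSubcategory} — so proposition~\ref{prop:propertiesFactorisationSystems}(2), applied to the factorisation system whose right class consists of the fully faithful functors, shows that $\eta_{\I{C}}$ is itself fully faithful. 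By proposition~\ref{prop:equivalentConditionsFullyFaithful} this means the square relating $(-)_1$ with $(-)_0\times(-)_0$ for $\eta_{\I{C}}$ is cartesian; since $(\eta_{\I{C}})_0$ is an equivalence, the bottom map of that square is an equivalence, hence so is $(\eta_{\I{C}})_1$. Invoking the Segal conditions of proposition~\ref{prop:ConditionsInternalCategories} for $\I{C}$ and for $\Gen{\I{C}_0}[\I{D}]$ then forces $(\eta_{\I{C}})_n$ to be an equivalence for all $n\geq 0$, and as equivalences in $\Simp\BB$ (and therefore in $\Cat(\BB)$) are detected levelwise, $\eta_{\I{C}}$ is an equivalence.

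Consequently every full subcategory $\I{C}\into\I{D}$ is equivalent over $\I{D}$ to $\Gen{\I{C}_0}[\I{D}]$, so $\Gen{-}[\I{D}]$ is surjective onto $\Sub^{\mathrm{full}}(\I{D})$; combined with lemma~\ref{lem:generatedSubcategory} this proves the proposition. The only genuinely delicate point is the verification that the unit $\eta_{\I{C}}$ is an equivalence — identifying $(\eta_{\I{C}})_0$ as an equivalence via the triangle identity, and then upgrading \emph{fully faithful and an equivalence on $0$-simplices} to \emph{an equivalence} using the Segal condition. Everything else is formal manipulation of the adjunction and the factorisation system.
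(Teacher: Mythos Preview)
Your proof is correct and follows essentially the same route as the paper: show that $\I{C}_0\into\I{D}_0$ is a monomorphism, factor the full subcategory through the unit $\eta_{\I{C}}\colon\I{C}\to\Gen{\I{C}_0}[\I{D}]$, observe that this map is fully faithful (as a map between two fully faithful functors into $\I{D}$) and an equivalence on level $0$, and conclude via the Segal conditions. The only cosmetic differences are that you package the argument in the language of the adjunction and its triangle identities, and you deduce that $f_0$ is a monomorphism from proposition~\ref{prop:FFMonomorphism} plus preservation of monomorphisms by right adjoints, whereas the paper invokes corollary~\ref{cor:FFMonomorphismsGroupoids} directly.
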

\begin{proof}
	It suffices to show that the map is essentially surjective. Let therefore $\I{C}\into\I{D}$ be a full subcategory of $\I{D}$. Using Corollary~\ref{cor:FFMonomorphismsGroupoids}, the induced map $\I{C}_0\to\I{D}_0$ is a monomorphism in $\BB$. We therefore obtain a factorisation
	\begin{equation*}
		\I{C}\into \Gen{\I{C}_0}[\I{D}]\into\I{D}
	\end{equation*}
	in which the first map is fully faithful since the second map and the composite map are fully faithful. As moreover the map $\I{C}\into\Gen{\I{C}_0}[\I{D}]$ induces an equivalence on level $0$, Proposition~\ref{prop:equivalentConditionsFullyFaithful} implies that it must be an equivalence on level $1$ as well. Together with the Segal condition, this implies that this map is an equivalence, which completes the proof.
\end{proof}

\begin{proposition}
	\label{prop:mappingPropertyFullSubcategory}
	Let $f\colon\I{C}\to\I{D}$ be a functor between large $\BB$-categories and let $\I{E}\into\I{D}$ be a full subcategory. Then the following are equivalent:
	\begin{enumerate}
		\item $f$ factors through the inclusion $\I{E}\into\I{D}$;
		\item $f^{\core}$ factors through $\I{E}^{\core}\into\I{D}^{\core}$;
		\item for every object $c$ in $\I{C}$ in context $A\in\BB$ its image $f(c)$ is contained in $\I{E}$.
	\end{enumerate}
\end{proposition}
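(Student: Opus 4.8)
The plan is to reduce all three conditions to the single statement
\[
(\ast)\qquad f_0\colon\I{C}_0\to\I{D}_0\ \text{factors through the monomorphism}\ \I{E}_0\into\I{D}_0 .
\]
Here $\I{E}_0\into\I{D}_0$ is a monomorphism because, by proposition~\ref{prop:subcategoriesGeneration} applied to the $\infty$-topos $\BBB$, the full subcategory $\I{E}\into\I{D}$ corresponds to a subobject $\I{E}_0\in\Sub(\I{D}_0)$ with $\I{E}\simeq\Gen{\I{E}_0}[\I{D}]$. To see that~(1) is equivalent to~$(\ast)$, I would use that $\Gen{-}[\I{D}]$ is right adjoint to $(-)_0\colon\Over{(\Simp\BBB)}{\I{D}}\to\Over{\BBB}{\I{D}_0}$: hence the space of factorisations of $f$ through $\I{E}\into\I{D}$ is equivalent to the space of factorisations of $f_0$ through $\I{E}_0\into\I{D}_0$, and both are $(-1)$-truncated. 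By proposition~\ref{prop:stabilityEsoFFCoreGroupoidification} the functor $\I{E}^{\core}\into\I{D}^{\core}$ is again a full subcategory, so the same argument applied to $f^{\core}$ shows that~(2) is equivalent to the statement that $(f^{\core})_0$ factors through $(\I{E}^{\core})_0\into(\I{D}^{\core})_0$. Finally, since both $(-)_0\colon\Cat(\BBB)\to\BBB$ and the composite of $(-)^{\core}\colon\Cat(\BBB)\to\Grpd(\BBB)$ with the equivalence $\Grpd(\BBB)\simeq\BBB$ are right adjoint to $\iota\colon\BBB\into\Cat(\BBB)$, uniqueness of adjoints gives a natural identification $\I{C}_0\simeq(\I{C}^{\core})_0$ under which $(f^{\core})_0$ becomes $f_0$ and $(\I{E}^{\core})_0\into(\I{D}^{\core})_0$ becomes $\I{E}_0\into\I{D}_0$; hence~(2) is also equivalent to~$(\ast)$.

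It remains to match~(3) with~$(\ast)$. An object $c$ of $\I{C}$ in context $A$ is a map $c\colon A\to\I{C}_0$ with $A\in\BB$, and $f(c)$ is contained in $\I{E}$ precisely when $f_0\circ c\colon A\to\I{D}_0$ factors through $\I{E}_0\into\I{D}_0$; so $(\ast)$ immediately implies~(3) by precomposition. For the converse I would form the pullback $P=\I{C}_0\times_{\I{D}_0}\I{E}_0$, so that $P\into\I{C}_0$ is a monomorphism and~$(\ast)$ holds if and only if this monomorphism is an equivalence. As in the proof of proposition~\ref{prop:characterizationInternalFullyFaithfulMappingGroupoids}, $\I{C}_0$ is the colimit in $\BBB$ of the canonical diagram $\Over{\BB}{\I{C}_0}\to\BB\into\BBB$, so that $\id_{\I{C}_0}$ is the colimit of the corresponding diagram in the slice $\Over{\BBB}{\I{C}_0}$; consequently
\[
\map{\Over{\BBB}{\I{C}_0}}(\I{C}_0,P)\ \simeq\ \lim_{(c\colon A\to\I{C}_0)}\map{\Over{\BBB}{\I{C}_0}}(A,P) .
\]
Each term on the right is the space of factorisations of $c$ through $P\into\I{C}_0$, which by the universal property of the pullback is the space of factorisations of $f_0\circ c$ through $\I{E}_0\into\I{D}_0$; this space is $(-1)$-truncated, and nonempty by~(3), hence contractible. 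Therefore the limit is contractible, so $P\into\I{C}_0$ admits a section and is an equivalence, which yields~$(\ast)$.

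Beyond the formal manipulations of the adjunctions $(-)_0\dashv\Gen{-}[\I{D}]$ and $\iota\dashv(-)_0$, the argument uses only propositions~\ref{prop:subcategoriesGeneration} and~\ref{prop:stabilityEsoFFCoreGroupoidification} together with the description of $\I{C}_0$ as a colimit of objects of $\BB$. The step I expect to require the most care is the implication $(3)\Rightarrow(\ast)$, where the pointwise hypothesis~(3) must be globalised over the family $\{A\to\I{C}_0\}_{A\in\BB}$; this is the only point at which one genuinely invokes that $\BB$ generates $\BBB$ under colimits, although the argument there is itself short.
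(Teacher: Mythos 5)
Your proposal is correct and follows essentially the same approach as the paper's proof: the equivalence of~(1) and~(2) is handled via the adjunction $(-)_0\dashv\Gen{-}[\I{D}]$ together with proposition~\ref{prop:subcategoriesGeneration}, and the passage from~(3) to the global condition uses the canonical colimit presentation of $\I{C}_0$ over $\Over{\BB}{\I{C}_0}$ together with the $(-1)$-truncatedness of factorisations through the monomorphism $\I{E}_0\into\I{D}_0$. The paper phrases this last step as a unique solution to a lifting problem of the cover $\bigsqcup_{A\to\I{C}_0}A\onto\I{C}_0$ against $\I{E}_0\into\I{D}_0$, whereas you compute the space of factorisations directly as a limit of contractible spaces, but the two arguments are interchangeable.
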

\begin{proof}
	Clearly~(1) implies~(2). By making use of the adjunction $(-)_0\dashv\Gen{-}[\I{D}]$ and Proposition~\ref{prop:subcategoriesGeneration}, one finds that conversely~(2) implies~(1). A fortiori~(2) implies~(3). Conversely, suppose that for any $c\colon A\to\I{C}$ the composite map $A\to\I{C}_0\to\I{D}_0$ factors through $\I{E}_0\into\I{D}_0$. As the map
	\begin{equation*}
		\bigsqcup_{A\to \I{C}_0} A\onto \I{C}_0
	\end{equation*}
	defines a cover in $\BBB$, the lifting problem
	\begin{equation*}
		\begin{tikzcd}
			\bigsqcup_{A\to \I{C}_0} A\arrow[r]\arrow[d, twoheadrightarrow] & \I{E}_0\arrow[d, hookrightarrow]\\
			\I{C}_0\arrow[r, "f"] \arrow[ur, dashed]& \I{D}_0
		\end{tikzcd}
	\end{equation*}
	admits a unique solution, which proves that~(2) holds.
\end{proof}

\begin{corollary}
	\label{cor:esoCoverCore}
	A map $f\colon\I{C}\to\I{D}$ between $\BB$-categories is essentially surjective if and only if $f_0\colon \I{C}_0\to\I{D}_0$ is a cover in $\BB$.
\end{corollary}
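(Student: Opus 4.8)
The plan is to derive both implications from the description of full subcategories in Section~\ref{sec:subcategories}, combined with the fact (Definition~\ref{def:fullyFaithfulEssentiallySurjective}) that essentially surjective and fully faithful functors form an orthogonal factorisation system on $\Cat(\BB)$, so that in particular every functor factors as an essentially surjective one followed by a fully faithful one.

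For the ``only if'' direction, suppose $f$ is essentially surjective and factor $f_0\colon\I{C}_0\to\I{D}_0$ in the $\infty$-topos $\BB$ as a cover $\I{C}_0\onto P$ followed by a monomorphism $m\colon P\into\I{D}_0$. Every object of $\I{C}$ in any context then has its image in $P$, so by Proposition~\ref{prop:mappingPropertyFullSubcategory} the functor $f$ factors as $\I{C}\xrightarrow{g}\Gen{P}[\I{D}]\xrightarrow{i}\I{D}$ with $i$ the fully faithful inclusion of Proposition~\ref{prop:subcategoriesGeneration}. Using that $f$ is left orthogonal to $i$, the square with top $g$, left $f$, bottom $\id_{\I{D}}$ and right $i$ admits a lift $s\colon\I{D}\to\Gen{P}[\I{D}]$ with $i\circ s\simeq\id_{\I{D}}$; since $i$ is also a monomorphism by Proposition~\ref{prop:FFMonomorphism}, it follows that $s\circ i\simeq\id$, hence $i$ is an equivalence. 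Evaluating at level $0$, where $(\Gen{P}[\I{D}])_0\simeq P$ and $i_0$ is identified with $m$ by the construction in Lemma~\ref{lem:generatedSubcategory}, shows that $m$ is an equivalence, so $f_0$ is a cover.

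For the ``if'' direction, assume $f_0$ is a cover and factor $f$ as an essentially surjective $g\colon\I{C}\to\I{E}$ followed by a fully faithful $i\colon\I{E}\into\I{D}$; it suffices to show $i$ is an equivalence. The case already proven, applied to $g$, shows $g_0$ is a cover, while $i_0$ is a monomorphism in $\BB$ since $i$ is a monomorphism (Proposition~\ref{prop:FFMonomorphism}) and $(-)_0\colon\Cat(\BB)\to\BB$ is a right adjoint. From $f_0\simeq i_0\circ g_0$ with $f_0$ and $g_0$ covers, item~(2) of Proposition~\ref{prop:propertiesFactorisationSystems} applied to the (cover, monomorphism) factorisation system of Example~\ref{ex:effectiveEpimorphismMonomorphism} forces $i_0$ to be a cover, and hence an equivalence by item~(1) of the same proposition. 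Full faithfulness of $i$ makes the square of Proposition~\ref{prop:equivalentConditionsFullyFaithful} cartesian, so $i_1$ is an equivalence, and the Segal description in Proposition~\ref{prop:ConditionsInternalCategories} propagates this to every level; thus $i$ is an equivalence and $f\simeq i\circ g$ is essentially surjective.

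The main obstacle I expect is the ``only if'' direction: one has to see that the fully faithful subcategory through which $f$ necessarily factors must exhaust $\I{D}$. This combines the lifting property of essentially surjective functors with the fact that fully faithful functors are monomorphisms — so that a one-sided inverse is automatically two-sided — and relies on correctly identifying $(\Gen{P}[\I{D}])_0$ with $P$ and $i_0$ with $m$.
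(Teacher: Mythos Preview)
Your proof is correct and follows the same overall architecture as the paper's: in both directions one works with the full subcategory generated by the image of $f_0$ and reduces the question to whether the fully faithful inclusion is an equivalence at level~$0$.

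The execution differs in a way worth noting. For the ``only if'' direction, the paper shows that $\I{C}\to\Gen{P}[\I{D}]$ is essentially surjective by performing a \emph{second} eso--ff factorisation and arguing that its fully faithful part is trivial; this identifies $\I{C}\to\Gen{P}[\I{D}]\into\I{D}$ as \emph{the} eso--ff factorisation of $f$, whence the inclusion is an equivalence. Your argument is more direct: you use the orthogonality $f\perp i$ to produce a section of $i$, and then the fact that $i$ is a monomorphism (Proposition~\ref{prop:FFMonomorphism}) upgrades the section to a two-sided inverse. This avoids the auxiliary factorisation entirely. For the ``if'' direction, the paper argues directly that $i_0$ is a cover from $f_0$ being one, whereas you first invoke the already-established ``only if'' direction on $g$ to get that $g_0$ is a cover and then apply item~(2) of Proposition~\ref{prop:propertiesFactorisationSystems}; both are clean, and your route makes the use of the factorisation-system cancellation property explicit. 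Your final step (deducing that $i$ is an equivalence from $i_0$ being one via the cartesian square of Proposition~\ref{prop:equivalentConditionsFullyFaithful} and the Segal conditions) is exactly what Proposition~\ref{prop:subcategoriesGeneration} encodes, so you could also cite that directly.
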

\begin{proof}
	Suppose first that $f_0$ is a cover, and let
	\begin{equation*}
			\I{C}\xrightarrow{p} \I{E}\xrightarrow{i} \I{D}
	\end{equation*}
	be the factorisation of $f$ into an essentially surjective and a fully faithful functor. We need to show that $i$ is an equivalence. Since $i$ is fully faithful,~Proposition~\ref{prop:subcategoriesGeneration} implies that this is the case if and only if $i_0$ is an equivalence. But since $f_0$ is a cover, the map $i_0$ is one as well and must therefore be an equivalence as it is already a monomorphism by Proposition~\ref{prop:FFMonomorphism}.
	
	Conversely, suppose that $f$ is essentially surjective, and let
	\begin{equation*}
			\I{C}_0\xrightarrow{p}  P\xrightarrow{i}  \I{D}_0
	\end{equation*}
	be the factorisation of $f_0$ into a cover and a monomorphism in $\BB$. We need to show that $i$ is an equivalence. By Proposition~\ref{prop:subcategoriesGeneration}, the object $P$ determines a full subcategory $\Gen{P}[\I{D}]$ of $\I{D}$ and since $f_0$ factors through $P$, Proposition~\ref{prop:mappingPropertyFullSubcategory} implies that $f$ factors through a map $\I{C}\to\Gen{P}[\I{D}]$. It suffices to show that this functor is essentially surjective. Let $\I{C}\onto \I{E}\into\Gen{P}[\I{D}]$ be the factorisation of this functor into an essentially surjective and a fully faithful functor. Then $p\colon \I{C}_0\to P$ factors through a monomorphism $\I{E}_0\into P$, but since $p$ is a cover this map must be a cover as well and therefore an equivalence. Proposition~\ref{prop:subcategoriesGeneration} then implies that the map $\I{E}\into\Gen{P}[\I{D}]$ is an equivalence and therefore that the functor $\I{C}\to\Gen{P}[\I{D}]$ is essentially surjective, as desired.
\end{proof}
\begin{definition}
	\label{def:essentialImage}
	Let $f\colon \I{C}\to\I{D}$ be a map in $\Cat(\BB)$ and let $\I{C}\onto\I{E}\into\I{D}$ be the factorisation of $f$ into an essentially surjective and a fully faithful functor. Then the full subcategory $\I{E}\into\I{D}$ is referred to as the \emph{essential image} of $f$.
\end{definition}
\begin{definition}
	\label{def:generatedFullSubcategory}
	Let $\I{D}$ be a $\BB$-category and let $(d_i\colon A_i\to \I{D})_{i\in I}$ be a small family of objects in $\I{D}$. The essential image of the induced map $\bigsqcup_i A_i\to\I{D}$ is referred to as the full subcategory of $\I{D}$ that is \emph{generated} by the family $(d_i)_{i\in I}$.
\end{definition}
In the context of Definition~\ref{def:generatedFullSubcategory}, let $G\into \I{D}_0$ be the image of the map $(d_i)_{i\in I}\colon \bigsqcup_i A_i\to \I{D}_0$. Then Corollary~\ref{cor:esoCoverCore} implies that the full subcategory of $\I{D}$ generated by the family $(d_i)_{i\in I}$ is given by $\Gen{G}[\I{D}]$.
\begin{remark}
	\label{rem:subcategoryGeneratedObjectsEquivalence}
	Let $(s_i\colon A_i\to C)_{i\in I}$ be a small family of maps in $\BB$ and let $r\colon B\to C$ be another map such that for all $i\in I$ there is a cover $p_i\colon B\onto A_i$ such that $r\simeq s_ip_i$. One then obtains a commutative diagram
	\begin{equation*}
		\begin{tikzcd}
			B\arrow[dr, "r"', bend right]&\bigsqcup_i B\arrow[d, "(r)_{i\in I}"] \arrow[l, twoheadrightarrow, "{{(\id)_{i\in I}}}"'] \arrow[r, twoheadrightarrow, "\bigsqcup_i p_i"]& \bigsqcup_i A_i\arrow[dl, "{(s_i)_{i\in I}}", bend left]\\
			& C& 
		\end{tikzcd}
	\end{equation*}
	which shows that the image of $(s_i)_{i\in I}$ in $C$ coincides with the image of the map $r$. This shows that in the situation of Definition~\ref{def:generatedFullSubcategory}, we may assume without loss of generality that the objects are pairwise locally non-equivalent in the sense that for any pair $(i,j)\in I\times I$ it is \emph{not} possible to find covers $p\colon B\onto A_i$ and $q\colon B\onto A_j$ such that $d_ip\simeq d_j q$.
\end{remark}
\begin{remark}
	Let $\I{D}$ be a $\BB$-category and let $(d_i\colon A_i\to \I{D})_{i\in I}$ be a family of objects in $\I{D}$ that are pairwise locally non-equivalent in the sense of Remark~\ref{rem:subcategoryGeneratedObjectsEquivalence}. Then $I$ is $\bV$-small since for any $A\in \BB$ the set of equivalence classes of maps $A\to \I{D}_0$ is small and the set of equivalence classes of objects in $\BB$ is $\bV$-small. By viewing $\I{D}$ as a large $\BB$-category, the subcategory spanned by the family $(d_i)_{i\in I}$ is therefore well-defined. Note that this is still a small $\BB$-category since any full subcategory of a small $\BB$-category must be small as well.
\end{remark}

We conclude this section with a discussion of the poset of full subcategories of the universe $\Univ$ for $\BB$-groupoids. To that end, let us recall the notion of a \emph{local class} in an $\infty$-topos~\cite[\S~6.1.3]{htt}:
\begin{definition}
	\label{defn:localClass}
	Let $S$ be a collection of maps in $\BB$ that is stable under pullbacks. Then the full subcategory of $\Fun(\Delta^1,\BB)$ spanned by the maps in $S$ forms a cartesian subfibration of the codomain fibration $d_0\colon \Fun(\Delta^1, \BB)\to \BB$ that is classified by a $\CatSS$-valued presheaf $\Over{S}{-}$ on $\BB$. The class $S$ is said to be \emph{local} if $\Over{S}{-}$ is a sheaf and \emph{bounded} if $\Over{S}{-}$ takes values in $\CatS$.
\end{definition}
In the situation of Definition~\ref{defn:localClass},~\cite[Lemma~6.1.3.7]{htt} implies that the presheaf $\Over{S}{-}$ is a sheaf if and only if $(\Over{S}{-})^\core$ is an $\SSS$-valued sheaf, and since the latter takes values in $\SS$ if and only if $\Over{S}{-}$ takes values in $\CatS$, one obtains:
\begin{proposition}
	\label{prop:characterisationLocalClasses}
	Let $S$ be a collection of maps in $\BB$ that is stable under pullbacks. Then the following are equivalent:
	\begin{enumerate}
		\item $S$ is a (bounded) local class.
		\item $\Over{S}{-}$ is a ($\CatS$-valued) sheaf.
		\item $(\Over{S}{-})^\core$ is an ($\SS$-valued) sheaf.\qed
	\end{enumerate}
\end{proposition}
The set of local classes in $\BB$ can be identified with a subset of the partially ordered set $\Sub(\Fun(\Delta^1,\BB))$ and therefore inherits a partial order. One now finds:
\begin{proposition}
	\label{prop:classificationInternalSubcategories}
	There is an equivalence between the partially ordered set of local classes in $\BB$ and $\Sub^{\mathrm{full}}(\Univ)$ with respect to which bounded local classes in $\BB$ correspond to \emph{small} full subcategories of $\Univ$.
\end{proposition}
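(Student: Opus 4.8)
The plan is to reduce the statement, via the results of Section~\ref{sec:subcategories}, to an analysis of the subobjects of the single object $\Univ_0\in\BBB$, and then to invoke the characterisation of local classes from proposition~\ref{prop:characterizationLocalClasses}. Since $\Univ$ is a (large) category in the $\infty$-topos $\BBB$, applying proposition~\ref{prop:subcategoriesGeneration} with $\BBB$ in place of $\BB$ gives an equivalence of partially ordered sets
\[
	\Gen{-}[\Univ]\colon \Sub(\Univ_0)\xrightarrow{\ \simeq\ }\Sub^{\mathrm{full}}(\Univ),
\]
so it suffices to identify the poset $\Sub(\Univ_0)$ of subobjects of $\Univ_0$ in $\BBB$ with the poset of local classes in $\BB$. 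First I would pin down $\Univ_0$ concretely: by corollary~\ref{cor:RightFibrationInternalUniverse} together with the equivalence $\Cat(\BBB)\simeq\Shv_{\Cat(\SSS)}(\BB)$ of proposition~\ref{prop:internalCategoriesSheaves}, the category $\Univ$ corresponds to the $\Cat(\SSS)$-valued sheaf $\Over{\BB}{-}$, and passing to the $0$-th level identifies $\Univ_0\in\BBB\simeq\Shv_{\SSS}(\BB)$ with the $\SSS$-valued sheaf $A\mapsto(\Over{\BB}{A})^{\core}$.

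Next I would describe $\Sub(\Univ_0)$ through a chain of order isomorphisms. Since $\BBB$ is a left exact localisation of $\PSh_{\SSS}(\BB)$ and the inclusion $\BBB\into\PSh_{\SSS}(\BB)$ is fully faithful and preserves and reflects monomorphisms, a subobject of $\Univ_0$ in $\BBB$ is the same as a subpresheaf of $(\Over{\BB}{-})^{\core}$ which happens to be a sheaf. A subpresheaf $F\subseteq(\Over{\BB}{-})^{\core}$ amounts, at each $A\in\BB$, to a union of connected components $F(A)\subseteq(\Over{\BB}{A})^{\core}$ compatible with the pullback maps; equivalently, using the notation of definition~\ref{defn:localClass}, it is the datum of a pullback-stable class $S$ of morphisms in $\BB$ via $F=(\Over{S}{-})^{\core}$, and this assignment is an isomorphism of posets (ordered by inclusion on both sides). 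Finally, by proposition~\ref{prop:characterizationLocalClasses} the presheaf $(\Over{S}{-})^{\core}$ is a sheaf precisely when $S$ is a local class. Composing these identifications with the equivalence $\Gen{-}[\Univ]$ from the first paragraph yields the asserted equivalence between $\Sub^{\mathrm{full}}(\Univ)$ and the poset of local classes in $\BB$.

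It remains to match bounded local classes with small full subcategories. Given a local class $S$ with associated subobject $P=(\Over{S}{-})^{\core}\in\Sub(\Univ_0)$, I would regard $\Gen{P}[\Univ]$ as a $\Cat(\SSS)$-valued sheaf via proposition~\ref{prop:internalCategoriesSheaves}; since full faithfulness is detected sectionwise (proposition~\ref{prop:characterizationInternalFullyFaithfulMappingGroupoids}) and the $0$-th level of $\Gen{P}[\Univ]$ is $P$, the value $\Gen{P}[\Univ](A)$ is exactly the full subcategory of $\Univ(A)\simeq\Over{\BB}{A}$ spanned by the objects lying in $P(A)=(\Over{S}{A})^{\core}$. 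Then $\Gen{P}[\Univ]$ is small, i.e.\ lies in $\Cat(\BB)\simeq\Shv_{\Cat(\SS)}(\BB)$, if and only if $\Gen{P}[\Univ](A)\in\Cat(\SS)$ for every $A\in\BB$; as a full subcategory of the locally small $\infty$-category $\Over{\BB}{A}$ is small precisely when its set of objects is small, this holds if and only if $P(A)\in\SS$ for all $A$, which by proposition~\ref{prop:characterizationLocalClasses} is exactly the assertion that $S$ is bounded. The main obstacle I anticipate is not conceptual but bookkeeping: keeping the several equivalences ($\Cat(\BBB)\simeq\Shv_{\Cat(\SSS)}(\BB)$, $\BBB\simeq\Shv_{\SSS}(\BB)$, and the Grothendieck construction for $\Univ$) mutually compatible so that the sectionwise descriptions of $\Univ_0$ and of $\Gen{P}[\Univ](A)$ are the expected ones, together with the universe bookkeeping needed to see that "$P(A)\in\SS$ for all $A$" is genuinely equivalent to smallness of the subcategory.
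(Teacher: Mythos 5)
Your argument is correct, and it is organised differently from the paper's proof. The paper constructs the bijection by hand: given a local class $S$, it observes that $\Over{S}{-}$ is a $\Cat(\SSS)$-valued sheaf and hence a large category $\Univ[S]$, then applies proposition~\ref{prop:characterizationInternalFullyFaithfulMappingGroupoids} sectionwise to see that $\Univ[S]\into\Univ$ is fully faithful; conversely it observes that the essential image of the cartesian subfibration $\int\I{C}\into\Fun(\Delta^1,\BB)$ associated to a full subcategory $\I{C}\into\Univ$ is a pullback-stable class whose associated presheaf is a sheaf. You instead route through proposition~\ref{prop:subcategoriesGeneration}, which already packages the assertion that full subcategories of any category in an $\infty$-topos are exactly the subobjects of the object of objects, and then reduce everything to identifying $\Sub(\Univ_0)$ with local classes. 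This is cleaner modularly: the hard content about full faithfulness is delegated to lemma~\ref{lem:generatedSubcategory} and proposition~\ref{prop:subcategoriesGeneration} rather than re-derived here, and the remaining work is the genuinely elementary observation that a subsheaf of the $\SSS$-valued sheaf $(\Over{\BB}{-})^{\core}\simeq\Univ_0$ is precisely a pullback-stable class $S$ with $(\Over{S}{-})^{\core}$ a sheaf, which proposition~\ref{prop:characterizationLocalClasses} identifies with a local class. The paper's version is shorter on the page but conceptually repeats work that proposition~\ref{prop:subcategoriesGeneration} already did; yours makes the dependence on that earlier result explicit. Both approaches handle the bounded/small refinement the same way, by applying proposition~\ref{prop:characterizationLocalClasses} once more sectionwise. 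One minor caveat worth noting if you were to write this up: proposition~\ref{prop:subcategoriesGeneration} is stated for categories in $\BB$, and you need it for the large category $\Univ$ in $\BBB$; this is unproblematic because that proposition's proof is universe-agnostic and applies to any $\infty$-topos (here $\BBB$ relative to $\bV$), but the invocation should be flagged.
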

\begin{proof}
	If $S$ is a local class, Proposition~\ref{prop:characterisationLocalClasses} shows that $\Over{S}{-}$ is a $\CatSS$-valued sheaf and therefore corresponds to a large $\BB$-category $\Univ[S]$. By Proposition~\ref{prop:characterisationInternalFullyFaithfulMappingGroupoids}, this is a full subcategory of the universe of $\BB$. Conversely, if $\I{C}\hookrightarrow  \Univ$ exhibits $\I{C}$ as a full subcategory of $ \Univ$, Proposition~\ref{prop:characterisationLocalClasses} implies that the set of objects contained in the essential image of the associated inclusion $\int\I{C}\hookrightarrow \Fun(\Delta^1, \BB)$ of cartesian fibrations over $\BB$ defines a local class. Clearly these operations are inverse to each other and order-preserving. Applying Proposition~\ref{prop:characterisationLocalClasses} once more, one moreover sees that this equivalence restricts to an equivalence between the poset of bounded local classes and the poset of small full subcategories of $ \Univ$.
\end{proof}
\begin{definition}
	\label{def:subuniverse}
	A full subcategory $\I{C}\into\Univ$ of the universe is said to be a \emph{subuniverse} in $\BB$.
\end{definition}
In the situation of Proposition~\ref{prop:classificationInternalSubcategories}, the case where $S$ is a bounded local class deserves a more careful discussion. In this case, the sheaf $\Over{S}{-}$ is represented by the  $\BB$-category $\Univ[S]$, hence $(\Over{S}{-})^{\core}$ is representable by $\Univ[S]^{\core}$ which by Yoneda's lemma implies that the full subcategory $S\into\Fun(\Delta^1,\BB)$ admits a final object $\phi_S\colon \UnivHat[S]^\core\to\Univ[S]^\core$ that is referred to as the \emph{universal morphism} in $S$.
Hereafter, our goal is to reverse this discussion: Suppose that $p\colon P\to A$ is an arbitrary morphism in $\BB$, and denote by $\Gen{p}$ the class of morphisms in $\BB$ that arise as a pullback of $p$. Since $\Gen{p}$ is stable under pullbacks, the full subcategory of $\Fun(\Delta^1,\BB)$ spanned by the maps in $\Gen{p}$ defines a cartesian fibration over $\BB$ and is therefore classified by a $\CatSS$-valued presheaf $\Over{\Gen{p}}{-}$ on $\BB$.
We would like to investigate the conditions that ensure $\Gen{p}$ to be a bounded local class in $\BB$, with $p$ as a universal morphism.
\begin{definition}
	\label{defn:univalence}
	A map $p\colon P\to A$ in $\BB$ is \emph{univalent} if $\Gen{p}$ is a bounded local class.
\end{definition}

\begin{remark}
	\label{rem:universalMorphism}
	Let $S$ be a bounded local class of morphisms in $\BB$ and let $\phi_S\colon\UnivHat[S]^\core\to\Univ[S]^\core$ denote the associated universal morphism in $S$. Then a map in $\BB$ arises as a pullback of $\phi_S$ if and only if it is contained in $S$, hence the map $\phi_S$ is univalent.
\end{remark}

By Proposition~\ref{prop:characterisationLocalClasses} the notion of univalence admits the following equivalent characterisation:
\begin{proposition}
	\label{prop:UnivalenceRepresentabilitycartesianFibration}
	For a map $p\colon P\to A$ in $\BB$, the following conditions are equivalent:
	\begin{enumerate}
		\item $p$ is univalent.
		\item $\Over{\Gen{p}}{-}$ is representable by a $\BB$-category
		\item $(\Over{\Gen{p}}{-})^\core$ is representable by a $\BB$-groupoid.\qed
	\end{enumerate}
\end{proposition}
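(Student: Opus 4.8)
The plan is to derive the proposition almost entirely from Proposition~\ref{prop:characterizationLocalClasses}, once the relevant objects have been set up correctly. First I would note that the class $\Gen{p}$ of all maps arising as a pullback of $p$ is stable under pullbacks: pasting cartesian squares shows that a pullback of a pullback of $p$ is again a pullback of $p$. Hence Definition~\ref{defn:localClass} applies verbatim, the presheaf $\Over{\Gen{p}}{-}$ is well-defined, and by Definition~\ref{defn:univalence} condition~(1) says exactly that $\Gen{p}$ is a bounded local class. Feeding $S=\Gen{p}$ into Proposition~\ref{prop:characterizationLocalClasses} then yields at once that~(1) is equivalent to the assertion that $\Over{\Gen{p}}{-}$ is a $\Cat(\SS)$-valued sheaf on $\BB$, and also equivalent to the assertion that $(\Over{\Gen{p}}{-})^{\core}$ is an $\SS$-valued sheaf on $\BB$.

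It therefore remains only to translate these two sheaf conditions into the representability statements. For~(2): a $\Cat(\SSS)$-valued presheaf on $\BB$ is, by definition, representable by a category in $\BB$ precisely when it lies in the essential image of the inclusion $\Cat(\BB)\into\PSh_{\Cat(\SSS)}(\BB)$; by Proposition~\ref{prop:internalCategoriesSheaves} this essential image consists exactly of the $\Cat(\SS)$-valued sheaves on $\BB$. Here the two halves of the phrase ``$\Cat(\SS)$-valued sheaf'' match the two halves of ``bounded local class'': boundedness of $\Gen{p}$ forces $\Over{\Gen{p}}{-}$ to take values in $\Cat(\SS)$ rather than only in $\Cat(\SSS)$ --- equivalently, to be representable by a \emph{small} category --- while locality is the sheaf condition. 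For~(3): analogously, using the equivalence $\Grpd(\BB)\simeq\Shv_{\SS}(\BB)$ recorded in remark~\ref{rem:coreGroupoidificationSheaves}, an $\SSS$-valued presheaf on $\BB$ is representable by a groupoid in $\BB$ if and only if it is an $\SS$-valued sheaf. Chaining these equivalences with those of the previous paragraph gives (1)~$\Leftrightarrow$~(2) and (1)~$\Leftrightarrow$~(3), as desired.

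The one point that requires care, rather than being a pure unwinding of definitions, is the identification of the core functor: in the statement of the proposition $(\Over{\Gen{p}}{-})^{\core}$ denotes the core of the classifying \emph{presheaf}, whereas Proposition~\ref{prop:characterizationLocalClasses} refers to the core computed at the level of the cartesian fibration spanned by the maps in $\Gen{p}$. I would check that these agree, i.e.\ that taking cores commutes with passing from this cartesian fibration to its classifying presheaf. This is precisely the compatibility underlying the discussion that precedes Proposition~\ref{prop:characterizationLocalClasses} (via \cite[Lemma~6.1.3.7]{htt}), so no new argument is needed, but it is the step where one must resist sloppiness about sizes and about which core is meant. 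Everything else is formal.
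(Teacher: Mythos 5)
Your proof is correct and takes the same route the paper intends: the paper marks the proposition with \(\qed\) and cites only Proposition~\ref{prop:characterizationLocalClasses} (with the sentence immediately before the statement), leaving implicit exactly the two translations you supply — that a \(\Cat(\SS)\)-valued sheaf on \(\BB\) is precisely a presheaf representable by a (small) category in \(\BB\) via Proposition~\ref{prop:internalCategoriesSheaves}, and that an \(\SS\)-valued sheaf is precisely one representable by a groupoid in \(\BB\) via Remark~\ref{rem:coreGroupoidificationSheaves}. Your closing caution about which core is meant is a reasonable thing to double-check, but as you note it is exactly the content of \cite[Lemma~6.1.3.7]{htt} already invoked in the discussion preceding Proposition~\ref{prop:characterizationLocalClasses}, so nothing further is needed.
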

\begin{lemma}
	\label{lem:coversToposDiagrams}
	Let $\BB$ be an $\infty$-topos and let $\CC$ be a small $\infty$-category. A map $f\colon Y\to X$ in the $\infty$-topos $\Fun(\CC,\BB)$ is a cover if and only if $f(c)\colon Y(c)\to X(c)$ is a cover for every $c\in \CC$.
\end{lemma}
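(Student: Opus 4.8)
The plan is to reduce the statement to an objectwise check by transporting everything through the evaluation functors $\ev_c\colon\Fun(\CC,\BB)\to\BB$, and to phrase ``being a cover'' in terms of Čech nerves. Recall from example~\ref{ex:effectiveEpimorphismMonomorphism} that in an $\infty$-topos a cover is the same thing as an effective epimorphism, and that a map $f\colon Y\to X$ is an effective epimorphism precisely when the canonical map $\colim_{\Delta^{\op}}\Cech(f)\to X$ out of the colimit of its Čech nerve is an equivalence.

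First I would record two elementary observations about the $\infty$-category $\Fun(\CC,\BB)$, where $\CC$ is small. It is again an $\infty$-topos: choosing a presentation $\BB\into\PSh_{\SS}(\DD)$ exhibits $\Fun(\CC,\BB)$ as an objectwise left exact accessible localisation of $\Fun(\CC,\PSh_{\SS}(\DD))\simeq\PSh_{\SS}(\CC^{\op}\times\DD)$, so the notion of cover makes sense in it. And for every object $c\in\CC$ the evaluation functor $\ev_c$ preserves all small limits and all small colimits, since both are computed objectwise in functor categories, while the family $(\ev_c)_{c\in\CC}$ is jointly conservative. The core of the argument is then the computation that, because the Čech nerve $\Cech(f)$ is assembled from iterated pullbacks and $\ev_c$ preserves pullbacks, one has $\ev_c(\Cech(f))\simeq\Cech(f(c))$ as simplicial objects of $\BB$; and since $\ev_c$ preserves colimits, it carries the augmentation $\colim_{\Delta^{\op}}\Cech(f)\to X$ to the augmentation $\colim_{\Delta^{\op}}\Cech(f(c))\to X(c)$. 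Using joint conservativity of the $\ev_c$, the former is an equivalence in $\Fun(\CC,\BB)$ if and only if the latter is an equivalence in $\BB$ for every $c$; unwinding the Čech-nerve characterisation of effective epimorphisms on both sides gives exactly the desired equivalence ``$f$ is a cover'' $\Leftrightarrow$ ``$f(c)$ is a cover for all $c$''.

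There is no genuinely hard step here; the points that need attention are bookkeeping ones — reading ``cover'' throughout as ``effective epimorphism'' in the sense of example~\ref{ex:effectiveEpimorphismMonomorphism}, and verifying that $\Fun(\CC,\BB)$ really is an $\infty$-topos so that this characterisation applies to it. An alternative route, closer to the spirit of section~\ref{sec:factorisationSystems}, is to show directly that the factorisation system (covers, monomorphisms) on $\Fun(\CC,\BB)$ is computed objectwise: a monomorphism is detected objectwise since its diagonal is, and a map that is objectwise a cover is left orthogonal to every monomorphism of $\Fun(\CC,\BB)$ because the mapping $\infty$-groupoids in $\Fun(\CC,\BB)$ are ends of the objectwise mapping $\infty$-groupoids and ends commute with the pullbacks occurring in the orthogonality condition. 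Then $f$ is a cover exactly when the monomorphism appearing in its factorisation is an equivalence, which — the factorisation being objectwise — holds if and only if each $f(c)$ is a cover.
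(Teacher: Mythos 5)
Your first argument is essentially the paper's proof: the paper observes that evaluation at each $c$ is an algebraic morphism (so preserves finite limits and small colimits), that these evaluations are jointly conservative, and that $f$ is a cover if and only if the monomorphism $\Image(f)\into X$ is an equivalence; since algebraic morphisms preserve image factorisations, the objectwise reduction follows. Your phrasing via the \v Cech nerve and its colimit is just an unwound version of the same argument, because $\Image(f)\simeq\colim_{\Delta^{\op}}\Cech(f)$ and the preservation of that colimit under $\ev_c$ is exactly what ``algebraic morphisms preserve image factorisations'' packages. Your alternative route via showing that the (cover, monomorphism) factorisation system on $\Fun(\CC,\BB)$ is computed objectwise is a genuinely different (and equally valid) strategy, though it requires a little more care to justify the orthogonality claim than you give it; the first route is cleaner and is the one the paper takes.
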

\begin{proof}
	For every $c\in\CC$, evaluation at $c$ defines an algebraic morphism $\Fun(\CC,\BB)\to \BB$, and since equivalences in $\Fun(\CC,\BB)$ are determined objectwise, the induced algebraic morphism $\Fun(\CC,\BB)\to\prod_{c\in\CC}\BB$ is conservative. Now $f$ is a cover if and only if the inclusion $\Image(f)\into X$ is an equivalence. Since algebraic morphisms preserve the image factorisation of a map and since conservative functors reflect equivalences, the claim follows.
\end{proof}
Suppose that $p\colon P\to A$ is a map in $\BB$ and let $g\colon A\to \Univ_0$ be the associated object in $\Univ$. By definition of $\Gen{p}$ and the fact that a map in $\PSh_{\SSS}(\BB)$ is a cover if and only if it is objectwise given by a cover in $\SSS$ (cf.\ Lemma~\ref{lem:coversToposDiagrams}), the image factorisation of $g$ in $\PSh_{\SSS}(\BB)$ is given by $A\onto (\Over{\Gen{p}}{-})^\core\into\Univ_0$. Therefore $p$ is univalent if and only if the cover $A\onto (\Over{\Gen{p}}{-})^\core$ is a monomorphism, which is the case if and only if $g$ itself is a monomorphism. We therefore conclude:
\begin{proposition}[{\cite[Proposition~3.8]{gepner2017}}]
	\label{prop:univalenceMonomorphism}
	Let $p\colon P\to A$ be a map in $\BB$ and let $g\colon A\to \Univ_0$ be the associated object in $\Univ$. Then $p$ is univalent if and only if $g$ is a monomorphism.\qed
\end{proposition}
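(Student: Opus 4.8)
The plan is to make precise the discussion preceding the statement. Recall that $\Univ$ corresponds to the sheaf $\Over{\BB}{-}$, so that $\Univ_0\in\BBB$ is the sheaf $B\mapsto(\Over{\BB}{B})^{\core}$ and, by the Yoneda lemma, a map $g\colon A\to\Univ_0$ is the same datum as an object of $(\Over{\BB}{A})^{\core}$, namely the morphism $p$ itself; concretely, for each $B\in\BB$ the induced map $\map{\BB}(B,A)\to\Univ_0(B)=(\Over{\BB}{B})^{\core}$ carries $a\colon B\to A$ to the pullback $a^{\ast}p$. The first step is to compute the image of $g$ in $\PSh_{\SSS}(\BB)$. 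Image factorisations in a presheaf $\infty$-topos are computed objectwise, so the image of $g$ is the subpresheaf of $\Univ_0$ whose value at $B$ is the full sub-$\infty$-groupoid of $(\Over{\BB}{B})^{\core}$ spanned by the maps into $B$ that arise as a pullback of $p$; comparing with definition~\ref{defn:localClass} this is exactly $(\Over{\Gen{p}}{-})^{\core}$, and the map $A\to(\Over{\Gen{p}}{-})^{\core}$ is objectwise surjective on connected components, hence a cover (cf.\ lemma~\ref{lem:coversToposDiagrams}). So, writing $X=(\Over{\Gen{p}}{-})^{\core}$ for brevity, the image factorisation of $g$ reads $A\onto X\into\Univ_0$.

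Next I would reduce the statement to an assertion about this factorisation. Since $X\into\Univ_0$ is a monomorphism one has $X\simeq X\times_{\Univ_0}X$, and base change along $A\times A\to X\times X$ then gives $A\times_{X}A\simeq A\times_{\Univ_0}A$; hence the cover $A\onto X$ is a monomorphism --- equivalently, an equivalence --- if and only if $g$ is a monomorphism. On the other hand, by definition~\ref{defn:univalence} together with proposition~\ref{prop:UnivalenceRepresentabilitycartesianFibration}, $p$ is univalent precisely when $X=(\Over{\Gen{p}}{-})^{\core}$ is representable by a groupoid in $\BB$, i.e.\ when $X$ lies in the essential image of $\BB\into\BBB$. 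So the whole statement comes down to showing that $X$ is representable by an object of $\BB$ if and only if the canonical cover $A\onto X$ is an equivalence.

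The implication from right to left is immediate, since then $X\simeq A\in\BB$. For the converse I would use the universal object: if $X$ is represented by some object of $\BB$ then the identity of $X$ corresponds under the Yoneda lemma to a morphism $\tilde p\colon\tilde P\to X$ which is a pullback of $p$ and through which every pullback of $p$ is classified by a unique map; the morphism $p$ over $A$ is classified by the cover $e\colon A\to X$, while $\tilde p$, being itself a pullback of $p$, is $c^{\ast}p$ for some $c\colon X\to A$, and uniqueness of classifying maps forces $e\circ c\simeq\id_X$. One then has to upgrade the split epimorphism $e$ to an equivalence, which amounts to the fact that a univalent $p$ is not merely a member of the class $\Gen{p}$ it generates but is in fact the \emph{universal} morphism of that class; this is the content established in~\cite[Proposition~3.8]{gepner2017}. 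I expect this last point --- rather than the formal manipulations with factorisation systems, covers, and the universe $\Univ$ recalled above --- to be the main obstacle.
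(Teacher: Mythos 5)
You take essentially the same route as the paper: compute the image of $g$ objectwise in $\PSh_{\SSS}(\BB)$, identify it with $(\Over{\Gen{p}}{-})^{\core}$, and reduce the statement to the cover $e\colon A\onto(\Over{\Gen{p}}{-})^{\core}$ being a monomorphism. Your treatment of the easy implication (``$g$ mono $\Rightarrow p$ univalent'') is correct, and you are right to flag the converse as the real content: univalence in the sense of definition~\ref{defn:univalence} produces a section $c$ of $e$ with $ec\simeq\id$, and nothing in sight gives $ce\simeq\id$.

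The gap is more than a missing citation. The literal definition~\ref{defn:univalence} (``$\Gen{p}$ is a bounded local class'') does not by itself imply that $g$ is a monomorphism: with $\BB=\SS$ and $A=B\Sigma_2\sqcup 1$, let $p$ restrict to the universal $\Sigma_2$-torsor $1\to B\Sigma_2$ over the first summand and to $1\sqcup 1\to 1$ over the second. Then $\Gen{p}$ is the class of \emph{all} double covers, since each is pulled back from $B\Sigma_2\subset A$; this is a bounded local class, so $p$ satisfies definition~\ref{defn:univalence}, yet $g$ sends both summands into the single connected component $B\Sigma_2\subset\Univ_0$ and is therefore not a monomorphism. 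The hypothesis that closes the gap is the one announced just before definition~\ref{defn:univalence} but not written into it, namely that $p$ itself be the universal morphism of $\Gen{p}$; granting this, $e$ classifies $p\simeq\phi_{\Gen{p}}$ and so is the identity, at which point your argument finishes. Finally, note that~\cite[Proposition~3.8]{gepner2017} cannot supply the missing step in the way you suggest: Gepner--Kock's univalence is the criterion of corollary~\ref{cor:univalenceObjectEquivalences}, which the paper derives \emph{from} the present proposition, so that route would be circular within the paper's logical order.
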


\begin{corollary}
	\label{cor:univalenceObjectEquivalences}
	Let $p\colon P\to A$ be a map in $\BB$ and let $g\colon A\to \Univ_0$ be the associated object in $\Univ$. Let $\pr_i\colon A\times A\to A$ be the projection onto the $i$th factor for $i\in \{0,1\}$. Then $p$ is univalent if and only if the canonical map $\phi\colon A\to \Eq{\Univ}(\pr_0^\ast g,\pr_1^\ast g)$ in $\Over{\BB}{A\times A}$ is an equivalence.
\end{corollary}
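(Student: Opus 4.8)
The plan is to deduce the statement from Proposition~\ref{prop:univalenceMonomorphism} together with the description of monomorphisms via their diagonals recorded in example~\ref{ex:effectiveEpimorphismMonomorphism}. By Proposition~\ref{prop:univalenceMonomorphism}, the map $p$ is univalent precisely when $g\colon A\to\Univ_0$ is a monomorphism in the $\infty$-topos $\BBB$, and by example~\ref{ex:effectiveEpimorphismMonomorphism} this in turn holds precisely when the diagonal $A\to A\times_{\Univ_0}A$ of $g$ is an equivalence. It therefore remains to identify this diagonal with the map $\phi$ from the statement; since every step here is an equivalence, both directions of the corollary follow simultaneously.

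To set up the identification I would first unwind the definition of $\Eq{\Univ}(g\pr_0,g\pr_1)$. The map $(g\pr_0,g\pr_1)\colon A\times A\to\Univ_0\times\Univ_0$ is just $g\times g$, and the second map in the defining pullback square of the groupoid of equivalences is the diagonal $\Univ_0\to\Univ_0\times\Univ_0$ (this is where corollary~\ref{cor:equivalencesIdentities} enters: the subobject of $\Univ_1$ spanned by the equivalences is the image of $s_0$, and $(d_1,d_0)\circ s_0$ is the diagonal). Hence $\Eq{\Univ}(g\pr_0,g\pr_1)$ is the fibre product of $g\times g$ with the diagonal of $\Univ_0$, and the standard identity $A\times_X A\simeq (A\times A)\times_{X\times X}X$ supplies a canonical equivalence $\Eq{\Univ}(g\pr_0,g\pr_1)\simeq A\times_{\Univ_0}A$ over $A\times A$. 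In particular $\Eq{\Univ}(g\pr_0,g\pr_1)$ lies in $\Over{\BB}{A\times A}$, since it is a subobject — via $s_0$ — of $\map{\Univ}(g\pr_0,g\pr_1)$, which belongs to $\Over{\BB}{A\times A}$ by proposition~\ref{prop:mappingObjectsInternalUniverse}, and a subobject of an object of $\BB$ formed in $\BBB$ is again an object of $\BB$.

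Finally I would check that $\phi$ is carried to the diagonal of $g$ by this equivalence. By adjunction the map $\phi$ — whose source $A$ carries the diagonal $\delta_A\colon A\to A\times A$ — is determined by its restriction along $\delta_A$, and this restriction is the tautological section of $\Eq{\Univ}(g,g)=\Eq{\Univ}(\delta_A^\ast(g\pr_0),\delta_A^\ast(g\pr_1))$ given by the identity endomorphism of $g$. Tracing this section through the pullback identifications above shows that it coincides with the base change of the diagonal $A\to A\times_{\Univ_0}A$ along $\delta_A$, so that $\phi$ itself is this diagonal, viewed as a morphism over $A\times A$. Putting the three steps together proves the corollary. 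I expect the one genuinely delicate point to be this last piece of bookkeeping — confirming that the canonical identity-section of $\Eq{\Univ}(g,g)$ is sent to the diagonal of $g$ through the chain of pullback squares — with everything else being a direct application of the cited results.
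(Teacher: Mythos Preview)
Your proof is correct and follows essentially the same approach as the paper: both reduce via Proposition~\ref{prop:univalenceMonomorphism} to $g$ being a monomorphism, then identify $\Eq{\Univ}(g\pr_0,g\pr_1)$ with the pullback $A\times_{\Univ_0}A$ (equivalently, with the pullback of the diagonal of $\Univ_0$ along $g\times g$) so that $\phi$ becomes the diagonal of $g$. The paper handles your ``delicate bookkeeping'' more tersely by simply invoking the universal property of the defining pullback square for $\Eq{\Univ}(g\pr_0,g\pr_1)$, which immediately identifies the comparison map $\phi$ as the one induced by the square with $A$ in the upper-left corner; your explicit check via restriction along $\delta_A$ is not wrong, just more laborious than necessary.
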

\begin{proof}
	By Proposition~\ref{prop:univalenceMonomorphism}, the morphism $p$ is univalent precisely if $g\colon A\to \Univ_0$ is a monomorphism in $\BBB$, which is equivalent to the commutative square
	\begin{equation*}
	\begin{tikzcd}
	A\arrow[d, "{(\id,\id)}"] \arrow[r, "g"] & \Univ_0\arrow[d, "{(\id,\id)}"]\\
	A\times A\arrow[r, "g\times g"] & \Univ_0\times \Univ_0
	\end{tikzcd}
	\end{equation*}
	being cartesian. On account of the cartesian square
	\begin{equation*}
	\begin{tikzcd}
	\Eq{\Univ}(\pr_0^\ast g,\pr_1^\ast g) \arrow[d]\arrow[r] & \Univ_0\arrow[d, "{(\id,\id)}"]\\
	A\times A\arrow[r, "g\times g"] & \Univ_0\times \Univ_0,
	\end{tikzcd}
	\end{equation*}
	we see that this is the case if and only if the map $\phi$ is an equivalence.
\end{proof}
The object of morphisms in the $\BB$-category $\Univ[\Gen{p}]$ that is associated with a univalent map $p\colon G\to A$ in $\BB$ admits an explicit description as well:
\begin{proposition}
	\label{prop:objectOfMorphismsUnivalent}
	Let $p\colon P\to A$ be a univalent morphism in $\BB$ and let $\Univ[\Gen{p}]$ be the associated $\BB$-category. Then $(\Univ[\Gen{p}])_1$ is equivalent to the internal mapping object $\iFun{\pr_0^\ast P}{\pr_1^\ast P}$ in $\Over{\BB}{A\times A}$.
\end{proposition}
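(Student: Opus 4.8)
The plan is to realise $(\Univ[\Gen{p}])_1$ as a mapping groupoid inside the universe $\Univ$ taken over the base object $A\times A$, and then to invoke proposition~\ref{prop:mappingObjectsInternalUniverse}. Write $G$ for the domain of $p$ (so $P=G$) and let $g\colon A\to\Univ_0$ be the object of $\Univ$ that classifies $p$. Since $p$ is univalent, proposition~\ref{prop:univalenceMonomorphism} shows that $g$ is a monomorphism, so the image factorisation $A\onto(\Over{\Gen{p}}{-})^\core\into\Univ_0$ recorded before that proposition collapses: the cover $A\to(\Over{\Gen{p}}{-})^\core$ is an equivalence. Consequently the full subcategory $\Univ[\Gen{p}]\into\Univ$ (full by proposition~\ref{prop:classificationInternalSubcategories}) has object of objects $(\Univ[\Gen{p}])_0\simeq A$, with its inclusion on objects into $\Univ_0$ given precisely by $g$; equivalently, $\Univ[\Gen{p}]\simeq\Gen{A}[\Univ]$ by proposition~\ref{prop:subcategoriesGeneration}.

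Next I would compute the object of morphisms. Applying proposition~\ref{prop:equivalentConditionsFullyFaithful} to the fully faithful functor $\Univ[\Gen{p}]\into\Univ$, the square whose top row is $(\Univ[\Gen{p}])_1\to\Univ_1$ and whose bottom row is $A\times A\to\Univ_0\times\Univ_0$ (the latter being $g\times g$, the right-hand vertical map being $(d_1,d_0)$) is cartesian. Thus $(\Univ[\Gen{p}])_1\simeq\Univ_1\times_{\Univ_0\times\Univ_0}(A\times A)$ as objects over $A\times A$. Since the pair of projections $(\pr_0,\pr_1)\colon A\times A\to A\times A$ is the identity, the map $g\times g$ agrees with $(g\pr_0,g\pr_1)$; hence, by the definition of mapping groupoids from section~\ref{sec:objectsMorphisms} applied inside the $\infty$-topos $\BBB$, this pullback is exactly $\map{\Univ}(g\pr_0,g\pr_1)$, the mapping groupoid between the two objects $g\pr_0$ and $g\pr_1$ of $\Univ$ in context $A\times A$.

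Finally, naturality of the equivalence $\Fun_{\BB}(-,\Univ)\simeq\Over{\BB}{-}$ (equivalently, lemma~\ref{lem:baseChangeUniverse}) identifies $g\pr_i$, as an object of $\Univ$ in context $A\times A$, with $\pr_i^\ast G\in\Over{\BB}{A\times A}$. Proposition~\ref{prop:mappingObjectsInternalUniverse}, applied with the base object $A\times A$ in place of $A$, then gives $\map{\Univ}(g\pr_0,g\pr_1)\simeq\iFun{\pr_0^\ast G}{\pr_1^\ast G}$, the internal mapping object computed in $\Over{\BB}{A\times A}$; composing the equivalences above yields the asserted equivalence, and since both sides lie in $\Over{\BB}{A\times A}\into\Over{\BBB}{A\times A}$ it is irrelevant whether we argue in $\BB$ or in $\BBB$. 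The only non-formal point is the first step — observing that univalence forces $(\Over{\Gen{p}}{-})^\core\simeq A$, so that $\Univ[\Gen{p}]$ embeds into $\Univ$ on objects via $g$; everything afterwards is a routine chase of the cartesian square characterising fully faithful functors followed by an application of proposition~\ref{prop:mappingObjectsInternalUniverse}.
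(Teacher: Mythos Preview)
Your argument is correct and follows the same route as the paper: use that $\Univ[\Gen{p}]\hookrightarrow\Univ$ is fully faithful to obtain the cartesian square identifying $(\Univ[\Gen{p}])_1$ as a pullback over $\Univ_0\times\Univ_0$, then invoke proposition~\ref{prop:mappingObjectsInternalUniverse}. The paper's proof is just a terser version of yours; your additional step making explicit the identification $(\Univ[\Gen{p}])_0\simeq A$ via univalence is implicit in the paper's discussion preceding the proposition.
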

\begin{proof}
	By construction there is a fully faithful functor $\Univ[\Gen{p}]\hookrightarrow  \Univ$ in $\Cat(\BBB)$, which means that the square
	\begin{equation*}
	\begin{tikzcd}
	(\Univ[\Gen{p}])_1\arrow[d]\arrow[r] & \Univ_1\arrow[d]\\
	(\Univ[\Gen{p}])_0\times(\Univ[\Gen{p}])_0\arrow[r] & \Univ_0\times\Univ_0
	\end{tikzcd}
	\end{equation*}
	is cartesian. On the other hand, Proposition~\ref{prop:mappingObjectsInternalUniverse} identifies the pullback of the above diagram with $\iFun{\pr_0^\ast P}{\pr_1^\ast P}$, which finishes the proof.
\end{proof}

\begin{remark}
	The theory of univalent maps in an $\infty$-topos has been previously worked out by Gepner and Kock in~\cite{gepner2017} and by Rasekh in~\cite{rasekh2018}, using slightly different methods.
\end{remark}

\section{Left fibrations and Yoneda's lemma}
\label{chap:yoneda}
The main goal of this chapter is to formulate and prove Yoneda's lemma for $\BB$-categories. The proof will rely on the interplay between $\Univ$-valued functors on a $\BB$-category $\I{C}$ and \emph{left fibrations} $p\colon \I{P}\to\I{C}$, a result that is commonly referred to as the \emph{Grothendieck construction}. The collection of left fibrations forms the right class of a factorisation system in $\Cat(\BB)$ whose left complement is comprised of \emph{initial functors}. We discuss this factorisation system in \S~\ref{sec:leftFibrations}--\ref{sec:covariantEquivalences}, and in \S~\ref{sec:GrothendieckConstruction} and \S~\ref{sec:universalLeftFibration} we establish the Grothendieck construction and study some of its consequences. Finally, we prove Yoneda's lemma in \S~\ref{sec:YonedaLemma}.

\begin{remark}
	Our strategy for the proof of Yoneda's lemma is inspired by Cisinski's proof of Yoneda's lemma for $\infty$-categories in~\cite{cisinski2019a}.
\end{remark}
\subsection{Left fibrations}
\label{sec:leftFibrations}
In this section we discuss left fibrations between $\BB$-categories (and more generally between simplicial objects) in an $\infty$-topos $\BB$ and discuss some of their basic properties.
\begin{definition}
	A map $P\to C$ between simplicial objects in $\BB$ is a \emph{left fibration} if it is internally right orthogonal to the map $d^1\colon\Delta^0\into\Delta^1$. Dually, $p$ is a \emph{right fibration} if it is internally right orthogonal to the map $d^0\colon\Delta^0\into\Delta^1$. We denote by $\LFib$ and $\RFib$ (or $\LFib_{\BB}$ and $\RFib_{\BB}$ when we want to emphasise the dependency on the base $\infty$-topos) the full subcategories of $\Fun(\Delta^1,\Simp\BB)$ spanned by the left and right fibrations, respectively.
\end{definition}

In what follows, we will mostly restrict the discussion to left fibrations. By dualising, however, all statements carry over unchanged to right fibrations. In more precise terms, this dualisation is obtained by taking \emph{opposite simplicial objects}: Recall that the autoequivalence $(-)^{\op}\colon\CatS\to\CatS$ restricts to an autoequivalence on $\Delta\hookrightarrow \CatS$. By precomposition, one obtains an autoequivalence $(-)^{\op}\colon \BB_{\Delta}\to \BB_{\Delta}$. For any simplicial object $C$ in $\BB$, we refer to the simplicial object $C^{\op}$ as the \emph{opposite simplicial object} associated with $C$. Note that $(-)^{\op}$ restricts to an autoequivalence of $\Cat(\BB)$, hence the opposite of a $\BB$-category is well-defined. Since the functor $(-)^{\op}$ sends the inclusion $d^1\colon\Delta^0\hookrightarrow \Delta^1$ to the map $d^0\colon \Delta^0\hookrightarrow \Delta^1$, one finds that the autoequivalence $(-)^{\op}$ sends right fibrations to left fibrations and vice versa.

\begin{lemma}
	\label{lem:generatorsInitialMaps}
	The saturated class of maps in $\Simp\BB$ that is generated by the maps $d^1\colon E\into\Delta^1\otimes E$  for any simplicial object $E$ in $\BB$ coincides with the saturation of the set
	\begin{equation*}
		\{d^{\{0\}}\colon A\into\Delta^n\otimes A~\vert~A\in\BB,~n\geq 0\}.
	\end{equation*}
\end{lemma}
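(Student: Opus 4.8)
The goal is to show that two saturated classes of maps in $\Simp\BB$ coincide: the one generated by all $d^1\colon E\into\Delta^1\otimes E$ (for $E\in\Simp\BB$), call it $S_1$, and the one generated by $\{d^{\{0\}}\colon A\into\Delta^n\otimes A \mid A\in\BB,\ n\geq 0\}$, call it $S_2$. As in lemma~\ref{lem:internalExternalSegalConditions}, it suffices to compare the right complements, i.e.\ to show that a map $f\colon\I{C}\to\I{D}$ in $\Simp\BB$ is right orthogonal to all maps in the first set if and only if it is right orthogonal to all maps in the second set. First I would argue the easy inclusion $S_2\subseteq S_1$: the map $d^{\{0\}}\colon A\into\Delta^n\otimes A$ for $n=1$ is just a generator of $S_1$ with $E=A$ a constant simplicial object; for general $n$ I would factor $d^{\{0\}}\colon A\to\Delta^n\otimes A$ as a composite of maps of the form $d^1$ (or rather the appropriate last-vertex inclusion, obtained by tensoring $d^{\{0\}}\colon\Delta^{k-1}\into\Delta^k$, which is itself a pushout/retract combination of $d^1\colon\Delta^0\into\Delta^1$ tensored with simplicial objects), and conclude by the closure properties of saturated classes from proposition~\ref{prop:propertiesFactorisationSystems}. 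More precisely: $\Delta^0\xrightarrow{d^{\{0\}}}\Delta^{n}$ factors as $\Delta^0\to\Delta^1\to\Delta^2\to\cdots\to\Delta^n$ where each step $\Delta^{k}\into\Delta^{k+1}$ picking out the added initial vertex... actually the cleaner route is to observe $\Delta^n\otimes A$ has a filtration by the "cone" inclusions and that $d^{\{0\}}\colon\Delta^{n}\into\Delta^{n+1}$ (the cocone on the initial vertex) is a pushout of $d^1\colon\Delta^0\into\Delta^1$ along $\Delta^0\to\Delta^n$ in a suitable sense — I would need the combinatorial identity $\Delta^{n+1}\simeq\Delta^0\star\Delta^n$, which exhibits $d^{\{0\}}\colon\Delta^n\into\Delta^{n+1}$ as a map obtained from $d^1\colon\Delta^0\into\Delta^1$ by the join construction; alternatively one can write it as a finite composite of pushouts along $\Delta^0\sqcup\Delta^0\into\Delta^1$ and $d^1\colon\Delta^0\into\Delta^1$. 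Either way the point is elementary and lives in $\Simp\SS$, so $S_2\subseteq S_1$.

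For the reverse inclusion $S_1\subseteq S_2$, I would reduce as in lemma~\ref{lem:internalExternalSegalConditions}: since $\Simp\BB$ is a localisation of $\PSh_\SS(\Delta\times\CC)$ with $\BB$ a localisation of $\PSh_\SS(\CC)$, every simplicial object $E$ is a small colimit of objects of the form $\Delta^k\otimes A$ with $A\in\BB$, $k\geq 0$; since saturated classes are closed under colimits in $\Fun(\Delta^1,-)$, it suffices to show $d^1\colon\Delta^k\otimes A\into\Delta^1\otimes(\Delta^k\otimes A) = (\Delta^1\times\Delta^k)\otimes A$ lies in $S_2$ for all $k,A$. Replacing $\BB$ by $\Over{\BB}{A}$ and then by $\SS$ (all the relevant maps are in the image of $\const$), this becomes a purely combinatorial statement about simplicial sets: the inclusion $\Delta^k \xrightarrow{d^1} \Delta^1\times\Delta^k$ (the copy $\{1\}\times\Delta^k$... no wait, $d^1$ omits vertex $1$, so it is $\{0\}\times\Delta^k$, the "initial end") lies in the saturated class $T$ generated by $\{d^{\{0\}}\colon\Delta^0\into\Delta^n \mid n\geq 0\}$. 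This I would prove using the standard prism decomposition of $\Delta^1\times\Delta^k$ into $(k+1)$ copies of $\Delta^{k+1}$, glued along the $\Delta^k$'s, and inducting on the simplices: each stage attaches a $\Delta^{k+1}$ along a horn, and one checks each such attachment is built from the initial-vertex inclusions $d^{\{0\}}\colon\Delta^0\into\Delta^m$ — concretely, the bottom face $\{0\}\times\Delta^k$ sits inside the first prism $\Delta^{k+1}$ via a map equivalent to $d^{\{0\}}\colon\Delta^k\into\Delta^{k+1}$, and one bootstraps from there. Alternatively, I would phrase it via the \emph{left anodyne} maps of Joyal's theory: the class generated by $d^{\{0\}}\colon\Delta^0\into\Delta^n$ contains all left horn inclusions $\Lambda^n_0\into\Delta^n$ and hence (being saturated and stable under products with the simplex, via a decomposition of $\partial\Delta^k$ into simplices as in lemma~\ref{lem:internalExternalSegalConditions}) all maps $(\Delta^1\times\partial\Delta^k)\cup(\{0\}\times\Delta^k)\into\Delta^1\times\Delta^k$ and ultimately $\{0\}\times\Delta^k\into\Delta^1\times\Delta^k$ — this is exactly the classical fact that the inclusion of an endpoint fiber into a product with $\Delta^1$ is left anodyne.

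The main obstacle I anticipate is the combinatorial core of the second inclusion: proving that $\{0\}\times\Delta^k\into\Delta^1\times\Delta^k$ is in the saturation of the initial-vertex inclusions. This is the analogue here of the appeals to \cite{rezk2001,joyal2008,htt} in lemmas~\ref{lem:internalExternalSegalConditions} and~\ref{lem:internalExternalUnivalence}, and the honest thing to do is cite the corresponding classical result — namely that $\{0\}\times\Delta^k\hookrightarrow\Delta^1\times\Delta^k$ is left anodyne and that left anodyne maps are generated by the left horn inclusions, which in turn lie in the saturation of $\{d^{\{0\}}\colon\Delta^0\into\Delta^n\}$ (this last step is again classical, e.g.\ from the theory of the covariant model structure). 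I would also double-check that the reductions "WLOG $\BB\simeq\SS$" and "WLOG $E=\Delta^k\otimes A$" are legitimate exactly as in lemma~\ref{lem:internalExternalSegalConditions} — they are, because right orthogonality against $d^1\colon E\into\Delta^1\otimes E$ for $f$ translates, via the adjunction defining $\Fun_\BB(-,-)$, into right orthogonality of $f_\ast\colon\Fun_\BB(A,\I{C})\to\Fun_\BB(A,\I{D})$ against the spine-type maps in $\Simp\SS$, and the product-absorption trick $\Fun_\BB(\Delta^k\otimes A,-)$ versus $\Fun_\BB(A,-)$ and $\Delta^1\times\Delta^k$ works verbatim.
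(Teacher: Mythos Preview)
Your overall strategy matches the paper's, and for the inclusion $S_1\subseteq S_2$ you have essentially the paper's argument: reduce to $E=\Delta^n\otimes A$, then to $\BB\simeq\SS$ and $A\simeq 1$, and use the prism decomposition. The paper makes the last step completely explicit: with respect to $\Delta^1\times\Delta^n\simeq\Delta^{n+1}\sqcup_{\Delta^n}\cdots\sqcup_{\Delta^n}\Delta^{n+1}$, the inclusion $\{0\}\times\Delta^n\hookrightarrow\Delta^1\times\Delta^n$ is the iterated pushout
\[
d^{\{0\}}\sqcup_{d^{\{0\}}}\cdots\sqcup_{d^{\{0,\dots,n-1\}}}d^{\{0,\dots,n\}}\colon\ \Delta^0\sqcup_{\Delta^0}\cdots\sqcup_{\Delta^{n-1}}\Delta^n\ \longrightarrow\ \Delta^{n+1}\sqcup_{\Delta^n}\cdots\sqcup_{\Delta^n}\Delta^{n+1},
\]
and each component $d^{\{0,\dots,i\}}\colon\Delta^i\hookrightarrow\Delta^{n+1}$ lies in $S_2$ by item~(2) of proposition~\ref{prop:propertiesFactorisationSystems} applied to the factorisation $d^{\{0\}}_{\Delta^{n+1}}\simeq d^{\{0,\dots,i\}}\circ d^{\{0\}}_{\Delta^i}$. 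No appeal to left anodyne theory or horn inclusions is needed.

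For the inclusion $S_2\subseteq S_1$ your proposal has a genuine gap. The suggestions you list---factoring $d^{\{0\}}\colon\Delta^0\to\Delta^n$ through a chain $\Delta^0\to\Delta^1\to\cdots$, or realising $d^{\{0\}}\colon\Delta^n\to\Delta^{n+1}$ as a pushout of $d^1\colon\Delta^0\to\Delta^1$---do not work in $\Simp\BB$: such pushouts produce wedges, not $\Delta^{n+1}$, and the intermediate face inclusions $\Delta^k\hookrightarrow\Delta^{k+1}$ are not visibly of the form $d^1\colon E\to\Delta^1\otimes E$. The paper instead uses a single retract (in the spirit of lemma~\ref{lem:iteratedLocalisationGenerators}): define $\alpha\colon\Delta^{n+1}\to\Delta^1\times\Delta^n$ by $\alpha(0)=(0,0)$ and $\alpha(k)=(1,k-1)$ for $k\geq 1$, and $\beta\colon\Delta^1\times\Delta^n\to\Delta^{n+1}$ by $\beta(0,k)=0$, $\beta(1,k)=k+1$; then $\beta\alpha\simeq\id$ and one obtains a retract diagram
\[
\begin{tikzcd}
\Delta^0\arrow[r]\arrow[d, "d^{\{0\}}"'] & \Delta^n\arrow[r]\arrow[d, "d^1\times\id"] & \Delta^0\arrow[d, "d^{\{0\}}"]\\
\Delta^{n+1}\arrow[r, "\alpha"] & \Delta^1\times\Delta^n\arrow[r, "\beta"] & \Delta^{n+1}
\end{tikzcd}
\]
exhibiting $d^{\{0\}}\colon\Delta^0\to\Delta^{n+1}$ as a retract of $d^1\times\id_{\Delta^n}$, which is a generator of $S_1$. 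Tensoring with $A$ finishes the argument.
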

\begin{proof}
	Let $S$ be the saturation of the set of maps $d^1\colon E\into\Delta^1\otimes E$ for $E\in\Simp\BB$. Then for any $A\in\BB$ and any $n\geq 0$ the map $d^0\colon(\Delta^0\times\Delta^n)\otimes A\into(\Delta^1\otimes\Delta^n)\otimes A$ is contained in $S$ as well. Let $\alpha\colon\Delta^{n+1}\into\Delta^1\times\Delta^n$ be defined by $\alpha(0)=(0,0)$ and $\alpha(k)=(1,k-1)$ for $1\leq k\leq n$, and let	$\beta\colon \Delta^1\times\Delta^n\to\Delta^{n+1}$ be defined by $\beta(0,k)=0$ and $\beta(1,k)=k+1$ for any $0\leq k\leq n$. One then obtains a commutative diagram
	\begin{equation*}
		\begin{tikzcd}
			\Delta^0\arrow[r] \arrow[d, "d^{\{0\}}"]&\Delta^n\arrow[r]\arrow[d, "d^1\times\id"] & \Delta^0\arrow[d, "d^{\{0\}}"]\\
			\Delta^{n+1}\arrow[r, "\alpha"]&\Delta^1\times\Delta^n\arrow[r, "\beta"] &\Delta^{n+1},
		\end{tikzcd}
	\end{equation*}
	and since $\beta\alpha\simeq\id_{\Delta^{n+1}}$ the map $d^{\{0\}}\colon\Delta^0\into\Delta^{n+1}$ is a retract of $d^1\times\id\colon \Delta^n\to \Delta^1\times\Delta^n$. By tensoring with $A$, this shows that the map $d^{\{0\}}\colon A\into\Delta^{n}\otimes A$ is contained in $S$ for all $n\geq 1$.
	
	Conversely, let $S$ be the saturation of the set of maps $d^{\{0\}}\colon A\into\Delta^n\otimes A$ for $n\geq 0$ and $A\in\BB$, and let $E$ be a simplicial object in $\BB$. We need to show that the map $d^1\colon E\into\Delta^1\otimes E$ is contained in $S$. By the same argument as in Lemma~\ref{lem:internalExternalSegalConditions}, we may assume without loss of generality $E\simeq \Delta^n\otimes A$ for some $n\geq 1$ and some $A\in\BB$. Now with respect to the usual decomposition $\Delta^1\times\Delta^n\simeq\Delta^{n+1}\sqcup_{\Delta^n}\cdots\sqcup_{\Delta^n}\Delta^{n+1}$ of the product $\Delta^1\times\Delta^n$ into $n+1$ copies of $\Delta^{n+1}$, the map $d^{\{0\}}\colon\Delta^n\into\Delta^1\times\Delta^n$ is given by the iterated pushout
	\begin{equation*}
		d^{\{0\}}\sqcup_{d^{\{0\}}}\cdots\sqcup_{d^{\{0,\dots,n-1\}}}d^{\{0,\dots,n\}}\colon \Delta^0\sqcup_{\Delta^0}\cdots\sqcup_{\Delta^{n-1}}\Delta^n\to \Delta^{n+1}\sqcup_{\Delta^{n}}\cdots\sqcup_{\Delta^n}\Delta^{n+1}
	\end{equation*}
	in $\Fun(\Delta^1,\Simp\SS)$. It is therefore enough to show that for every integer $n\geq 1$ and every $0\leq i\leq n$ the map $d^{\{0,\dots, i\}}\colon\Delta^i\otimes A\into\Delta^n\otimes A$ is contained in $S$, which follows immediately from the assumption by using item~(2) of Proposition~\ref{prop:propertiesFactorisationSystems}.
\end{proof}
\begin{proposition}
	\label{prop:fibrationsExternalCharacterization}
	A map $P\to C$ between simplicial objects in $\BB$ is a left fibration if and only if for every $n\geq 1$ the commutative diagram
	\begin{equation*}
		\begin{tikzcd}
			P_n \arrow[r] \arrow[d, "d_{\{0\}}"] & C_n\arrow[d, "d_{\{0\}}"]\\
			P_0\arrow[r] & C_0
		\end{tikzcd}
	\end{equation*}
	is cartesian.
\end{proposition}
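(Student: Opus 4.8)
The plan is to translate the internal orthogonality condition defining a left fibration into an external orthogonality condition against an explicit generating set of maps — the one provided by Lemma~\ref{lem:generatorsInitialMaps} — and then to rewrite that external condition as the stated family of pullback squares by means of the tensor--hom adjunction together with Proposition~\ref{prop:simplicialPowering}.

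First I would unwind the definition. Since $\Cat(\BB)\into\Simp\BB$ is an exponential ideal (Proposition~\ref{prop:categoriesExponentialIdeal}), internal orthogonality computed in $\Cat(\BB)$ and in $\Simp\BB$ agree on maps between categories, so $p\colon\I{P}\to\I{C}$ is a left fibration precisely when it is internally right orthogonal to $d^1\colon\Delta^0\into\Delta^1$ in $\Simp\BB$; by definition of internal orthogonality, and using $E\times\Delta^1\simeq\Delta^1\otimes E$ and $E\times\Delta^0\simeq E$, this says exactly that $p$ is right orthogonal to every map $d^1\colon E\into\Delta^1\otimes E$ with $E\in\Simp\BB$. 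The class $\prescript{\bot}{}{\{p\}}$ of maps left orthogonal to the fixed map $p$ is closed under composition, pushouts, retracts and colimits in the arrow category, hence is saturated; therefore $p$ is right orthogonal to this family of maps if and only if it is right orthogonal to the saturated class it generates. By Lemma~\ref{lem:generatorsInitialMaps} that saturated class coincides with the saturation of $\{d^{\{0\}}\colon A\into\Delta^n\otimes A\mid A\in\BB,\ n\geq 0\}$, and these maps again lie in $\Cat(\BB)$, so we conclude that $p$ is a left fibration if and only if $p$ is right orthogonal to $d^{\{0\}}\colon A\into\Delta^n\otimes A$ for all $A\in\BB$ and all $n\geq 0$; the case $n=0$ is vacuous.

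It remains to identify this with the pullback squares of the statement. Right orthogonality of $p$ to $d^{\{0\}}\colon A\into\Delta^n\otimes A$ means that the square with corners $\map{\Cat(\BB)}(\Delta^n\otimes A,\I{P})$, $\map{\Cat(\BB)}(\Delta^n\otimes A,\I{C})$, $\map{\Cat(\BB)}(A,\I{P})$, $\map{\Cat(\BB)}(A,\I{C})$ — the vertical maps induced by $d^{\{0\}}$, the horizontal ones by $p$ — is cartesian. Now the tensor--hom adjunction gives $\map{\Cat(\BB)}(\Delta^n\otimes A,\I{X})\simeq\map{\BB}(A,(\I{X}^{\Delta^n})_0)$ for $\I{X}\in\{\I{P},\I{C}\}$, and Proposition~\ref{prop:simplicialPowering} identifies $(\I{X}^{\Delta^n})_0$ with $\I{X}_n$ in such a way that the map induced by $d^{\{0\}}$ becomes the simplicial face map $d_{\{0\}}\colon\I{X}_n\to\I{X}_0$. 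Hence the orthogonality condition is equivalent to requiring that $\map{\BB}(A,-)$ carry the comparison map $\I{P}_n\to\I{C}_n\times_{\I{C}_0}\I{P}_0$ to an equivalence for every $A\in\BB$; since a map in $\BB$ is an equivalence as soon as $\map{\BB}(A,-)$ sends it to one for all $A$, this holds for all $A$ exactly when the square of the proposition is cartesian. Letting $n$ range over $n\geq 1$ finishes the argument.

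I do not anticipate a serious obstacle: all of the genuine combinatorics is packaged into Lemma~\ref{lem:generatorsInitialMaps}, and the rest is formal manipulation of adjunctions and orthogonality. The only steps needing a little care are the reduction of internal orthogonality against $d^1$ to external orthogonality against the family $\{d^1\colon E\into\Delta^1\otimes E\}$ — which uses that every simplicial object is a colimit of objects $\Delta^k\otimes A$, these being categories, and that $\prescript{\bot}{}{\{p\}}$ is closed under such colimits — and keeping track of which face map $d^{\{0\}}$ corresponds to under Proposition~\ref{prop:simplicialPowering}.
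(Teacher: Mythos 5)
Your proof is correct and is precisely what the paper's one-line proof (``This follows immediately from Lemma~\ref{lem:generatorsInitialMaps}'') leaves implicit: unwind internal orthogonality to external orthogonality against the family $\{d^1\colon E\into\Delta^1\otimes E\}$, use that the left orthogonal complement of $p$ is saturated together with Lemma~\ref{lem:generatorsInitialMaps} to replace this family with $\{d^{\{0\}}\colon A\into\Delta^n\otimes A\}$, and then convert that external orthogonality into the stated pullback squares via the tensor--powering adjunction, Proposition~\ref{prop:simplicialPowering}, and Yoneda. Same approach, just spelled out.
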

\begin{proof}
	This follows immediately from Lemma~\ref{lem:generatorsInitialMaps}.
\end{proof}

\begin{lemma}
	\label{lem:leftFibrationsCategorical}
	Let $S$ be the set of maps in $\Simp\BB$ that is internally generated by $d^1\colon \Delta^0\into\Delta^1$. Then $S$ contains the two maps $E^1\to 1$ and $I^2\into\Delta^2$. Dually, the set $S^\prime$ that is internally generated by $d^0\colon \Delta^0\into\Delta^1$ contains the two maps $E^1\to 1$ and $I^2\into\Delta^2$.
\end{lemma}
\begin{proof}
	We show the statement for the set $S$, the proof for the dual case is analogous. Since $d^1$ is a section of the unique map $\Delta^1\to\Delta^0$, the latter is contained in $S$, hence $\Delta^3\to E^1$ is contained in $S$ as well since saturated classes of maps are stable under colimits in $\Fun(\Delta^1,\Simp\BB)$ and pushouts in $\Simp\BB$. Since by Lemma~\ref{lem:generatorsInitialMaps} the map $d^{\{0\}}\colon \Delta^0\into\Delta^3$ defines an element of $S$, we find that the composition $\Delta^0\to E^1$ is contained in $S$. As this is a section of the map $E^1\to 1$, we conclude that the latter map is contained in $S$ as well.
	
	The inclusion $\Delta^1\into I^2$ of the first copy of $\Delta^1$ in $I^2$ is a pushout of $d^1\colon \Delta^0\into\Delta^1$ and therefore an element of $S$. By precomposing with $d^1$, we thus obtain a map $\Delta^0\into I^2$ in $S$ such that its postcomposition with the inclusion $I^2\into\Delta^2$ recovers $d^{\{0\}}\colon\Delta^0\into\Delta^2$. Since Lemma~\ref{lem:generatorsInitialMaps} shows that this map is an element of $S$ as well, we conclude that the inclusion $I^2\into\Delta^2$ must be contained in $S$ too, which finishes the proof.
\end{proof}

\begin{proposition}
	\label{prop:leftFibrationCategorical}
	Let $p\colon P\to \I{C}$ be a left fibration in $\Simp\BB$ such that $\I{C}$ is a $\BB$-category. Then $P$ is a $\BB$-category as well.
\end{proposition}
\begin{proof}
	By Lemma~\ref{lem:leftFibrationsCategorical}, the map $p$ is internally right orthogonal to the two maps $E^1\to 1$ and $I^2\into\Delta^2$. Since $\I{C}$ is internally local with respect to these maps, we conclude that $P$ is internally local with respect to the two maps as well and therefore a $\BB$-category, as claimed.
\end{proof}

\begin{remark}
	\label{rem:initialMapsCatSimp}
	By Proposition~\ref{prop:categoriesExponentialIdeal}, a functor $p\colon \I{P}\to\I{C}$ in $\Cat(\BB)$ is a left fibration precisely if it is internally right orthogonal to the map $d^1$ \emph{in $\Cat(\BB)$}. Therefore, Proposition~\ref{prop:leftFibrationCategorical} implies that the pullback of the cartesian fibration $\LFib\to \Simp\BB$ along the inclusion $\Cat(\BB)\into\Simp\BB$ is given by the full subcategory of $\Fun(\Delta^1,\Cat(\BB))$ that is spanned by the left fibrations between $\BB$-categories. We will denote the resulting cartesian fibration over $\Cat(\BB)$ by $\LFib$ as well. Note, moreover, that also the localisation functor $\Fun(\Delta^1,\Cat(\BB))\to\LFib$ arises as the restriction of the localisation functor $\Fun(\Delta^1,\Simp\BB)\to\LFib$. 
\end{remark}

\begin{remark}
	\label{rem:leftFibsAreLeftOverTotSpace}
	The following observation is due to Sebastian Wolf. By using~\cite[Lemma 1.4.14]{lurie2009b} it is easy to see that a map $\I{P} \rightarrow \I{C}$ in $\Cat(\BBB)$ is a left fibration if and only if the associated functor
	$\smallint \I{P} \rightarrow \smallint \I{C}$ of cocartesian fibrations over $\BB^\op$
	is a left fibration of $\infty$-categories.
	Therefore the unstraightening functor $\int\colon \Cat(\BBB)\into \Cocart(\BB^\op)$ induces an inclusion $\LFib_{\BBB}(-) \into \LFib_{\SS}(\smallint -)$ of presheaves on $\Cat(\BBB)$ that identifies $\LFib_{\BBB}(\I{C})$ with the full subcategory of $\LFib_{\SSS}(\smallint \I{C})$ spanned by those left fibrations $\PP\to \int \I{C}$ for which the induced cocartesian fibration $\PP\to \BB^\op$ is classified by a $\CatSS$-valued sheaf on $\BB$.
\end{remark}

\begin{proposition}
	\label{prop:fibrationsFunctorCategories}
	For any simplicial object $K$ in $\BB$, the endofunctor $\iFun{K}{-}\colon\Simp\BB\to\Simp\BB$ preserves left fibrations.
\end{proposition}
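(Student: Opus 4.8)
\emph{Proof proposal.} The plan is to unwind the definition of a left fibration into an internal-orthogonality condition and then transport it through the cartesian-closed adjunction of $\Simp\BB$. By proposition~\ref{prop:categoriesExponentialIdeal}, if $\I{P}$ and $\I{C}$ are categories in $\BB$ then so are $\iFun{K}{\I{P}}$ and $\iFun{K}{\I{C}}$, so $\iFun{K}{p}$ is a functor between categories in $\BB$; it remains to show that it is internally right orthogonal to $d^1\colon\Delta^0\into\Delta^1$. By the description of internal orthogonality in section~\ref{sec:factorisationSystems}, and since $\Delta^0$ is the terminal object of $\Simp\BB$, this amounts to showing that for every simplicial object $E$ in $\BB$ the map $d^1\colon E\to\Delta^1\otimes E$ is (ordinarily) left orthogonal to $\iFun{K}{p}$.

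Next I would apply the adjunction $\map{\Simp\BB}(-,\iFun{K}{-})\simeq\map{\Simp\BB}(-\times K,-)$: under it, the orthogonality square of $d^1\colon E\to\Delta^1\otimes E$ against $\iFun{K}{p}$ is identified with the orthogonality square of $E\times K\to(\Delta^1\otimes E)\times K$ against $p$. Because the simplicial tensoring is itself a product, $\Delta^1\otimes F\simeq\const(\Delta^1)\times F$, there is a canonical identification $(\Delta^1\otimes E)\times K\simeq\Delta^1\otimes(K\times E)$ under which this map becomes $d^1\colon K\times E\to\Delta^1\otimes(K\times E)$. Since $p$ is a left fibration it is internally right orthogonal to $d^1$, hence right orthogonal to $d^1\colon F\to\Delta^1\otimes F$ for every $F\in\Simp\BB$, in particular for $F=K\times E$; this yields the desired orthogonality.

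The argument is essentially formal and I do not anticipate a genuine obstacle: the only points requiring (minor) care are the identification $(\Delta^1\otimes E)\times K\simeq\Delta^1\otimes(K\times E)$, which is just associativity and commutativity of products, and keeping straight what ``internally right orthogonal to $d^1$'' unwinds to. One can alternatively phrase everything via factorisation systems: $\iFun{K}{-}$ is right adjoint to $K\times(-)$, and the latter, being a left adjoint, preserves the smallest saturated class containing the generating maps $d^1\colon E\to\Delta^1\otimes E$ of the left class of initial functors (lemma~\ref{lem:generatorsInitialMaps}, proposition~\ref{prop:saturatedClosureInternal}) — since it carries each such generator back into the same family — so its right adjoint $\iFun{K}{-}$ preserves the right orthogonal class consisting of the left fibrations, and the same reasoning applies verbatim to categories in $\BBB$.
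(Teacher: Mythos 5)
Your argument is correct and is exactly the paper's proof, written out in full: the paper's one-line proof observes that internal right orthogonality to $d^1\colon\Delta^0\into\Delta^1$ automatically gives internal right orthogonality to $d^1\colon\Delta^0\otimes K\into\Delta^1\otimes K$ and then invokes the exponential adjunction, which is precisely the transposition you carry out step by step via the identification $(\Delta^1\otimes E)\times K\simeq\Delta^1\otimes(E\times K)$. The alternative phrasing at the end through saturated classes and lemma~\ref{lem:generatorsInitialMaps} is also sound but is an unnecessary detour, since once you know $K\times(-)$ carries each generator $E\times d^1$ to another generator $(K\times E)\times d^1$, the adjunction transposes orthogonality directly without ever needing to pass to the saturation.
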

\begin{proof}
	Since left fibrations are by definition \emph{internally} right orthogonal to $d^1\colon\Delta^0\into\Delta^1$, they are also internally right orthogonal to $d^1\colon\Delta^0\otimes K\into\Delta^1\otimes K$, hence the result follows.
\end{proof}

Left fibrations are \emph{fibred in $\BB$-groupoids}. To see this, note that since $d^0\colon \Delta^0\into\Delta^1$ is a section of the unique map $\Delta^1\to\Delta^0$, one finds:
\begin{lemma}
	\label{lem:localisationsInitial}
	Let $S$ be a saturated class of maps in $\Simp\BB$ that contains the maps $d^0\colon K\into\Delta^1\otimes K$ for all $K\in\Simp\BB$. Then $S$ contains the class of maps in $\Simp\BB$ that is internally generated by $\Delta^1\to\Delta^0$.\qed
\end{lemma}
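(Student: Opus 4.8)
By Proposition~\ref{prop:saturatedClosureInternal}, the class of maps internally generated by $\Delta^1\to\Delta^0$ is the smallest saturated class of maps in $\Cat(\BB)$ that contains the set $\{\I{C}\times(\Delta^1\to\Delta^0)\mid\I{C}\in\Cat(\BB)\}$. Since $\I{C}\times\const(\Delta^1)\simeq\Delta^1\otimes\I{C}$ and $\I{C}\times\const(\Delta^0)\simeq\I{C}$, this generating set is precisely the collection of projections $p_{\I{C}}\colon\Delta^1\otimes\I{C}\to\I{C}$. As $S$ is itself saturated, it therefore suffices to prove $p_{\I{C}}\in S$ for every category $\I{C}$ in $\BB$.

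I would first record that $S$ also contains the inclusions $d^1\colon\I{E}\to\Delta^1\otimes\I{E}$ for every $\I{E}$: if $\sigma\colon\Delta^1\to\Delta^1$ denotes the non-trivial automorphism, then $\sigma\otimes\id_{\I{E}}$ is an equivalence in $\Cat(\BB)$ and $d^1\simeq(\sigma\otimes\id_{\I{E}})\circ d^0$, so $d^1$ is the composite of a map in $S$ with an equivalence. Both $d^0$ and $d^1$ are sections of $p_{\I{E}}$. Moreover, since $\Cat(\BB)$ is presentable and $S$ is saturated, $S$ is closed under retracts and under sequential colimits in $\Fun(\Delta^1,\Cat(\BB))$ of diagrams all of whose vertices lie in $S$ (cf.\ Proposition~\ref{prop:propertiesFactorisationSystems}).

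The crux is to extract $p_{\I{C}}\in S$ from the fact that $p_{\I{C}}$ admits the section $d^0_{\I{C}}\in S$. The endofunctor $\Delta^1\otimes(-)$ carries, besides the sections $d^0,d^1$ and the retraction $p$, a natural contraction: the monotone map $\min\colon\Delta^1\times\Delta^1\to\Delta^1$ induces $h\colon\Delta^1\otimes(\Delta^1\otimes\I{C})\to\Delta^1\otimes\I{C}$ with $h\circ d^0_{\Delta^1\otimes\I{C}}\simeq\id$ and $h\circ d^1_{\Delta^1\otimes\I{C}}\simeq d^1_{\I{C}}\circ p_{\I{C}}$. The plan is to feed this deformation‑retraction data into the standard mapping‑telescope construction so as to realise $p_{\I{C}}$ as a retract of a transfinite composite of pushouts of the cylinder inclusions $d^0_{\I{E}},d^1_{\I{E}}$ — exactly the type of manipulation carried out in the proofs of Lemmas~\ref{lem:iteratedLocalisationGenerators} and~\ref{lem:generatorsInitialMaps}. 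I expect the organisation of this telescope to be the delicate point: the obvious telescope exhibits $p_{\I{C}}$ as a colimit whose transition maps are the idempotent $d^0_{\I{C}}\circ p_{\I{C}}$, which is circular, so one has to reshuffle it — using the face and degeneracy structure of $\Delta^1\times\Delta^1$ tensored with $\I{C}$ — until every object appearing is literally one of the cylinder inclusions or an equivalence. (Equivalently, one may try to prove the statement in its dual form, namely that every map internally right orthogonal to $d^0\colon\Delta^0\to\Delta^1$ is internally right orthogonal to $\Delta^1\to\Delta^0$, by comparing the two via the external characterisation in the spirit of Proposition~\ref{prop:fibrationsExternalCharacterization}.) Once $p_{\I{C}}\in S$ is established for all $\I{C}$, the lemma follows from the reduction in the first paragraph.
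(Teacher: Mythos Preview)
Your reduction in the first paragraph is correct: by Proposition~\ref{prop:saturatedClosureInternal} it suffices to show that each projection $s^0\otimes\id_{\I{C}}\colon\Delta^1\otimes\I{C}\to\I{C}$ lies in $S$. After that, however, the argument goes astray.

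First, a concrete error: the category $\Delta^1=\{0\to 1\}$ has no non-trivial automorphism. The object $0$ is initial and $1$ is final, so any self-equivalence fixes both; your map $\sigma$ does not exist, and you cannot deduce $d^1\in S$ from $d^0\in S$ this way. (This is not essential to your main line, but it does show you cannot freely swap $d^0$ and $d^1$.) Second, the telescope construction you outline is never carried out, and you yourself note that the obvious version is circular. There is no indication of how the ``reshuffling'' would actually go.

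The paper's argument, by contrast, is a single observation: $d^0\colon\Delta^0\hookrightarrow\Delta^1$ is a \emph{section} of $\Delta^1\to\Delta^0$, so $(s^0\otimes\id_{\I{C}})\circ(d^0\otimes\id_{\I{C}})\simeq\id_{\I{C}}$. Now it is enough to prove the inclusion for the \emph{smallest} saturated class containing the maps $d^0\colon\I{E}\hookrightarrow\Delta^1\otimes\I{E}$; by Proposition~\ref{prop:saturatedClosureInternal} this smallest class is the left class $\LL$ of the factorisation system internally generated by $d^0\colon\Delta^0\hookrightarrow\Delta^1$. By Proposition~\ref{prop:propertiesFactorisationSystems}(2) the class $\LL$ satisfies left cancellation: since $d^0\otimes\id_{\I{C}}\in\LL$ and the composite $(s^0\otimes\id_{\I{C}})\circ(d^0\otimes\id_{\I{C}})=\id_{\I{C}}\in\LL$, one concludes $s^0\otimes\id_{\I{C}}\in\LL\subseteq S$. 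No telescope is required.
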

\begin{definition}
	\label{def:conservative}
	A map between simplicial objects in $\BB$ is said to be \emph{conservative} if it is internally right orthogonal to the map $\Delta^1\to\Delta^0$.
\end{definition}
In light of Definition~\ref{def:conservative}, Lemma~\ref{lem:localisationsInitial} shows:
\begin{proposition}
	\label{prop:fibrationsConservative}
	Both left and right fibrations between simplicial objects in $\BB$ are conservative.\qed
\end{proposition}
By Proposition~\ref{prop:characterisationGroupoids}, if $C$ is a simplicial object in $\BB$, the unique map $C\to 1$ in $\Simp\BB$ is conservative if and only if $C$ is a constant simplicial object, i.e.\ contained in the inclusion $\iota\colon \BB\into\Simp\BB$. Consequently, item~(2) of Proposition~\ref{prop:propertiesFactorisationSystems} implies that if $A\in\BB$ is an arbitrary object, a map $C\to A$ in $\Simp\BB$ is conservative if and only if $C$ is contained in $\BB$ as well. In particular, if $f\colon C\to D$ is a conservative map in $\Simp\BB$, then the fibre of $f$ over any map $A\to D$ with $A\in\BB$ is itself contained in $\BB$. 
By using Proposition~\ref{prop:fibrationsConservative} and Corollary~\ref{cor:conservativeFibres}, one now concludes:
\begin{corollary}
	\label{cor:leftFibrationsFibredGroupoids}
	The fibre of a left or right fibration in $\Cat(\BB)$ over any object in the codomain in context $A\in\BB$ is a $\Over{\BB}{A}$-groupoid.\qed
\end{corollary}
\begin{remark}
	\label{rem:groupoidificationFibrantReplacement}
	By Corollary~\ref{cor:leftFibrationsFibredGroupoids} and Proposition~\ref{prop:fibrationsExternalCharacterization}, a map $C\to A$ in $\Simp\BB$ in which $A$ is contained in $\BB$ is a left or right fibration precisely if $C$ is contained in $\BB$ as well. Therefore, both localisation functors $\Fun(\Delta^1, \Simp\BB)\to\RFib$ and $\Fun(\Delta^1,\Simp\BB)\to\LFib$ recover the functor $\colim_{\Delta^{\op}}\colon \Simp\BB\to \BB$ upon taking the fibre over the final object $1\in\Simp\BB$. By restriction, the localisation functors $\Fun(\Delta^1, \Cat(\BB))\to\RFib$ and $\Fun(\Delta^1,\Cat(\BB))\to\LFib$ thus both induce the groupoidification functor on the fibres over $1\in\Cat(\BB)$.
\end{remark}
Conservative maps between \emph{$\BB$-categories} are completely determined by the property that they are fibred in groupoids. To show this, we first need the following lemma:
\begin{lemma}
	\label{lem:generatorsLocalisation}
	Let $S$ be a saturated class of maps in $\Cat(\BB)$ that contains the projections $\Delta^1\otimes A\to A$ for all $A\in\BB$. Then $S$ contains the projection $\Delta^1\otimes\I{C}\to\I{C}$ for any $\BB$-category $\I{C}$.
\end{lemma}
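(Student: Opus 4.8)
The plan is to reduce, by a colimit argument, to the claim that the projections $(\Delta^1\times\Delta^n)\otimes A\to\Delta^n\otimes A$ lie in $S$ for all $A\in\BB$, and then to deduce this by recognising the relevant degeneracy maps as pushouts of $s^0\colon\Delta^1\to\Delta^0$. First I would note that $\Delta^1\otimes-=\const(\Delta^1)\times-\colon\Cat(\BB)\to\Cat(\BB)$ preserves small colimits, $\Cat(\BB)$ being presentable (proposition~\ref{prop:categoriesAccessiblelocalisation}) and cartesian closed (proposition~\ref{prop:categoriesExponentialIdeal}); hence $\I{C}\mapsto(\Delta^1\otimes\I{C}\to\I{C})$ is a small-colimit-preserving functor $\Cat(\BB)\to\Fun(\Delta^1,\Cat(\BB))$. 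Since $S$ is closed under small colimits in $\Fun(\Delta^1,\Cat(\BB))$, it is enough to treat $\I{C}$ ranging over a class of objects generating $\Cat(\BB)$ under small colimits. Now $\Cat(\BB)$ is a colimit-preserving localisation of $\Simp\BB$, in which every object is a small colimit of objects $\Delta^n\otimes A$ with $n\geq 0$ and $A\in\BB$ (as in the proof of lemma~\ref{lem:internalExternalSegalConditions}), and each $\Delta^n\otimes A=\const(\Delta^n)\times A$ is already a category in $\BB$ by lemma~\ref{lem:functorialityInternalCategories} applied to $\const$ together with the stability of categories under products; so $\Cat(\BB)$ is generated under small colimits by the $\Delta^n\otimes A$. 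Using $\Delta^1\otimes(\Delta^n\otimes A)\simeq(\Delta^1\times\Delta^n)\otimes A$, this reduces the statement to showing $(\Delta^1\times\Delta^n)\otimes A\to\Delta^n\otimes A\in S$ for all $n\geq 0$ and $A\in\BB$.

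The key observation is that for $0\leq i\leq n$ the degeneracy $s^i\colon\Delta^{n+1}\to\Delta^n$ is the pushout in $\Simp\SS$ of $s^0\colon\Delta^1\to\Delta^0$ along the edge inclusion $d^{\{i,i+1\}}\colon\Delta^1\into\Delta^{n+1}$: maps out of $\Delta^{n+1}\sqcup_{\Delta^1}\Delta^0$ are precisely the $(n+1)$-simplices whose $i$th edge is degenerate, and these correspond naturally to the $n$-simplices, so the pushout is $\Delta^n$ with $s^i$ as its canonical map. Tensoring this pushout square with $A$, which preserves pushouts, and noting that all four of its objects are categories (so that the square remains a pushout in $\Cat(\BB)$), the map $s^i\otimes A\colon\Delta^{n+1}\otimes A\to\Delta^n\otimes A$ is a pushout of the projection $\Delta^1\otimes A\to A$; it therefore lies in $S$, by hypothesis and the closure of $S$ under pushouts.

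Finally I would invoke the standard prism decomposition $\Delta^1\times\Delta^n\simeq\Delta^{n+1}\sqcup_{\Delta^n}\cdots\sqcup_{\Delta^n}\Delta^{n+1}$ of the product into $n+1$ copies of $\Delta^{n+1}$ glued along $n$ copies of $\Delta^n$: over the corresponding fence-shaped diagram, the projection $\Delta^1\times\Delta^n\to\Delta^n$ restricts to the degeneracies $s^i$ on the maximal simplices and to identities of $\Delta^n$ on the shared faces. Tensoring with $A$ and passing to colimits as in the first paragraph presents $(\Delta^1\times\Delta^n)\otimes A\to\Delta^n\otimes A$ as a colimit in $\Fun(\Delta^1,\Cat(\BB))$ of arrows of the form $s^i\otimes A\in S$ and of identities of $\Delta^n\otimes A$; since $S$ contains all equivalences and is closed under small colimits in $\Fun(\Delta^1,\Cat(\BB))$, this projection lies in $S$, which finishes the proof. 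The main obstacle is the middle step: the inductive argument of lemma~\ref{lem:iteratedLocalisationGenerators} does not apply, since $S$ is assumed to contain the projections $\Delta^1\otimes A\to A$ only for $A\in\BB$ and not for arbitrary simplicial objects, so instead of peeling off one $\Delta^1$-factor at a time one must realise the degeneracy maps as pushouts of $s^0$.
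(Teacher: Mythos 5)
Your strategy is sound and, in a sense, more direct than the paper's --- the paper first reduces to $n=1$ via the spine decomposition $\Delta^n\simeq\Delta^1\sqcup_{\Delta^0}\cdots\sqcup_{\Delta^0}\Delta^1$ (taken in $\Cat(\BB)$) and only then passes to $\Delta^1\times\Delta^1\simeq\Delta^2\sqcup_{\Delta^1}\Delta^2$, whereas you handle all $n$ at once via the prism decomposition. But the pivotal middle step contains a genuine error: the claim that $s^i\colon\Delta^{n+1}\to\Delta^n$ is the pushout \emph{in $\Simp\SS$} of $s^0\colon\Delta^1\to\Delta^0$ along $d^{\{i,i+1\}}$ is false. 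The pushout of simplicial sets $\Delta^{n+1}\sqcup_{\Delta^1}\Delta^0$ is a strictly larger object than $\Delta^n$: already for $n=1$, $i=0$ it has four $1$-simplices (two of them nondegenerate) and retains a nondegenerate $2$-simplex, while $\Delta^1$ has three $1$-simplices (one nondegenerate) and none. In a general simplicial object $X$, $(n+1)$-simplices whose $i$th edge is degenerate do \emph{not} correspond to $n$-simplices --- the assignment $\sigma\mapsto s_i\sigma$ is a section of $d_i$ on that locus, not an inverse. Your parenthetical ``all four of its objects are categories (so that the square remains a pushout in $\Cat(\BB)$)'' would be a valid deduction from a pushout in $\Simp\BB$ between categories, but the square in question is not a pushout in $\Simp\BB$ in the first place, so that observation cannot rescue the step.

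What is true, and what makes your approach viable, is that the square \emph{is} a pushout in $\Cat(\SS)$. By the universal property, maps from $\Delta^{n+1}\sqcup_{\Delta^1}\Delta^0$ into a \emph{category} $\I{C}$ are the $(n+1)$-simplices whose $i$th edge is degenerate, and the Segal condition identifies the space of these with $\I{C}_n$; equivalently, the pushout in $\Cat(\SS)$ computes the localization of the poset $\Delta^{n+1}$ at the arrow $i\to i+1$, which merges those two objects and gives $\Delta^n$ via $s^i$. You should then apply the colimit-preserving functor $-\otimes A\colon\Cat(\SS)\to\Cat(\BB)$ to this pushout square in $\Cat(\SS)$, rather than tensoring a (non-)pushout of simplicial sets and then arguing about categoriness; this exhibits $s^i\otimes A$ as a pushout in $\Cat(\BB)$ of $\Delta^1\otimes A\to A\in S$, and the rest of your proof then goes through unchanged. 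With this repair the argument is a genuine alternative to the paper's: both express the relevant degeneracies as colimits of the single generator $s^0\colon\Delta^1\to\Delta^0$ computed in the \emph{localized} $\infty$-category $\Cat$ rather than in $\Simp$, but you produce each $s^i\colon\Delta^{n+1}\to\Delta^n$ in one pushout and feed them all into the prism colimit, while the paper first reduces to $n=1$ via the spine and then builds $s^0,s^1\colon\Delta^2\to\Delta^1$ from the spine decomposition of $\Delta^2$.
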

\begin{proof}
	As every $\BB$-category $\I{C}$ is a colimit of $\BB$-categories of the form $\Delta^n\otimes A$ for some $n\geq 0 $ and some $A\in\BB$, we may assume without loss of generality $\I{C}\simeq\Delta^n$. Since $\Delta^n\simeq I^n$ in $\Cat(\BB)$, we may furthermore assume $n=1$. In light of the decomposition $\Delta^1\times\Delta^1\simeq\Delta^2\sqcup_{\Delta^1}\Delta^2$, the projection $\Delta^1\times\Delta^1\to\Delta^1$ is equivalent to the composition
	\begin{equation*}
		\Delta^2\sqcup_{\Delta^1}\Delta^2\xrightarrow{s^1\sqcup_{\Delta^1}\id} \Delta^2\xrightarrow{s^0} \Delta^1.
	\end{equation*}
	It will therefore be enough to show that the two maps $s^0, s^1\colon\Delta^2\rightrightarrows\Delta^1$ are contained in $S$, which follows immediately from the observation that these two maps are given by $s^0\sqcup_{\Delta^0}\id$ and $\id\sqcup_{\Delta^0} s^0$ in light of the decomposition $\Delta^2\simeq\Delta^1\sqcup_{\Delta^0}\Delta^1$.
\end{proof}
Lemma~\ref{lem:generatorsLocalisation} immediately implies:
\begin{proposition}
	\label{prop:conservativeCharacterization}
	A functor $f\colon\I{C}\to\I{D}$ between $\BB$-categories is conservative if and only if the square
	\begin{equation*}
		\begin{tikzcd}
			\I{C}_0\arrow[r, "f_0"]\arrow[d, "s_0"] & \I{D}_0\arrow[d, "s_0"]\\
			\I{C}_1\arrow[r, "f_1"] & \I{D}_1
		\end{tikzcd}
	\end{equation*}
	is cartesian.\qed
\end{proposition}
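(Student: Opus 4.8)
The plan is to decode the definition of \emph{conservative} and reduce, via Lemma~\ref{lem:generatorsLocalisation}, to a small generating family of test maps. First I would note that, since $\Cat(\BB)$ is presentable and cartesian closed (Proposition~\ref{prop:categoriesAccessiblelocalisation} and Proposition~\ref{prop:categoriesExponentialIdeal}), $f$ being internally right orthogonal to $\Delta^1\to\Delta^0$ means precisely that $f$ is (ordinarily) right orthogonal to every map $\Delta^1\otimes\I{E}\to\I{E}$ with $\I{E}\in\Cat(\BB)$, i.e.\ to $c\times(\Delta^1\to\Delta^0)$ for all $c$. Equivalently, applying Proposition~\ref{prop:factorisationSystemInternallyGenerated} to the singleton $\{\Delta^1\to\Delta^0\}$ together with Proposition~\ref{prop:saturatedClosureInternal}, the conservative functors form the right class $\RR$ of a factorisation system whose left class $\LL$ is the smallest saturated class of maps in $\Cat(\BB)$ containing $\{\Delta^1\otimes\I{E}\to\I{E}\mid \I{E}\in\Cat(\BB)\}$.

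Next I would invoke Lemma~\ref{lem:generatorsLocalisation}: any saturated class that contains the maps $\Delta^1\otimes A\to A$ for every $A\in\BB$ already contains $\Delta^1\otimes\I{C}\to\I{C}$ for arbitrary $\I{C}\in\Cat(\BB)$, so $\LL$ coincides with the smallest saturated class containing the subfamily $\{\Delta^1\otimes A\to A\mid A\in\BB\}$. Using that the right-orthogonal complement of a family of maps depends only on the saturated class it generates (the left-orthogonal complement of any single map is itself saturated), this shows that $f$ is conservative if and only if $f$ is right orthogonal to $\Delta^1\otimes A\to A$ for every $A\in\BB$.

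It then remains to identify this last condition with cartesianness of the displayed square. For this I would use Proposition~\ref{prop:simplicialPowering}, which supplies natural equivalences $\map{\Cat(\BB)}(\Delta^1\otimes A,\I{X})\simeq\map{\BB}(A,\I{X}_1)$ and $\map{\Cat(\BB)}(A,\I{X})\simeq\map{\BB}(A,\I{X}_0)$ for every category $\I{X}$ in $\BB$, under which precomposition with $\Delta^1\otimes A\to A$ (that is, with $\sigma^0\colon\Delta^1\to\Delta^0$) corresponds to postcomposition with the degeneracy $s_0\colon\I{X}_0\to\I{X}_1$. Hence $f$ being right orthogonal to $\Delta^1\otimes A\to A$ for all $A\in\BB$ says exactly that mapping each $A\in\BB$ into the square of the statement yields a cartesian square of $\infty$-groupoids, which by Yoneda's lemma is equivalent to the square itself being cartesian in $\BB$ (equivalently in $\BBB$, by Proposition~\ref{prop:universeEnlargementStructurePreservation}). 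The only point requiring care — and the step I expect to be the main, albeit mild, obstacle — is the passage from orthogonality against the full family $\{\Delta^1\otimes\I{C}\to\I{C}\}$ to orthogonality against the subfamily indexed by objects of $\BB$; this is precisely the content of Lemma~\ref{lem:generatorsLocalisation}, and everything else is the routine Yoneda computation above.
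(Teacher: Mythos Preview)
Your proposal is correct and follows essentially the same approach as the paper: the paper states that the proposition is an immediate consequence of Lemma~\ref{lem:generatorsLocalisation}, and your argument simply unpacks this by reducing internal orthogonality against $\Delta^1\to\Delta^0$ to orthogonality against the generators $\Delta^1\otimes A\to A$ for $A\in\BB$ via that lemma, and then translating via the tensoring/powering adjunction and Yoneda into cartesianness of the displayed square.
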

\begin{corollary}
	\label{cor:conservativeCore}
	A functor $f\colon\I{C}\to\I{D}$ between $\BB$-categories is conservative if and only if the commutative square
	\begin{equation*}
			\begin{tikzcd}
				\I{C}^{\core}\arrow[r, "f^{\core}"]\arrow[d] & \I{D}^{\core}\arrow[d]\\
				\I{C}\arrow[r, "f"] & \I{D}
			\end{tikzcd}
	\end{equation*}
	is cartesian.
\end{corollary}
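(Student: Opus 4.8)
The plan is to reduce to Proposition~\ref{prop:conservativeCharacterization} by forming the pullback and comparing simplicial degrees. Let $\I{P}:=\I{C}\times_{\I{D}}\I{D}^{\core}$ be the pullback formed in $\Cat(\BB)$; the commutative square in the statement induces a canonical functor $g\colon\I{C}^{\core}\to\I{P}$, and the square is cartesian precisely when $g$ is an equivalence. Because the inclusion $\Cat(\BB)\into\Simp\BB$ creates limits (remark~\ref{rem:limitsColimitsofCategories}), we have $\I{P}_n\simeq\I{C}_n\times_{\I{D}_n}(\I{D}^{\core})_n$ for every $n$, with $g_n$ the evident comparison map. Both $\I{C}^{\core}$ and $\I{P}$ are categories in $\BB$, so by the Segal description in Proposition~\ref{prop:ConditionsInternalCategories}(2) — together with the fact that equivalences of simplicial objects are detected in each degree — the functor $g$ is an equivalence if and only if $g_0$ and $g_1$ are equivalences in $\BB$.

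Next I would pin down the low-dimensional data of the core groupoid. Since $\I{C}^{\core}$ is a groupoid, it is a constant simplicial object by Proposition~\ref{prop:characterizationGroupoids}. As $\iota\colon\BB\into\Cat(\BB)$ factors through $\Grpd(\BB)$, and $(-)^{\core}$ is the right adjoint of $\Grpd(\BB)\into\Cat(\BB)$ while $(-)_0$ is the right adjoint of $\iota$, tracing through the adjunctions gives $(\I{C}^{\core})_n\simeq\I{C}_0$ for all $n$, with the counit $\I{C}^{\core}\to\I{C}$ equal to $\id_{\I{C}_0}$ in degree $0$; since this counit is a map of simplicial objects and the degeneracies of a constant simplicial object are identities, it must be $s_0\colon\I{C}_0\to\I{C}_1$ in degree $1$. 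The same discussion applies to $\I{D}$, and $f^{\core}$ is the constant simplicial map with value $f_0\colon\I{C}_0\to\I{D}_0$.

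Granting these identifications the conclusion is immediate: $g_0$ is the canonical map $\I{C}_0\to\I{C}_0\times_{\I{D}_0}\I{D}_0$, which is always an equivalence, whereas $g_1$ is an equivalence exactly when the square
\begin{equation*}
\begin{tikzcd}
\I{C}_0\arrow[r, "f_0"]\arrow[d, "s_0"] & \I{D}_0\arrow[d, "s_0"]\\
\I{C}_1\arrow[r, "f_1"] & \I{D}_1
\end{tikzcd}
\end{equation*}
is cartesian, i.e.\ — by Proposition~\ref{prop:conservativeCharacterization} — exactly when $f$ is conservative. Hence $g$ is an equivalence if and only if $f$ is conservative, which is the claim. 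The one step requiring genuine care is the middle paragraph: matching the degree-$1$ slice of the square in the corollary with the square characterising conservative functors, which hinges on correctly recognising $\I{C}^{\core}$ as the constant simplicial object on $\I{C}_0$ and the comparison map to $\I{C}$ in degree $1$ as the degeneracy $s_0$. Everything else is the routine principle that a map of Segal objects is an equivalence as soon as it is so in degrees $0$ and $1$.
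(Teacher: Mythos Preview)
Your proof is correct and follows the same approach as the paper: both arguments reduce the cartesianness of the square to levels $0$ and $1$ via the Segal conditions, observe that level $0$ is trivially cartesian, and identify level $1$ with the square from Proposition~\ref{prop:conservativeCharacterization}. The paper compresses this into a single sentence, while you spell out the identification of $(\I{C}^{\core})_n$ and the degree-$1$ component of the counit explicitly.
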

\begin{proof}
	On account of the Segal conditions, a cartesian square between $\BB$-categories is cartesian if and only if it is cartesian on level $0$ and level $1$, hence the claim follows from the observation that the square in the statement of the corollary is trivially cartesian on level $0$ and recovers the square from Proposition~\ref{prop:conservativeCharacterization} on level $1$.
\end{proof}
\begin{corollary}
	\label{cor:conservativeFibres}
	A functor $\I{C}\to\I{D}$ between large $\BB$-categories is conservative if and only if for any object $d\colon A\to\I{D}$ in context $A\in\BB$ the fibre $\I{C}\vert_d=\I{C}\times_{A}\I{D}$ is a $\Over{\BBB}{A}$-groupoid.
\end{corollary}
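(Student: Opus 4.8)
The plan is to deduce the statement from corollary~\ref{cor:conservativeCore}, which identifies conservativity of a functor $f\colon\I{C}\to\I{D}$ with the cartesianness of the square relating $\I{C}^{\core},\I{D}^{\core},\I{C},\I{D}$. Write $\I{C}\vert_d=\I{C}\times_{\I{D}}A$ for the fibre of $f$ over an object $d\colon A\to\I{D}$. Since $A$ is a groupoid and $(-)^{\core}\colon\Cat(\BB)\to\Grpd(\BB)$ is right adjoint to the inclusion, the object $d$ factors uniquely through $\I{D}^{\core}\into\I{D}$. For the "only if" direction, assume $f$ is conservative. Then by corollary~\ref{cor:conservativeCore} the comparison map $\I{C}^{\core}\to\I{C}\times_{\I{D}}\I{D}^{\core}$ is an equivalence, so factoring $d$ through $\I{D}^{\core}$ and pasting pullbacks yields $\I{C}\vert_d\simeq\I{C}^{\core}\times_{\I{D}^{\core}}A$. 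This is a limit of groupoids, hence itself a groupoid: the diagonal functor $\iota\colon\BB\into\Simp\BB$ is a right adjoint (to $\colim_{\Delta^{\op}}$), so its essential image $\Grpd(\BB)$ is closed under limits, and the inclusion $\Cat(\BB)\into\Simp\BB$ creates limits.

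For the "if" direction, assume $\I{C}\vert_d$ is a groupoid for every object $d$ of $\I{D}$; I would show the square of corollary~\ref{cor:conservativeCore} is cartesian by checking that the comparison map $\I{C}^{\core}\to\I{C}\times_{\I{D}}\I{D}^{\core}$ is an equivalence on levels $0$ and $1$, which suffices for a map between categories in $\BB$ by the Segal conditions (as in the proof of corollary~\ref{cor:conservativeCore}). On level $0$ this is automatic, since $(-)^{\core}$ is the identity on objects. On level $1$, using that $\I{C}^{\core}$ sits inside $\I{C}$ on level $1$ as the object of equivalences, which by corollary~\ref{cor:equivalencesIdentities} is the image of the split monomorphism $s_0\colon\I{C}_0\into\I{C}_1$ (and likewise for $\I{D}$), the claim becomes that the two monomorphisms $s_0\colon\I{C}_0\into\I{C}_1$ and $\I{C}_1\times_{\I{D}_1}\I{D}_0\into\I{C}_1$ cut out the same subobject of $\I{C}_1$. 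The former factors through the latter because identities map to identities, so it remains to prove the reverse containment, i.e.\ that every morphism of $\I{C}$ whose image in $\I{D}$ is an equivalence is itself an equivalence.

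Since objects of $\BB$ cover $\I{C}_1\times_{\I{D}_1}\I{D}_0$ and $\I{C}_0\into\I{C}_1$ is a monomorphism, by the cover/monomorphism factorisation system (example~\ref{ex:effectiveEpimorphismMonomorphism}) it is enough to treat a morphism $g\colon\Delta^1\otimes A\to\I{C}$ of $\I{C}$ in context $A$ such that $fg$ is an equivalence of $\I{D}$ in context $A$. By proposition~\ref{prop:walkingEquivalenceClassifiesEquivalences} together with the univalence of $\I{D}$, the morphism $fg$ factors through the projection $\Delta^1\otimes A\to A$ and an object $d\colon A\to\I{D}$; hence $g$ factors through $\I{C}\vert_d$. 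Since $\I{C}\vert_d$ is a groupoid, every morphism in it is an equivalence (its degeneracy $s_0$ is an equivalence, so corollary~\ref{cor:equivalencesIdentities} applies), and the projection $\I{C}\vert_d\to\I{C}$, being a functor, preserves equivalences; thus $g$ is an equivalence of $\I{C}$, as needed. By corollary~\ref{cor:conservativeCore} this shows $f$ is conservative.

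The main obstacle is organisational rather than conceptual: in the converse direction one must carefully translate the level-$1$ component of the square of corollary~\ref{cor:conservativeCore} into an equality of subobjects of $\I{C}_1$, and then descend the pointwise statement "a morphism over an equivalence is an equivalence" to that equality via the cover/monomorphism factorisation system together with the fact that the objects of $\BB$ cover every object. The reduction of $fg$ to an object $d$ through proposition~\ref{prop:walkingEquivalenceClassifiesEquivalences} and univalence is the other step requiring a little care.
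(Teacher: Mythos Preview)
Your proof is correct, but the converse direction takes a substantially longer route than the paper's. The paper's argument for the ``if'' direction consists of a single observation: apply the hypothesis to the object $i\colon\I{D}^{\core}\to\I{D}$ itself (with $A=\I{D}^{\core}$). This yields that $\I{C}\times_{\I{D}}\I{D}^{\core}$ is a groupoid; but it receives a map from $\I{C}^{\core}$ over $\I{C}$ which is an equivalence on level~$0$, and a map between groupoids that is an equivalence on level~$0$ is an equivalence. Hence $\I{C}^{\core}\simeq\I{C}\times_{\I{D}}\I{D}^{\core}$, which is exactly the cartesian square of corollary~\ref{cor:conservativeCore}.

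Your argument instead unfolds the level-$1$ component into a comparison of subobjects of $\I{C}_1$ and then verifies it pointwise over all contexts $A$, invoking the cover/monomorphism factorisation system, proposition~\ref{prop:walkingEquivalenceClassifiesEquivalences}, and corollary~\ref{cor:equivalencesIdentities}. This is a valid and conceptually transparent reduction to the elementary statement ``a morphism lying over an equivalence is an equivalence'', but it is machinery you can avoid entirely by choosing the right test object. (Incidentally, the cover argument is not really needed: you could take $A=\I{C}_1\times_{\I{D}_1}\I{D}_0$ as the universal case directly.) The ``only if'' direction is essentially the same as the paper's, which phrases it via pullback-stability of conservative maps rather than via corollary~\ref{cor:conservativeCore}, but the content is identical.
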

\begin{proof}
	If $f$ is conservative, then for any object $d\colon A\to\I{D}$ the map $\I{C}\vert_d\to A$ is conservative as well, hence $\I{C}\vert_d$ is internally local with respect to $\Delta^1\to\Delta^0$ and therefore a $\Over{\BBB}{A}$-groupoid. Conversely, if $\I{C}\vert_d$ is a $\Over{\BBB}{A}$-groupoid for any object $d\colon A\to\I{D}$, the fact that $\I{C}_0\simeq \colim_{A\to \I{C}_0}A$ and descent in $\Simp\BBB$ imply that the fibre of $f$ over the map $\I{C}_0\to\I{C}$ is contained in $\BBB$. On account of Corollary~\ref{cor:conservativeCore}, the claim now follows.
\end{proof}
We conclude this section by showing that equivalences between left or right fibrations can be detected fibrewise:
\begin{proposition}
	\label{prop:equivalenceLeftFibrationsFibrewise}
	A map $f\colon P\to Q$ between left fibrations over a simplicial object $C$ in $\BBB$ is an equivalence if and only if for every object $A\in\BB$ and every map $c\colon A\to C$ the induced map $c^\ast P\to c^\ast Q$ is an equivalence in $\Over{\BBB}{A}$. In particular, a map between left fibrations of large $\BB$-categories is an equivalence if and only if it induces an equivalence on the fibres over every object in the base $\BB$-category.
\end{proposition}
\begin{proof}
	By item~(2) of Proposition~\ref{prop:propertiesFactorisationSystems}, the map $f$ is a left fibration itself. Therefore $f$ is an equivalence whenever the underlying map $f_0\colon P_0\to Q_0$ is one. The claim now follows from descent together with the fact that $C_0$ is canonically obtained as the colimit $\colim_{A\to C_0} A$.
\end{proof}

\subsection{Slice $\BB$-categories}
In this section we will discuss one particularly important example of left fibrations between $\BB$-categories - that of \emph{slice $\BB$-categories}.
\begin{definition}
	\label{def:commaCategory}
	Let $f\colon \I{D}\to \I{C}$ and $g\colon\I{E}\to\I{C}$ be two functors between $\BB$-categories. The \emph{comma $\BB$-category} $\Comma{\I{D}}{\I{C}}{\I{E}}$ is defined as the pullback
	\begin{equation*}
		\begin{tikzcd}
			\Comma{\I{D}}{\I{C}}{\I{E}}\arrow[r]\arrow[d]& \I{C}^{\Delta^1}\arrow[d, "{(d^1,d^0)}"] \\
			\I{D}\times\I{E}\arrow[r, "f\times g"] & \I{C}\times\I{C}.
		\end{tikzcd}
	\end{equation*}
	If $g$ is given by an object $c\colon A\to \I{C}$, we write $\Over{\I{D}}{c}=\Comma{\I{D}}{\I{C}}{A}$, and if in addition $f$ is the identity on $\I{C}$ we refer to this $\BB$-category as the \emph{slice $\BB$-category} over $c$. Dually if $f$ is given by an object $c\colon A\to\I{C}$ we write $\Under{\I{D}}{c}=\Comma{A}{\I{C}}{\I{D}}$ and refer to this $\BB$-category as the slice $\BB$-category under $c$ when furthermore $g$ is the identity on $\I{C}$.
\end{definition}

In the situation of Definition~\ref{def:commaCategory}, the slice $\BB$-category $\Under{\I{C}}{c}$ comes along with a canonical map to $A\times\I{C}$ which we will denote by $(\pi_c)_{!}\colon \Under{\I{C}}{c}\to  A\times\I{C}$.
Furthermore, note that the identity $\id_c\colon A\to \I{C}^{\Delta^1}$ induces a map $A\to \Under{\I{C}}{c}$ over $(\id_A,c)\colon A\to A\times\I{C}$ that we will denote by $\id_c$ as well.
\begin{remark}
	\label{rem:sliceCategoryBaseChange}
	Let $\I{C}$ be a $\BB$-category and let $c\colon A\to \I{C}$ be an object in $\I{C}$, which can be equivalently regarded as an object $c\colon 1\to\pi_A^\ast\I{C}$. Using Proposition~\ref{prop:etaleBaseChangeInternalCategories} and Lemma~\ref{lem:BCFunctorCategory} below, the map $(\pi_c)_!\colon\Under{\I{C}}{c}\to A\times\I{C}$ is equivalent to the image of the projection $(\pi_c)_!\colon\Under{(\pi_A^\ast\I{C})}{c}\to\pi_A^\ast\I{C}$ along the forgetful functor $(\pi_A)_!\colon\Cat(\Over{\BB}{A})\to\Cat(\BB)$.
\end{remark}
\begin{lemma}
	\label{lem:BCFunctorCategory}
	For any object $A\in\BB$ there is a canonical equivalence 
	\begin{equation*}
	\pi_A^\ast\iFun{-}{-}\simeq\iFun{\pi_A^\ast(-)}{\pi_A^\ast(-)}
	\end{equation*}
	of bifunctors $\Cat(\BB)^{\op}\times\Cat(\BB)\to\Cat(\Over{\BB}{A})$.
\end{lemma}
\begin{proof}
	By definition of functor categories in $\Cat(\BB)$ and in $\Cat(\Over{\BB}{A})$, the datum of an equivalence of bifunctors $\pi_A^\ast\iFun{-}{-}\simeq\iFun{\pi_A^\ast(-)}{\pi_A^\ast(-)}$ is equivalent to the datum of an equivalence
	\begin{equation*}
		(\pi_A)_!(-\times_A\pi_A^\ast(-))\simeq (\pi_A)_!(-)\times -
	\end{equation*}
	of bifunctors $\Cat(\Over{\BB}{A})\times\Cat(\BB)\to\Cat(\BB)$. Since $\pi_A^\ast$ commutes with products, the lax square
	\begin{equation*}
	\begin{tikzcd}[column sep={7em,between origins}]
	\BB\times\BB\arrow[d, "-\times -"']\arrow[r, "\pi_A^\ast\times\pi_A^\ast"] & \Over{\BB}{A}\times\Over{\BB}{A}\arrow[d, "-\times_A -"]\\
	\BB\arrow[r, "\pi_A^\ast"']\arrow[ur, Rightarrow, shorten=5mm] & \Over{\BB}{A}
	\end{tikzcd}
	\end{equation*}
	commutes. By making use of the mate construction, this square gives rise to a map
	\begin{equation*}
		\phi\colon (\pi_A)_!(-\times_A-)\to (\pi_A)_!(-)\times \pi_A(-),
	\end{equation*}
	and by combining this map with the adjunction counit $\epsilon\colon (\pi_A)_!\pi_A^\ast\to\id_{\Cat(\BB)}$, we can define a map
	\begin{equation*}
	(\pi_A)_!(-\times_A\pi_A^\ast(-))\xrightarrow{\phi(\id\times\pi_A^\ast)} (\pi_A)_!(-)\times (\pi_A)_!\pi_A^\ast(-)\xrightarrow{\id\times\epsilon} (\pi_A)_!(-)\times -.
	\end{equation*}
	By construction, when evaluated at a pair $(\I{C}\to A, \I{D})$, this map is given by the morphism $\psi$ in the commutative diagram
	\begin{equation*}
	\begin{tikzcd}[column sep={3.5em,between origins}, row sep={1em}]
		& \I{C}\times\I{D}\arrow[rr, "\pr_1"]\arrow[dd, "\pr_0"] && \I{D}\arrow[dd]\\
		\I{C}\times_A(\I{D}\times A)\arrow[rr, "\pr_1", crossing over, near end]\arrow[dd, "\pr_0"]\arrow[ur, "\psi"] && \I{D}\times A\arrow[ur, "\pr_0"']& \\
		& \I{C}\arrow[rr] && 1\\
		\I{C}\arrow[rr]\arrow[ur, "\id"] && A. \arrow[ur, "\pi_A"]\arrow[from=uu, "\pr_1", crossing over, near start]& 
	\end{tikzcd}
	\end{equation*}
	The square on the left side being cartesian now implies that $\psi$ is an equivalence, hence the desired result follows.
\end{proof}

Hereafter, our goal is to prove that the projection $(\pi_c)_!\colon\Under{\I{C}}{c}\to A\times\I{C}$ is a left fibration for any $\BB$-category $\I{C}$ and any object $c$ in $\I{C}$ in context $A\in\BB$. We will achieve this by identifying $(\pi_c)_!$ as the pullback of a left fibration that we will construct hereafter.

Let $-\star -\colon \Delta\times\Delta\to \Delta$ be the ordinal sum bifunctor. We may then define:
\begin{definition}
	\label{def:twistedArrow}
	Let $\epsilon\colon \Delta\to\Delta$ denote the functor $\ord{n}\mapsto \ord{n}^{\op}\star \ord{n}$. For any $\BB$-category $\I{C}$, we define the \emph{twisted arrow $\BB$-category} $\Tw(\I{C})$ to be the simplicial object given by the composition
	\begin{equation*}
			\Delta^{\op}\xrightarrow{\epsilon^{\op}} \Delta^{\op}\xrightarrow{\I{C}} \BB.
	\end{equation*}
	This defines a functor $\Tw\colon \Cat(\BB)\to\Simp\BB$.
\end{definition}
Note that the functor $\epsilon$ in Definition~\ref{def:twistedArrow} comes along with two canonical natural transformations
\begin{equation*}
	(-)^{\op}\to \epsilon \leftarrow \id_{\Delta}
\end{equation*}
which induces a map of simplicial objects
\begin{equation*}
	\Tw(\I{C})\to \I{C}^{\op}\times\I{C}
\end{equation*}
that is natural in $\I{C}$.

\begin{proposition}
	\label{prop:twistedArrowLeftFibration}
	For any $\BB$-category, the simplicial object $\Tw(\I{C})$ is a $\BB$-category as well, and the map $\Tw(\I{C})\to \I{C}^{\op}\times\I{C}$ is a left fibration.
\end{proposition}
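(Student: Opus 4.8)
The statement has two parts: that the simplicial object $\Tw(\I{C})$ is a category in $\BB$, and that the natural map $\Tw(\I{C})\to\I{C}^{\op}\times\I{C}$ is a left fibration. The plan is to verify the first via the Segal and univalence conditions of proposition~\ref{prop:ConditionsInternalCategories} and the second via the criterion of proposition~\ref{prop:fibrationsExternalCharacterization}, in both cases unwinding the required conditions into manipulations of iterated fibre products over $\I{C}_0$ that use only that $\I{C}$ is a category. The combinatorial input is the identification of the structure maps of $\Tw(\I{C})$ in terms of those of $\I{C}$. Since $\epsilon(\ord n)=\ord n^{\op}\star\ord n\cong\ord{2n+1}$ one has $\Tw(\I{C})_n\simeq\I{C}_{2n+1}$, and applying $\epsilon$ to the cofaces and codegeneracies one computes, for example, that the two face maps $\Tw(\I{C})_1\rightrightarrows\Tw(\I{C})_0$ are $d_{\{0,3\}},d_{\{1,2\}}\colon\I{C}_3\rightrightarrows\I{C}_1$, the ``outer'' and ``middle'' edge of a $3$-simplex, so that a morphism of $\Tw(\I{C})$ between $a_0\to a_1$ and $b_0\to b_1$ is the datum of a commuting square, as it should be. All operators below are of this kind, of the form $d_S$ or $s_S$ for subsets $S$ produced by $\epsilon$.

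For the Segal condition, proposition~\ref{prop:ConditionsInternalCategories} asks that for $m\geq 2$ the map
\[
\I{C}_{2m+1}\to\I{C}_3\times_{\I{C}_1}\cdots\times_{\I{C}_1}\I{C}_3
\]
($m$ factors, the $i$-th glued to the $(i{+}1)$-st along $d_{\{0,3\}}$ of the former and $d_{\{1,2\}}$ of the latter) be an equivalence. Using the Segal condition for $\I{C}$ to rewrite $\I{C}_{2m+1}\simeq\I{C}_1\times_{\I{C}_0}\cdots\times_{\I{C}_0}\I{C}_1$ and $\I{C}_3\simeq\I{C}_1\times_{\I{C}_0}\I{C}_1\times_{\I{C}_0}\I{C}_1$, the face $d_{\{1,2\}}$ becomes the middle projection and $d_{\{0,3\}}$ the triple composite, and an induction on $m$ — in which the identification of the middle factor of the newly attached $\I{C}_3$ with the composite of the previous block adds no data by associativity — exhibits the right-hand side as the canonical $(2m{+}1)$-fold fibre product of $\I{C}_1$ over $\I{C}_0$.

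For univalence, proposition~\ref{prop:ConditionsInternalCategories} demands that $\I{C}_1\to(\I{C}_1\times\I{C}_1)\times_{\I{C}_3\times\I{C}_3}\I{C}_7$ be an equivalence, where (applying $\epsilon$ to $\delta^{\{0,2\}},\delta^{\{1,3\}}$ and to the surjection $\ord 3\to\ord 0$) the two faces $\I{C}_7\rightrightarrows\I{C}_3$ are $d_{\{1,3,4,6\}}$ and $d_{\{0,2,5,7\}}$, the map $\I{C}_1\times\I{C}_1\to\I{C}_3\times\I{C}_3$ is the evident double degeneracy on each factor, and $\I{C}_1\to\I{C}_7$ is the degeneracy collapsing the first four and the last four vertices. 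Rewriting $\I{C}_7$ and $\I{C}_3$ by the Segal condition for $\I{C}$ and chasing the diagram, a point of the target amounts to a morphism $x$ of $\I{C}$ together with an invertible triple of edges out of the source of $x$ (with the two consecutive composites homotopic to identities) and a symmetric triple out of the target of $x$, all other data being determined; two applications of univalence of $\I{C}$ — each in the form that the object of $(a,b,\sigma)\in\I{C}_0\times\I{C}_0\times\I{C}_3$ with $b$ fixed and $d_{\{0,2\}}\sigma\simeq\id_a$, $d_{\{1,3\}}\sigma\simeq\id_b$ is contractible — collapse each triple onto a point and identify the target with $\I{C}_1$ compatibly with the given map.

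Finally, $\I{C}^{\op}\times\I{C}$ is a category and, by the above, so is $\Tw(\I{C})$, so by proposition~\ref{prop:fibrationsExternalCharacterization} it remains to check that for every $n\geq 1$ the square with vertices $\I{C}_{2n+1}$, $\I{C}_n\times\I{C}_n$, $\I{C}_1$, $\I{C}_0\times\I{C}_0$ — top map the degree-$n$ component of $\Tw(\I{C})\to\I{C}^{\op}\times\I{C}$, left map $d_{\{n,n+1\}}$ (restriction to the two central vertices of $\ord{2n+1}$), bottom and right maps the evident source–target maps, the latter read off from the natural transformations $(-)^{\op}\to\epsilon\leftarrow\id_{\Delta}$ — is cartesian; invoking the Segal condition for $\I{C}$ once more turns this square into the obvious decomposition of $\I{C}_1\times_{\I{C}_0}\cdots\times_{\I{C}_0}\I{C}_1$ into its first $n$ edges, its middle edge, and its last $n$ edges, so nothing further is needed. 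I expect the main obstacle to be the index bookkeeping in the first three steps — reading off correctly which $d_S,s_S$ of $\I{C}$ realise the various structure maps of $\Tw(\I{C})$ from the definition of $\epsilon$ via ordinal sums, and arranging the reassociations so that the univalence step genuinely reduces to univalence of $\I{C}$. An alternative to the category claim, bypassing the explicit structure maps, is to observe that $C\mapsto\Tw(C)$ is precomposition with $\epsilon^{\op}$ on $\Simp\BB$, hence has a left adjoint $L_\epsilon$ with $L_\epsilon(\Delta^n\otimes A)\simeq\Delta^{2n+1}\otimes A$; it then suffices that $L_\epsilon$ carry the saturated class generated, by lemmas~\ref{lem:internalExternalSegalConditions} and~\ref{lem:internalExternalUnivalence}, by $\{I^k\otimes B\hookrightarrow\Delta^k\otimes B\}\cup\{E^1\otimes B\to B\}$ into itself, which by the usual reduction to $\BB\simeq\SS$ becomes the classical fact that edgewise subdivision preserves complete Segal spaces (concretely, that $L_\epsilon$ of a spine inclusion is inner anodyne, the $\epsilon$-image of $I^k$ being a union of $3$-simplices of $\Delta^{2k+1}$ containing the spine $I^{2k+1}$).
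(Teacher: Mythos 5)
Your proof is correct in outline, but it is structured differently from the paper's, and the difference is worth noting. The paper's proof establishes, for every $n\geq 1$, exactly one pullback square, namely
\begin{equation*}
\begin{tikzcd}
\Tw(\I{C})_n\arrow[r]\arrow[d, "d_{\{0\}}"] & (\I{C}^{\op}\times\I{C})_n\arrow[d, "d_{\{0\}}"]\\
\Tw(\I{C})_0\arrow[r] & (\I{C}^{\op}\times\I{C})_0,
\end{tikzcd}
\end{equation*}
which unwinds to a limit-cone claim about $\I{C}_{2n+1}$ mapping to $\I{C}_n$, $\I{C}_1$, $\I{C}_n$ over the two copies of $\I{C}_0$, settled by the Segal conditions for $\I{C}$ alone (this is your third step). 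It then proves a general lemma: whenever $P\to\I{C}$ is a map of simplicial objects with $\I{C}$ a category and these $d_{\{0\}}$-squares cartesian for all $n$, $P$ is automatically a category. That lemma is proved by pasting pullbacks to show that the Segal and univalence squares for $P$ sit cartesianly over those of $\I{C}$, so that $\I{C}$ satisfying Segal and univalence forces $P$ to do so. In other words, the paper never checks the Segal or univalence conditions of $\Tw(\I{C})$ directly at all — it verifies the one family of pullbacks and inherits everything else from $\I{C}$. Your plan instead verifies Segal, univalence, and the left-fibration pullback as three independent computations with iterated fibre products over $\I{C}_0$, which does work (your identification of the structure maps, e.g.\ $\epsilon(\delta^{\{0,2\}})=d^{\{1,3,4,6\}}$ and $\epsilon(\delta^{\{1,3\}})=d^{\{0,2,5,7\}}$, is correct), but the univalence step, which you yourself flag as delicate, is exactly the bookkeeping the paper's lemma sidesteps. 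Your sketched alternative, showing that the left adjoint $L_\epsilon$ of $\epsilon^{\op\ast}$ preserves the relevant internally saturated class, is a third legitimate route and is closest to the classical ``edgewise subdivision preserves complete Segal spaces'' argument, though it trades explicit fibre products for a reduction to $\BB\simeq\SS$ and a simplicial combinatorics lemma. Each approach buys something: the paper's is shortest and reuses the left-fibration check; yours is the most self-contained but requires carrying through the univalence square explicitly; the $L_\epsilon$ approach modularises the combinatorics but depends on the reduction machinery of lemmas~\ref{lem:internalExternalSegalConditions} and~\ref{lem:internalExternalUnivalence}.
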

\begin{proof}
	We will begin by showing that for any $n\geq 1$ the square
	\begin{equation*}
		\begin{tikzcd}
			\Tw(\I{C})_n\arrow[r]\arrow[d, "d_{\{0\}}"] & \I{C}^{\op}_n\times \I{C}^{\op}_n\arrow[d, "d_{\{0\}}"]\\
			\Tw(\I{C})_0\arrow[r] & \I{C}^{\op}_0\times\I{C}_0
		\end{tikzcd}
	\end{equation*}
	is a pullback diagram. Unwinding the definitions, this is equivalent to the diagram
	\begin{equation*}
		\begin{tikzcd}
			&& \I{C}_{2n+1}\arrow[dll, "d_{\{0,\dots,n\}}"']\arrow[d, "d_{\{n,n+1\}}"]\arrow[drr, "d_{\{n+1,\dots,2n+1\}}"] && \\
			\I{C}_n \arrow[dr, "d_{\{n\}}"']&& \I{C}_1\arrow[dl, "d_{\{0\}}"]\arrow[dr, "d_{\{1\}}"'] && \I{C}_n\arrow[dl, "d_{\{0\}}"] \\
			& \I{C}_0 && \I{C}_0 &
		\end{tikzcd}
	\end{equation*}
	being a limit diagram, which follows easily from the Segal conditions. By Proposition~\ref{prop:fibrationsExternalCharacterization}, the map $\Tw(\I{C})\to \I{C}^{\op}\times\I{C}$ is therefore a left fibration. Since the codomain of this map defines a $\BB$-category, Proposition~\ref{prop:leftFibrationCategorical} now implies that $\Tw(\I{C})$ is a $\BB$-category as well.
\end{proof}
Observe that the ordinal sum functor $\star\colon \Delta\times\Delta\to \Delta$ fits into the commutative square
\begin{equation*}
	\begin{tikzcd}
		\Delta\times\Delta\arrow[r, "\star"] \arrow[d, hookrightarrow] & \Delta\arrow[d, hookrightarrow]\\
		\CatS\times\CatS\arrow[r, "\diamond"] & \CatS
	\end{tikzcd}
\end{equation*}
in which $\diamond$ denotes the bifunctor that sends a pair $(\CC,\DD)$ of $\infty$-categories to the pushout
\begin{equation*}
	\begin{tikzcd}
		(\CC\times\DD)\sqcup(\CC\times\DD)\arrow[d]\arrow[r, "{(d^1,d^0)}"]\arrow[d, "{\pr_0\sqcup \pr_1}"] \arrow[dr, phantom, very near end, "\lrcorner"]& \CC\times\DD\times\Delta^1\arrow[d]\\
		\CC\sqcup\DD\arrow[r] & \CC\diamond\DD.
	\end{tikzcd}
\end{equation*}
In fact, the inclusions $\ord{m}\hookrightarrow \ord{m}\star \ord{n}\hookleftarrow \ord{n}$ in $\Delta$ induce a map $\Delta^m\sqcup \Delta^n\to \Delta^{m\star n}$ of $1$-categories that is natural in $m$ and $n$, and we may also define a map $\Delta^n\times\Delta^m\times\Delta^1\to \Delta^{n\star m}$ of $1$-categories naturally in $m$ and $n$ by sending a triple $(i,j,k)$ to $i$ if $k=0$ and to $m+j$ otherwise. This construction gives rise to a natural map $\Delta^m\diamond \Delta^n\to \Delta^{m\star n}$ that is an equivalence by~\cite[Proposition~4.2.1.2]{htt}. Combining this observation with Proposition~\ref{prop:simplicialPowering}, we therefore conclude that for any $\BB$-category the underlying simplicial object of $\Tw(\I{C})$ is obtained by applying the core functor to the simplicial object $\I{C}^{(\Delta^{\bullet})^{\op}\diamond\Delta^{\bullet}}$ in $\Cat(\BB)$. 
\begin{lemma}
\label{lem:cocommaPushout}
For any integer $n\geq 0$, the canonical square
\begin{equation*}
	\begin{tikzcd}
		(\Delta^n)^{\op}\sqcup \Delta^n\arrow[d] \arrow[r] & (\Delta^n)^{\op}\diamond \Delta^n\arrow[d]\\
		\Delta^0\sqcup \Delta^n \arrow[r] & \Delta^0\diamond\Delta^n
	\end{tikzcd}
\end{equation*}
is a pushout in $\CatS$.
\end{lemma}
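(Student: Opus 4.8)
The plan is to verify the pushout by probing with mapping $\infty$-groupoids: a commutative square in $\Cat(\SS)$ is cocartesian if and only if, for every $\EE\in\Cat(\SS)$, applying $\map{\Cat(\SS)}(-,\EE)$ yields a cartesian square of $\infty$-groupoids. I would therefore fix an $\infty$-category $\EE$, abbreviate $\CC=(\Delta^n)^{\op}$ and $\DD=\Delta^n$, and rephrase everything in terms of functors into $\EE$.

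Unwinding the pushout that defines the bifunctor $\diamond$, the $\infty$-groupoid $\map{\Cat(\SS)}(\CC\diamond\DD,\EE)$ is the space of triples $(F,G,H)$ consisting of functors $F\colon\CC\to\EE$, $G\colon\DD\to\EE$ and a morphism $H\colon\pr_0^\ast F\to\pr_1^\ast G$ in $\Fun(\CC\times\DD,\EE)$; similarly $\map{\Cat(\SS)}(\Delta^0\diamond\DD,\EE)$ is the space of triples $(x,G,H')$ with $x\in\EE$ an object and $H'\colon\const_x\to G$ a morphism in $\Fun(\DD,\EE)$. On mapping groupoids the left vertical map $\CC\sqcup\DD\to\Delta^0\sqcup\DD$ becomes the map $\EE^{\core}\times\map{\Cat(\SS)}(\DD,\EE)\to\map{\Cat(\SS)}(\CC,\EE)\times\map{\Cat(\SS)}(\DD,\EE)$ that is the identity in the $\DD$-variable and carries an object $x$ to the constant functor $\const_x\colon\CC\to\EE$. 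Feeding these descriptions into the square, I would identify its candidate pullback with the space of triples $(x,G,H)$ where $H\colon\const_x\to\pr_1^\ast G$ is a morphism in $\Fun(\CC\times\DD,\EE)$, and check that the comparison map out of $\map{\Cat(\SS)}(\Delta^0\diamond\DD,\EE)$ is simply $(x,G,H')\mapsto(x,G,\pr_1^\ast H')$.

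At this point the whole statement reduces to the assertion that, for fixed $x$ and $G$, restriction along $\pr_1\colon\CC\times\DD\to\DD$ induces an equivalence
\begin{equation*}
	\pr_1^\ast\colon\map{\Fun(\DD,\EE)}(\const_x,G)\xrightarrow{\ \ \simeq\ \ }\map{\Fun(\CC\times\DD,\EE)}(\const_x,\pr_1^\ast G).
\end{equation*}
Both sides being mapping spaces out of constant functors, adjunction rewrites them as $\map{\EE}(x,\lim_{\DD}G)$ and $\map{\EE}(x,\lim_{\CC\times\DD}\pr_1^\ast G)$ respectively, so it is enough to see that the canonical map $\lim_{\CC\times\DD}\pr_1^\ast G\to\lim_{\DD}G$ is an equivalence and that $\pr_1^\ast$ realises its inverse. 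Here I would compute the limit over $\CC\times\DD$ one variable at a time and use that $\CC=(\Delta^n)^{\op}$ has an initial object — the vertex $0$ of $\Delta^n$ — so that the limit over $\CC$ of a constant diagram returns that constant. Intuitively this is the statement that collapsing the $\CC$-end of the fat join $\CC\diamond\DD$ to a point produces the fat cone $\Delta^0\diamond\DD$, which is harmless precisely because $(\Delta^n)^{\op}$ is weakly contractible.

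This last point is the only genuine step; the rest is bookkeeping with the defining pushout of $\diamond$. I expect no real obstacle beyond being careful that the identifications of mapping spaces are natural in all variables, so that the comparison maps really are the ones induced by $\pr_1^\ast$; the weak contractibility of $(\Delta^n)^{\op}$ is essential, as the analogous statement fails already when the first variable is empty or disconnected.
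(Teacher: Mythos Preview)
Your argument is correct and takes a genuinely different route from the paper. The paper argues combinatorially: after unwinding $\diamond$ via its defining pushout and cancelling a common $\Delta^n$-factor, it reduces along the spine decomposition $\Delta^n\simeq\Delta^1\sqcup_{\Delta^0}\cdots\sqcup_{\Delta^0}\Delta^1$ to the case $n=1$, which is then settled by an explicit pushout computation using $(\Delta^1)^{\op}\times\Delta^1\simeq\Delta^2\sqcup_{\Delta^1}\Delta^2$. Your approach instead isolates the essential content as the full faithfulness of $\pr_1^\ast\colon\Fun(\DD,\EE)\to\Fun(\CC\times\DD,\EE)$, deduced from the contractibility of $\CC=(\Delta^n)^{\op}$. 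This is more conceptual and shows at once that the analogous square is a pushout with $(\Delta^n)^{\op}$ replaced by any $\infty$-category admitting a terminal (or initial) object, whereas the paper's argument is tailored to simplices.

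Two small points. First, the step ``adjunction rewrites them as $\map{\EE}(x,\lim_{\DD}G)$'' tacitly assumes that $\EE$ has $\DD$-indexed limits, which is not given. The fix is painless: either take the limits in $\SS$ instead (the identity $\map{\Fun(\DD,\EE)}(\const_x,G)\simeq\lim_{\DD}\map{\EE}(x,G(-))$ holds for arbitrary $\EE$), or argue directly that the diagonal $\Fun(\DD,\EE)\to\Fun(\CC,\Fun(\DD,\EE))$ is fully faithful --- since vertex $0$ is terminal in $(\Delta^n)^{\op}$, evaluation there is a left adjoint to the diagonal with invertible counit, no hypothesis on $\EE$ needed. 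Second, a slip: vertex $0$ of $\Delta^n$ is the \emph{terminal} object of $(\Delta^n)^{\op}$, not the initial one (that is vertex $n$); for your limit formulation you want the initial object, though of course either makes $(\Delta^n)^{\op}$ weakly contractible, which is all that is really being used.
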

\begin{proof}
By definition of the bifunctor $\diamond$ and the pasting lemma for pushout squares, the commutative square in the statement of the lemma is a pushout if and only if the square
\begin{equation*}
	\begin{tikzcd}
		((\Delta^n)^{\op}\times\Delta^n)\sqcup((\Delta^n)^{\op}\times\Delta^n)\arrow[d]\arrow[r, "{(d^1,d^0)}"]\arrow[d]& (\Delta^n)^{\op}\times\Delta^n\times\Delta^1\arrow[d]\\
		\Delta^n\sqcup\Delta^n\arrow[r, "{(d^1,d^0)}"] &\Delta^n\times\Delta^1
	\end{tikzcd}
\end{equation*}
is cocartesian. By making use of the fact that the functor $-\times\Delta^n$ commutes with colimits, we may assume $\Delta^n\simeq \Delta^0$. Moreover, in light of the decomposition $\Delta^n\simeq\Delta^1\sqcup_{\Delta^0}\cdots\sqcup_{\Delta^0}\Delta^1$ in $\CatS$, we may asssume $n=1$. We now have to show that the commutative square
\begin{equation*}
\begin{tikzcd}
(\Delta^1)^{\op}\sqcup(\Delta^1)^{\op}\arrow[r, "{(d^1,d^0)}"] \arrow[d ]& (\Delta^1)^{\op}\times\Delta^1\arrow[d]\\
\Delta^0\sqcup\Delta^0\arrow[r, "{(d^1,d^0)}"] & \Delta^1
\end{tikzcd}
\end{equation*}
is a pushout, which is easily shown by making use of the equivalence $(\Delta^1)^{\op}\times\Delta^1\simeq\Delta^2\sqcup_{\Delta^1}\Delta^2$ and the fact that the diagram
\begin{equation*}
\begin{tikzcd}
\Delta^1\arrow[r, hookrightarrow, "d^0"] \arrow[d]& \Delta^2\arrow[d]\\
\Delta^0\arrow[r, hookrightarrow, "d^0"] & \Delta^1
\end{tikzcd}
\end{equation*}
is cocartesian in $\CatS$.
\end{proof}
By making use of Lemma~\ref{lem:cocommaPushout}, one now obtains a cartesian square
\begin{equation*}
	\begin{tikzcd}
		(\I{C}^{\Delta^{0}\diamond\Delta^{\bullet}})_0 \arrow[d]\arrow[r] & \Tw(\I{C})\arrow[d]\\
		\I{C}_0\times \I{C}\arrow[r] &\I{C}^{\op}\times\I{C}.
	\end{tikzcd}
\end{equation*}
On the other hand, the defining pushout for $\Delta^0\diamond \Delta^{\bullet}$ gives rise to a cartesian square
\begin{equation*}
	\begin{tikzcd}
		(\I{C}^{(\Delta^{0})^{\op}\diamond\Delta^{\bullet}})_0 \arrow[d]\arrow[r] & \I{C}^{\Delta^1}\arrow[d]\\
		\I{C}_0\times \I{C}\arrow[r] &\I{C}\times\I{C},
	\end{tikzcd}
\end{equation*}
which in particular shows:
\begin{proposition}
	\label{prop:fibresTwistedArrowFibration}
	Let $\I{C}$ be a $\BB$-category. For any object $c\colon A\to \I{C}$, the canonical map $(\pi_c)_!\colon\Under{\I{C}}{c}\to A\times\I{C}$ fits into a cartesian square
	\begin{equation*}
		\begin{tikzcd}
			\Under{\I{C}}{c}\arrow[d, ""]\arrow[r] & \Tw(\I{C})\arrow[d]\\
			A\times \I{C}\arrow[r, "c\times\id"] &\I{C}^{\op}\times\I{C}
		\end{tikzcd}
	\end{equation*}
	in $\Cat(\BB)$. In particular, $(\pi_c)_!$ is a left fibration.\qed
\end{proposition}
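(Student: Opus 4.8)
The plan is to deduce the statement by pasting together the two cartesian squares established immediately above, combined with the description of the slice category as a pullback.

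Recall from Definition~\ref{def:commaCategory} that the slice category $\Under{\I{C}}{c}$ is the pullback of $(d^1,d^0)\colon\I{C}^{\Delta^1}\to\I{C}\times\I{C}$ along $c\times\id_{\I{C}}\colon A\times\I{C}\to\I{C}\times\I{C}$, and that the canonical map $\Under{\I{C}}{c}\to A\times\I{C}$ is the resulting projection. The crucial point is that the functor $c\colon A\to\I{C}$ factors as
\[
A\xrightarrow{\ c\ }\I{C}_0\longrightarrow\I{C},
\]
where the second map is the canonical map exhibiting $\I{C}_0$ as the groupoid of objects of $\I{C}$ (it is the counit of $\iota\dashv(-)_0$, and encodes exactly the identification of objects of $\I{C}$ in context $A$ with maps $A\to\I{C}_0$ from Section~\ref{sec:objectsMorphisms}); the analogous factorization holds for $\I{C}^{\op}$, and there the composite $A\to\I{C}_0\to\I{C}^{\op}$ is $c$ regarded as an object of $\I{C}^{\op}$.

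Using this factorization and the pasting lemma for pullback squares, one rewrites
\[
\Under{\I{C}}{c}\;\simeq\;\bigl(\I{C}^{\Delta^1}\times_{\I{C}\times\I{C}}(\I{C}_0\times\I{C})\bigr)\times_{\I{C}_0\times\I{C}}(A\times\I{C}).
\]
The inner pullback is precisely the top left corner of the second of the two cartesian squares above, i.e.\ the object $(\I{C}^{(\Delta^0)^{\op}\diamond\Delta^{\bullet}})_0$; and the first of those two squares identifies this object with $\Tw(\I{C})\times_{\I{C}^{\op}\times\I{C}}(\I{C}_0\times\I{C})$. Substituting and pasting once more gives
\[
\Under{\I{C}}{c}\;\simeq\;\Tw(\I{C})\times_{\I{C}^{\op}\times\I{C}}(A\times\I{C}),
\]
where the base change is taken along the composite $A\times\I{C}\to\I{C}_0\times\I{C}\to\I{C}^{\op}\times\I{C}$, which by the factorization above equals $c\times\id_{\I{C}}$. (All pullbacks in question may be computed in $\Simp\BB$, since $\Cat(\BB)\into\Simp\BB$ creates limits by Remark~\ref{rem:limitsColimitsofCategories}, and the objects in play are categories in $\BB$: $\Tw(\I{C})$ by Proposition~\ref{prop:twistedArrowLeftFibration}, $\I{C}^{\Delta^1}$ by Proposition~\ref{prop:categoriesExponentialIdeal}, and the intermediate pullback by stability of $\Cat(\BB)$ under finite limits.) It then remains only to observe that under this chain of equivalences the projection to $A\times\I{C}$ coincides with the canonical map of Definition~\ref{def:commaCategory}, which is immediate from the construction.

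I do not expect a serious obstacle here: the combinatorial content has already been absorbed into Lemma~\ref{lem:cocommaPushout} and the two cartesian squares preceding the statement, so the argument is essentially a diagram chase. The one point requiring a little care is the bookkeeping of which ``inclusion of objects'' map $\I{C}_0\to\I{C}$, resp.\ $\I{C}_0\to\I{C}^{\op}$, occurs as the bottom edge of each square, so that the iterated base change is pinned down as $c\times\id_{\I{C}}$ and not some transposed variant; this is settled by unwinding the natural transformations $(-)^{\op}\to\epsilon\leftarrow\id_{\Delta}$ from Definition~\ref{def:twistedArrow} that define $\Tw(\I{C})\to\I{C}^{\op}\times\I{C}$.
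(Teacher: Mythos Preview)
Your proposal is correct and takes essentially the same approach as the paper: the proposition is stated with a \qed, meaning it is regarded as an immediate consequence of the two cartesian squares displayed just before it, and your argument is precisely the pasting-lemma diagram chase that unpacks this. Your additional remarks on the factorization $A\to\I{C}_0\to\I{C}^{\op}$ and the bookkeeping of the bottom edges are exactly the care needed to make the \qed honest.
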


\subsection{Initial functors}
We will now focus on the left complement of the class of left fibrations. The results in this section are heavily inspired by Cisinski's book~\cite{cisinski2019a}.
\begin{definition}
	A map $J\to I$ between simplicial objects in $\BB$ is said to be \emph{initial} if it is left orthogonal to every left fibration in $\Simp\BB$. Dually, $J\to I$ is \emph{final} if it is left orthogonal to every right fibration in $\Simp\BB$.
\end{definition}
\begin{remark}
	A map  $J\to I$ between simplicial objects in $\BB$ is initial if and only if its opposite $J^{\op}\to I^{\op}$ is final. Therefore all properties of initial maps carry over to final maps upon taking opposite simplicial objects. We will therefore restrict our attention to the case of initial maps.
\end{remark}
\begin{example}
	\label{ex:localisationInitial}
	By Lemma~\ref{lem:localisationsInitial}, any map between simplicial objects in $\BB$ that is in the internal saturation of the map $\Delta^1\to\Delta^0$ defines both an initial and a final map.
\end{example}
\begin{remark}
	A functor $\I{J}\to\I{I}$ between $\BB$-categories is initial (final) in $\Simp\BB$ precisely if it is internally left orthogonal to left (right) fibrations in $\Cat(\BB)$. This is easily seen as a consequence of Proposition~\ref{prop:leftFibrationCategorical}.
\end{remark}

\begin{definition}
	\label{def:initialFinalObject}
	Let $\I{C}$ be a $\BB$-category. An object $c\colon A\to \I{C}$ is said to be \emph{initial} if the transpose map $1\to \pi_A^\ast\I{C}$ defines an initial functor in $\Cat(\Over{\BB}{A})$. Dually, $c$ is \emph{final} if the transpose map $1\to \pi_A^\ast \I{C}$ defines a final functor in $\Cat(\Over{\BB}{A})$.
\end{definition}
\begin{remark}
	In Corollary~\ref{cor:universalPropertyInitialObject} we will see that initial and final objects satisfy the expected universal property, which in particular implies that an object in an ordinary $\infty$-category is initial or final in the sense of Definition~\ref{def:initialFinalObject} precisely if it is initial or final in the usual sense.
\end{remark}

\begin{remark}
	\label{rem:initialObjectequivalentCondition}
	In the context of Definition~\ref{def:initialFinalObject}, the object $c$ is initial if and only if $(c,\id)\colon A\to \I{C}\times A$ is an initial map in $\Cat(\BB)$. To see this, observe that $(c,\id)$ is precisely the image of $c\colon 1\to \pi_A^\ast\I{C}$ under the forgetful functor $(\pi_A)_!$, hence one direction follows from the observation that $(\pi_A)_!$ preserves initial maps. The other direction follows from the fact that every map in $\Cat(\Over{\BB}{A})$ arises as a pullback of a map that is in the image of the base change functor $\pi_A^\ast\colon\Cat(\BB)\to\Cat(\Over{\BB}{A})$, which implies that a map $f$ in $\Cat(\Over{\BB}{A})$ is initial if and only if $(\pi_A)_!(f)$ is initial.
\end{remark}
\begin{remark}
	\label{rem:warningInitialObject}
	Note that an object $c\colon A\to\I{C}$ in a $\BB$-category $\I{C}$ being initial is different from the condition that $c$ is initial when viewed as a functor in $\Cat(\BB)$. In fact, Remark~\ref{rem:initialObjectequivalentCondition} shows that the first condition is equivalent to $(c,\id)\colon A\to A\times\I{C}$ being initial as a functor in $\Cat(\BB)$, hence either of the two conditions implying the other would imply that the projection $A\times\I{C}\to\I{C}$ is initial as well, which is not true in general. In fact, since the projection $A\times\I{C}\to\I{C}$ is a left fibration, this map is initial if and only if it is an equivalence.
\end{remark}
Any functor between $\BB$-categories admits a unique factorisation into an initial map followed by a left fibration. In what follows, our goal is to describe this factorisation explicitly for the case where the domain is an object in $\BB$. To that end, recall that to any $\BB$-category $\I{C}$ and any object $c\colon A\to \I{C}$ one can associate the slice $\BB$-category $\Under{\I{C}}{c}\to A\times\I{C}$ such that the identity map on $c$ defines a section $\id_c\colon A\to \Under{\I{C}}{c}$ over $(\id_A, c)\colon A\to A\times\I{C}$.
\begin{proposition}
	\label{prop:initialityCanonicalSection}
	For any $\BB$-category and any object $c$ in $\I{C}$ in context $A\in \BB$, the section $\id_c\colon A\to \Under{\I{C}}{c}$ is initial as a map in $\Cat(\BB)$.
\end{proposition}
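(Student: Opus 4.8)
The plan is to prove directly that $\id_c$ is left orthogonal to every left fibration. Fix a left fibration $p\colon\I{P}\to\I{I}$ in $\Cat(\BB)$; I must show that every commutative square
\begin{equation*}
\begin{tikzcd}
A\arrow[r, "u"]\arrow[d, "\id_c"'] & \I{P}\arrow[d, "p"]\\
\Under{\I{C}}{c}\arrow[r, "v"'] & \I{I}
\end{tikzcd}
\end{equation*}
admits an essentially unique lift $w\colon\Under{\I{C}}{c}\to\I{P}$. A first reduction: by remark~\ref{rem:sliceCategoryBaseChange}, proposition~\ref{prop:etaleBaseChangeInternalCategories} and lemma~\ref{lem:etaleMapsAtomic}, the functor $\id_c\colon A\to\Under{\I{C}}{c}$ is the image under the forgetful functor $(\pi_A)_!\colon\Cat(\Over{\BB}{A})\to\Cat(\BB)$ of the section $\id_c\colon 1\to\Under{(\pi_A^\ast\I{C})}{c}$ attached to the global object $c\colon 1\to\pi_A^\ast\I{C}$; since $\pi_A^\ast$ commutes with $\const$, with finite limits and with internal mapping objects it preserves left fibrations, hence $(\pi_A)_!$ preserves initial maps, and we may assume $A\simeq 1$.

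The heart of the argument is a \emph{tautological homotopy}: a natural transformation
\begin{equation*}
H\colon\Delta^1\otimes\Under{\I{C}}{c}\longrightarrow\Under{\I{C}}{c}
\end{equation*}
whose restriction along $d^1$ (its source) is the constant functor $\Under{\I{C}}{c}\to 1\xrightarrow{\id_c}\Under{\I{C}}{c}$, whose restriction along $d^0$ (its target) is $\id_{\Under{\I{C}}{c}}$, and whose restriction along the edge $\id_c$ is the degenerate morphism on $\id_c$. Heuristically, $H$ sends an object $f\colon c\to d$ of $\Under{\I{C}}{c}$ to the morphism $\id_c\to f$ of $\Under{\I{C}}{c}$ given by $f$ itself. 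To produce $H$ coherently one invokes proposition~\ref{prop:fibresTwistedArrowFibration}, which identifies $\Under{\I{C}}{c}$ with a base change of the twisted arrow category $\Tw(\I{C})$; the transformation is then extracted from a cosimplicial homotopy on the functor $\epsilon\colon\ord{n}\mapsto\ord{n}^{\op}\star\ord{n}$ interpolating between the natural map $\id_{\Delta}\to\epsilon$ and the natural map $\epsilon\to\id_{\Delta}$ collapsing the $\ord{n}^{\op}$-summand onto its terminal element, the three boundary conditions being read off the ordinal-sum combinatorics.

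Granting $H$, the lift is a formal consequence of the lifting property of $p$ against $d^1\colon\Delta^0\into\Delta^1$. The composite $vH\colon\Delta^1\otimes\Under{\I{C}}{c}\to\I{I}$ restricts along $d^1$ to $v\circ\id_c$ precomposed with the projection $\Under{\I{C}}{c}\to 1$, i.e.\ to $p\circ u$ precomposed with that projection, and hence lifts along $p$ through $\hat u\coloneqq\bigl(\Under{\I{C}}{c}\to 1\xrightarrow{u}\I{P}\bigr)$. As $p$ is internally right orthogonal to $d^1$, it is right orthogonal to the map $\Under{\I{C}}{c}\to\Delta^1\otimes\Under{\I{C}}{c}$ induced by $d^1$, so there is a unique $\widetilde H\colon\Delta^1\otimes\Under{\I{C}}{c}\to\I{P}$ with $p\widetilde H=vH$ and $\widetilde H|_{d^1}=\hat u$; put $w\coloneqq\widetilde H|_{d^0}$. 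Then $pw=v$ immediately, and precomposing $\widetilde H$ with the edge $\id_c$ gives a morphism of $\I{P}$ lying over the degenerate morphism $vH\circ(\id_{\Delta^1}\otimes\id_c)$, which is therefore itself degenerate (once more by orthogonality against $d^1$), so that its target $w\circ\id_c$ equals $u$. Essential uniqueness of $w$ then follows by running the same construction in families, or by the standard retract argument: factoring $\id_c\simeq rl$ with $l$ initial and $r$ a left fibration, the lifting property just established against $r$ realises $\id_c$ as a retract of $l$ in $\Fun(\Delta^1,\Cat(\BB))$, and the class of initial maps is closed under retracts by proposition~\ref{prop:propertiesFactorisationSystems}.

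I expect the construction of $H$ together with the verification of its three boundary conditions — that is, distilling proposition~\ref{prop:fibresTwistedArrowFibration} and the combinatorics of $\epsilon$ into an actual morphism of simplicial objects in $\BB$ with the prescribed restrictions along $d^0$, $d^1$ and $\id_c$ — to be the main obstacle. Everything else is formal. A more combinatorial alternative, avoiding homotopies altogether, would be to present $\Under{\I{C}}{c}$ as an iterated colimit of powered simplices $\Delta^n\otimes A$ and to place $\id_c$ in the saturated class internally generated by $d^1\colon\Delta^0\into\Delta^1$ by means of lemma~\ref{lem:generatorsInitialMaps}; this replaces the homotopy by a careful bookkeeping of pushouts.
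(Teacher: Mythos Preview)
Your strategy is essentially the same as the paper's: reduce to $A\simeq 1$, produce a contracting homotopy $H\colon\Delta^1\otimes\Under{\I{C}}{c}\to\Under{\I{C}}{c}$ with the three boundary conditions you list, and conclude via the retract argument. The paper packages the last step differently---it first isolates a criterion (lemma~\ref{lem:criterioninitiality}) saying that $c\colon 1\to\I{C}$ is initial whenever $(\pi_c)_!$ admits a section sending $c$ to $\id_c$, proves that criterion by the same retract trick you outline, and then applies it with $\I{C}$ replaced by $\Under{\I{C}}{c}$---but this amounts to the same thing once one observes (via the comma/cocomma adjunction of proposition~\ref{prop:adjunctionCommaCocomma}) that such a section is precisely your $H$ after factoring through $(\Under{\I{C}}{c})^{\triangleleft}$.

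Where the paper is sharper is in the construction of $H$ itself. You propose to extract it from twisted-arrow combinatorics and a cosimplicial homotopy on $\epsilon$; this can be made to work but is heavier than necessary. The paper instead uses the ``diagonal'' projection $d\colon\Delta^1\times\Delta^1\to\Delta^1$ given by $s^0\sqcup_{\Delta^1}s^0$ under $\Delta^1\times\Delta^1\simeq\Delta^2\sqcup_{\Delta^1}\Delta^2$: the induced map $d^\ast\colon\I{C}^{\Delta^1}\to\I{C}^{\Delta^1\times\Delta^1}$ transposes to a map $e\colon\Delta^1\otimes\I{C}^{\Delta^1}\to\I{C}^{\Delta^1}$ whose restrictions along $d^1$ and $d^0$ are $s_0d_1$ and the identity, respectively. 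Pulling back along the defining square of $\Under{\I{C}}{c}$ yields exactly your $H$, with all three boundary conditions read off immediately from the two squares the paper displays for $d^\ast$. This is a two-line construction, so if you revise I would recommend replacing the twisted-arrow route with it; you correctly flagged the construction of $H$ as the obstacle, and the diagonal map dissolves it.

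One small correction: in your lifting argument you say the edge of $\widetilde H$ over $\id_c$ is ``degenerate'' by orthogonality against $d^1$. What you actually get is that it lies over a degenerate edge of $\I{I}$, hence is an \emph{equivalence} by conservativity of left fibrations (proposition~\ref{prop:leftFibrationsFibredGroupoids}); this suffices for $w\circ\id_c\simeq u$, which is all you need.
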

\begin{remark}
	In the situation of Proposition~\ref{prop:initialityCanonicalSection}, let us denote by $c\colon 1\to \pi_A^\ast\I{C}$ the global section in $\Cat(\Over{\BB}{A})$ that arises as the transpose of $c\colon A\to \I{C}$. Then the section $\id_c\colon A\to \Under{\I{C}}{c}$ arises as the image of the global section $\id_c\colon 1\to \Under{({\pi_A^\ast\I{C}})}{c}$  under the forgetful functor $(\pi_A)_!\colon\Cat(\Over{\BB}{A})\to\Cat(\BB)$. Since this functor creates initial maps (cf.\ Remark~\ref{rem:initialObjectequivalentCondition}), we conclude that the section $\id_c\colon A\to \Under{{\I{C}}}{c}$ is initial if and only if the associated global section $\id_c\colon 1\to \Under{({\pi_A^\ast\I{C}})}{c}$ defines an initial object in the $\Over{\BB}{A}$-category $\Under{({\pi_A^\ast\I{C}})}{c}$.
\end{remark}
The proof of Proposition~\ref{prop:initialityCanonicalSection} requires a few preparations. We begin by describing how the comma $\BB$-category construction from Definition~\ref{def:commaCategory} can be turned into a functor, and we will construct a left adjoint to this functor. To that end, let $K$ denote the $1$-category that is depicted in the following diagram:
\begin{equation*}
	\begin{tikzcd}[column sep=small, row sep=small]
	\bullet \arrow[dr]&& \bullet\arrow[dl]\arrow[dr] &&\bullet\arrow[dl]\\
	& \bullet && \bullet &
	\end{tikzcd}
\end{equation*}
Let $i\colon K\into K^{\triangleleft}$ be the canonical inclusion, and let $r\colon \Lambda^2_0\to K^{\triangleleft}$ be the span that is given by the two dashed arrows in the following depiction of $K^{\triangleleft}$: 
\begin{equation*}
	\begin{tikzcd}[column sep=small, row sep=small]
	&& \bullet\arrow[dll, dashed]\arrow[d]\arrow[drr, dashed] && \\
	\bullet \arrow[dr]&& \bullet\arrow[dl]\arrow[dr] &&\bullet\arrow[dl]\\
	& \bullet && \bullet &
	\end{tikzcd}
\end{equation*}
Let furthermore $\phi\colon\Fun(\Lambda^2_2,\Cat(\BB))\to\Fun(K, \Cat(\BB))$ be the evident functor that sends a cospan
\begin{equation*}
	\begin{tikzcd}[column sep=small, row sep=small]
		& \I{E}\arrow[d, "g"]\\
		\I{D}\arrow[r, "f"] & \I{C}
	\end{tikzcd}
\end{equation*}
to the diagram
\begin{equation*}
	\begin{tikzcd}[column sep=small, row sep=small]
		& & \I{E}\arrow[d, "g"]\\
		& \I{C}^{\Delta^1}\arrow[r, "d_0"] \arrow[d, "d_1"]& \I{C}\\
		\I{D}\arrow[r, "f"] & \I{C}. &
	\end{tikzcd}
\end{equation*}
The composition
\begin{equation*}
		\Fun(\Lambda^2_2, \Cat(\BB))\xrightarrow{\phi} \Fun(K, \Cat(\BB))\xrightarrow{i_\ast} \Fun(K^{\triangleleft}, \Cat(\BB))\xrightarrow{r^\ast} \Fun(\Lambda^2_0, \Cat(\BB)) 
\end{equation*}
in which $i_\ast$ denotes the functor of right Kan extension along $i$ then sends a cospan
\begin{equation*}
	\begin{tikzcd}[column sep=small, row sep=small]
		& \I{E}\arrow[d, "g"]\\
		\I{D}\arrow[r, "f"] & \I{C}
	\end{tikzcd}
\end{equation*}
to the span $\I{D}\leftarrow\Comma{D}{C}{E}\rightarrow\I{E}$ and therefore defines the desired comma $\BB$-category functor $\Comma{-}{}{-}$.
\begin{definition}
	\label{def:cocomma}
	Let $f\colon \I{C}\to\I{D}$ and $g\colon\I{C}\to\I{E}$ be functors in $\Cat(\BB)$. The \emph{cocomma $\BB$-category} $\Cocomma{\I{D}}{\I{C}}{\I{E}}$ is the $\BB$-category that is defined by the pushout square
	\begin{equation*}
		\begin{tikzcd}
			\I{C}\sqcup\I{C}\arrow[r, "{(d^1,d^0)}"]\arrow[d, "f\sqcup g"] & \Delta^1\otimes\I{C}\arrow[d]\\
			\I{D}\sqcup\I{E}\arrow[r] &\Cocomma{\I{D}}{\I{C}}{\I{E}}.
		\end{tikzcd}
	\end{equation*}
	If $A\in \BB$ is an arbitrary object and $\I{C}\to A$ is a map (i.e.\ if $\I{C}$ is a $\Over{\BB}{A}$-category), we write $\I{C}^{\triangleright}=\Cocomma{\I{C}}{\I{C}}{A}$ and refer to this $\BB$-category as the \emph{right cone of $\I{C}\to A$}. Dually, we write $\I{C}^{\triangleleft}=\Cocomma{A}{\I{C}}{\I{C}}$ and refer to this $\BB$-category as the \emph{left cone of $\I{C}\to A$}.
\end{definition}
Analogous to the comma $\BB$-category construction, the construction of the cocomma $\BB$-category from Definition~\ref{def:cocomma} can be turned into a functor $\Cocomma{-}{}{-}$ as follows: Let $L=K^{\op}$, let $j\colon L\into L^{\triangleright}$ denote the canonical inclusion, and let $s=r^{\op}\colon \Lambda^2_2\to L^{\triangleright}$.
If $\psi\colon \Fun(\Lambda^2_0,\Cat(\BB))\to \Fun(L,\Cat(\BB))$ denotes the functor that sends a span 
\begin{equation*}
	\begin{tikzcd}[column sep=small, row sep=small]
		\I{C}\arrow[r, "g"]\arrow[d, "f"] & \I{E}\\
		\I{D}
	\end{tikzcd}
\end{equation*}
to the diagram
\begin{equation*}
	\begin{tikzcd}[column sep=small, row sep=small]
		& \I{C}\arrow[r,"g"] \arrow[d, "d^0"]& \I{E}\\
		\I{C}\arrow[d, "f"] \arrow[r, "d^1"]& \Delta^1\otimes \I{C} &\\
		\I{D}, & &  
	\end{tikzcd}
\end{equation*}
the desired functor $\Cocomma{-}{}{-}$ can be defined by the composition
\begin{equation*}
		\Fun(\Lambda^2_0, \Cat(\BB))\xrightarrow{\psi} \Fun(L, \Cat(\BB))\xrightarrow{j_!} \Fun(L^{\triangleright}, \Cat(\BB))\xrightarrow{s^\ast} \Fun(\Lambda^2_2, \Cat(\BB)) 
\end{equation*}
in which $j_!$ denotes the functor of left Kan extension along $j$.
\begin{proposition}
	\label{prop:adjunctionCommaCocomma}
	The functor $\Cocomma{-}{}{-}$ is left adjoint to the functor $\Comma{-}{}{-}$.
\end{proposition}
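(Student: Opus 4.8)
The plan is to verify the adjunction directly at the level of mapping spaces and then invoke the standard recognition criterion for adjunctions. The only nontrivial ingredient beyond the definitions of the two constructions is the tensor--cotensor adjunction $\Delta^1\otimes(-)\dashv(-)^{\Delta^1}$ on $\Cat(\BB)$ established in section~\ref{sec:infty2}.

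Fix a span $S=(\I{D}\xleftarrow{f}\I{C}\xrightarrow{g}\I{E})$ in $\Fun(\Lambda^2_0,\Cat(\BB))$ and a cospan $T=(\I{D}'\xrightarrow{f'}\I{C}'\xleftarrow{g'}\I{E}')$ in $\Fun(\Lambda^2_2,\Cat(\BB))$. First I would compute the mapping space from the cospan $\I{D}\to\Cocomma{\I{D}}{\I{C}}{\I{E}}\leftarrow\I{E}$ to $T$. Since $\Cocomma{\I{D}}{\I{C}}{\I{E}}$ is by definition the pushout of $\I{D}\sqcup\I{E}\leftarrow\I{C}\sqcup\I{C}\xrightarrow{(d^1,d^0)}\Delta^1\otimes\I{C}$, and mapping spaces carry colimits in the source to limits, a morphism of cospans from $\Cocomma{-}{}{-}(S)$ to $T$ unwinds to the datum of morphisms $u\colon\I{D}\to\I{D}'$ and $w\colon\I{E}\to\I{E}'$ together with a morphism $h\colon\Delta^1\otimes\I{C}\to\I{C}'$ whose restrictions along $d^1$ and $d^0$ are $f'\circ u\circ f$ and $g'\circ w\circ g$; that is, this mapping space is the limit of the explicit finite diagram
\begin{equation*}
\map{\Cat(\BB)}(\I{D},\I{D}')\times\map{\Cat(\BB)}(\I{E},\I{E}')\longrightarrow\map{\Cat(\BB)}(\I{C},\I{C}')\times\map{\Cat(\BB)}(\I{C},\I{C}')\longleftarrow\map{\Cat(\BB)}(\Delta^1\otimes\I{C},\I{C}'),
\end{equation*}
where the left map is $(u,w)\mapsto(f'uf,g'wg)$ and the right map is $h\mapsto(hd^1,hd^0)$.

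Dually, I would compute the mapping space from $S$ into the span $\I{D}'\leftarrow\Comma{\I{D}'}{\I{C}'}{\I{E}'}\to\I{E}'$. Since $\Comma{\I{D}'}{\I{C}'}{\I{E}'}$ is by definition the pullback of $\I{D}'\times\I{E}'\xrightarrow{f'\times g'}\I{C}'\times\I{C}'\xleftarrow{(d^1,d^0)}(\I{C}')^{\Delta^1}$, and mapping spaces carry limits in the target to limits, a morphism of spans from $S$ unwinds to morphisms $u\colon\I{D}\to\I{D}'$, $w\colon\I{E}\to\I{E}'$ and a morphism $\I{C}\to(\I{C}')^{\Delta^1}$ whose image under $(d^1,d^0)$ is $(f'uf,g'wg)$; so this mapping space is the limit of the diagram obtained from the one above by replacing $\map{\Cat(\BB)}(\Delta^1\otimes\I{C},\I{C}')$ with $\map{\Cat(\BB)}(\I{C},(\I{C}')^{\Delta^1})$. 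The tensor--cotensor adjunction supplies an equivalence $\map{\Cat(\BB)}(\I{C},(\I{C}')^{\Delta^1})\simeq\map{\Cat(\BB)}(\Delta^1\otimes\I{C},\I{C}')$ compatible with the maps to $\map{\Cat(\BB)}(\I{C},\I{C}')\times\map{\Cat(\BB)}(\I{C},\I{C}')$, so the two limit diagrams agree and the two mapping spaces are equivalent.

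The step requiring the most care is promoting this to an equivalence of functors $\Fun(\Lambda^2_0,\Cat(\BB))^{\op}\times\Fun(\Lambda^2_2,\Cat(\BB))\to\SS$, after which the recognition criterion gives $\Cocomma{-}{}{-}\dashv\Comma{-}{}{-}$. The cleanest route is to run the entire computation through the Yoneda embedding, comparing the presheaf represented by the cospan $\Cocomma{-}{}{-}(S)$ with the presheaf co-represented by the span $\Comma{-}{}{-}(T)$, so that the universal property of the defining pushout, the universal property of the defining pullback, and the adjunction $\Delta^1\otimes(-)\dashv(-)^{\Delta^1}$ are each used as natural equivalences of presheaves rather than pointwise. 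Alternatively one can exhibit the unit and counit explicitly: the counit $\Cocomma{-}{}{-}(\Comma{-}{}{-}(T))\to T$ is the morphism of cospans induced on $\Cocomma{\I{D}'}{\Comma{\I{D}'}{\I{C}'}{\I{E}'}}{\I{E}'}$ by the tautological natural transformation between the two projections carried by the comma category, and the unit $S\to\Comma{-}{}{-}(\Cocomma{-}{}{-}(S))$ has middle component the morphism $\I{C}\to\Comma{\I{D}}{\Cocomma{\I{D}}{\I{C}}{\I{E}}}{\I{E}}$ classifying the tautological natural transformation carried by the cocomma category; the triangle identities then reduce to a diagram chase with these universal properties. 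Either way, the mathematical content lies entirely in the interplay between the defining pushout of $\Cocomma{-}{}{-}$, the defining pullback of $\Comma{-}{}{-}$, and the single adjunction $\Delta^1\otimes(-)\dashv(-)^{\Delta^1}$; everything else is bookkeeping.
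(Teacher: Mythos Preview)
Your argument is correct and rests on the same core observation as the paper's proof: both mapping spaces are computed as the same finite pullback, with the only nontrivial identification coming from the adjunction $\Delta^1\otimes(-)\dashv(-)^{\Delta^1}$. The difference lies in how the bookkeeping for naturality is organised. You appeal directly to the universal properties of the defining pushout and pullback and then address naturality by running the computation through the Yoneda embedding (or by exhibiting unit and counit). The paper instead exploits the explicit descriptions of $\Comma{-}{}{-}$ and $\Cocomma{-}{}{-}$ as composites $r^\ast i_\ast\phi$ and $s^\ast j_!\psi$: using the adjunctions $d^1\dashv s^0\dashv d^0$ on $\Delta^1$, the functor $r^\ast$ acquires a left adjoint $r_!$ and $s^\ast$ a right adjoint $s_\ast$, reducing the problem to an equivalence $\map{\Fun(K,\Cat(\BB))}(i^\ast r_!(-),\phi(-))\simeq\map{\Fun(L,\Cat(\BB))}(\psi(-),j^\ast s_\ast(-))$. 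This is then established via Glasman's formula for mapping spaces in functor $\infty$-categories (lemma~\ref{lem:mappingGroupoidFunctorCategory}), which expresses both sides as limits over the twisted arrow categories of $K$ and $L$; these finite diagrams are written out explicitly and matched using the tensor--cotensor adjunction. Your route is more elementary in that it avoids Glasman's lemma and the auxiliary categories $K$, $L$, $K^{\triangleleft}$, $L^{\triangleright}$; the paper's route buys a manifestly natural identification at the cost of that extra machinery.
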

The proof of Proposition~\ref{prop:adjunctionCommaCocomma} makes use of the following description of the mapping $\infty$-groupoids in functor $\infty$-categories, due to Glasman:
\begin{lemma}[{\cite[Proposition~2.3]{glasman2016}}]
	\label{lem:mappingGroupoidFunctorCategory}
	For any two locally small $\infty$-categories $\CC$ and $\DD$, the mapping $\infty$-groupoid functor $\map{\Fun(\CC,\DD)}$ is equivalent to the composition
	\begin{equation*}
			\Fun(\CC,\DD)^{\op}\times\Fun(\CC,\DD)\to \Fun(\CC^{\op}\times \CC, \DD^{\op}\times\DD)\xrightarrow{p^{\ast}(\map{\DD})_\ast} \Fun(\Tw(\CC), \SS)\xrightarrow{\lim} \SS,
	\end{equation*}
	in which $p\colon \Tw(\CC)\to\CC^{\op}\times\CC$ denotes the canonical projection.\qed
\end{lemma}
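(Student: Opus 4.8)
The plan is to deduce the statement from two facts. The first is that the twisted arrow projection $p=(p_0,p_1)\colon\Tw(\CC)\to\CC^{\op}\times\CC$ is precisely the left fibration classifying the mapping-space bifunctor $\map{\CC}(-,-)\colon\CC^{\op}\times\CC\to\SS$, so that for every $H\colon\CC^{\op}\times\CC\to\SS$ there is an equivalence $\lim_{\Tw(\CC)}(H\circ p)\simeq\map{\Fun(\CC^{\op}\times\CC,\SS)}(\map{\CC}(-,-),H)$, natural in $H$. The second is the $\infty$-categorical form of the classical fact that natural transformations form an end: an equivalence $\map{\Fun(\CC,\DD)}(F,G)\simeq\map{\Fun(\CC^{\op}\times\CC,\SS)}(\map{\CC}(-,-),\map{\DD}(F(-),G(-)))$, natural in $F$ and $G$. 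Granting both, plugging $H=\map{\DD}(F(-),G(-))$ into the first and comparing with the second identifies $\map{\Fun(\CC,\DD)}(F,G)$ with $\lim_{\Tw(\CC)}(\map{\DD}(F(-),G(-))\circ p)$, which unravels to the asserted composite.

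For the first fact I would argue as follows. By proposition~\ref{prop:twistedArrowLeftFibration} the map $p$ is a left fibration, and unwinding the functor $\epsilon$ exhibits its restriction to level $0$ as the map $(d_1,d_0)\colon\Tw(\CC)_0=\CC_1\to\CC_0\times\CC_0$; hence the fibre of $p$ over an object $(c,c')$ is $\map{\CC}(c,c')$, and after matching the two variances one concludes that the copresheaf on $\CC^{\op}\times\CC$ classified by $p$ is $\map{\CC}(-,-)$. The density theorem then presents $\map{\CC}(-,-)$ as the colimit $\colim_{x\in\Tw(\CC)}\map{\CC^{\op}\times\CC}(p(x),-)$ of corepresentables in $\Fun(\CC^{\op}\times\CC,\SS)$, and the Yoneda lemma yields $\map{\Fun(\CC^{\op}\times\CC,\SS)}(\map{\CC}(-,-),H)\simeq\lim_{x\in\Tw(\CC)}\Map(\map{\CC^{\op}\times\CC}(p(x),-),H)\simeq\lim_{\Tw(\CC)}(H\circ p)$, naturally in $H$.

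For the second fact — the genuine content — I would first reduce to $\DD=\SS$ by means of the Yoneda embedding $\DD\into\Fun(\DD^{\op},\SS)$: it is fully faithful, hence preserves the left-hand side, and it preserves the right-hand side since limits in $\Fun(\DD^{\op},\SS)$ are computed pointwise and it is compatible with the mapping-space bifunctor. With $\DD=\SS$, both $F\mapsto\map{\Fun(\CC,\SS)}(F,G)$ and $F\mapsto\lim_{\Tw(\CC)}\Map(F(p_0(-)),G(p_1(-)))$ carry colimits in $\Fun(\CC,\SS)$ to limits (for the second: evaluation preserves colimits, $\Map(-,Z)$ turns colimits into limits, and a limit of limits is a limit); since $\Fun(\CC,\SS)$ is generated under colimits by the corepresentables $\map{\CC}(c,-)$, it therefore suffices to treat the case $F=\map{\CC}(c,-)$. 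There the left-hand side is $G(c)$ by the Yoneda lemma, and the right-hand side is $\lim_{(a\to b)\in\Tw(\CC)}\Map(\map{\CC}(c,a),G(b))$, which I would identify with $G(c)$ by a cofinality argument organised around the object $\id_c$: the twisted arrows that $\id_c\colon c\to c$ maps to exhibit it as playing the role of an initial object, and the remaining contributions to the limit collapse against it. Naturality in $F$ and $G$ is apparent from the construction at each stage.

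The hard part will be this last cofinality computation for a general $\CC$, i.e.\ making precise the sense in which $\id_c$ governs $\lim_{(a\to b)\in\Tw(\CC)}\Map(\map{\CC}(c,a),G(b))$; I expect to settle it by reducing to $\CC=\Delta^n$, where both sides become finite limits that can be read off the explicit poset $\Tw(\Delta^n)$. An equivalent packaging of the same obstruction — which one might prefer — is to construct the comparison map $\map{\Fun(\CC,\DD)}(F,G)\to\lim_{\Tw(\CC)}(\map{\DD}(F(-),G(-))\circ p)$ directly, by applying the functor $\Tw$ to a natural transformation presented as a functor $\CC\times\Delta^1\to\DD$ and using $\Tw(\CC\times\Delta^1)\simeq\Tw(\CC)\times\Tw(\Delta^1)$ together with the tautological object of $\Tw(\Delta^1)$, and then to check that this map is an equivalence — again, ultimately, by reduction to simplices.
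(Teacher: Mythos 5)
The paper does not prove this lemma at all---it is quoted from Glasman---so there is no in-paper argument to compare with, and your proposal has to stand on its own. Its skeleton is the standard one and is sound in outline: (i) $\Tw(\CC)\to\CC^{\op}\times\CC$ is the left fibration classifying $\map{\CC}(-,-)$, whence $\lim_{\Tw(\CC)}(H\circ p)\simeq \map{\Fun(\CC^{\op}\times\CC,\SS)}(\map{\CC}(-,-),H)$ by co-Yoneda and Yoneda; (ii) the end formula for natural transformations. Two caveats already here. First, computing the fibres of $p$ does not identify the classified copresheaf; that identification is a genuine (citable) theorem (Lurie, Higher Algebra, \S 5.2.1), not something that follows from ``matching the two variances''. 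Second, your reduction ``to $\DD=\SS$'' via the Yoneda embedding only reduces the target to $\Fun(\DD^{\op},\SS)$: full faithfulness of $\DD\into\Fun(\DD^{\op},\SS)$ preserves both sides, but mapping spaces in $\Fun(\CC,\Fun(\DD^{\op},\SS))$ are not ``pointwise in $\DD^{\op}$''---that is the very statement being proved. To descend further to $\SS$ you need $\Fun(\CC,\Fun(\DD^{\op},\SS))\simeq\Fun(\CC\times\DD^{\op},\SS)$ together with $\Tw(\CC\times\DD^{\op})\simeq\Tw(\CC)\times\Tw(\DD^{\op})$ and a Fubini argument; you have these tools in hand but do not deploy them, so as written the reduction is incomplete (though fixable).

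The genuine gap is the step you yourself flag as hard, and the mechanism you propose for it does not work as stated. Writing $\CC\simeq\colim_{\Over{\Delta}{\CC}}\Delta^n$ does let the left-hand side decompose, since $\Fun(-,\DD)$ sends colimits to limits and mapping spaces in a limit of $\infty$-categories are limits of mapping spaces; but the right-hand side is a limit over $\Tw(\CC)$, and $\Tw$ does \emph{not} commute with colimits of $\infty$-categories: $\Delta^2\simeq\Delta^1\sqcup_{\Delta^0}\Delta^1$, while $\Tw(\Delta^1)\sqcup_{\Tw(\Delta^0)}\Tw(\Delta^1)$ is a five-object zigzag and $\Tw(\Delta^2)$ is a six-object poset. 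So ``reduce to $\CC=\Delta^n$'' is not formal; what it requires is precisely a descent/coinitiality statement (that the canonical functor $\colim_{\Over{\Delta}{\CC}}\Tw(\Delta^n)\to\Tw(\CC)$ induces an equivalence on the relevant limits, equivalently that ends satisfy descent along the simplex decomposition). That statement is true, but it is a theorem of essentially the same depth as the lemma itself, and you neither state nor prove it---it is where the real content lives. Likewise, the ``cofinality argument organised around $\id_c$'' does not parse as a statement about $\Tw(\CC)$: $\id_c$ is initial in $\Under{\CC}{c}\simeq\Tw(\CC)\times_{\CC^{\op}}\{c\}$, not in $\Tw(\CC)$ (were the inclusion of $\id_c$ coinitial in $\Tw(\CC)$, every end would collapse to its value at $(c,c)$). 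Bridging that initiality to the limit over all of $\Tw(\CC)$ against the corepresentable weight $\map{\CC}(c,-)$ is exactly the missing argument, so the crux of the proof remains open as written.
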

\begin{remark}
	\label{rem:mappingGroupoidFunctorCategory}
	Quite generally, if $\II$ and $\JJ$ are arbitrary $\infty$-categories 
	and if $d\colon\II\to \Fun(\JJ,\SS)$ is any functor, the composition $\lim_{\JJ} d\colon \II\to\Fun(\JJ, \SS)\to \SS$
	is the limit of the diagram $\JJ\to\Fun(\II,\SS)$
	that corresponds to $d$ in light of the equivalence $\Fun(\JJ,\Fun(\II,\SS))\simeq\Fun(\II,\Fun(\JJ,\SS))$.
	In the situation of Lemma~\ref{lem:mappingGroupoidFunctorCategory}, this observation implies that the mapping $\infty$-groupoid functor $\map{\Fun(\CC,\DD)}$ is equivalently obtained as the limit of the diagram
	\begin{equation*}
	\Tw(\CC)\to  \Fun(\Fun(\CC,\DD)^{\op}\times\Fun(\CC,\DD), \SS)
	\end{equation*}
	that is determined by transposing the composition
	\begin{equation*}
			\Fun(\CC,\DD)^{\op}\times\Fun(\CC,\DD)\to \Fun(\CC^{\op}\times \CC, \DD^{\op}\times\DD)\xrightarrow{p^{\ast}(\map{\DD})_\ast} \Fun(\Tw(\CC), \SS).
	\end{equation*}
\end{remark}
\begin{proof}[{Proof of Proposition~\ref{prop:adjunctionCommaCocomma}}]
	Note that there is an evident identification $K^\triangleleft\simeq \Delta^1\times\Lambda^2_0$ with respect to which the functor $r$ corresponds to $d^1\times \id_{\Lambda^2_0}$. Hence, by making use of the two adjunctions $d^1\dashv s^0\colon \Delta^1\leftrightarrows \Delta^0$ and $s^0\dashv d^0\colon \Delta^0\leftrightarrows\Delta^1$, the functor $r^\ast$ admits a left adjoint $r_!$ that is given by precomposition with $s^0\times\id$, and the functor $s^\ast$ admits a right adjoint $s_\ast$ that is given by precomposition with $s^0\times\id$. We therefore obtain equivalences
	\begin{equation*}
		\map{\Fun(\Lambda^2_0,\Cat(\BB))}(-, \Comma{-}{}{-})\simeq\map{\Fun(K,\Cat(\BB))}(i^\ast r_!(-), \phi(-))
	\end{equation*}
	and
	\begin{equation*}
		\map{\Fun(\Lambda^2_2,\Cat(\BB))}(\Cocomma{-}{}{-},-)\simeq\map{\Fun(L,\Cat(\BB))}(\psi(-), j^\ast s_\ast(-)).
	\end{equation*}
	We complete the proof by constructing an equivalence
	\begin{equation*}
		\map{\Fun(K,\Cat(\BB))}(i^\ast r_!(-), \phi(-))\simeq\map{\Fun(L,\Cat(\BB))}(\psi(-), j^\ast s_\ast(-)).
	\end{equation*}
	To that end, observe that the twisted arrow $1$-categories $\Tw(K)$ and $\Tw(L)$ can both be identified with the poset that can be depicted by the diagram
	\begin{equation*}
		\begin{tikzcd}
			& \bullet\arrow[dl]\arrow[dr] && \bullet\arrow[dl]\arrow[dr] && \bullet\arrow[dl]\arrow[dr] && \bullet \arrow[dl]\arrow[dr]&\\
			\bullet && \bullet && \bullet && \bullet && \bullet
		\end{tikzcd}
	\end{equation*}
	in which the vertices in the lower row correspond to the degenerate edges in $K$ and $L$ and the vertices in the upper row correspond to the non-degenerate edges in $K$ and $L$, and in which the edges correspond to the assignment of each non-degenerate edge in $K$ and $L$ to its domain and its codomain, respectively.
	Let us label the vertices and edges of the two $1$-categories $\Lambda^2_2$ and $\Lambda^2_0$ as depicted in the following two diagrams
	\begin{equation*}
		\begin{tikzcd}
			c\arrow[d, "p_0"]\arrow[r, "p_1"] & b && & y\arrow[d, "q_1"]\\
			a & && x\arrow[r, "q_0"] & z.
		\end{tikzcd}
	\end{equation*}
	 By using Lemma~\ref{lem:mappingGroupoidFunctorCategory} and Remark~\ref{rem:mappingGroupoidFunctorCategory}, the bifunctor $\map{\Fun(K,\Cat(\BB))}(i^\ast r_!(-), \phi(-))$ is obtained as the limit of the diagram
	\begin{equation*}
	\begin{tikzcd}[column sep={8em,between origins}]
		\map{\Cat(\BB)}(a^\ast(-),x^\ast(-)) \arrow[dr, "(q_0)_\ast p_0^\ast"]&& \map{\Cat(\BB)}(c^\ast(-),z^\ast(-)^{\Delta^1}) \arrow[dl, "(d_1)_\ast"']\arrow[dr, "(d_0)_\ast"] && \map{\Cat(\BB)}(b^\ast(-),y^\ast(-))\arrow[dl, "(q_1)_\ast p_1^\ast"']\\
		& \map{\Cat(\BB)}(c^\ast(-),z^\ast(-)) && \map{\Cat(\BB)}(c^\ast(-),z^\ast(-)) &
	\end{tikzcd}
	\end{equation*}
	in which for $i\in\{0,1\}$ the maps $p_i$ and $q_i$ denote the morphisms of functors that are induced by the inclusion of the two edges $p_i\colon\Delta^1\into\Lambda^2_0$ and $q_i\colon\Delta^1\into\Lambda^2_2$. Dually, the bifunctor $\map{\Fun(L,\Cat(\BB))}(\psi(-), j^\ast s_\ast(-))$ is obtained as the limit of the diagram
	\begin{equation*}
	\begin{tikzcd}[column sep={8em,between origins}]
		\map{\Cat(\BB)}(a^\ast(-),x^\ast(-)) \arrow[dr, "(q_0)_\ast p_0^\ast"]&& \map{\Cat(\BB)}(\Delta^1\otimes c^\ast(-),z^\ast(-)) \arrow[dl, "(d^1)^\ast"']\arrow[dr, "(d^0)^\ast"] && \map{\Cat(\BB)}(b^\ast(-),y^\ast(-))\arrow[dl, "(q_1)_\ast p_1^\ast"']\\
		& \map{\Cat(\BB)}(c^\ast(-),z^\ast(-)) && \map{\Cat(\BB)}(c^\ast(-),z^\ast(-)), &
	\end{tikzcd}
	\end{equation*}
	hence the adjunction $\Delta^1\otimes -\dashv (-)^{\Delta^1}$ gives rise to the desired equivalence.
\end{proof}
\begin{remark}
	\label{rem:commaCocommaRelativeAdjunction}
	In the situation of Proposition~\ref{prop:adjunctionCommaCocomma}, let $\alpha\colon\Delta^0\sqcup\Delta^0\into\Lambda^2_0$ be the inclusion of the two objects that are not initial, and let $\beta\colon\Delta^0\sqcup\Delta^0\into\Lambda^2_2$ be the inclusion of the two objects that are not final. Then one obtains two restriction functors $\alpha^\ast\colon\Fun(\Lambda^2_0,\Cat(\BB))\to\Cat(\BB)\times\Cat(\BB)$ and $\beta^\ast\colon\Fun(\Lambda^2_2,\Cat(\BB))\to\Cat(\BB)\times\Cat(\BB)$ that commute with the comma and cocomma constructions, in that there are two commutative diagrams
	\begin{equation*}
		\begin{tikzcd}[column sep={5.5em,between origins}]
			\Fun(\Lambda^2_2, \Cat(\BB))\arrow[rr, "\Comma{-}{}{-}"]\arrow[dr, "\beta^\ast"'] && \Fun(\Lambda^2_0,\Cat(\BB))\arrow[dl, "\alpha^\ast"] &&& \Fun(\Lambda^2_2, \Cat(\BB))\arrow[from=rr, "\Cocomma{-}{}{-}"']\arrow[dr,"\beta^\ast"'] && \Fun(\Lambda^2_0,\Cat(\BB))\arrow[dl, "\alpha^\ast"]\\
			& \Cat(\BB)\times\Cat(\BB) & &&& & \Cat(\BB)\times\Cat(\BB).
		\end{tikzcd}
	\end{equation*}
	Moreover, the explicit construction of the equivalence
	\begin{equation*}
		\begin{tikzcd}
			\map{\Fun(\Lambda^2_2,\Cat(\BB))}(\Cocomma{-}{}{-},-)\simeq\map{\Fun(\Lambda^2_0,\Cat(\BB))}(-,\Comma{-}{}{-})
		\end{tikzcd}
	\end{equation*}
	in the proof of Proposition~\ref{prop:adjunctionCommaCocomma} shows that if $\eta$ and $\epsilon$ denote unit and counit of the adjunction $\Cocomma{-}{}{-}\dashv \Comma{-}{}{-}$, the induced morphisms $\alpha^\ast\eta$ and $\beta^\ast \epsilon$ recover the identity transformations on $\alpha^\ast$ and $\beta^\ast$, respectively.
\end{remark}
\begin{remark}
	\label{rem:commaCocommaExplicit}
	In the situation of Proposition~\ref{prop:adjunctionCommaCocomma}, if $\I{C}$ is a $\BB$-category, note that the cospan $\Cocomma{\I{C}}{\I{C}}{\I{C}}$ in $\Cat(\BB)$ is given by  $\I{C}\into\Delta^1\otimes\I{C}\hookleftarrow \I{C}$ in which the two inclusions are the cosimplicial maps $d^1$ and $d^0$. The explicit construction of the equivalence
	\begin{equation*}
	\begin{tikzcd}
		\map{\Fun(\Lambda^2_2,\Cat(\BB))}(\Cocomma{-}{}{-},-)\simeq\map{\Fun(\Lambda^2_0,\Cat(\BB))}(-,\Comma{-}{}{-})
	\end{tikzcd}
	\end{equation*}
	in the proof of Proposition~\ref{prop:adjunctionCommaCocomma} then shows that the transpose of the map $(\id, s_0,\id)\colon(\I{C}=\I{C}=\I{C})\to \Comma{\I{C}}{\I{C}}{\I{C}}$ is given by the map $(\id,s^0,\id)\colon \Cocomma{\I{C}}{\I{C}}{\I{C}}\to(\I{C}=\I{C}=\I{C})$.
\end{remark}
\begin{remark}
	\label{rem:canonicalSectionCocomma}
	Let $\I{C}$ be a $\BB$-category and let $c\colon A\to \I{C}$ be an object in $\I{C}$. Then the canonical section $\id_c\colon A\to \Under{\I{C}}{c}$ can be viewed as a map $(A=A=A)\to \Comma{A}{\I{C}}{\I{C}}$ in $\Fun(\Lambda^2_0,\Cat(\BB))$ that fits into a commutative square
	\begin{equation*}
		\begin{tikzcd}[column sep=large]
			{(A=A=A)}\arrow[r]\arrow[d, "{(c,c,c)}"] & \Comma{A}{\I{C}}{\I{C}}\arrow[d, "{\Comma{c}{\id}{\id}}"]\\
			{(C=C=C)}\arrow[r, "{(\id, s_0,\id)}"] & \Comma{\I{C}}{\I{C}}{\I{C}}.
		\end{tikzcd}
	\end{equation*}
	Using Proposition~\ref{prop:adjunctionCommaCocomma} and Remark~\ref{rem:commaCocommaExplicit}, this diagram corresponds to a commutative square
	\begin{equation*}
		\begin{tikzcd}[column sep=large]
			\Cocomma{A}{A}{A}\arrow[r]\arrow[d, "{\Cocomma{c}{c}{c}}"] & (A\to\I{C}=\I{C} )\arrow[d, "{(c,\id,\id)}"]\\
			\Cocomma{\I{C}}{\I{C}}{\I{C}}\arrow[r, "{(\id, s^0,\id)}"] & (\I{C}=\I{C}=\I{C}),
		\end{tikzcd}
	\end{equation*}
	hence the map $\Cocomma{A}{A}{A}\to (A\to \I{C}=\I{C})$ that corresponds to $\id_c\colon (A=A=A)\to\Comma{A}{\I{C}}{\I{C}}$ is given by the triple $(\id, \id_c, c)$.
\end{remark}
\begin{lemma}
	\label{lem:criterioninitiality}
	For any $\BB$-category $\I{C}$, an object $c\colon 1\to\I{C}$ is initial if the canonical projection $(\I{\pi}_c)_!\colon \Under{\I{C}}{c}\to \I{C}$ has a section $s\colon\I{C}\to \Under{\I{C}}{c}$ such that $s(c)$ is equivalent to the canonical section $\id_c$.
\end{lemma}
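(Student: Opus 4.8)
The plan is to unpack the section $s$ as a \emph{directed deformation retraction} of $\I{C}$ onto the object $c$, and to use it to solve lifting problems against left fibrations directly. Unwinding the description of $\Under{\I{C}}{c}=\Comma{c}{\I{C}}{\I{C}}$ as a subobject of $\I{C}^{\Delta^1}$ (definition~\ref{def:commaCategory}), a section $s\colon\I{C}\to\Under{\I{C}}{c}$ of the canonical projection $\Under{\I{C}}{c}\to\I{C}$ is the same datum as a natural transformation $H\colon\Delta^1\otimes\I{C}\to\I{C}$ with $Hd^1=\const_c$ and $Hd^0=\id_{\I{C}}$, i.e.\ a morphism $\const_c\to\id_{\I{C}}$ in $\iFun{\I{C}}{\I{C}}$; under this dictionary the hypothesis $s(c)\simeq\id_c$ says precisely that the restriction of $H$ along $\Delta^1\otimes c$ is the degenerate morphism $cs^0\colon\Delta^1\to\I{C}$. (This translation can also be obtained invariantly from the comma--cocomma adjunction of proposition~\ref{prop:adjunctionCommaCocomma}, but the direct computation suffices.)

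Given $H$, I would show that $c\colon 1\to\I{C}$ is left orthogonal to an arbitrary left fibration $p\colon\I{P}\to\I{D}$. Fix a square with top arrow $x\colon 1\to\I{P}$, bottom arrow $g\colon\I{C}\to\I{D}$ and a chosen equivalence $px\simeq gc$. Form $gH\colon\Delta^1\otimes\I{C}\to\I{D}$; its restriction along $d^1$ is $\const_{gc}$, which factors through $1\xrightarrow{gc}\I{D}$ and hence, via $px\simeq gc$, agrees with $p$ composed with the functor $\const_x\colon\I{C}\to\I{P}$ that is constant at $x$. Since $p$ is a left fibration it is internally right orthogonal to $d^1\colon\Delta^0\into\Delta^1$, so in particular right orthogonal to $d^1\colon\I{C}\into\Delta^1\otimes\I{C}$; thus the square formed by $\const_x$, $gH$, $p$ and this $d^1$ admits a unique lift $\widetilde w\colon\Delta^1\otimes\I{C}\to\I{P}$ with $\widetilde w\circ d^1\simeq\const_x$ and $p\widetilde w\simeq gH$. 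Set $\ell:=\widetilde w\circ d^0\colon\I{C}\to\I{P}$. Then $p\ell\simeq gHd^0=g$, so $\ell$ lifts the bottom arrow, and it remains to check $\ell c\simeq x$. For this, restrict $\widetilde w$ along $\Delta^1\otimes c$: using that $H$ is degenerate on $c$, the resulting morphism $\Delta^1\to\I{P}$ has source equivalent to $x$ and lies over the degenerate morphism $\id_{gc}$ of $\I{D}$, so by the \emph{uniqueness} half of the orthogonality of $p$ against $d^1\colon\Delta^0\into\Delta^1$ it must be equivalent to $\id_x$; passing to targets gives $\ell c\simeq x$.

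Finally, the assignments $\ell\mapsto\ell\circ H$ and $\widetilde w\mapsto\widetilde w\circ d^0$ define maps between the space of lifts of the original square and the space of lifts of the $d^1$-square constructed above, and their composite on the former is the identity (on the nose, since $Hd^0=\id_{\I{C}}$). This exhibits the space of lifts of the original square as a retract of the space of lifts of the $d^1$-square, which is contractible because $p$ is a left fibration; a retract of a contractible space is contractible, so $c$ is left orthogonal to $p$, and since $p$ was an arbitrary left fibration, $c$ is initial. The step that needs real care --- and where the hypothesis $s(c)\simeq\id_c$ is genuinely used --- is to carry out the last two paragraphs at the level of mapping $\infty$-groupoids rather than on homotopy classes of functors, so that the various $2$-cells produced by $H$ assemble coherently and the retraction is honest; I expect this to be the bulk of the writing, but it is bookkeeping rather than a conceptual obstacle.
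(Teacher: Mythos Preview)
Your unpacking of the section $s$ as a deformation $H\colon\Delta^1\otimes\I{C}\to\I{C}$ with $Hd^1=\const_c$, $Hd^0=\id_{\I{C}}$ and $H(\id\otimes c)\simeq cs^0$ is exactly right, and this is also how the paper reads the hypothesis. From there, however, the two arguments diverge. The paper does \emph{not} solve lifting problems directly; instead it observes that $Hd^1$ factors through $1$, so $H$ descends to a map $r\colon\I{C}^{\triangleleft}\to\I{C}$ from the left cone, and then exhibits a retract diagram
\[
\begin{tikzcd}
1 \arrow[r, "d^0"] \arrow[d, "c"'] & \Delta^1 \arrow[d] \arrow[r, "s^0"] & 1 \arrow[d, "c"] \\
\I{C} \arrow[r] & \I{C}^{\triangleleft} \arrow[r, "r"] & \I{C}
\end{tikzcd}
\]
in $\Fun(\Delta^1,\Cat(\BB))$, so that $c$ is a retract of the map $\Delta^1\to\I{C}^{\triangleleft}$. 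The latter is initial by the two--out--of--three property applied to $1\xrightarrow{d^1}\Delta^1\to\I{C}^{\triangleleft}$, since the composite is the cone point inclusion, a pushout of the initial map $d^1\colon\I{C}\into\Delta^1\otimes\I{C}$. This is a statement purely in the arrow category, so all higher coherences are automatic.

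Your direct lifting argument is morally the same idea but runs into exactly the issue you flag at the end, and I would not call it mere bookkeeping. The map $\ell\mapsto\ell H$ from your first lift space to the second genuinely comes from a map of arrows $(d^1)\to(c)$ given by $(\pi,H)$, so it is a well-defined map of $\infty$-groupoids. The reverse assignment $\widetilde w\mapsto\widetilde w d^0$, however, does \emph{not} arise from a map of arrows $(c)\to(d^1)$: the obvious candidate $(c,d^0)$ fails because $d^1c\not\simeq d^0c$ in $\Delta^1\otimes\I{C}$. Producing the equivalence $(\widetilde w d^0)c\simeq x$ coherently for all $\widetilde w$ requires an auxiliary construction (essentially the contractibility of a third lift space, for the square $(x,\id_{gc},d^1,p)$), and then one must still verify that the resulting retraction composes with $\ell\mapsto\ell H$ to give the identity \emph{including} the $2$-cell data. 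This can be done, but the cleanest way to package it is precisely to pass to $\I{C}^{\triangleleft}$ and argue via the arrow-category retract as the paper does; at that point the coherence is free.
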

\begin{proof}
	The section $s$ defines a commutative diagram
	\begin{equation*}
		\begin{tikzcd}[column sep=large]
			(1=1=1)\arrow[d, "{(c,c,\id)}"'] \arrow[dr, bend left, "{(\id, \id_c,\id)}"]& \\
			(1\leftarrow\I{C}=\I{C})\arrow[r, "{(\id,s,\id)}"] & \Comma{A}{\I{C}}{\I{C}}
		\end{tikzcd}
	\end{equation*}
	in $\Fun(\Lambda^2_2,\Cat(\BB))$. Using the adjunction between the comma and the cocomma construction as well as Remark~\ref{rem:canonicalSectionCocomma}, this diagram corresponds to a commutative diagram
	\begin{equation*}
		\begin{tikzcd}[column sep=large]
			\Cocomma{1}{1}{1}\arrow[d]\arrow[dr, bend left, "{(\id,\id_c,c)}"] &\\
			\Cocomma{1}{\I{C}}{\I{C}}\arrow[r, "{(\id,r, \id)}"] & (1\to\I{C}=\I{C})
		\end{tikzcd}
	\end{equation*}
	in $\Fun(\Lambda^2_0,\Cat(\BB))$, which is explicitly given by the commutative diagram
	\begin{equation*}
		\begin{tikzcd}
			1\arrow[d, hookrightarrow, "d^1"]\arrow[r, "\id"] & 1 \arrow[r, "\id"] & 1\arrow[d, "c"]\\
			\Delta^1\arrow[r]\arrow[rr, bend left, "\id_c", near start]& \I{C}^{\triangleleft}\arrow[r, "r"] \arrow[from=u, crossing over]& \I{C} \\
			1\arrow[u, "d^0"'] \arrow[r, "c"]& \I{C}\arrow[u] \arrow[r, "\id"] & \I{C}\arrow[u, "\id"'] 
		\end{tikzcd}
	\end{equation*}
	in $\Cat(\BB)$. As a consequence, the map $c\colon 1\to\I{C}$ is seen to be a retract of the map $\Delta^1\to\I{C}^{\triangleleft}$, hence it suffices to show that the latter is initial. Since both $d^1$ and the map $1\to\I{C}^{\triangleleft}$ are initial, this follows immediately from item~(2) in Proposition~\ref{prop:propertiesFactorisationSystems}.
\end{proof}
\begin{proof}[{Proof of Proposition~\ref{prop:initialityCanonicalSection}}]
	 The canonical section $\id_c\colon A\to \Under{\I{C}}{c}$ is obtained as the image of the canonical section $\id_c\colon 1\to \Under{(\pi_A^\ast\I{C})}{c}$ in $\Cat(\Over{\BB}{A})$ under the forgetful functor $(\pi_A)_!\colon\Cat(\Over{\BB}{A})\to\Cat(\BB)$ when viewing $c$ as a an object of $\pi_A^\ast\I{C}$ (cf.\ Remark~\ref{rem:sliceCategoryBaseChange}). As the functor $(\pi_A)_!$ preserves initial maps, we may replace $\BB$ with $\Over{\BB}{A}$ and can therefore assume without loss of generality $A\simeq 1$. We would like to apply Lemma~\ref{lem:criterioninitiality} to the pair $(\Under{\I{C}}{c},\id_c)$. By using the adjunction between the comma and the cocomma construction, it suffices to construct a map $r\colon (\Under{\I{C}}{c})^{\triangleleft}\to\Under{\I{C}}{c}$ such that the triple $(\id, r, \id)$ defines a map $\Cocomma{1}{\Under{\I{C}}{c}}{\Under{\I{C}}{c}}\to (1\to \Under{\I{C}}{c}=\Under{\I{C}}{c})$ in $\Fun(\Lambda^2_2,\Cat(\BB))$. To that end, let $d\colon \Delta^1\times\Delta^1\to\Delta^1$ be the projection onto the diagonal that is given by composing the equivalence $\Delta^1\times\Delta^1\simeq\Delta^2\sqcup_{\Delta^1}\Delta^2$ with $s^0\sqcup_{\Delta^1}s^0\colon\Delta^2\sqcup_{\Delta^1}\Delta^2\to \Delta^1$. Then the map $d^\ast\colon \I{C}^{\Delta^1}\to\I{C}^{\Delta^1\times\Delta^1}$ fits into the two commutative squares
	 \begin{equation*}
	 	\begin{tikzcd}
	 		\I{C}^{\Delta^1}\arrow[r, "d^\ast"] \arrow[d, "d_1"] & \I{C}^{\Delta^1\times\Delta^1}\arrow[d, "(\id\times d^1)^\ast"] && \I{C}^{\Delta^1}\arrow[r, "d^\ast"] \arrow[d, "d_1"] & \I{C}^{\Delta^1\times\Delta^1}\arrow[d, "(d^1\times \id)^\ast"]\\
	 		\I{C}\arrow[r, "s_0"] & \I{C}^{\Delta^1}  && \I{C}\arrow[r, "s_0"] & \I{C}^{\Delta^1}.
	 	\end{tikzcd}
	 \end{equation*}
 	Transposing $d^\ast$ along the adjunction $\Delta^1\otimes -\dashv (-)^{\Delta^1}$ thus determines a map $e\colon \Delta^1\otimes\I{C}^{\Delta^1}\to\I{C}^{\Delta^1}$ together with two commutative squares
 	\begin{equation*}
 		\begin{tikzcd}
 			\Delta^1\otimes\I{C}^{\Delta^1}\arrow[r, "e"]\arrow[d, "\id\otimes d_1"] & \I{C}^{\Delta^1}\arrow[d, "d_1"] && \I{C}^{\Delta^1}\arrow[r, "d^1\otimes\id"]\arrow[d, "d_1"] & \Delta^1\otimes \I{C}^{\Delta^1}\arrow[d, "e"]\\
 			\Delta^1\otimes\I{C}\arrow[r, "s^0\otimes\id"] & \I{C} && \I{C}\arrow[r, "s_0"] & \I{C}^{\Delta^1}.
 		\end{tikzcd}
 	\end{equation*}
 	By pasting the right square with the pullback diagram that defines the slice $\BB$-category $\Under{\I{C}}{c}$, we obtain a map $h\colon (\Under{\I{C}}{c})^{\triangleleft}\to\I{C}^{\Delta^1}$ that fits into the commutative diagram
 	\begin{equation*}
 		\begin{tikzcd}[column sep={4em,between origins}, row sep={3em,between origins}]
 			& \Under{\I{C}}{c}\arrow[dd]\arrow[rr, "d^1\otimes\id"]\arrow[dl] && \Delta^1\otimes\Under{\I{C}}{c}\arrow[dd]\arrow[dl]\arrow[rr, "\id\otimes \pi_{\Under{\I{C}}{c}}"] && \Delta^1\arrow[dd]\arrow[dl, "\id\otimes c"{fill=white}]\\
 			\I{C}^{\Delta^1}\arrow[rr, crossing over, "d^1\otimes\id"{fill=white}]\arrow[dd, "d_1"] && \Delta^1\otimes\I{C}^{\Delta^1} \arrow[rr, crossing over, "\id\otimes d_1"{fill=white}]&& \Delta^1\otimes\I{C} &\\
 			 & 1\arrow[rr] \arrow[dl, "c"]&& (\Under{\I{C}}{c})^{\triangleleft}\arrow[dl, "h"]\arrow[rr] && 1\arrow[dl, "c"]\\
 			 \I{C}\arrow[rr, "s_0"] && \I{C}^{\Delta^1}. \arrow[from=uu, crossing over,"e", near start]\arrow[rr, "d_1"]&& \I{C}. \arrow[from=uu, "s^0\otimes\id", near start]&
 		\end{tikzcd}
 	\end{equation*}
 	The horizontal square on the bottom right in this diagram now gives rise to the desired map $r\colon (\Under{\I{C}}{c})^{\triangleleft}\to\Under{\I{C}}{c}$. By inspection of the above commutative diagram, it is clear that $r$ sends the section $1\to (\Under{\I{C}}{c})^{\triangleleft}$ to the canonical section $\id_c\colon 1\to\Under{\I{C}}{c}$. Lastly, the observation that the composition
 	\begin{equation*}
 			\I{C}^{\Delta^1}\xrightarrow{d^0\otimes\id} \Delta^1\otimes\I{C}^{\Delta^1}\xrightarrow{e} \I{C}^{\Delta^1}
 	\end{equation*}
 	recovers the identity on $\I{C}^{\Delta^1}$ implies that $r$ is a retract of the map $\Under{\I{C}}{c}\to (\Under{\I{C}}{c})^{\triangleleft}$, which finishes the proof.
\end{proof}
\begin{corollary}
	\label{cor:factorisationInternalObject}
	Let $\I{C}$ be a $\BB$-category and let $c\colon A\to\I{C}$ be an object in $\I{C}$. The factorisation of $c$ into an initial map and a left fibration is given by the composition $\pr_1(\pi_c)_!\id_c\colon A\to \Under{(\I{C})}{c}\to \I{C}$ where $\pr_1\colon A\times\I{C}\to\I{C}$ is the projection.\qed
\end{corollary}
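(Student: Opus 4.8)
The plan is to assemble the two substantial inputs established above. First I would recall that, by definition, left fibrations between categories in $\BB$ are exactly the maps that are internally right orthogonal to $d^1\colon\Delta^0\into\Delta^1$, so by proposition~\ref{prop:factorisationSystemInternallyGenerated} applied to the singleton $S=\{d^1\}$ in the presentable cartesian closed $\infty$-category $\Cat(\BB)$ they constitute the right class $\RR$ of a factorisation system $(\LL,\RR)$ whose left class $\LL=\prescript{\bot}{}{\RR}$ is precisely the class of initial functors. In particular every functor between categories in $\BB$ admits an essentially unique factorisation into an initial map followed by a left fibration, so it suffices to exhibit \emph{some} factorisation of $c$ of this shape and appeal to uniqueness.

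Next I would check that the composite $(\pi_c)_!\id_c$ is genuinely equivalent to $c$. By construction (see definition~\ref{def:commaCategory} and the paragraph following it) the section $\id_c\colon A\to\Under{\I{C}}{c}$ lies over $(\id_A,c)\colon A\to A\times\I{C}$, while $(\pi_c)_!\colon\Under{\I{C}}{c}\to\I{C}$ is by definition the composite of the canonical map $\Under{\I{C}}{c}\to A\times\I{C}$ with the projection $A\times\I{C}\to\I{C}$; hence $(\pi_c)_!\id_c$ is the composite of $(\id_A,c)$ with that projection, which is just $c$. It remains to verify the two orthogonality conditions, and these are exactly the content of proposition~\ref{prop:initialityCanonicalSection} (the section $\id_c$ is initial) and corollary~\ref{cor:sliceProjectionFibration} (the projection $(\pi_c)_!$ is a left fibration). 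Together with the uniqueness of factorisations recalled above, this identifies $(\pi_c)_!\id_c$ as \emph{the} factorisation of $c$ into an initial map and a left fibration.

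Since the two deep ingredients have already been proved, there is essentially no obstacle remaining: the argument is a one-line synthesis. The only point deserving a moment of care is the bookkeeping that $\id_c$, originally introduced as an object of $\Under{\I{C}}{c}$ in context $A$, i.e.\ as a morphism over $A\times\I{C}$, is here being regarded as a bare functor $A\to\Under{\I{C}}{c}$ in $\Cat(\BB)$ (with $\Under{\I{C}}{c}$ a genuine category in $\BB$, being a limit of such), and that post-composing it with $(\pi_c)_!$ recovers the original object $c$ rather than a reindexed version of it; both are immediate from the definitions just cited.
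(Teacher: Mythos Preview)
Your proposal is correct and is exactly the intended argument: the paper marks this corollary with \qed\ because it follows immediately from proposition~\ref{prop:initialityCanonicalSection} (initiality of $\id_c$) and corollary~\ref{cor:sliceProjectionFibration} (that $(\pi_c)_!$ is a left fibration), together with the existence and uniqueness of the factorisation system. Your additional care in verifying that the composite $(\pi_c)_!\id_c$ really recovers $c$ is a welcome piece of bookkeeping that the paper leaves implicit.
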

Lemma~\ref{lem:criterioninitiality} can furthermore be used to derive the following characterisation of initial objects in a $\BB$-category:
\begin{proposition}
	\label{prop:characterisationinitialObject}
	Let $\I{C}$ be a $\BB$-category. For any object $c\colon A\to \I{C}$, the following are equivalent:
	\begin{enumerate}
		\item  $c$ is an initial object;
		\item the projection $(\pi_c)_!\colon\Under{\I{C}}{c}\to A\times \I{C}$ is an equivalence;
		\item for any object $d\colon B\to \I{C}$ the map $\map{\I{C}}(\pr_0^\ast c,\pr_1^\ast d)\to A\times B$ is an equivalence in $\BB$.
	\end{enumerate}
\end{proposition}
\begin{proof}
	By replacing $\BB$ with $\Over{\BB}{A}$, we can assume that $A\simeq 1$. Now if $c$ is initial, Corollary~\ref{cor:factorisationInternalObject} implies that the left fibration $(\pi_c)_!\colon\Under{\I{C}}{c}\to \I{C}$ must be initial as well and therefore an equivalence. Conversely, if this map is an equivalence, Corollary~\ref{cor:factorisationInternalObject} implies that $c$ is initial. Lastly, since the map $(\pi_c)_!\colon\Under{\I{C}}{c}\to \I{C}$ is a left fibration, Proposition~\ref{prop:equivalenceLeftFibrationsFibrewise} implies that this map is an equivalence whenever the induced map $(\Under{\I{C}}{c})\vert_{d}\to B$ is an equivalence for any object $d\colon  B\to\I{C}$. As this map recovers the morphism in~(3), the claim follows.
\end{proof}
\begin{corollary}
	\label{cor:universalPropertyInitialObject}
	Let $\I{C}$ be a $\BB$-category and let $c$ and $d$ be objects in $\I{C}$ in context $A\in\BB$ such that $c$ is initial. Then there is a unique map $c\to d$ in $\I{C}$ in context $A$ that is an equivalence if and only if $d$ is initial as well.
\end{corollary}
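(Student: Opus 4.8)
The plan is to deduce the statement from the two characterisations of initial objects in Proposition~\ref{prop:characterizationinitialObject}, essentially reproducing the classical argument that initial objects are unique up to unique isomorphism. First I would establish existence and uniqueness of a map $c\to d$ in context $A$: applying item~(3) of Proposition~\ref{prop:characterizationinitialObject} (with $B=A$) to the object $d\colon A\to\I{C}$ shows that $\map{\I{C}}(c\pi_0,d\pi_1)\to A\times A$ is an equivalence in $\BB$, and pulling this back along the diagonal $A\to A\times A$ gives that $\map{\I{C}}(c,d)\to A$ is an equivalence in $\Over{\BB}{A}$. Hence the $\infty$-groupoid of sections $A\to\map{\I{C}}(c,d)$ over $A$ is contractible, which is exactly the assertion that there is a unique morphism $f\colon c\to d$ in context $A$.

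Next, for the ``if'' direction I would assume $d$ is also initial and run the same argument with the roles of $c$ and $d$ exchanged, producing a unique $g\colon d\to c$. Taking $d=c$ above shows $\map{\I{C}}(c,c)\to A$ is an equivalence, so $\id_c$ is the unique endomorphism of $c$ in context $A$; since the composite $c\xrightarrow{\,f\,}d\xrightarrow{\,g\,}c$ is such an endomorphism, it must be equivalent to $\id_c$, and symmetrically (using that $d$ is initial) the composite $d\xrightarrow{\,g\,}c\xrightarrow{\,f\,}d$ is equivalent to $\id_d$. By the definition of an equivalence of morphisms — or by Proposition~\ref{prop:walkingEquivalenceClassifiesEquivalences} — this exhibits $f$ as an equivalence in $\I{C}$, with $g$ serving as both left and right inverse. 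For the ``only if'' direction I would use Corollary~\ref{cor:equivalencesIdentities}: if $f$ is an equivalence then the map $A\to\I{C}_1$ classifying it factors through $s_0\colon\I{C}_0\to\I{C}_1$, say through $e\colon A\to\I{C}_0$, and the simplicial identities $d_1s_0=d_0s_0=\id$ identify the source $c$ and the target $d$ of $f$ with $e$, so that $c$ and $d$ are equivalent objects of $\I{C}$ in context $A$. Transposing, the functors $\bar c,\bar d\colon 1\to\pi_A^\ast\I{C}$ in $\Cat(\Over{\BB}{A})$ are equivalent, and since being an initial functor is a property invariant under equivalence of the functor in question, $\bar d$ is initial because $\bar c$ is; that is, $d$ is an initial object.

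The one point requiring a little care — and the main, if modest, obstacle — is that last step: checking that initiality of an object is transported along an equivalence of objects. This reduces to the observation that, for a fixed left fibration $p\colon\I{P}\to\I{Q}$, the cartesian square expressing $\bar c\bot p$ depends on $\bar c$ only through its equivalence class as an object of $\pi_A^\ast\I{C}$, because $\bar c$ enters that square solely via the pullback functors $\bar c^\ast$ on mapping $\infty$-groupoids. Everything else is a routine manipulation of pullbacks, sections, and simplicial identities, using only results already established in this section.
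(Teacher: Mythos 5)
Your proof is correct and follows essentially the same route as the paper's: existence and uniqueness via item~(3) of Proposition~\ref{prop:characterizationinitialObject} pulled back along the diagonal, the ``if'' direction by producing the inverse from initiality of $d$ and invoking uniqueness of endomorphisms, and the ``only if'' direction by observing that $c$ and $d$ become equivalent objects and that initiality is invariant under equivalence of objects. The extra care you take spelling out the factorisation through $s_0$ via Corollary~\ref{cor:equivalencesIdentities} and the invariance of orthogonality under equivalence of maps is exactly what the paper leaves implicit, so there is no substantive difference.
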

\begin{proof}
	We may again assume $A\simeq 1$. By Proposition~\ref{prop:characterisationinitialObject}, the map $\map{\I{C}}(c,d)\to 1$ is an equivalence. Therefore, there is a unique map $f\colon c\to d$ that corresponds to the unique section $1\to \map{\I{C}}(c,d)$. If $d$ is initial, then by the same argumentation there is a unique map $g\colon d\to c$, and by uniqueness this must be an inverse of $f$. Hence $f$ is an equivalence. Conversely, if $f$ is an equivalence, then $c$ and $d$ are equivalent as objects in $\I{C}$, which implies that the two maps $c,d\colon 1\rightrightarrows\I{C}$ in $\Cat(\Over{\BB}{A})$ are equivalent, which shows that $d$ must be initial.
\end{proof}
As a consequence of Corollary~\ref{cor:universalPropertyInitialObject} (and its dual), initial (and final) objects in a $\BB$-category $\I{C}$ are unique. We will usually denote an initial object by $0_{\I{C}}\colon A\to\I{C}$ and a final object by $1_{\I{C}}\colon A\to\I{C}$.

\subsection{Covariant equivalences}
\label{sec:covariantEquivalences}
Let $L\colon \Fun(\Delta^1,\Simp\BB)\to \LFib$ denote a left adjoint to the inclusion and let $\Over{L}{C}\colon \Over{(\Simp\BB)}{C}\to \Over{\LFib}{C}$ be the induced functor on the fibre over a simplicial object $C$ in $\BB$. 
\begin{definition}
\label{def:covariantEquivalence}
Let $C$ be a simplicial object in $\BB$ and let $f\colon P\to Q$ be a map in $\Over{(\Simp\BB)}{C}$. Then $f$ is said to be a \emph{covariant equivalence} if $\Over{L}{C}(f)$ is an equivalence in $\Over{\LFib}{C}$.
\end{definition}
\begin{remark}
\label{rem:covariantEquivalencesExplicitly}
In the context of Definition~\ref{def:covariantEquivalence}, the map $\Over{L}{C}(f)$ is constructed by means of the unique commutative diagram
\begin{equation*}
\begin{tikzcd}[column sep=small]
P\arrow[d]\arrow[rr, "f"]&&Q\arrow[d]\\
\Over{L}{C}(P)\arrow[rr, "\Over{L}{C}(f)"] \arrow[dr] && \Over{L}{C}(Q)\arrow[dl]\\
& C&
\end{tikzcd}
\end{equation*}
in which the two vertical maps are initial and the two diagonal maps are left fibrations. In particular, if $f$ is initial then $f$ is a covariant equivalence over $C$. The converse implication is true whenever the map $Q\to C$ is already a left fibration.
\end{remark}

The main goal of this section is to prove the following characterisation of covariant equivalences over $\I{C}$:
\begin{proposition}
	\label{prop:characterisationCovariantEquivalences}
	Let $\I{C}$ be a $\BB$-category and let
	\begin{equation*}
		\begin{tikzcd}[column sep=small]
			P\arrow[dr, "p"']\arrow[rr, "f"] && Q\arrow[dl, "q"] \\
			& \I{C}
		\end{tikzcd}
	\end{equation*}
	be a commutative triangle in $\Simp\BB$. Then the following are equivalent:
	\begin{enumerate}
	\item $f$ is a covariant equivalence over $\I{C}$;
	\item for any object $c\colon A\to \I{C}$ the induced map $\Over{f}{c}\colon \Over{P}{c}\to \Over{Q}{c}$ is a covariant equivalence over $\Over{\I{C}}{c}$;
	\item for any object $c\colon A\to \I{C}$ the induced map $\colim_{\Delta^{\op}}(\Over{f}{c})\colon \colim_{\Delta^{\op}}(\Over{P}{c})\to \colim_{\Delta^{\op}}(\Over{Q}{c})$ is an equivalence in $\BB$.
	\end{enumerate}
\end{proposition}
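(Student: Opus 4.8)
The plan is to deduce the whole statement from one key lemma: \emph{for every functor $p\colon\I{P}\to\I{C}$ and every object $c\colon A\to\I{C}$ there is an equivalence $\Over{L}{\I{C}}(\I{P})\vert_c\simeq(\Over{\I{P}}{c})^{\gp}$, natural in $\I{P}\in\Over{\Cat(\BB)}{\I{C}}$}, where $\Over{L}{\I{C}}(\I{P})$ denotes the left fibration obtained by fibrant replacement and $\vert_c$ the fibre over $c$. Granting this, $(1)\Leftrightarrow(3)$ is immediate: $f$ is a covariant equivalence over $\I{C}$ if and only if $\Over{L}{\I{C}}(f)$ is an equivalence in $\Over{\LFib}{\I{C}}$, which by proposition~\ref{prop:equivalenceLeftFibrationsFibrewise} holds if and only if $\Over{L}{\I{C}}(f)\vert_c$ is an equivalence for every $c$, and by the lemma this is exactly condition~(3). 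Since the lemma, and hence the equivalence $(1)\Leftrightarrow(3)$, hold over an arbitrary base, I would obtain $(2)\Leftrightarrow(3)$ by applying $(1)\Leftrightarrow(3)$ over the base $\Over{\I{C}}{c_0}$ for each object $c_0$ of $\I{C}$, together with the routine identification of iterated slices $\Over{(\Over{\I{C}}{c_0})}{\gamma}\simeq\Over{\I{C}}{d}$, where $d$ is the image of $\gamma$ under $(\pi_{c_0})_!\colon\Over{\I{C}}{c_0}\to\I{C}$; this yields $\Over{(\Over{\I{P}}{c_0})}{\gamma}\simeq\Over{\I{P}}{d}$, and as every object $d$ of $\I{C}$ arises in this way (take $c_0=d$ and $\gamma=\id_d$), condition~(2) unwinds to condition~(3).

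To prove the key lemma, first reduce to the case $A\simeq 1$ by base change along $\pi_A\colon\Over{\BB}{A}\to\BB$: since $\pi_A^\ast$ preserves internal mapping objects (lemma~\ref{lem:etaleMapsAtomic}) it preserves left fibrations and hence the $(\mathrm{initial},\LFib)$-factorisation system, and it is compatible with groupoidification and with slice categories (remark~\ref{rem:sliceCategoryBaseChange} and proposition~\ref{prop:etaleBaseChangeInternalCategories}), so both sides base-change correctly. Assuming now $c\colon1\to\I{C}$ is global, factor $\I{P}\xrightarrow{i}\Over{L}{\I{C}}(\I{P})\xrightarrow{\ell}\I{C}$ with $i$ initial and $\ell$ a left fibration, and pull back along the right fibration $(\pi_c)_!\colon\Over{\I{C}}{c}\to\I{C}$ (a right fibration by the dual of corollary~\ref{cor:sliceProjectionFibration}). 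The pullback of $\ell$ is again a left fibration $\Over{\Over{L}{\I{C}}(\I{P})}{c}\to\Over{\I{C}}{c}$, and --- this is the first essential input --- the pullback of the initial map $i$ along a right fibration is again initial, so $\Over{\I{P}}{c}\to\Over{\Over{L}{\I{C}}(\I{P})}{c}\to\Over{\I{C}}{c}$ \emph{is} the fibrant replacement, i.e.\ $\Over{L}{\Over{\I{C}}{c}}(\Over{\I{P}}{c})\simeq\Over{\Over{L}{\I{C}}(\I{P})}{c}$. Taking the fibre over the object $\id_c\colon1\to\Over{\I{C}}{c}$ and unwinding the iterated pullback identifies it with $\Over{L}{\I{C}}(\I{P})\vert_c$. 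It therefore remains to prove: \emph{if $\I{D}$ has a final object $t$, then $\Over{L}{\I{D}}(\I{X})\vert_t\simeq\I{X}^{\gp}$ for every $\I{X}\to\I{D}$} --- applied to $\I{D}=\Over{\I{C}}{c}$, $t=\id_c$ (a final object, since $\id_c$ is final by the dual of proposition~\ref{prop:initialityCanonicalSection}) and $\I{X}=\Over{\I{P}}{c}$, this closes the lemma.

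For this last statement, replacing $\I{X}$ by $\Over{L}{\I{D}}(\I{X})$ --- which changes neither side, as $\I{X}\to\Over{L}{\I{D}}(\I{X})$ is initial and hence inverted by $(-)^{\gp}$ --- we may assume $\I{X}\to\I{D}$ is itself a left fibration, so that we must show the inclusion $\I{X}\vert_t\into\I{X}$ of the fibre over $t$ becomes an equivalence after groupoidification. This inclusion is the pullback of the final functor $t\colon1\to\I{D}$ along the left fibration $\I{X}\to\I{D}$; the second essential input --- the opposite-category dual of the first --- is that the pullback of a final functor along a left fibration is again final, so $\I{X}\vert_t\into\I{X}$ is final and therefore inverted by $(-)^{\gp}$ (final functors are contravariant equivalences over any base by the dual of remark~\ref{rem:covariantEquivalencesExplicitly}, in particular over $1$, where the contravariant localisation is $(-)^{\gp}$ by remark~\ref{rem:groupoidificationFibrantReplacement}). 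Since $\I{X}\vert_t$ is already a groupoid by proposition~\ref{prop:leftFibrationsFibredGroupoids}, the claim follows.

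The main obstacle is thus the stability property invoked twice above: \emph{the pullback of an initial functor along a right fibration is initial} (equivalently, by passing to opposite categories, \emph{the pullback of a final functor along a left fibration is final}). Conceptually this expresses that right fibrations are proper and left fibrations smooth, and amounts to a Beck--Chevalley identity --- pushforward along a right fibration commutes with base change. I would establish it by reducing, via the saturation description of initial maps (propositions~\ref{prop:factorisationSystemInternallyGenerated} and~\ref{prop:saturatedClosureInternal} together with lemma~\ref{lem:generatorsInitialMaps}), to the generating initial maps $d^{\{0\}}\colon\I{E}\into\Delta^n\otimes\I{E}$, for which the pullback along a right fibration can be analysed explicitly using the simplicial characterisation of fibrations in proposition~\ref{prop:fibrationsExternalCharacterization}, in the same spirit as the proof of lemma~\ref{lem:generatorsInitialMaps} itself. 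This is where the real work lies; once it is in place, the rest of the argument is bookkeeping with pullback squares and the factorisation system.
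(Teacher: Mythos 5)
Your proposal is correct and takes a genuinely different route from the paper. The paper proves the cyclic chain $(1)\Rightarrow(2)\Rightarrow(3)\Rightarrow(1)$: the step $(1)\Rightarrow(2)$ uses properness of $(\pi_c)_!$ directly; $(2)\Rightarrow(3)$ observes that the left adjoint $(\pi_{\I{D}})_!\colon\Over{\LFib}{\I{D}}\to\Grpd(\BB)$ of $\pi_{\I{D}}^\ast$ factors as the forgetful functor followed by $(-)^{\gp}$, applied to $\I{D}=\Over{\I{C}}{c}$; and $(3)\Rightarrow(1)$ uses smoothness of left fibrations and the factorisation of $c$ into a final map followed by a right fibration. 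You instead isolate a clean formula $\Over{L}{\I{C}}(\I{P})\vert_c\simeq(\Over{\I{P}}{c})^{\gp}$ (which the paper never states, though its $(3)\Rightarrow(1)$ step essentially establishes it), read off $(1)\Leftrightarrow(3)$ from proposition~\ref{prop:equivalenceLeftFibrationsFibrewise}, and then bootstrap $(2)\Leftrightarrow(3)$ by applying $(1)\Leftrightarrow(3)$ over the base $\Over{\I{C}}{c_0}$. The mathematical inputs are identical --- properness of right fibrations, smoothness of left fibrations, groupoidification inverting initial and final functors --- but your organisation is tighter, and the key lemma is a nice standalone observation. Two remarks. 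First, the ``essential input'' you defer (pullback of an initial map along a right fibration is initial) is already proposition~\ref{prop:rightFibrationProper} in the paper; you do not need to rederive it, and the sketch you give would in any case need the sheaf condition on $\RFib$ (dual of lemma~\ref{lem:sheafConditionLFib}) for saturation stability, together with the retract argument in that proof, which is rather more delicate than lemma~\ref{lem:generatorsInitialMaps}. Second, the ``routine'' iterated-slice identification $\Over{(\Over{\I{C}}{c_0})}{\gamma}\simeq\Over{\I{C}}{d}$ does hold and is indeed a two-line consequence of the $n=1$ internal right-fibration square for $(\pi_{c_0})_!$, but it is an extra ingredient the paper avoids (its $(2)\Rightarrow(3)$ argument needs no iterated slices); worth spelling out, since as stated it is not visibly a quotation of anything in the paper and one must be careful not to conflate $\id_{c_0}$ being a final \emph{map} (proposition~\ref{prop:initialityCanonicalSection}) with $\id_{c_0}$ being a final \emph{object}, which holds only after the base change to $A\simeq 1$.
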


The proof of Proposition~\ref{prop:characterisationCovariantEquivalences} is based on the concept of a \emph{proper map}. Observe that for any map $p\colon P\to C$ in $\Simp\BB$ the commutative square
\begin{equation*}
	\begin{tikzcd}
		\Over{\LFib}{C}\arrow[d, "p^\ast"]\arrow[r, hookrightarrow] & \Over{(\Simp\BB)}{C}\arrow[d, "p^\ast"]\\
		\Over{\LFib}{P}\arrow[r, hookrightarrow] & \Over{(\Simp\BB)}{P}
	\end{tikzcd}
\end{equation*}
gives rise to a  left lax square
\begin{equation*}
	\begin{tikzcd}
		\Over{\LFib}{C}\arrow[d, "p^\ast"]\arrow[from=r, "\Over{L}{C}"'] & \Over{(\Simp\BB)}{C}\arrow[d, "p^\ast"]\\
		\Over{\LFib}{P}\arrow[from=r, "\Over{L}{D}"'] & \Over{(\Simp\BB)}{\I{P}}\arrow[ul, Rightarrow, shorten=3mm]
	\end{tikzcd}
\end{equation*}
by means of the mate construction. As $L$ does not preserve pullbacks, this square does not commute in general.
\begin{definition}
	\label{def:properSmooth}
	A map $p\colon D\to C$ in $\Simp\BB$ is said to be \emph{proper} if for any cartesian square
	\begin{equation*}
		\begin{tikzcd}
			Q\arrow[d, "q"]\arrow[r] & P\arrow[d, "p"]\\
			D\arrow[r] & C
		\end{tikzcd}
	\end{equation*}
	in $\Simp\BB$ the left lax square
	\begin{equation*}
		\begin{tikzcd}
			\Over{\LFib}{D}\arrow[d, "q^\ast"]\arrow[from=r, "\Over{L}{D}"'] & \Over{(\Simp\BB)}{D}\arrow[d, "q^\ast"]\\
			\Over{\LFib}{Q}\arrow[from=r, "\Over{L}{Q}"'] & \Over{(\Simp\BB)}{Q}\arrow[ul, Rightarrow, shorten=3mm]
		\end{tikzcd}
	\end{equation*}
	commutes. Dually, a map $p\colon P\to C$ is \emph{smooth} if $P^{\op}\to C^{\op}$ is proper.
\end{definition}
Unwinding the definitions, the left lax square from Definition~\ref{def:properSmooth} is commutative if and only if for any $f\colon E\to D$ the lower square in the commutative diagram
\begin{equation*}
	\begin{tikzcd}
		q^\ast E\arrow[d]\arrow[r]\arrow[dd, bend right=40, "q^\ast f"'] & E\arrow[d]\arrow[dd, bend left=40, "f"]\\
		L(q^\ast E)\arrow[r] \arrow[d] & L(E)\arrow[d]\\
		Q\arrow[r, "q"] & D
	\end{tikzcd}
\end{equation*}
is cartesian. Here the vertical maps are given by the factorisation of $E\to D$ and $q^\ast E\to Q$ into an initial map and a left fibration. This is equivalent to the condition that base change along $q$ preserves initial maps. In fact, if this condition is satisfied, then in particular the map $L(q^\ast E)\to q^\ast L(E)$ is initial. But since this map must also be a left fibration, it is necessarily an equivalence. The converse direction follows from chasing an initial map through the commutative square that is provided in the definition of proper maps.

By definition, proper maps preserve covariant equivalences:
\begin{proposition}
\label{prop:properFunctorCovariantEquivalences}
If $p\colon P\to C$ is a proper map between simplicial objects in $\BB$, then the base change functor $p^\ast\colon \Over{(\Simp\BB)}{C}\to\Over{(\Simp\BB)}{P}$ carries covariant equivalences over $C$ to covariant equivalences over $P$.\qed
\end{proposition}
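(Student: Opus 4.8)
The plan is to deduce the claim directly from the reformulation of properness given immediately after definition~\ref{def:properSmooth}: the functor $p$ is proper precisely when base change along any pullback of $p$ preserves initial maps, so in particular, taking the cartesian square with $\I{E}=\I{C}$ and $\I{F}=\I{D}$, the functor $p^\ast\colon\Over{\Cat(\BB)}{\I{C}}\to\Over{\Cat(\BB)}{\I{D}}$ carries initial maps to initial maps. The second ingredient is that left fibrations, forming the right class of the initial/left-fibration factorisation system in $\Cat(\BB)$, are stable under pullback by item~(3) of proposition~\ref{prop:propertiesFactorisationSystems}; hence $p^\ast$ also carries left fibrations to left fibrations.

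Given these two facts, let $f\colon\I{P}\to\I{Q}$ be a covariant equivalence over $\I{C}$. By remark~\ref{rem:covariantEquivalencesExplicitly} we may choose the factorisations $\I{P}\to\Over{L}{\I{C}}(\I{P})\to\I{C}$ and $\I{Q}\to\Over{L}{\I{C}}(\I{Q})\to\I{C}$ into an initial map followed by a left fibration, together with the induced map $\Over{L}{\I{C}}(f)\colon\Over{L}{\I{C}}(\I{P})\to\Over{L}{\I{C}}(\I{Q})$, which is an equivalence since $f$ is a covariant equivalence. Applying $p^\ast$ to the resulting commutative square yields a commutative square
\begin{equation*}
\begin{tikzcd}
p^\ast\I{P}\arrow[r, "p^\ast f"]\arrow[d] & p^\ast\I{Q}\arrow[d]\\
p^\ast\Over{L}{\I{C}}(\I{P})\arrow[r, "p^\ast\Over{L}{\I{C}}(f)"] & p^\ast\Over{L}{\I{C}}(\I{Q})
\end{tikzcd}
\end{equation*}
over $\I{D}$ whose vertical maps are initial (by properness), whose bottom objects are left fibrations over $\I{D}$ (by pullback-stability), and whose bottom horizontal map is an equivalence since $p^\ast$ preserves equivalences. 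By the uniqueness of the initial/left-fibration factorisation this identifies $p^\ast\Over{L}{\I{C}}(f)$ with $\Over{L}{\I{D}}(p^\ast f)$, so the latter is an equivalence, i.e.\ $p^\ast f$ is a covariant equivalence over $\I{D}$.

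Equivalently, the argument amounts to nothing more than observing that for a proper $p$ the left lax square of definition~\ref{def:properSmooth} commutes, giving a natural equivalence $\Over{L}{\I{D}}\circ p^\ast\simeq p^\ast\circ\Over{L}{\I{C}}$, after which the result is immediate because base change is a functor and hence preserves equivalences. I do not expect a genuine obstacle: everything needed is already in place, and the only point requiring a little care is the bookkeeping with the factorisation system that lets one read off $\Over{L}{\I{D}}(p^\ast(-))\simeq p^\ast\Over{L}{\I{C}}(-)$ from the definition of properness.
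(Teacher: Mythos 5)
Your proof is correct and matches the paper's (implicit) reasoning. The paper states this proposition with a \qed because the claim is immediate once one unwinds the definition: specializing the definition of properness to the trivial cartesian square over $\I{C}$ gives exactly the commutativity $\Over{L}{\I{D}}\circ p^\ast\simeq p^\ast\circ\Over{L}{\I{C}}$, and then $\Over{L}{\I{D}}(p^\ast f)\simeq p^\ast(\Over{L}{\I{C}}(f))$ is an equivalence because pullback preserves equivalences; your second paragraph says precisely this, and your first paragraph just redoes the paper's preceding discussion that identifies commutativity of the lax square with preservation of initial maps.
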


\begin{proposition}
	For any two simplicial objects $C$ and $D$ in $\BB$ the projection $C\times D\to C$ is proper.
\end{proposition}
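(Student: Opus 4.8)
The plan is to derive this from the single fact that the class of initial functors in $\Cat(\BB)$ is stable under products with an arbitrary category $\I{D}$. First I would record the reduction. For any functor $g\colon\I{E}\to\I{C}$ the commutative square with horizontal maps $g\times\id_{\I{D}}\colon\I{E}\times\I{D}\to\I{C}\times\I{D}$ and $g\colon\I{E}\to\I{C}$ and vertical maps the two projections is cartesian, so every base change of $\pr_0\colon\I{C}\times\I{D}\to\I{C}$ is again a projection of the form $\pr_0\colon\I{E}\times\I{D}\to\I{E}$. By the criterion for properness recorded after Definition~\ref{def:properSmooth}, it therefore suffices to show that for every category $\I{E}$ in $\BB$ the base change functor $\pr_0^{\ast}\colon\Over{\Cat(\BB)}{\I{E}}\to\Over{\Cat(\BB)}{\I{E}\times\I{D}}$ preserves initial maps. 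Since $\I{P}\times_{\I{E}}(\I{E}\times\I{D})\simeq\I{P}\times\I{D}$ naturally in $\I{P}\in\Over{\Cat(\BB)}{\I{E}}$, this functor sends a map $f\colon\I{P}\to\I{Q}$ over $\I{E}$ to $f\times\id_{\I{D}}\colon\I{P}\times\I{D}\to\I{Q}\times\I{D}$, so the proposition reduces to the claim that $f\times\id_{\I{D}}$ is initial whenever $f$ is.

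For the main step I would argue directly. Let $l$ be an initial functor and let $r\colon\I{R}\to\I{S}$ be any left fibration. Because $\Cat(\BB)$ is cartesian closed (Proposition~\ref{prop:categoriesExponentialIdeal}), the adjunction $(-)\times\I{D}\dashv\iFun{\I{D}}{-}$ identifies the orthogonality relation $(l\times\id_{\I{D}})\bot r$ with $l\bot\iFun{\I{D}}{r}$, where $\iFun{\I{D}}{r}\colon\iFun{\I{D}}{\I{R}}\to\iFun{\I{D}}{\I{S}}$. By Proposition~\ref{prop:categoriesExponentialIdeal} both $\iFun{\I{D}}{\I{R}}$ and $\iFun{\I{D}}{\I{S}}$ are categories in $\BB$, and by Proposition~\ref{prop:fibrationsFunctorCategories} the map $\iFun{\I{D}}{r}$ is again a left fibration; hence $l\bot\iFun{\I{D}}{r}$ holds since $l$ is initial, and so $(l\times\id_{\I{D}})\bot r$. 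As $r$ was an arbitrary left fibration, $l\times\id_{\I{D}}$ is initial, which is exactly what the reduction above requires.

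Alternatively, the same conclusion follows structurally: by Proposition~\ref{prop:factorisationSystemInternallyGenerated} applied to the one-element set $\{d^1\colon\Delta^0\into\Delta^1\}$ in the presentable and cartesian closed $\infty$-category $\Cat(\BB)$, the class of initial functors is the internal left orthogonal complement $\prescript{\ibot}{}{\RR}$ of the class $\RR$ of left fibrations, and any such complement is automatically closed under products with arbitrary objects (from $c\times l\bot r$ for all $c$ one deduces $(c\times\I{D})\times l\bot r$ for all $c$). I do not expect a genuine obstacle in this argument; the only point that warrants care is the first paragraph, namely that properness of $\pr_0$ unwinds precisely to the product-stability of initial maps, which uses both that projections are stable under pullback and that base change along a projection is the product functor on slice categories.
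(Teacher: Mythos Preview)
Your proof is correct and follows essentially the same approach as the paper: the paper's one-line argument observes that initial maps are \emph{internally} left orthogonal to left fibrations, which immediately yields stability under products with $\I{D}$, and your main step (via the adjunction $(-)\times\I{D}\dashv\iFun{\I{D}}{-}$ and Proposition~\ref{prop:fibrationsFunctorCategories}) together with your alternative argument are precisely unwindings of this internal orthogonality. Your reduction paragraph making explicit that pullbacks of projections are again projections is a helpful elaboration that the paper leaves implicit.
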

\begin{proof}
	This follows immediately from the fact that initial maps are stable under products (since initial maps are internally left orthogonal to left fibrations).
\end{proof}

The central ingredient towards the proof of Proposition~\ref{prop:characterisationCovariantEquivalences} is the following proposition:
\begin{proposition}
	\label{prop:rightFibrationProper}
	Any right fibration between simplicial objects in $\BB$ is proper.
\end{proposition}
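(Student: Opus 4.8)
The plan is to reduce the assertion to an explicit orthogonality computation. In a cartesian square as in Definition~\ref{def:properSmooth} whose right-hand vertical arrow $p$ is a right fibration, the left-hand vertical arrow $q$ is again a right fibration, since right fibrations form the right class of the factorisation system generated by $d^{0}\colon\Delta^{0}\into\Delta^{1}$ (Proposition~\ref{prop:factorisationSystemInternallyGenerated}) and are therefore stable under pullback by Proposition~\ref{prop:propertiesFactorisationSystems}. By the discussion following Definition~\ref{def:properSmooth} it thus suffices to prove that base change along an arbitrary right fibration $q\colon\I{F}\to\I{E}$ sends initial functors to initial functors. Given an initial functor $i\colon\I{J}\to\I{I}$ in $\Over{\Cat(\BB)}{\I{E}}$, pulling $q$ back along the structure map $\I{I}\to\I{E}$ leaves $q^{\ast}i$ unchanged, so we may assume $\I{E}=\I{I}$; we must then show that for a right fibration $q\colon\I{F}\to\I{I}$ and an initial functor $i\colon\I{J}\to\I{I}$ the base change $\I{J}\times_{\I{I}}\I{F}\to\I{F}$ is initial.

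By Proposition~\ref{prop:saturatedClosureInternal} the class of initial functors is the smallest saturated class containing the maps $d^{1}\colon\I{C}\into\Delta^{1}\otimes\I{C}$ for $\I{C}\in\Cat(\BB)$. Granting that base change along a right fibration preserves colimits, the collection of functors over $\I{I}$ whose base change along $q$ is initial is again a saturated class, so it is enough to treat these generators. Unwinding, this amounts to the following: for every $\I{C}\in\Cat(\BB)$ and every right fibration $\pi\colon\I{G}\to\Delta^{1}\otimes\I{C}$, the inclusion $\iota\colon\I{G}_{0}\into\I{G}$ of the pullback $\I{G}_{0}=\I{G}\times_{\Delta^{1}\otimes\I{C}}\I{C}$ along $d^{1}\colon\I{C}\into\Delta^{1}\otimes\I{C}$ — that is, of the part of $\I{G}$ lying over the source copy of $\I{C}$ — is an initial functor.

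For this core step the idea is to manufacture from the right fibration $\pi$ a retraction $r\colon\I{G}\to\I{G}_{0}$ together with a natural transformation $\nu\colon\iota r\Rightarrow\id_{\I{G}}$ which restricts to the identity on $\I{G}_{0}$, and then to use $(r,\nu)$ to solve lifting problems against left fibrations over $\I{G}$. The pair $(r,\nu)$ is produced by cartesian transport: the order map $\min\colon\Delta^{1}\times\Delta^{1}\to\Delta^{1}$ defines a natural transformation from the projection–inclusion $\Delta^{1}\otimes\I{C}\to\I{C}\xrightarrow{d^{1}}\Delta^{1}\otimes\I{C}$ to the identity, hence a functor $(\Delta^{1}\otimes\I{C})\otimes\Delta^{1}\to\Delta^{1}\otimes\I{C}$; pulling $\I{G}$ back along it and transporting along the new $\Delta^{1}$-coordinate by means of the cartesian edges of $\pi$ yields $r$ and $\nu$, with $r\iota\simeq\id$ by construction. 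Now let $\rho\colon\I{P}\to\I{G}$ be any left fibration and $\sigma\colon\I{G}_{0}\to\I{P}$ a functor with $\rho\sigma=\iota$; since $\rho$ is internally right orthogonal to $d^{1}$, hence to $d^{1}\colon\I{G}\into\Delta^{1}\otimes\I{G}$, the square with top arrow $\sigma r$, bottom arrow $\nu\colon\Delta^{1}\otimes\I{G}\to\I{G}$ and left arrow $d^{1}$ admits a unique filler $\lambda\colon\Delta^{1}\otimes\I{G}\to\I{P}$; restricting $\lambda$ along $d^{0}\colon\I{G}\into\Delta^{1}\otimes\I{G}$ gives a functor $\I{G}\to\I{P}$ over $\I{G}$ extending $\sigma$, and a second use of the uniqueness of such fillers shows the extension is unique. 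Thus $\iota$ is left orthogonal to every left fibration, i.e.\ initial, and by the reductions above every right fibration is proper.

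The step I expect to be the main obstacle is the ingredient invoked in the second paragraph, that base change along a right fibration preserves colimits: in $\Cat(\BB)$ only filtered colimits are universal in general (Remark~\ref{rem:limitsColimitsofCategories}), so this genuinely uses that a right fibration is conservative and fibred in groupoids, and should be established separately; it also furnishes, by adjoint functor arguments, a right adjoint $q_{\ast}$ of $q^{\ast}$, which one may alternatively use to bypass the reduction to generators altogether, provided one checks that $q_{\ast}$ preserves left fibrations. The remaining point requiring care is carrying out the cartesian-transport construction of $(r,\nu)$ in the internal setting, where it has to be phrased via pullbacks of simplicial objects and the external characterisation of right fibrations (Proposition~\ref{prop:fibrationsExternalCharacterization}) rather than by manipulating objects and morphisms pointwise.
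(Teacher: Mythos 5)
Your proposal follows the same route as the paper's proof: reduce properness to the statement that pullback along a right fibration preserves initial maps, reduce further via a saturation argument to the generator $d^1\colon \I{C}\into\Delta^1\otimes\I{C}$, and handle a pullback of the generator via cartesian transport along the functor induced by the $\min$ map $\Delta^1\times\Delta^1\to\Delta^1$ (the paper's $\tau$). The two places where your sketch leaves work to be done are both places the paper has to work, so you have correctly located the substance of the argument — but neither is filled in, so as written this is not yet a proof.

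The first gap, which you flag, is the claim that the class of maps whose base change along every right fibration is initial is saturated. You reduce this to ``base change along a right fibration preserves colimits'' and defer that. The paper closes it by invoking the sheaf condition on the presheaf $\I{C}\mapsto\Over{\RFib}{\I{C}}$ (the dual of Lemma~\ref{lem:sheafConditionLFib}); this is exactly what furnishes the needed descent/universality and is a genuine ingredient, not a routine check — your instinct to single it out is right, but without an argument it is a hole. The second gap is in the finish of the core step. Having produced the retraction $r$ and the transport $\nu\colon \iota r\Rightarrow \id$, you try to conclude by solving the lifting problem $(\sigma,\iota,\rho)$ directly and asserting uniqueness ``by a second use of the uniqueness of such fillers.'' As stated this only shows the comparison map of mapping $\infty$-groupoids is essentially surjective, not that it is an equivalence; contractibility of the space of lifts is the content of orthogonality and is not established. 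The paper instead assembles precisely the data $(r,\nu)$ you construct into a retract diagram, exhibiting the pulled-back $d^1$ as a retract of the pushout-product of $d^1$ with the base-changed generator, which is initial since the initial maps form the left class of an internally generated factorisation system and are therefore closed under such pushout-products. That retract formulation is what actually closes the argument, and it is the step your sketch replaces with an informal uniqueness claim.
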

\begin{proof}
	Since right fibrations are stable under pullbacks, it suffices to show that the base change along a right fibration preserves initial maps. To that end, let $S$ be the set of maps in $\Simp\BB$ whose base change along right fibrations results in an initial map. We claim that $S$ is saturated. In fact, it is obvious that $S$ is closed under composition and contains all equivalences, and the stability of $S$ under pushouts and small colimits in $\Fun(\Delta^1,\Simp\BB)$ follows from the fact that $\RFib$ defines a sheaf on $\Simp\BB$ (as it is defined as the right orthogonality class of a factorisation system). As a consequence, it suffices to show that the initial map $d^1\colon D\hookrightarrow \Delta^1\otimes D$ is contained in $S$ for any simplicial object $D$ in $\BB$. 
	
	Let therefore
	\begin{equation*}
		\begin{tikzcd}
			Q\arrow[r, "f"] \arrow[d] & P\arrow[d, "p"]\\
			D\arrow[r, "d^1"] & \Delta^1\otimes D
		\end{tikzcd}
	\end{equation*}
	be a cartesian square in $\Simp\BB$ such that $p$ is a right fibration. Let $\tau\colon\Delta^1\times\Delta^1\to\Delta^1$ be the map that sends the final vertex $(1,1)$ to $1$ and any other vertex to $0$. We then obtain a commutative diagram
	\begin{equation*}
		\begin{tikzcd}
			&\Delta^1\arrow[d, "\id\times d^1"']\arrow[r, "s_0"] &\Delta^0\arrow[d, "d^1"] \\
			\Delta^1\arrow[r, "d^0\times\id"]&\Delta^1\times\Delta^1\arrow[r, "\tau"] & \Delta^1\\
			&\Delta^1\arrow[u, "\id\times d^0"]\arrow[ur, "\id"] & 
		\end{tikzcd}
	\end{equation*}
	of $\infty$-categories such that the composition of the two horizontal arrows is the identity. Now let $(\Delta^1\otimes Q)\sqcup_{Q}P$ be the pushout of $f$ along the map $d^0\colon Q\to \Delta^1\otimes Q$, and observe that the induced map $(\Delta^1\otimes Q)\sqcup_{Q}P\to \Delta^1\otimes P$ is final (by making use of item~(2) of Proposition~\ref{prop:propertiesFactorisationSystems}). Therefore the lifting problem
	\begin{equation*}
		\begin{tikzcd}[column sep=large, row sep=large]
			(\Delta^1\otimes Q)\sqcup_{Q}P\arrow[r, "{(f\otimes s^0, \id)}"]\arrow[d] & P\arrow[d, "p"]\\
			\Delta^1\otimes P\arrow[r, "(\tau\otimes\id)\circ(\id\otimes p)"]\arrow[ur, dotted, "h"] & \Delta^1\otimes D
		\end{tikzcd}
	\end{equation*}
	admits a unique solution $h$. Let $r\colon P\to Q$ be defined as the unique functor that makes the diagram
	\begin{equation*}
		\begin{tikzcd}[column sep={2cm,between origins}, row sep=small]
			& P\arrow[dl, "r"]\arrow[rr, "d^1"]\arrow[dd, "p", near start] && \Delta^1\otimes P\arrow[dl, "h"]\arrow[dd, "\id\otimes p"] \\
			Q\arrow[dd, "q"]\arrow[rr, "f", near end, crossing over] && P\\
			& \Delta^1\otimes D\arrow[dl, "s^0"']\arrow[rr, "d^1\otimes\id", near start]&& (\Delta^1\times\Delta^1)\otimes D\arrow[dl, "\tau\otimes\id"]\\
			D\arrow[rr, "d^1"] && \Delta^1\otimes D\arrow[from=uu, near start, crossing over, "p"]
		\end{tikzcd}
	\end{equation*}
	commute. By construction, this map satisfies $rf\simeq\id$ and moreover fits into the commutative diagram
	\begin{equation*}
		\begin{tikzcd}
			P\arrow[d, "d^1"]\arrow[dr, "fr"] &\\
			\Delta^1\otimes P\arrow[r, "h"] & P\\
			P.\arrow[u,"d^0"']\arrow[ur, "\id"'] &
		\end{tikzcd}
	\end{equation*}
	Hence the commutative diagram
	\begin{equation*}
		\begin{tikzcd}
			Q\arrow[d, "f"]\arrow[r] & \Delta^1\otimes Q\sqcup_{Q}P\arrow[r, "{(s^0, r)}"]\arrow[d] & Q\arrow[d, "f"] \\
			P\arrow[r, "d^1"] &\Delta^1\otimes P\arrow[r, "s^0"] & P
		\end{tikzcd}
	\end{equation*}
	exhibits $f$ as a retract of the initial map $\Delta^1\otimes Q\sqcup_{Q}P\to \Delta^1\otimes P$ (in which the domain is the pushout of $f$ along the inclusion $d^1\colon Q\to \Delta^1\otimes Q$) and therefore as an initial map itself.
\end{proof}

\begin{proof}[{Proof of Proposition~\ref{prop:characterisationCovariantEquivalences}}]
	Suppose first that $f$ is a covariant equivalence over $\I{C}$. Since the projection $(\pi_c)_!\colon \Over{\I{C}}{c}\to \I{C}$ is a right fibration and since by Proposition~\ref{prop:rightFibrationProper} any right fibration is proper, Proposition~\ref{prop:properFunctorCovariantEquivalences} implies that the map $\Over{f}{c}$ must be a covariant equivalence over $\Over{\I{C}}{c}$.
	
	Suppose now that $\Over{f}{c}$ is a covariant equivalence over $\Over{\I{C}}{c}$, i.e.\ that $\Over{L}{(\Over{\I{C}}{c})}(f)$ is an equivalence. Quite generally, note that for any simplicial object $D$ in $\BB$ the base change functor $\pi_{D}^\ast\colon\Simp\BB\to\Over{(\Simp\BB)}{\I{D}}$ admits a left adjoint $(\pi_{D})_!$ that is given by the forgetful functor, which implies that the base change functor $\pi_{D}^\ast\colon\BB\to\Over{\LFib}{D}$ admits a left adjoint $(\pi_{D})_!$ as well that is explicitly given by the composition
	\begin{equation*}
	\Over{\LFib}{D}\into \Over{(\Simp\BB)}{D}\xrightarrow{(\pi_{D})_!} \Simp\BB\xrightarrow{\colim_{\Delta^{\op}}} \BB,
	\end{equation*}
	cf.\ Remark~\ref{rem:groupoidificationFibrantReplacement}.
	One consequently obtains a commutative square
	\begin{equation*}
	\begin{tikzcd}
	\Over{(\Simp\BB)}{D}\arrow[r, "(\pi_{D})_!"]\arrow[d, "\Over{L}{D}"] & \Simp\BB\arrow[d, "\colim_{\Delta^{\op}}"]\\
	\Over{\LFib}{D}\arrow[r, "(\pi_{D})_!"] & \BB.
	\end{tikzcd}
	\end{equation*}
	Applying this observation to $D=\Over{\I{C}}{c}$, one finds that the map $\colim_{\Delta^{\op}}(\Over{f}{c})$ arises as the image of $\Over{L}{(\Over{\I{C}}{c})}(f)$ along the functor $(\pi_{\Over{\I{C}}{c}})_!$ and is therefore an equivalence.
	
	Lastly, assume that $\colim_{\Delta^{\op}}(\Over{f}{c})$ is an equivalence in $\BB$ for every object $c\colon A\to\I{C}$ in context $A\in\BB$. We need to show that the map $\Over{L}{\I{C}}(f)\colon \Over{L}{\I{C}}(P)\to \Over{L}{\I{C}}(Q)$ in $\Over{\LFib}{\I{C}}$ is an equivalence. By Proposition~\ref{prop:equivalenceLeftFibrationsFibrewise}, it suffices to show that the map $\Over{L}{\I{C}}(f)\vert_c\colon \Over{L}{\I{C}}(P)\vert_c\to \Over{L}{\I{C}}(Q)\vert_c$ that is induced on the fibres over $c\colon A\to \I{C}$ is an equivalence in $\BB$ for all objects $c$ in $\I{C}$. By making use of the factorisation of $c$ into the canonical final map $\id_c\colon A\to \Over{\I{C}}{c}$ followed by the right fibration $(\pi_c)_!\colon \Over{\I{C}}{c}\to\I{C}$,
	one obtains a pullback square
	\begin{equation*}
	\begin{tikzcd}
	\Over{L}{\I{C}}(P)\vert_c\arrow[r, "{\Over{L}{\I{C}}(f)\vert_c}"]\arrow[d] & \Over{L}{\I{C}}{(Q)}\vert_c \arrow[d]\\
	\Over{\Over{L}{\I{C}}(P)}{c}\arrow[r, "{\Over{(\Over{L}{\I{C}}(f))}{c}}"] & \Over{\Over{L}{\I{C}}(Q)}{c}
	\end{tikzcd}
	\end{equation*}
	in which the vertical maps are final since they arise as pullbacks of the final map $\id_c$ along left fibrations and since the dual of Proposition~\ref{prop:rightFibrationProper} implies that left fibrations are smooth.
	Note that the fibres $\Over{L}{\I{C}}(P)\vert_c$ and $\Over{L}{\I{C}}(Q)\vert_c$ are contained in $\BB$. Hence the functor $\colim_{\Delta^{\op}}$ sends the two vertical maps in the above square to equivalences in $\BB$ while leaving the upper horizontal map unchanged. We conclude that ${\Over{L}{\I{C}}(\I{f})\vert_c}$ is an equivalence whenever $\colim_{\Delta^{\op}}({\Over{(\Over{L}{\I{C}}(f))}{c}})$ is one, and since the latter recovers the map $\colim_{\Delta^{\op}}(\Over{f}{c})$ (again using properness of the right fibration $(\pi_c)_!\colon \Over{\I{C}}{c}\to\I{C}$), the result follows.
\end{proof}
Proposition~\ref{prop:characterisationCovariantEquivalences} can be used to derive an internal version of Quillen's theorem A:
\begin{corollary}
	\label{cor:QuillenA}
	A functor $f\colon\I{J}\to\I{I}$ between $\BB$-categories is initial if and only if for every object $i$ in $\I{I}$ in context $A\in \BB$ the canonical map $(\Over{\I{J}}{i})^{\gp}\to A$ is an equivalence.
\end{corollary}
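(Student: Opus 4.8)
The plan is to deduce this from the characterisation of covariant equivalences in Proposition~\ref{prop:characterisationCovariantEquivalences}. First I would observe that the identity functor $\id\colon\I{I}\to\I{I}$ is an equivalence, hence trivially a left fibration, so that Remark~\ref{rem:covariantEquivalencesExplicitly}, applied to the commutative triangle $\I{J}\xrightarrow{f}\I{I}\xrightarrow{\id}\I{I}$, identifies initiality of $f$ with $f$ being a covariant equivalence over $\I{I}$. Proposition~\ref{prop:characterisationCovariantEquivalences} (the equivalence of its conditions (1) and (3), applied with $\I{C}=\I{I}$, $\I{P}=\I{J}$, $\I{Q}=\I{I}$ and $q=\id$) then rewrites this as the condition that for every object $i\colon A\to\I{I}$ the induced map $(\Over{f}{i})^{\gp}\colon(\Over{\I{J}}{i})^{\gp}\to(\Over{\I{I}}{i})^{\gp}$ is an equivalence in $\Grpd(\BB)$, where $\Over{\I{J}}{i}=\I{J}\times_{\I{I}}\Over{\I{I}}{i}$ and, under this choice of $\I{Q}$, $\Over{\I{Q}}{i}=\I{I}\times_{\I{I}}\Over{\I{I}}{i}=\Over{\I{I}}{i}$.

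The remaining point is then purely about the slice over a single object: I would show that the canonical map $(\Over{\I{I}}{i})^{\gp}\to A$ is an equivalence for every $i\colon A\to\I{I}$. Granting this, functoriality of groupoidification identifies the composite $(\Over{\I{J}}{i})^{\gp}\xrightarrow{(\Over{f}{i})^{\gp}}(\Over{\I{I}}{i})^{\gp}\to A$ with the canonical map $(\Over{\I{J}}{i})^{\gp}\to A$, so a two-out-of-three argument shows that $(\Over{f}{i})^{\gp}$ is an equivalence precisely when $(\Over{\I{J}}{i})^{\gp}\to A$ is — which is exactly the assertion of the corollary.

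To prove that $(\Over{\I{I}}{i})^{\gp}\to A$ is an equivalence I would use that the canonical section $\id_i\colon A\to\Over{\I{I}}{i}$ is a section of the structure map $\Over{\I{I}}{i}\to A$ and, by the dual of Proposition~\ref{prop:initialityCanonicalSection}, is a \emph{final} functor. It then suffices to show that groupoidification carries final functors to equivalences: if $g$ is final, then since every map between groupoids in $\BB$ is a right fibration (Remark~\ref{rem:groupoidificationFibrantReplacement}) the map $g$ is left orthogonal to $\iota(h)$ for every map $h$ in $\Grpd(\BB)$, where $\iota\colon\Grpd(\BB)\into\Cat(\BB)$ is the inclusion; the adjunction $(-)^{\gp}\dashv\iota$ transports this orthogonality to give that $g^{\gp}$ is left orthogonal to every map in $\Grpd(\BB)$, and taking $h=g^{\gp}$ forces $g^{\gp}$ to be left orthogonal to itself, hence an equivalence. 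Applying this with $g=\id_i$ shows $(\id_i)^{\gp}\colon A\to(\Over{\I{I}}{i})^{\gp}$ is an equivalence, and since $(\Over{\I{I}}{i})^{\gp}\to A$ is a left inverse of it, that map is an equivalence as well.

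The main obstacle I anticipate is not conceptual but bookkeeping: one must check carefully that the slice $\Over{\I{I}}{i}$ appearing here is exactly the object $\Over{\I{Q}}{i}$ of Proposition~\ref{prop:characterisationCovariantEquivalences} for the trivial fibration $q=\id$, so that all the comparison maps in question genuinely live over $A$ and are compatible after groupoidification, and that the ``groupoidification inverts final maps'' step goes through in the present internal, parametrised setting using only the orthogonality transport along $(-)^{\gp}\dashv\iota$ together with the fact that all maps of groupoids are right fibrations. None of these is deep, but they are the steps that need to be spelled out with some care.
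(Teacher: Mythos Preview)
Your proposal is correct and follows essentially the same approach as the paper: reduce initiality to being a covariant equivalence over $\I{I}$ via Remark~\ref{rem:covariantEquivalencesExplicitly}, apply Proposition~\ref{prop:characterisationCovariantEquivalences}, and then use that $(\Over{\I{I}}{i})^{\gp}\to A$ is an equivalence together with a two-out-of-three argument. The only cosmetic difference is that the paper deduces the last step by observing that the projection $\Over{\I{I}}{i}\to A$ is final (being a retraction of the final section $\id_i$), whereas you argue via finality of $\id_i$ and the fact that groupoidification inverts final maps; these are trivially equivalent.
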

\begin{proof}
	On account of Remark~\ref{rem:covariantEquivalencesExplicitly}, the map $f$ is initial if and only if it is a covariant equivalence over $\I{I}$. By Proposition~\ref{prop:characterisationCovariantEquivalences}, this is the case if and only if for every object $i\colon A\to\I{I}$ the map $(\Over{\I{J}}{i})^{\gp}\to(\Over{\I{I}}{i})^{\gp}$ is an equivalence. By construction, there is a commutative diagram
	\begin{equation*}
		\begin{tikzcd}[column sep=small]
		(\Over{\I{J}}{i})^{\gp}\arrow[rr]\arrow[dr] && (\Over{\I{I}}{i})^{\gp}\arrow[dl]\\
		& A.&
		\end{tikzcd}
	\end{equation*}
	Therefore, the proof is finished once we show that the map $(\Over{\I{I}}{i})^{\gp}\to A$ is an equivalence. But this follows from the observation that the map $\Over{\I{I}}{i}\to A$ is final as it is a retraction of the final section $\id_c\colon A\to \Over{\I{I}}{c}$.
\end{proof}

\begin{remark}
	\label{rem:initialFunctorsLocal}
	As a consequence of Corollary~\ref{cor:QuillenA}, the condition of a functor between $\BB$-categories to be initial is \emph{local}. More precisely, if $\bigsqcup_k A_k\onto 1$ is a cover in $\BB$ and if $f\colon\I{J}\to\I{I}$ is a functor between $\BB$-categories, then $f$ is initial if and only if $\pi_{A_k}^\ast f$ is initial for all $k$. In fact, Corollary~\ref{cor:QuillenA} tells us that $f$ being initial is equivalent to the map $(\Over{\I{J}}{i})^\gp\to A$ being an equivalence for every object $i\colon A\to \I{I}$. Using Lemma~\ref{lem:BCFunctorCategory}, we obtain an equivalence $\pi_{A_k}^\ast(\Over{\I{J}}{i})\simeq\Over{(\pi_{A_k}^\ast\I{J})}{\pi_{A_k}^\ast i}$ over $A_k$ for all $k$. Since $\pi_{A_k}^\ast$ moreover commutes with the groupoidification functor (Proposition~\ref{prop:universeEnlargementCategoriesGroupoidification}), we may thus identify the pullback of the map $(\Over{\I{J}}{i})^\gp\to A$ along $\pi_{A_k}$ with the map $(\Over{\pi_{A_k}^\ast\I{J}}{\pi_{A_k}^\ast i})^\gp \to \pi_{A_k}^\ast A$ in $\Over{\BB}{{A_k}}$. The claim now follows from the fact that the algebraic morphism $\BB\to\prod_k \Over{\BB}{A_k}$ is conservative since $\bigsqcup_k A_k\onto 1$ is a cover in $\BB$.
\end{remark}
We end this section with yet another characterisation of covariant equivalences that will be useful later:
\begin{proposition}
	\label{prop:initialityFibrewiseSmooth}
	Let $\I{C}$ be a $\BB$-category and let
	\begin{equation*}
		\begin{tikzcd}[column sep=small]
			P\arrow[dr, "p"']\arrow[rr, "f"] && Q\arrow[dl, "q"] \\
			& \I{C}
		\end{tikzcd}
	\end{equation*}
	be a commutative triangle in $\Simp\BB$ in which both $p$ and $q$ are smooth. Then $f$ is a covariant equivalence over $\I{C}$ if and only if for any object $c$ in $\I{C}$ the induced map $f\vert_c\colon \colim_{\Delta^{\op}}(P\vert_c)\to \colim_{\Delta^{\op}}(Q\vert_c)$ is an equivalence.
\end{proposition}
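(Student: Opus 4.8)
The plan is to reduce to proposition~\ref{prop:characterisationCovariantEquivalences} by comparing, for each object $c\colon A\to\I{C}$, the fibre $\I{P}\vert_c = \I{P}\times_{\I{C}} A$ with the slice $\Over{\I{P}}{c} = \I{P}\times_{\I{C}}\Over{\I{C}}{c}$, and similarly for $\I{Q}$.

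First I would use the factorisation of $c$ as $A\xrightarrow{\id_c}\Over{\I{C}}{c}\xrightarrow{(\pi_c)_!}\I{C}$, in which $\id_c$ is the canonical final section (the dual of proposition~\ref{prop:initialityCanonicalSection}); since the composite is $c$, this identifies $\I{P}\vert_c$ with $\Over{\I{P}}{c}\times_{\Over{\I{C}}{c}} A$. Smoothness, being dual to properness, is stable under base change (immediate from definition~\ref{def:properSmooth}), so the projection $\Over{\I{P}}{c}\to\Over{\I{C}}{c}$, being a pullback of the smooth functor $p$, is smooth. By the dual of the characterisation of proper functors in the discussion following definition~\ref{def:properSmooth} (base change along a smooth functor preserves final maps), pulling back the final map $\id_c$ along $\Over{\I{P}}{c}\to\Over{\I{C}}{c}$ yields a final map $\I{P}\vert_c\to\Over{\I{P}}{c}$. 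The same argument with $q$ in place of $p$ produces a final map $\I{Q}\vert_c\to\Over{\I{Q}}{c}$, and these two maps fit into a commutative square with $f\vert_c$ along the top and $\Over{f}{c}$ along the bottom.

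Next I would observe that a final map becomes an equivalence after groupoidification: a final map is a contravariant equivalence over the terminal category $1\in\Cat(\BB)$ (the dual of remark~\ref{rem:covariantEquivalencesExplicitly}), and the contravariant localisation functor over $1$ is precisely $(-)^{\gp}$ (the dual of remark~\ref{rem:groupoidificationFibrantReplacement}). Applying $(-)^{\gp}$ to the square of the previous paragraph therefore gives a commutative square in $\Grpd(\BB)$ whose vertical maps $(\I{P}\vert_c)^{\gp}\to(\Over{\I{P}}{c})^{\gp}$ and $(\I{Q}\vert_c)^{\gp}\to(\Over{\I{Q}}{c})^{\gp}$ are equivalences. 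Hence $(f\vert_c)^{\gp}$ is an equivalence if and only if $(\Over{f}{c})^{\gp}$ is. Combining this with the equivalence of conditions~(1) and~(3) of proposition~\ref{prop:characterisationCovariantEquivalences} — which says that $f$ is a covariant equivalence over $\I{C}$ precisely when $(\Over{f}{c})^{\gp}$ is an equivalence for every object $c$ of $\I{C}$ — yields the claim.

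I do not anticipate a genuine obstacle: the only points requiring care are the stability of smoothness under base change and the fact that base change along smooth functors carries final maps to final maps, and both of these are dual to statements already recorded for proper functors and initial maps in the paragraph following definition~\ref{def:properSmooth}.
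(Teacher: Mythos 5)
Your proof is correct, and it takes a genuinely cleaner route than the paper's. The paper unwinds the definition of covariant equivalence via the localisation $\Over{L}{\I{C}}(f)$, uses proposition~\ref{prop:equivalenceLeftFibrationsFibrewise} to reduce to the fibres of this left fibration, and then argues at some length (using a three-dimensional diagram, properness of right fibrations, and smoothness of both $p$ and the structure map of $\Over{L}{\I{C}}(\I{P})$) that the fibrewise unit maps $i\vert_c$ and $j\vert_c$ are initial, from which the conclusion follows by groupoidification. You instead black-box all of that machinery inside the already-proved equivalence of conditions~(1) and~(3) in proposition~\ref{prop:characterisationCovariantEquivalences}, and reduce the remaining work to the single observation that the comparison map $\I{P}\vert_c\to\Over{\I{P}}{c}$ is final: this is where the smoothness of $p$ (and dually $q$) enters, via stability of smoothness under base change and the fact that base change along smooth functors preserves final maps. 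Both arguments rest on the same ingredients — the factorisation $A\xrightarrow{\id_c}\Over{\I{C}}{c}\to\I{C}$, smoothness/properness preserving final/initial maps, and groupoidification sending both classes to equivalences — but your version avoids re-deriving the fibrewise behaviour of the covariant localisation and shows the proposition to be a corollary of proposition~\ref{prop:characterisationCovariantEquivalences} rather than a parallel argument. The one difference in what is obtained along the way is that the paper's proof incidentally establishes that the unit maps $i\vert_c$, $j\vert_c$ are initial (not merely that they become equivalences after $(-)^{\gp}$), but this stronger fact is not needed for the statement.
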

\begin{proof}
	$f$ is a covariant equivalence over $\I{C}$ if and only if $\Over{L}{\I{C}}(f)$ is an equivalence. Let
	\begin{equation*}
	\begin{tikzcd}
	P\arrow[r, "f"]\arrow[d, "i"] & Q\arrow[d, "j"]\\
	\Over{L}{\I{C}}(P)\arrow[r, "\Over{L}{\I{C}}(f)"] & \Over{L}{\I{C}}(Q)
	\end{tikzcd}
	\end{equation*}
	be the canonical square in which the two vertical maps are obtained from the adjunction unit and are therefore initial.
	Since $\Over{L}{\I{C}}(f)$ is a map in of left fibrations over $\I{C}$, Proposition~\ref{prop:equivalenceLeftFibrationsFibrewise} implies that this map is an equivalence if and only if the induced map $\Over{L}{\I{C}}(f)\vert_c$ on the fibres over $c$ is one for every object $c\colon A\to \I{C}$. It therefore suffices to show that in the induced commutative diagram
	\begin{equation*}
	\begin{tikzcd}
	P\vert_c\arrow[r, "f\vert_c"]\arrow[d, "i\vert_c"] & Q\vert_c\arrow[d, "j\vert_c"]\\
	\Over{L}{\I{C}}(P)\vert_c\arrow[r, "\Over{L}{\I{C}}(f)\vert_c"] & \Over{L}{\I{C}}(Q)\vert_c
	\end{tikzcd}
	\end{equation*}
	of the fibres over $c$ the maps $i\vert_c$ and $j\vert_c$ are initial. We will show this for $i\vert_c$, the case of $j\vert_c$ is analogous.
	
	Let $p^{\prime}\colon \Over{L}{\I{C}}(P)\to\I{C}$ be the structure map, and consider the commutative diagram
	\begin{equation*}
		\begin{tikzcd}[column sep=small, row sep=small]
			& P\vert_c \arrow[dd, "\id", near start]\arrow[rr]\arrow[dl, "i\vert_c"', near start]  && \Over{P}{c} \arrow[dl, "\Over{i}{c}"']\arrow[dd, "\id", near start]\arrow[rr] && P\arrow[dl, "i"']\arrow[dd, "\id", near start] \\
			\Over{L}{\I{C}}(P)\vert_c\arrow[dd, "p^{\prime}\vert_c"', near start]\arrow[rr, crossing over] && \Over{\Over{L}{\I{C}}(P)}{c} \arrow[rr, crossing over] && \Over{L}{\I{C}}(P)\\
			& P\vert_c \arrow[rr]\arrow[dl, "p\vert_c"'] && \Over{P}{c} \arrow[dl, "\Over{p}{c}"']\arrow[rr] && P\arrow[dl, "p"'] \\
			A\arrow[rr, "\id_c"] && \Over{\I{C}}{c}\arrow[rr, "(\pi_c)_!"]\arrow[from=uu, crossing over, "\Over{p^{\prime}}{c}"', near start] && \I{C}.\arrow[from=uu, crossing over, "p^{\prime}"', near start]
		\end{tikzcd}
	\end{equation*}
	The projection $(\pi_c)_!$ is a right fibration, hence so are the maps $\Over{P}{c}\to P$ and $\Over{\Over{L}{\I{C}}(P)}{c}\to \Over{L}{\I{C}}(P)$. Since right fibrations are proper by Proposition~\ref{prop:rightFibrationProper}, we conclude that the map $\Over{i}{c}$ must be initial. Moreover, since $p$ and $p^{\prime}$ are smooth the maps $\Over{p}{c}$ and $\Over{p^\prime}{c}$ must be smooth as well, which implies that the maps $P\vert_c\to \Over{P}{c}$ and $\Over{L}{\I{C}}(P)\vert_c\to \Over{\Over{L}{\I{C}}(P)}{c}$ must be final since $\id_c$ is final. We therefore obtain a pullback square
	\begin{equation*}
		\begin{tikzcd}
			P\vert_c\arrow[d, "i\vert_c"]\arrow[r] & \Over{P}{c}\arrow[d, "\Over{i}{c}"]\\
			\Over{L}{\I{C}}(P)\vert_c\arrow[r] & \Over{\Over{L}{\I{C}}(P)}{c}
		\end{tikzcd}
	\end{equation*}
	in which the horizontal maps are final and the vertical map on the right is initial. Since the  functor $\colim_{\Delta^{\op}}$ carries both final and initial maps to equivalences in $\BB$ (cf.\ Remark~\ref{rem:localEquivalences} and Remark~\ref{rem:groupoidificationFibrantReplacement}), the map $\colim_{\Delta^{\op}}(i\vert_c)$ must be an equivalence. But as $\Over{L}{\I{C}}(P)\vert_c$ is already contained in $\BB$, the map $i\vert_c$ is equivalent to the composition of an initial map with an equivalence and therefore initial itself.
\end{proof}

\subsection{The Grothendieck construction}
\label{sec:GrothendieckConstruction}

For any $\BB$-category $\I{C}$, let $\ILFib_{\I{C}}$ be the presheaf on $\BB$ that is given by the assignment $A\mapsto \LFib(\I{C}\times A)$. This defines a functor $\I{C}\mapsto \ILFib_{\I{C}}$ from $\Cat(\BB)$ into $\PSh_{\CatSS}(\BB)$.
\begin{theorem}
	\label{thm:internalGrothendieck}
	For any $\BB$-category, the presheaf $\ILFib_{\I{C}}$ defines a large $\BB$-category, and there is a canonical equivalence
	\begin{equation*}
		\iFun{\I{C}}{\Univ}\simeq \ILFib_{\I{C}}
	\end{equation*}
	that is natural in $\I{C}$.
\end{theorem}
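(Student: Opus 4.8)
The plan is to exhibit the equivalence on the level of $\Cat(\SSSS)$-valued presheaves on $\BB$ and then appeal to the sheaf condition to descend it to categories in $\BB$. By definition, $\iFun{\I{C}}{\Univ}$ is the category in $\BBB$ whose $A$-sections form the $\infty$-category $\Fun_{\BB}(A, \iFun{\I{C}}{\Univ}) \simeq \Fun_{\BB}(A\times\I{C}, \Univ)$, and by the equivalence of section~\ref{sec:categoriesAsSheaves} this is computed (up to equivalence) by the sheafification of $A\mapsto \Fun_{\BBB}(\I{C}\times A, \Univ)$; since $\Univ$ already represents a sheaf and $\I{C}\times A$ is a legitimate category, one can in fact work directly with the value $\Fun_{\BBB}(\I{C}\times A, \Univ)$ at objects $A\in\BB$. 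First I would establish the key pointwise identification: for any (large) category $\I{D}$ in $\BB$ there is an equivalence $\Fun_{\BB}(\I{D},\Univ)\simeq \LFib^{\bU}_{\I{D}}$, natural in $\I{D}$, where the right-hand side is the $\infty$-category of small left fibrations over $\I{D}$. Applying this with $\I{D}=\I{C}\times A$ and varying $A$ then yields the desired equivalence of presheaves $\iFun{\I{C}}{\Univ}\simeq\ILFib_{\I{C}}$, and the final claim that $\ILFib_{\I{C}}$ is a large category follows because $\iFun{\I{C}}{\Univ}$ is one by proposition~\ref{prop:categoriesExponentialIdeal}.

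For the pointwise identification I would proceed as follows. A functor $\I{D}\to\Univ$ is, via proposition~\ref{prop:CotensoringInternalUniverse} and corollary~\ref{cor:RightFibrationInternalUniverse}, the same as a map of right fibrations over $\BB$ from $\int\I{D}$ into the universal cartesian fibration $\Fun(\Delta^{\bullet+1},\BB)$ restricted to cartesian edges; unwinding, such a map assigns to each object $d\colon A\to\I{D}$ an object $\I{P}\vert_d\in\Over{\BB}{A}$ compatibly with the simplicial structure, i.e.\ it is precisely the data of a functor $\I{D}^{\op}\to\Univ$ worth of "fibres". The more structural route, which I expect to be cleaner, is to build the Grothendieck construction functor directly: given $F\colon \I{D}\to\Univ$, form the pullback of the left fibration $\Tw(\Univ)\to\Univ^{\op}\times\Univ$ (proposition~\ref{prop:twistedArrowLeftFibration}) — or rather a suitable slice/comma construction over $\Univ$ — along $F$ to produce a left fibration $\I{P}_F\to\I{D}$, using the fact that left fibrations are stable under base change (proposition~\ref{prop:propertiesFactorisationSystems}(3), applied to the right class). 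One checks smallness via proposition~\ref{prop:characterizationSmallFibrations}: the fibre of $\I{P}_F$ over $d\colon A\to\I{D}$ is the groupoid in $\Over{\BB}{A}$ classified by $F(d)\in\Over{\BB}{A}\simeq\Fun_{\Over{\BB}{A}}(1,\pi_A^\ast\Univ)$, which is small by lemma~\ref{lem:baseChangeUniverse} and the identification $\Fun_{\BB}(A,\Univ)\simeq\Over{\BB}{A}$. Conversely, given a small left fibration $\I{P}\to\I{D}$, the assignment $d\mapsto (\I{P}\vert_d\to A)$ together with proposition~\ref{prop:fibrationsExternalCharacterization} and proposition~\ref{prop:equivalenceLeftFibrationsFibrewise} (equivalences of left fibrations are detected fibrewise) should assemble into a functor $\I{D}\to\Univ$, and I would show these two constructions are mutually inverse by comparing fibres, again invoking proposition~\ref{prop:equivalenceLeftFibrationsFibrewise} on one side and the fact that $\Univ$-valued functors are determined by their values on objects (a consequence of $\I{D}_0\simeq\colim_{A\to\I{D}_0}A$ and descent in $\BBB$) on the other.

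The naturality in $\I{C}$ — and in fact in $\I{D}$ — requires care: I would phrase the whole argument as an equivalence of functors $\Cat(\BBB)^{\op}\to\Cat(\SSSS)$ (or of the associated presheaves on $\BB$) by constructing both sides as restrictions of functors out of an appropriate fibration, much as $\Cat(\BBB)\into\Cart(\BB)$ and $\int\I{C}^{(-)}$ were handled in section~\ref{sec:universe}, so that the comparison map is manifestly a natural transformation and it suffices to check it is an objectwise equivalence. The main obstacle I anticipate is precisely this coherence: producing the Grothendieck construction as a \emph{functorial} assignment $F\mapsto \I{P}_F$ rather than merely a fibrewise bijection, and matching it against the presheaf $\ILFib_{\I{C}}$ whose functoriality in $\I{C}$ is built into its definition via $A\mapsto\LFib^{\bU}(\I{C}\times A)$. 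Once the comparison is set up as a natural transformation between sheaves on $\BB$, proposition~\ref{prop:characterizationLocalClasses}-style reasoning together with proposition~\ref{prop:characterizationSmallFibrations} reduces everything to the local statement $\Fun_{\BB}(A,\Univ)\simeq\Over{\BB}{A}$ over each $A\in\BB$, which is already known. Secondary technical points — that $\ILFib_{\I{C}}$ is genuinely a sheaf, and that small left fibrations over $\I{C}\times A$ glue along covers of $A$ — follow from the fact that $\Univ$ is a sheaf and that left fibrations form a local class, analogous to lemma~\ref{lem:coversToposDiagrams} and the discussion of $\LFib$ as an accessible localisation.
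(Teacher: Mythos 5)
Your plan captures the high-level shape of the paper's argument — establish an equivalence of $\Cat(\SSS)$-valued presheaves $\Fun_{\BBB}(-,\Univ)\simeq\LFib^{\bU}$, check it pointwise, and use the sheaf condition to propagate — and you correctly identify the central difficulty (coherence: turning the fibrewise data into an actual functor into $\Univ$). But you don't close that gap, and the move you sketch to close it is the weak point of the proposal.

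Concretely, the issue is in the ``converse direction.'' You write that given a small left fibration $\I{P}\to\I{D}$, the fibrewise assignment $d\mapsto\I{P}\vert_d$ ``should assemble into a functor $\I{D}\to\Univ$'' because ``$\Univ$-valued functors are determined by their values on objects.'' That last claim is not correct as stated: a functor out of a category in $\BB$ requires compatible data on \emph{all} simplices, and values on $\I{D}_0$ alone do not determine it. The device you would need — and which the paper supplies — is to first check the equivalence on the generating objects $\Delta^n\otimes A$ of $\Cat(\BBB)$, where the problem becomes purely combinatorial. The paper does this via lemma~\ref{lem:characterizationLFibPresheaf} (an explicit characterisation of which $\Over{\BB}{A}$-valued presheaves on $\Over{\Delta}{\Delta^n}$ arise from small left fibrations over $\Delta^n\otimes A$) and lemma~\ref{lem:GrothendieckConstructionSimplices} (matching that essential image against $\Fun(\Delta^n,\Over{\BB}{A})$ via the adjunction $\lambda\dashv\epsilon$ between $\Delta^n$ and $(\Over{\Delta}{\Delta^n})^{\op}$); the extension to all of $\Cat(\BBB)$ then needs both lemma~\ref{lem:coYoneda} (co-Yoneda for the localisation $\PSh_{\SSS}(\Delta\times\BB)\to\Cat(\BBB)$) and lemma~\ref{lem:sheafConditionLFib} ($\LFib^{\bU}$ sends $\bV$-small colimits to limits, which is a nontrivial adaptation of Lurie's arguments for (co)cartesian fibrations, not a formal ``secondary technical point'' as you put it). Your proposal never names these reductions, and the alternative construction you float — pulling back $\Tw(\Univ)\to\Univ^{\op}\times\Univ$ or an unspecified comma construction over $\Univ$ — is either the wrong object (the twisted arrow fibration classifies the mapping bifunctor, not the universal left fibration) or, if you mean something like $\Under{\Univ}{1}$, presupposes the very identification of the universal left fibration that the paper only establishes \emph{after} the theorem (corollary~\ref{cor:explicitDescriptionUniversalLeftFibration}). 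One could in principle run the argument by defining $\UnivHat:=\Under{\Univ}{1}$ and proving directly, using proposition~\ref{prop:mappingObjectsInternalUniverse}, that pullback along it on the generators $\Delta^n\otimes A$ gives all small left fibrations, but that computation is exactly the content you are missing, and it is not obviously easier than the $\epsilon$-$\lambda$ route the paper takes.
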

Note that the statement of Theorem~\ref{thm:internalGrothendieck} can be reformulated as follows: By making use of the embedding $\Cat(\BB)\hookrightarrow \PSh_{\CatSS}(\BB)$, one sees that the functor $\iFun{-}{\Univ}$ is equivalent to the bifunctor
\begin{equation*}
	\Fun_{\BBB}(-\times -, \Univ)\colon \BB^{\op}\times \Cat(\BB)^{\op}\to \CatSS.
\end{equation*}
Similarly, the functor $\ILFib_{(-)}$ corresponds to the bifunctor
\begin{equation*}
	\LFib(-\times -)\colon \BB^{\op}\times \Cat(\BB)^{\op}\to \CatSS
\end{equation*}
under this identification. It therefore suffices to show that there is an equivalence 
\begin{equation*}
	\Fun_{\BBB}(-,\Univ)\simeq \LFib
\end{equation*}
of $\CatSS$-valued presheaves on $\Cat(\BB)$. Our strategy is to even show that there is an equivalence
\begin{equation*}
	\Fun_{\BBB}(-,\Univ)\simeq \LFib
\end{equation*}
of $\CatSS$-valued presheaves on $\Simp\BB$. Upon restricting this equivalence along the inclusion $\Cat(\BB)\into\Simp\BB$, the desired result will follow.

As a first step, let us choose a small $\infty$-category $\GG$ such that $\BB$ arises as a left exact and accessible localisation of $\PSh_{\SS}(\GG)$. Then $\Simp\BB$ is then given as a localisation of $\PSh_{\SS}(\Delta\times \GG)$, hence the composition of the localisation functor $L$ with the Yoneda embedding $h$ defines a functor
\begin{equation*}
	Lh\colon \Delta\times\GG\to \Simp\BB.
\end{equation*}
Note that by its very construction this is precisely the restriction of the tensoring bifunctor $-\otimes -$ along $\Delta\times\GG\to\Simp\SS\times\Simp\BB$.
\begin{lemma}
	\label{lem:coYoneda}
	The identity on $\Simp\BB$ is a left Kan extension of the functor $Lh$ along itself.
\end{lemma}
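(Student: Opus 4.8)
The statement is a standard "co-Yoneda" or "density" result: $\Cat(\BBB)$ is a localisation of the presheaf $\infty$-category $\PSh_{\SSS}(\Delta\times\BB)$, and I want to show that $\id_{\Cat(\BBB)}$ is the left Kan extension of $Lh\colon\Delta\times\BB\to\Cat(\BBB)$ along itself. The approach is to reduce to the analogous fact for presheaf $\infty$-categories and then transport it across the localisation. First I would recall that for \emph{any} presheaf $\infty$-category $\PSh_{\SSS}(\EE)$ with Yoneda embedding $h_{\EE}$, the identity functor is the left Kan extension of $h_{\EE}$ along itself — this is the density theorem, i.e.\ every presheaf is canonically a colimit of representables, $F\simeq\colim_{(e\to F)\in\EE_{/F}}h_{\EE}(e)$. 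Apply this with $\EE=\Delta\times\BB$. So $\id_{\PSh_{\SSS}(\Delta\times\BB)}=\operatorname{Lan}_{h}h$.

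The second step is to push this through the localisation $L\colon\PSh_{\SSS}(\Delta\times\BB)\to\Cat(\BBB)$, which is left adjoint to the fully faithful inclusion $\iota\colon\Cat(\BBB)\hookrightarrow\PSh_{\SSS}(\Delta\times\BB)$. Since $L$ is a left adjoint it commutes with all colimits, and in particular with left Kan extensions: for any functor $G\colon\Delta\times\BB\to\DD$ landing in a cocomplete target and any $K$, one has $L\circ\operatorname{Lan}_{h}(\iota\circ-)\simeq\operatorname{Lan}_{h}(L\circ-)$ when the relevant colimits are preserved. Concretely: given $\I{C}\in\Cat(\BBB)$, write $\iota(\I{C})\simeq\colim_{(e\to\iota\I{C})}h(e)$ in $\PSh_{\SSS}(\Delta\times\BB)$ by the density theorem. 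Applying $L$ and using that $L$ preserves colimits and that $L\iota\simeq\id$ gives $\I{C}\simeq\colim_{(e\to\iota\I{C})}Lh(e)$. The indexing $\infty$-category here is $(\Delta\times\BB)_{/\iota\I{C}}$; I then need to identify this with the comma $\infty$-category $(\Delta\times\BB)\times_{\Cat(\BBB)}\Cat(\BBB)_{/\I{C}}$ over which the left Kan extension colimit $\operatorname{Lan}_{Lh}(Lh)(\I{C})$ is computed. This identification is precisely the adjunction $L\dashv\iota$: maps $h(e)\to\iota\I{C}$ in $\PSh_{\SSS}$ correspond to maps $Lh(e)\to\I{C}$ in $\Cat(\BBB)$, naturally in $e$, so the two comma $\infty$-categories are equivalent and the colimit diagrams agree. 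Hence $\operatorname{Lan}_{Lh}(Lh)\simeq\id_{\Cat(\BBB)}$.

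**The main obstacle.** The conceptual content is easy, but there is a genuine size/well-definedness subtlety that is the real point to check: the comma $\infty$-category $(\Delta\times\BB)_{/\iota\I{C}}$ need not be small (the Yoneda embedding $\Delta\times\BB\hookrightarrow\PSh_{\SSS}(\Delta\times\BB)$ is only a map of large $\infty$-categories), so one must be careful that the left Kan extension along $Lh$ exists and is pointwise computed by these (possibly large) colimits. Since $\Cat(\BBB)$ is presentable relative to the universe $\bV$ (proposition~\ref{prop:categoriesAccessiblelocalisation} together with the discussion of universe enlargement), it admits $\bV$-small colimits, and the comma $\infty$-categories in question are $\bV$-small because $\Delta\times\BB$ is (essentially) $\bU$-small while $\Cat(\BBB)$ is locally $\bV$-small; so the colimits do exist and the pointwise formula for $\operatorname{Lan}$ applies. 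I would state this size bookkeeping explicitly but not belabor it. The only other thing to verify is that the comparison map $\operatorname{Lan}_{Lh}(Lh)\to\id$ — which exists by the universal property of left Kan extension, its component at $Lh(e)$ being the identity — is an equivalence, and this is exactly what the density computation above establishes objectwise on all of $\Cat(\BBB)$, since every object receives an effective-epimorphism-indexed colimit of objects of the form $Lh(e)$.
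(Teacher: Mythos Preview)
Your proposal is correct and takes essentially the same approach as the paper: both reduce to the density theorem for $\PSh_{\SSS}(\Delta\times\BB)$ (the identity is $\operatorname{Lan}_h h$) and then transport this across the reflective localisation $L\dashv i$. The paper's execution is slightly slicker in that it avoids the pointwise colimit formula entirely: it works at the level of functor $\infty$-categories, observing that the commuting square of right adjoints $i_\ast h^\ast \simeq h^\ast i_\ast$ yields a commuting square of left adjoints $L_\ast h_! \simeq h_! L_\ast$, and then checks that the composite $i^\ast h_!$ (applied to $Lh$) computes left Kan extension along $Lh$ because the adjunction $L\dashv i$ induces $i^\ast\dashv L^\ast$; chasing the identity on presheaves through gives the identity on $\Cat(\BBB)$. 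Your explicit identification of the comma $\infty$-categories via the adjunction is exactly the content of that last step, unpacked objectwise, and your size bookkeeping is appropriate and handled implicitly in the paper by working inside $\bV$-presentable categories.
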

\begin{proof}
	Let $i\colon \Simp\BB\hookrightarrow \PSh_{\SS}(\Delta\times\GG)$ denote the inclusion. In the diagram
	\begin{equation*}
		\begin{tikzcd}
			\Fun(\PSh_{\SS}(\Delta\times\GG),\PSh_{\SS}(\Delta\times\GG))\arrow[r, shift left, "L_\ast"]\arrow[from=r, shift left, "i_\ast"] \arrow[from=d, shift left, "h_!"]\arrow[d, shift left, "h_\ast"] & \Fun(\PSh_{\SS}(\Delta\times\GG), \Simp\BB) \arrow[r, shift left, "i^\ast"]\arrow[from=r, shift left, "L^\ast"]\arrow[from=d, shift left, "h_!"]\arrow[d, shift left, "h^\ast"]& \Fun(\Simp\BB, \Simp\BB)\\
			\Fun(\Delta\times\GG, \PSh_{\SS}(\Delta\times\GG))\arrow[r, shift left, "L_\ast"]\arrow[from=r, shift left, "i_\ast"] & \Fun(\Delta\times\GG, \Simp\BB)
		\end{tikzcd}
	\end{equation*}
	the square of right adjoints is commutative, hence the the square of left adjoints must commute as well. By~\cite[Lemma~5.1.5.3]{htt} the identity on $\PSh_{\SS}(\Delta\times\GG)$ arises as the left Kan extension of $h$ along itself, i.e.\ as the functor $h_!(h)$. Since moreover the composite $i^\ast L_\ast$ sends the identity on $\PSh_{\SS}(\Delta\times\GG)$ to the identity on $\Simp\BB$, a simple diagram chase shows that the identity on $\Cat(\BB)$ is equivalent to the functor $i^\ast h_!(Lh)$. We conclude by observing that since the adjunction $L\dashv i$ induces an adjunction $i^\ast\dashv L^\ast$,  the functor $i^\ast h_!$ is the functor of left Kan extension along $Lh$.
\end{proof}
Our next step towards the proof of Theorem~\ref{thm:internalGrothendieck} will be to establish an equivalence
\begin{equation*}
	\Fun_{\BBB}(\Delta^{\bullet}\otimes -  ,\Univ)\simeq \LFib(\Delta^{\bullet}\otimes -  )
\end{equation*}
of $\CatSS$-valued presheaves on $\Delta\times\GG$. To that end, for any $n\geq 0$ an any $G\in\GG$ let us denote by $\Delta^n\times G$ the presheaf on $\Delta\times\GG$ that is represented by the pair $(\ord{n},G)$. We thus obtain $L(\Delta^n\times G)\simeq \Delta^n\otimes G$ (where we abuse notation by identifying $G$ with its image along the functor $\GG\to \BB$).
\begin{lemma}
	\label{lem:embeddingLFibPSh}
	Assigning to a left fibration $P\to \Delta^n\otimes G$ in $\Simp\BB$ its pullback along the adjunction unit $\Delta^n\times G \to \Delta^n\otimes G$ in $\PSh_{\SS}(\Delta\times\GG)$ defines an embedding
	\begin{equation*}
		\LFib(\Delta^{\bullet}\otimes -  )\hookrightarrow \Over{\PSh_{\SS}(\Delta\times\GG)}{\Delta^{\bullet}\times - }
	\end{equation*}
	of presheaves on $\Delta\times\GG$. 
\end{lemma}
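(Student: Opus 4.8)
The plan is to realise the asserted embedding as the composite
\[
\LFib^{\bU}(\Delta^{\bullet}\otimes -)\hookrightarrow \Over{\Cat(\BBB)}{\Delta^{\bullet}\otimes -}\xrightarrow{\;u^{\ast}\;}\Over{\PSh_{\SSS}(\Delta\times\BB)}{\Delta^{\bullet}\times -},
\]
in which the first arrow is the inclusion of the full subcategory of small left fibrations and the second is base change along the natural transformation $u\colon \Delta^{\bullet}\times - \to \Delta^{\bullet}\otimes -$ obtained by restricting the unit $\id\Rightarrow iL$ of the adjunction $L\dashv i$ (with $i\colon \Cat(\BBB)\hookrightarrow\PSh_{\SSS}(\Delta\times\BB)$ and $L$ its left adjoint) along the Yoneda embedding, using $L(\Delta^{n}\times A)\simeq\Delta^{n}\otimes A$. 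I would then verify, in order: that this composite takes values in $\Over{\PSh_{\SS}(\Delta\times\BB)}{\Delta^{\bullet}\times -}$; that it is natural in $(\ord n, A)$; and that it is fully faithful.

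The first two points are routine. If $\I{P}\to\Delta^{n}\otimes A$ is a small left fibration, then applying the definition of smallness with $\I{D}=\Delta^{n}\otimes A$ — or proposition~\ref{prop:characterizationSmallFibrations}, since $\Delta^{n}\otimes A$ is a small category — shows that $\I{P}$ is itself a small category in $\BB$, so that its underlying presheaf on $\Delta\times\BB$ has small values, and hence so does the pullback $u^{\ast}\I{P}$ along the representable $\Delta^{n}\times A$. Naturality is formal: $u$ is the restriction of a natural transformation to representables, and base change commutes with the restriction functors on either side.

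For full faithfulness, fix small left fibrations $\I{P},\I{Q}\to\Delta^{n}\otimes A$. Combining the universal property of the pullback $u^{\ast}\I{Q}=\I{Q}\times_{\Delta^{n}\otimes A}(\Delta^{n}\times A)$, the observation that $\I{Q}\to\Delta^{n}\otimes A$ is a local object of $\Over{\PSh_{\SSS}(\Delta\times\BB)}{\Delta^{n}\otimes A}$ for the reflective localisation onto $\Over{\Cat(\BBB)}{\Delta^{n}\otimes A}$ (which holds because $\I{Q}$ and $\Delta^{n}\otimes A$ both lie in $\Cat(\BBB)$, so that every map over $\Delta^{n}\otimes A$ whose underlying map is an $L$-equivalence is inverted by $\I{Q}$), and the full faithfulness of $\Cat(\BBB)\hookrightarrow\PSh_{\SSS}(\Delta\times\BB)$ and of its slices, one reduces the claim to identifying $L_{/}(u^{\ast}\I{P})$ with $\I{P}$ in $\Over{\Cat(\BBB)}{\Delta^{n}\otimes A}$ — equivalently, to showing that the canonical projection $u^{\ast}\I{P}\to\I{P}$ is a local equivalence over $\Delta^{n}\otimes A$. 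I note that at this stage the left fibration hypothesis is no longer used; only that $\I{P}$ is a category in $\BBB$ matters, the hypothesis having served above solely to keep everything in $\PSh_{\SS}(\Delta\times\BB)$.

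The main obstacle is exactly this last step, and the point is that $L$ is \emph{not} left exact (neither the Segal nor the univalence localisation is), so that one cannot simply pull $u$ back through $L$. Instead I would factor $L\simeq L'\circ a$, where $a\colon\PSh_{\SSS}(\Delta\times\BB)\to\Simp\BBB$ is the levelwise sheafification — in particular left exact — and $L'\colon\Simp\BBB\to\Cat(\BBB)$ imposes the Segal conditions and univalence. A levelwise computation identifies the sheafification $a(\Delta^{n}\times A)$ of the representable with $\Delta^{n}\otimes A\simeq\const(\Delta^{n})\times A$ — at level $k$ both are the copower of $A$ indexed by the finite set $(\Delta^{n})_{k}$ — and under this identification $u$ is the sheafification unit; in particular $u$ is an $a$-equivalence. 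Applying the left exact functor $a$ to the pullback square defining $u^{\ast}\I{P}$ then exhibits $u^{\ast}\I{P}\to\I{P}$ as an $a$-equivalence, hence an $L$-equivalence, and therefore a local equivalence in $\Over{\PSh_{\SSS}(\Delta\times\BB)}{\Delta^{n}\otimes A}$. This yields $L_{/}(u^{\ast}\I{P})\simeq L_{/}\I{P}\simeq\I{P}$ and completes full faithfulness; since passing from $\PSh_{\SSS}(\Delta\times\BB)$ to its full subcategory $\PSh_{\SS}(\Delta\times\BB)$ leaves these mapping spaces unchanged, the functor is the desired embedding.
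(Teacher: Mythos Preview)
Your proof is correct and follows essentially the same route as the paper's. Both arguments factor the inclusion $\Cat(\BBB)\hookrightarrow\PSh_{\SSS}(\Delta\times\BB)$ through $\Simp\BBB$, use that the levelwise sheafification $a\colon\PSh_{\SSS}(\Delta\times\BB)\to\Simp\BBB$ is left exact and already carries the representable $\Delta^{n}\times A$ onto $\Delta^{n}\otimes A$, and deduce full faithfulness from this; the paper packages this as ``the localisation functor commutes with pullbacks'' to obtain fibrewise fully faithful right adjoints, while you phrase the same computation as the unit projection $u^{\ast}\I{P}\to\I{P}$ being an $a$-equivalence and hence an $L$-equivalence.
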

\begin{proof}
	Since the presheaf $\LFib(\Delta^{\bullet}\otimes -  )$ embeds into the presheaf $\Over{(\Simp\BB)}{\Delta^{\bullet}\otimes -  }$, it suffices to prove that one can define an embedding
	\begin{equation*}
		\Over{(\Simp\BB)}{\Delta^{\bullet}\otimes -  }\hookrightarrow \Over{\PSh_{\SS}(\Delta\times\GG)}{\Delta^{\bullet}\times - }
	\end{equation*}
	in the desired way. Since the localisation functor $\PSh_{\SS}(\Delta\times\GG)\to \BB_{\Delta}$ is left exact, there is a functorial map
	\begin{equation*}
		\Over{\PSh_{\SS}(\Delta\times\GG)}{\Delta^{\bullet}\times -}\to \Over{(\BB_{\Delta})}{\Delta^{\bullet}\otimes -  }
	\end{equation*}
	that is given on the level of cartesian fibrations by the pullback of the natural map
	\begin{equation*}
	\Fun(\Delta^1,\PSh_{\SS}(\Delta\times\GG))\to \Fun(\Delta^1,\Simp\BB)\times_{\PSh_{\SS}(\Delta\times\GG)}\Simp\BB
	\end{equation*}
	along the Yoneda embedding $\Delta\times\GG\into\PSh_{\SS}(\Delta\times\GG)$.
	On the fibre over $(\ord{n},G)$, this functor is given by the map that is naturally induced by the localisation functor $\PSh_{\SS}(\Delta\times\GG)\to\Simp\BB$ upon taking slice $\infty$-categories. Hence there are fibrewise right adjoints that are given by composing the natural map
	\begin{equation*}
		\Over{(\Simp\BB)}{\Delta^n\otimes G}\to \Over{\PSh_{\SS}(\Delta\times \GG)}{\Delta^n\otimes G}
	\end{equation*}
	that is induced by the inclusion $\BB_{\Delta}\hookrightarrow\PSh_{\SS}(\Delta\times\GG)$ with the pullback functor along the adjunction unit $\Delta^n\times G\to \Delta^n\otimes G$ in $\PSh_{\SS}(\Delta\times\GG)$. Note that each of these fibrewise right adjoints is fully faithful since the localisation functor commutes with pullbacks. We conclude by observing that these fibrewise right adjoints assemble to a map of $\CatSS$-valued presheaves on $\Delta\times\GG$.
\end{proof}
By~\cite[Proposition~9.8]{haugseng2017} there is a functorial equivalence
\begin{equation*}
	\Over{\PSh_{\SS}(\Delta\times\GG)}{\Delta^{\bullet}\times - }\simeq \PSh_{\SS}(\Over{(\Delta\times\GG)}{\Delta^{\bullet}\times - })
\end{equation*}
in which the right-hand side can furthermore be functorially identified with $\PSh_{\SS}(\Over{\Delta}{\Delta^{\bullet}}\times\Over{\GG}{-})$. Combining this result with Lemma~\ref{lem:embeddingLFibPSh}, we conclude that there is an embedding
\begin{equation*}
	\LFib(\Delta^{\bullet}\otimes -  )\hookrightarrow \PSh_{\SS}(\Over{\Delta}{\Delta^{\bullet}}\times\Over{\GG}{-})
\end{equation*}
that sends a left fibration $P\to \Delta^n\otimes G$ in $\Simp\BB$ to the presheaf that maps a pair $(\tau\colon \ord{k}\to \ord{n},s\colon H\to G)$ to the fibre of the map $P_k(H)\to \const_{\BB}(\Delta^n_k)\times G(H)$ over the point that is determined by $(\tau,s)$. Note that this embedding factors through the inclusion
\begin{equation*}
	\PSh_{\Over{\BB}{-}}(\Over{\Delta}{\Delta^{\bullet}})\hookrightarrow \PSh_{\PSh_{\SS}(\Over{\GG}{-})}(\Over{\Delta}{\Delta^{\bullet}})\simeq \PSh_{\SS}(\Over{\Delta}{\Delta^{\bullet}}\times\Over{\GG}{-}),
\end{equation*}
which implies that we end up with a functorial embedding
\begin{equation*}
	\LFib(\Delta^{\bullet}\otimes -  )\hookrightarrow\PSh_{\Over{\BB}{-}}(\Over{\Delta}{\Delta^{\bullet}}).
\end{equation*}
Our next goal is to characterise the essential image of this embedding:
\begin{lemma}
	\label{lem:characterisationLFibPresheaf}
	For any $G\in\GG$ and any $n\geq 0$, a presheaf $F\in \PSh_{\Over{\BB}{G}}(\Over{\Delta}{\Delta^{n}})$ is contained in the essential image of the inclusion $\LFib(\Delta^n\otimes G)\into \PSh_{\Over{\BB}{G}}(\Over{\Delta}{\Delta^{n}})$ if and only if for any $k\geq 1$ and any map $\tau\colon \ord{k}\to \ord{n}$ in $\Delta$, the inclusion $\delta^{\{0\}}\colon \ord{0}\to \ord{k}$ induces an equivalence $F(\tau)\simeq F(\tau \delta^{\{0\}})$.
\end{lemma}
\begin{proof}
	Let $F$ be a $\Over{\BB}{G}$-valued presheaf on $\Over{\Delta}{\Delta^n}$ and let $P\to \Delta^n\times G$ be the map in $\PSh_{\SS}(\Delta\times\GG)$ that corresponds to $F$ in view of the inclusion $\PSh_{\Over{\BB}{G}}(\Over{\Delta}{\Delta^n})\hookrightarrow \Over{\PSh_{\SS}(\Delta\times\GG)}{\Delta^n\times G}$. Then $P\to \Delta^n\times G$ is in the essential image of the inclusion $\Over{(\BB_{\Delta})}{\Delta^n\otimes G }\hookrightarrow \Over{\PSh_{\SS}(\Delta\times\GG)}{\Delta^n\times G}$. To see this, let $(L\dashv i)$ denotes the adjunction $\BB_{\Delta}\leftrightarrows \PSh_{\SS}(\Delta\times\GG)$. We need to show that the commutative square
	\begin{equation*}
		\begin{tikzcd}
			P\arrow[r]\arrow[d] & iL(P)\arrow[d]\\
			\Delta^n\times G \arrow[r] & i(\Delta^n\otimes G )
		\end{tikzcd}
	\end{equation*}
	that is induced by the adjunction unit $\id\to iL$ is a pullback square in $\PSh_{\SS}(\Delta\times\GG)$. It suffices to show this for each $k\in \Delta$ individually. In this case, the map $P_k\to \const(\Delta^n_k)\times G$ is given by the coproduct
	\begin{equation*}
		\bigsqcup_{\tau\colon \ord{k}\to \ord{n}} F(\tau)\to \bigsqcup_{\tau\colon \ord{k}\to \ord{n}} G,
	\end{equation*}
	hence it suffices to show that for each map $\tau\colon \ord{k}\to \ord{n}$ in $\Delta$ the square
	\begin{equation*}
		\begin{tikzcd}
			F(\tau)\arrow[d]\arrow[r] & iL(F(\tau))\arrow[d]\\
			G\arrow[r] & iL(G)
		\end{tikzcd}
	\end{equation*}
	is cartesian, which follows from $F(\tau)$ being contained in $\BB$. Hence $F$ is contained in $\LFib(\Delta^n\otimes G)$ precisely if the map $L(P)\to \Delta^n\otimes G$ is a left fibration.

	Using that $L$ is left exact as well as Proposition~\ref{prop:fibrationsExternalCharacterization}, one finds that map $L(P)\to \Delta^n\otimes G$ being a left fibration is equivalent to the square
	\begin{equation*}
		\begin{tikzcd}
			P_k\arrow[d]\arrow[r, "d_{\{0\}}"] & P_0\arrow[d]\\
			\Delta^n_k\times G\arrow[r, "d_{\{0\}}"] & \Delta^n_0\times G
		\end{tikzcd}
	\end{equation*}
	being a pullback diagram for all $k\geq 1$. On account of the commutative diagrams
	\begin{equation*}
		\begin{tikzcd}[column sep=tiny, row sep=tiny]
			& F(\tau)\arrow[dl]\arrow[rr]\arrow[dd] & & F(\tau \delta^{\{0\}})\arrow[dl]\arrow[dd]\\
			P_k\arrow[dd]\arrow[rr, "d_{\{0\}}", crossing over, near end] && P_0&\\
			& G \arrow[rr, "\id", near start]\arrow[dl, "\tau"]&& G\arrow[dl, "\tau \delta^{\{0\}}"]\\
			\Delta^n_k\times G \arrow[rr, "d_{\{0\}}", near end] && \Delta^n_0\times G\arrow[from=uu, crossing over]  &
		\end{tikzcd}
	\end{equation*}
	for all $\tau\colon \ord{k}\to \ord{n}$, in which the squares on the left and on the right are cartesian, this is seen to be equivalent to the map $\delta^{\{0\}}\colon \ord{0}\to \ord{k}$ inducing an equivalence $F(\tau)\simeq F(\tau \delta^{\{0\}})$.
\end{proof}

Recall that there is an equivalence
\begin{equation*}
	\Fun_{\BBB}(\Delta^{\bullet}\otimes -  ,\Univ)\simeq \Fun(\Delta^{\bullet}, \Univ(-))\simeq \Fun(\Delta^{\bullet}, \Over{\BB}{-})
\end{equation*}
of $\CatSS$-valued presheaves on $\Delta\times\GG$. Let 
\begin{equation*}
	\lambda\colon \Delta^n\to \left(\Over{\Delta}{\Delta^n}\right)^{\op}
\end{equation*}
denote the functor that sends $k\leq n$ to the inclusion $d^{\{k,\dots,n\}}\colon\Delta^{n-k}\subset \Delta^n$. This functor admits a right adjoint
\begin{equation*}
	\epsilon\colon \left(\Over{\Delta}{\Delta^n}\right)^{\op}\to \Delta^n
\end{equation*}
that sends $\tau\colon \ord{k}\to \ord{n}$ to $\tau(0)$. One easily checks that $\epsilon$ is natural in $n$. Moreover, since $\lambda\sigma$ is the identity functor on $\Delta^n$, this adjunction exhibits $\Delta^n$ as a localisation of $\left(\Over{\Delta}{\Delta^n}\right)^{\op}$. By precomposition, we therefore obtain a functorial embedding
\begin{equation*}
	\epsilon^\ast\colon \Fun(\Delta^{\bullet},\Over{\BB}{-})\hookrightarrow \PSh_{\Over{\BB}{-}}(\Over{\Delta}{\Delta^{\bullet}})
\end{equation*}
that exibits each $\infty$-category $\Fun(\Delta^n,\Over{\BB}{G})$ as a colocalisation of $\PSh_{\Over{\BB}{G}}(\Over{\Delta}{\Delta^{n}})$, with the right adjoint given by $\lambda^\ast$.
\begin{lemma}
	\label{lem:GrothendieckConstructionSimplices}
	For any pair $(\ord{n},G)\in\Delta\times\GG$, the essential image of the functor 
	\begin{equation*}
		\epsilon^\ast\colon \Fun(\Delta^{n},\Over{\BB}{G})\hookrightarrow \PSh_{\Over{\BB}{G}}(\Over{\Delta}{\Delta^{n}})
	\end{equation*}
	coincides with the essential image of the embedding
	\begin{equation*}
		\LFib(\Delta^n\otimes G)\hookrightarrow \PSh_{\Over{\BB}{G}}(\Over{\Delta}{\Delta^{n}}).
	\end{equation*}
\end{lemma}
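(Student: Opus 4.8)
The plan is to identify, via explicit conditions on presheaves, the two subcategories of $\PSh_{\Over{\BB}{A}}(\Over{\Delta}{\Delta^{n}})$ appearing in the statement. By Lemma~\ref{lem:characterizationLFibPresheaf} the essential image of $\LFib^{\bU}(\Delta^n\otimes A)\hookrightarrow\PSh_{\Over{\BB}{A}}(\Over{\Delta}{\Delta^{n}})$ is the full subcategory of presheaves satisfying conditions~(1), (2) and~(3) of that lemma, so it suffices to prove that the essential image of $\epsilon^\ast$ equals this subcategory. Since $\epsilon\colon(\Over{\Delta}{\Delta^n})^{\op}\to\Delta^n$ is a localisation functor, the essential image of $\epsilon^\ast$ consists precisely of those presheaves that send every morphism of $\Over{\Delta}{\Delta^n}$ inverted by $\epsilon$ to an equivalence; unwinding the definition of $\epsilon$, these are the morphisms $g\colon(\ord j,\sigma)\to(\ord k,\tau)$ in $\Over{\Delta}{\Delta^n}$ with $\sigma(0)=\tau(0)$, and I will call their totality $W$.

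First I would check that every $F$ in the essential image of $\epsilon^\ast$ — equivalently, every $F$ of the form $\epsilon^\ast G=G\circ\epsilon$ with $G=\lambda^\ast F\colon\Delta^n\to\Over{\BB}{A}$ — satisfies conditions~(1), (2) and~(3). Indeed $F(\ord k,\tau)\simeq G(\tau(0))$, and $F$ carries a morphism $(\ord j,\sigma)\to(\ord k,\tau)$ of $\Over{\Delta}{\Delta^n}$ to the structure map $G(\tau(0))\to G(\sigma(0))$ of $G$, which is an identity whenever $\sigma(0)=\tau(0)$; in particular every morphism in $W$ goes to an identity. Consequently the iterated fibre products in conditions~(1) and~(2) have an identity leg at each stage, so they collapse onto their first factor, and a direct inspection shows that the corresponding comparison maps are equivalences. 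Condition~(3) is immediate because the morphism $\delta^{\{0\}}\colon(\ord 0,\tau\delta^{\{0\}})\to(\ord k,\tau)$ lies in $W$.

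For the converse inclusion I would show that condition~(3) alone already forces $F$ to invert all of $W$. The crucial observation concerns a constant arrow $c=\const_a\colon\ord m\to\ord n$: writing $\pi\colon\ord m\to\ord 0$ for the unique map and $\iota_a\colon\ord 0\to\ord n$ for the inclusion of the vertex $a$, every vertex inclusion $\delta^{\{l\}}\colon(\ord 0,\iota_a)\to(\ord m,c)$ in $\Over{\Delta}{\Delta^n}$ has $\pi$ as a retraction, so $F(\pi)$ is a common section of all the $F(\delta^{\{l\}})$; since condition~(3) makes $F(\delta^{\{0\}})$ an equivalence, $F(\pi)$, and hence every $F(\delta^{\{l\}})$, is an equivalence. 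Now take an arbitrary $g\colon(\ord j,\sigma)\to(\ord k,\tau)$ in $W$, set $a=\sigma(0)=\tau(0)$, and consider the commutative triangle $(\ord 0,\iota_a)\xrightarrow{\delta^{\{0\}}}(\ord j,\sigma)\xrightarrow{g}(\ord k,\tau)$ whose composite is $\delta^{\{g(0)\}}$. Since $F(\delta^{\{0\}})$ is an equivalence by condition~(3), two-out-of-three reduces the problem to showing that $F$ inverts $\delta^{\{m\}}\colon(\ord 0,\iota_a)\to(\ord k,\tau)$ whenever $\tau(m)=a=\tau(0)$. To handle this, I would factor $\delta^{\{m\}}$ through the edge $\delta^{\{0,m\}}\colon(\ord 1,\const_a)\to(\ord k,\tau)$ — this target is indeed constant at $a$ because $\tau$ is monotone with $\tau(0)=\tau(m)$ — and combine condition~(3) with the constant-arrow observation (applied to $\ord 1$) to see that $F(\delta^{\{0,m\}})$ is an equivalence, whence $F(\delta^{\{m\}})$ is an equivalence as well.

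Putting the two inclusions together shows that the essential image of $\epsilon^\ast$ is exactly the subcategory cut out by conditions~(1), (2) and~(3), which by Lemma~\ref{lem:characterizationLFibPresheaf} is the essential image of $\LFib^{\bU}(\Delta^n\otimes A)\hookrightarrow\PSh_{\Over{\BB}{A}}(\Over{\Delta}{\Delta^{n}})$. The main obstacle is the combinatorial bookkeeping in the third paragraph: one has to keep careful track of faces, vertices and the localisation class $W$, and find precisely the right elementary factorisations — through constant simplices and through the single edge $\delta^{\{0,m\}}$ — so that condition~(3) propagates from the generating morphisms of $W$ to all of $W$.
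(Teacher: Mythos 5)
Your proof is correct, and it takes a somewhat different route from the paper's. Both arguments handle the easy inclusion in the same way, by observing that $\epsilon^\ast G$ sends every morphism of $\Over{\Delta}{\Delta^n}$ over a fixed vertex to an identity, so the fibre products and squares in conditions~(1)--(3) of Lemma~\ref{lem:characterizationLFibPresheaf} collapse. The difference lies in the reverse inclusion. The paper works directly with the adjunction counit $\epsilon^\ast\lambda^\ast F\to F$: it identifies the component at $\tau$ as $F$ applied to the counit map $\tau\to\lambda\epsilon(\tau)$ over $\ord{n}$, uses condition~(1) to reduce the verification to the cases $k\in\{0,1\}$, and then invokes condition~(3). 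You instead appeal to the general fact that a functor with a fully faithful (left, here) adjoint exhibits its target as a localisation, so the essential image of $\epsilon^\ast$ is exactly the class of presheaves that invert $W=\epsilon^{-1}(\text{equiv.})$; you then show by an explicit combinatorial argument -- using retracts onto vertex inclusions of constant simplices and the factorisation of $\delta^{\{m\}}$ through $\delta^{\{0,m\}}$ -- that condition~(3) alone forces a presheaf to invert all of $W$. This buys you a slightly stronger observation than what the paper records, namely that condition~(3) by itself already implies conditions~(1) and~(2) in this setting, though it costs a more careful bookkeeping of which generators of $W$ one must factor through. One small thing worth making explicit in your write-up is that in the final reduction you should treat the degenerate case $m=0$ separately (there $\delta^{\{0\}}$ is handled directly by condition~(3), and the edge $\delta^{\{0,m\}}$ does not exist); as stated the argument silently assumes $m\geq 1$.
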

\begin{proof}
	We first claim that $\sigma\colon \Delta^n\to\Over{\BB}{G}$ the associated presheaf $\sigma\epsilon\colon (\Over{\Delta}{\Delta^n})^{\op}\to\Over{\BB}{G}$ satisfies the condition of Lemma~\ref{lem:characterisationLFibPresheaf}. In fact, if $\tau\colon \ord{k}\to \ord{n}$ is a map in $\Delta$ with $k\geq 1$, then the map in $\Over{\BB}{G}$ that is induced by the inclusion $\delta^{\{0\}}\colon \ord{0}\to \ord{k}$ is simply the identity $\sigma(\tau(0))\simeq \sigma(\tau(0))$, hence the claim follows.
	
	To finish the proof, it now suffices to show that for any presheaf $F\in \PSh_{\Over{\BB}{G}}(\Over{\Delta}{\Delta^n})$ that satisfies the condition of Lemma~\ref{lem:characterisationLFibPresheaf} the adjunction counit $\epsilon^{\ast}\lambda^{\ast}F\to F$ is an equivalence. Since this map is given by precomposition with the adjunction counit of $\lambda\dashv \epsilon$, the map $\epsilon^{\ast}\lambda^{\ast}F\to F$ is defined on each object $\tau\colon \ord{k}\to \ord{n}$ in $\Over{\Delta}{\Delta^n}$ by applying $F$ to the map $\ord{k}\to \ord{n-\tau(0)}$
	over $\ord{n}$, where the structure map of the codomain into $\ord{n}$ is given by the inclusion $\delta^{\{\tau(0),\dots,n\}}$. Since precomposing this map with $\delta^{\{0\}}\colon \ord{0}\to \ord{k}$ recovers the inclusion $\delta^{\{0\}}\colon\ord{0}\to \ord{n-\tau(0)}$, the two out of three property of equivalences and the condition on $F$ imply that $F$ sends the map $\ord{k}\to\ord{n-\tau(0)}$ to an equivalence in $\Over{\BB}{G}$, as desired.
\end{proof}

\begin{proof}[Proof of Theorem~\ref{thm:internalGrothendieck}]
	By Lemma~\ref{lem:GrothendieckConstructionSimplices}, there is an equivalence
	\begin{equation*}
		\LFib(\Delta^{\bullet}\otimes -  )\simeq \Fun_{\BBB}(\Delta^{\bullet}\otimes -  ,\Univ)
	\end{equation*}
	of functors $\Delta^{\op}\times\BB^{\op}\to\Simp\BB^{\op}\to\CatSS$. As both $\LFib$ and $\Fun_{\BBB}(-,\Univ)$ are sheaves on $\Simp\BB$, Lemma~\ref{lem:coYoneda} implies that the above equivalences can be uniquely extended to an equivalence
	\begin{equation*}
		\LFib\simeq \Fun_{\BBB}(-,\Univ),
	\end{equation*}
	which finishes the proof.
\end{proof}
\begin{remark}
	\label{rem:BCLFib}
	Let $A\in\BB$ be an arbitrary object. By combining the Grothendieck construction, Remark~\ref{rem:BCUniverse} and Lemma~\ref{lem:BCFunctorCategory}, one obtains an equivalence $\pi_A^\ast \ILFib_{\I{C}}\simeq\ILFib_{\pi_A^\ast\I{C}}$ that is natural in $\I{C}$. By unwinding the constructions, this equivalence is explicitly given by the chain of equivalences
	\begin{equation*}
	\Fun_{\Over{\BB}{A}}(-,\ILFib_{\pi_A^\ast\I{C}})\simeq \LFib(-\times_A \pi_A^\ast\I{C})\simeq \LFib((\pi_A)_!(-)\times\I{C})\simeq \Fun_{\Over{\BB}{A}}(-, \pi_A^\ast\ILFib_{\I{C}})
	\end{equation*}
	in which the middle equivalence is induced by the equivalence $(\pi_A)_!(-\times_A\pi_A^\ast\I{C})\simeq (\pi_A)_!(-)\times \I{C}$, together with the evident observation that the forgetful functor $(\pi_A)_!\colon \Cat(\Over{\BB}{A})\to\Cat(\BB)$ gives rise to an equivalence $\LFib((\pi_A)_!(-))\simeq \LFib(-)$ of sheaves on $\Cat(\Over{\BB}{A})$.
\end{remark}

\begin{remark}
	\label{rem:restrictionGrothendieck}
	The proof of Theorem~\ref{thm:internalGrothendieck} shows that the restriction of the equivalence $\Fun_{\BB}(-, \Univ)\simeq \LFib$ along the inclusion $\BB\hookrightarrow\Cat(\BB)$ recovers the equivalence
	\begin{equation*}
		\Fun_{\BB}(-,\Univ)\simeq \Over{\BB}{-}
	\end{equation*}
	of $\CatS$-valued sheaves on $\BB$.
\end{remark}

 Let $p \colon P\to C$ be a map between simplicial objects in  $\BBB$. We will say that $p$ is \emph{small} if for every map $D\to C$ in $\Simp\BBB$ in which $\D$ is small (i.e.\ contained in $\Simp\BB$), the pullback $p^\ast D=P\times_{C} D$ is small as well. 
The collection of small maps defines a cartesian subfibration of the codomain fibration $\Fun(\Delta^1,\Simp\BBB)\to\Simp\BBB$. We therefore obtain a subpresheaf $\LFib^{\bU}_{\BBB}\into\LFib_{\BBB}$ of the sheaf of left fibrations on $\Simp\BBB$ that is spanned by the small left fibrations.
\begin{proposition}
	\label{prop:characterisationSmallFibrations}
	Let $p\colon P\to C$ be a left fibration between simplicial objects in $\BBB$. Then $p$ is small if and only if for all maps $c\colon A\to C$ with $A\in \BB$ the fibre $P\vert_c= P\times_{C} A$ is contained in $\Over{\BB}{A}$.
\end{proposition}
\begin{proof}
	The condition is clearly necessary. For the converse direction, it suffices to show that if $p\colon P\to C$ is a left fibration in $\Simp\BBB$ such that $C$ is small and such that the fibre $P\vert_c$ is small for every map $c\colon A\to C$ with $A\in\BB$, the simplicial object $P$ is small as well. To see this, note that since $p$ is a left fibration and $C$ is small, it suffices to show that $P_0$ is small. But this follows from the fact that $P_0$ arises as the fibre of $p$ over the map $C_0\to C$, see Corollary~\ref{cor:conservativeCore}.
\end{proof}
Let $\Univ[\BBB]$ denote the universe for large $\BB$-groupoids, i.e.\ the very large $\BB$-category that corresponds to the sheaf $\Over{\BBB}{-}$ on $\BBB$.  By the discussion in \S~\ref{sec:functoriality}, the inclusions $\Over{\BB}{A}\into\Over{\BBB}{A}$ for $A\in\BB$ define an embedding of presheaves $\Over{\BB}{-}\into\Over{\BBB}{-}$ on $\BB$. Since moreover restriction along the inclusion $\BB\into\BBB$ defines an equivalence
\begin{equation*}
	\begin{tikzcd}
	\Shv_{\CatSSS}(\BBB)\simeq \Shv_{\CatSSS}(\BB)
	\end{tikzcd}
\end{equation*}
(see the argument in Remark~\ref{rem:transitivityUniverseEnlargement}), we obtain a fully faithful functor $\Univ[\BB]\into\Univ[\BBB]$ in $\Cat(\BBBB)$. Explicitly, an object $A\to \Univ[\BBB]$ in context $A\in\BB$ that corresponds to a map $P\to A$ in $\BBB$ is contained in $\Univ[\BB]$ precisely if $B$ is small. By combining Remark~\ref{rem:restrictionGrothendieck} with Proposition~\ref{prop:characterisationSmallFibrations}, we thus obtain:
\begin{corollary}
	\label{cor:GrothendieckConstructionLarge}
	For any large $\BB$-category $\I{C}$, the subpresheaf $\LFib^{\bU}(-\times\I{C})\into {\LFib}(-\times\I{C})$ defines a large $\BB$-category $\ILFib^{\bU}_{\I{C}}$ in $\BB$, and the restriction of the Grothendieck construction $\ILFib_{\I{C}}\simeq \iFun{\I{C}}{\Univ[\BBB]}$ in $\BBB$ along the fully faithful functor $\iFun{\I{C}}{\Univ[\BB]}\into\iFun{\I{C}}{\Univ[\BBB]}$ gives rise to an equivalence
	\begin{equation*}
		\iFun{\I{C}}{\Univ}\simeq \ILFib^{\bU}_{\I{C}}
	\end{equation*}
	in $\Cat(\BBB)$.\qed
\end{corollary}

\begin{remark}
	\label{rem:GrothendieckConstructionDual}
	The equivalence $(-)^\op\colon\Simp\BB\simeq\Simp\BB$ that we constructed in the beginning of \S~\ref{sec:leftFibrations} induces a commutative square
	\begin{equation*}
	\begin{tikzcd}
	\RFib\arrow[r, "(-)^\op"]\arrow[d] & \LFib\arrow[d]\\
	\Simp\BB\arrow[r, "(-)^\op"] & \Simp\BB
	\end{tikzcd}
	\end{equation*}
	and therefore by Theorem~\ref{thm:internalGrothendieck} an equivalence
	\begin{equation*}
	\RFib\simeq\Fun_{\BB}((-)^\op,\Univ)
	\end{equation*}
	of $\CatSS$-valued sheaves on $\Simp\BB$. Since the diagonal embedding $\BB\into\Simp\BB$ commutes with taking opposite simplicial objects in $\BB$ (owing to $\op\colon\Delta\simeq\Delta$ preserving the final object $\ord{0}$), we thus obtain an equivalence
	\begin{equation*}
	\IRFib_{\I{C}}\simeq\iFun{\I{C}^\op}{\Univ}
	\end{equation*}
	(where the large $\BB$-category $\IRFib_{\I{C}}$ is given by the $\CatSS$-valued sheaf $\RFib(\I{C}\times -$)) that is natural in $\I{C}\in\Cat(\BB)$. Similarly, one also obtains an equivalence $\IRFib_{\I{C}}^{\bU}\simeq\iFun{\I{C}^{\op}}{\Univ}$ for every \emph{large} $\BB$-category $\I{C}$.
\end{remark}

\subsection{The universal left fibration}
\label{sec:universalLeftFibration}
By Corollary~\ref{cor:GrothendieckConstructionLarge}, the identity $\id_{\Univ}\colon\Univ\simeq\Univ$ determines a small left fibration $\phi\colon\UnivHat\to\Univ$ of large $\BB$-categories.
\begin{definition}
	The left fibration $\phi\colon\UnivHat\to\Univ$ is referred to as the the  \emph{universal left fibration}.
\end{definition}
\begin{remark}
	Yoneda's lemma for $\infty$-categories implies that the equivalence
	\begin{equation*}
		\map{\Cat(\BB)}(-,\Univ)\simeq 	(\LFib^{\bU})^{\simeq}
	\end{equation*}
	that is induced by the equivalence in Corollary~\ref{cor:GrothendieckConstructionLarge} on the underlying $\SSS$-valued sheaves is induced by assigning to each functor $f\colon \I{C}\to \Univ$ the left fibration $\int f\to\I{C}$ that is determined by the pullback square
	\begin{equation*}
		\begin{tikzcd}
			\int f\arrow[d]\arrow[r] & \UnivHat\arrow[d, "\phi"]\\
			\I{C}\arrow[r, "f"] & \Univ.
		\end{tikzcd}
	\end{equation*}
	We say that the left fibration $\int  f\to \I{C}$ is \emph{classified} by $f$. Conversely, given a small left fibration $p\colon\I{P}\to \I{C}$ of large $\BB$-categories, the functor $\I{C}\to\Univ$ that classifies $p$ acts by carrying an object $c\colon A\to\I{P}$ to the $\Over{\BB}{A}$-groupoid $\I{P}\vert_c$ that is determined by the cartesian square
	\begin{equation*}
		\begin{tikzcd}
			\I{P}\vert_c\arrow[d]\arrow[r] & \I{P}\arrow[d]\\
			A\arrow[r, "c"] & \I{C}.
		\end{tikzcd}
	\end{equation*}
\end{remark}

The main goal of this section is to prove that the universal left fibration admits the following explicit description:
\begin{proposition}
	\label{prop:explicitDescriptionUniversalLeftFibration}
	The global section $1_{\Univ}\colon 1\to \Univ$ that is determined by the final object $1_{\BB}\in \BB$ defines a final object in the universe $\Univ$, and there is an equivalence $\Under{\Univ}{1_{\Univ}}\simeq \UnivHat$ that fits into the commutative diagram
	\begin{equation*}
		\begin{tikzcd}
			\Under{\Univ}{1_{\Univ}}\arrow[r, "\simeq"] \arrow[dr, "(\pi_{1_{\Univ}})_!"']& \UnivHat\arrow[d, "\phi"] \\
			& \Univ.
		\end{tikzcd}
	\end{equation*}
\end{proposition}
The remainder of this section is devoted to the proof of Proposition~\ref{prop:explicitDescriptionUniversalLeftFibration}. We begin with the following criterion how to recognise initial objects:
\begin{lemma}
	\label{lem:criterioninitialObject}
	Let $\I{C}$ be a $\BB$-category, let $c\colon 1\to \I{C}$ be an object and let $\epsilon\colon\Delta^1\otimes\I{C}\to \I{C}$ be a map such that
	\begin{enumerate}
		\item the composite $\epsilon d^1\colon \I{C}\to \I{C}$ is equivalent to the constant map with value $c$, i.e.\ to the composite $\I{C}\to  1\to \I{C}$ in which the second arrow is given by $c$;
		\item the composite $\epsilon d^0\colon \I{C}\to \I{C}$ is equivalent to the identity;
		\item the map $\epsilon \circ(\id\otimes c)\colon \Delta^1\to \I{C}$ is equivalent to the identity map $\id_c$.
	\end{enumerate}
	Then $c$ is an initial object.
\end{lemma}
\begin{proof}
	Let us abuse notation and denote by $c$ the constant map $\I{C}\to \I{C}$ with value $c$. The map $\epsilon$ can be regarded as an object $1\to \iFun{\I{C}}{\I{C}}^{\Delta^1}$ that is sent to the constant map $c$ by the projection $d_1\colon \iFun{\I{C}}{\I{C}}^{\Delta^1}\to \iFun{\I{C}}{\I{C}}$. Therefore the map $\epsilon$ defines an object in the slice $\BB$-category $\Under{{\iFun{\I{C}}{\I{C}}}}{c}$. Observe, moreover, that the commutative diagram
	\begin{equation*}
		\begin{tikzcd}
			\Under{\iFun{\I{C}}{\I{C}}}{c}\arrow[r]\arrow[d] & \iFun{\I{C}}{\I{C}}^{\Delta^1}\arrow[r, "\simeq"] \arrow[d]&\iFun{\I{C}}{\I{C}^{\Delta^1}}\arrow[d]\\
			1\arrow[r, "c"] & {\iFun{\I{C}}{\I{C}}}\arrow[r, "\id"] & {\iFun{\I{C}}{\I{C}}}
		\end{tikzcd}
	\end{equation*}
	in which both squares are cartesian gives rise to an equivalence
	\begin{equation*}
		\Under{\iFun{\I{C}}{\I{C}}}{c}\simeq \iFun{\I{C}}{\Under{\I{C}}{c}}.
	\end{equation*}
	With respect to this equivalence, the map $\epsilon$ corresponds to a section
	$\I{C}\to \Under{\I{C}}{c}$ of the functor $(\pi_c)_!\colon \Under{\I{C}}{c}\to \I{C}$ that sends the object $c\colon 1\to \I{C}$ to $\id_c\colon 1\to \Under{\I{C}}{c}$. By Lemma~\ref{lem:criterioninitiality}, this implies that $c$ is initial.
\end{proof}

\begin{proposition}
	\label{prop:YonedaCoreLemmaB}
	Let $g\colon\I{D}\to \I{C}$ be a functor between large $\BB$-categories and let $pi\colon \I{D}\to \I{P}\to\I{C}$ be its factorisation into a final map and a right fibration. Suppose that the right fibration $p$ is small, and let $f\colon \I{C}^{\op}\to \Univ$ be the associated functor. Consider the left fibration $\I{\pi}$ that is defined by the cartesian square
	\begin{equation*}
		\begin{tikzcd}
			\I{Z}\arrow[d, "\I{\pi}"]\arrow[r]& {\IPSh_{\UnivHat}(\I{D})}\arrow[d]\\
			{\IPSh_{\Univ}(\I{C})} \arrow[r, "g^\ast"] & {\IPSh_{\Univ}(\I{D})}.
		\end{tikzcd}
	\end{equation*}
	Then there is an initial object $z\colon 1\to \I{Z}$ whose image along $\I{\pi}$ is $f$.
\end{proposition}
\begin{proof}
	Let us denote by $\phi\colon \UnivHat\to\Univ$ the universal left fibration. Since $i\colon \I{D}^{\op}\to \I{P}^{\op}$ is initial and $\UnivHat\to \Univ$ is a left fibration, the pullback square in the statement of the lemma decomposes into a pasting of cartesian squares
	\begin{equation*}
		\begin{tikzcd}
			\I{Z}\arrow[r] \arrow[d, "\I{\pi}"]& \IPSh_{\UnivHat}(\I{P})\arrow[d, "\phi_\ast"]\arrow[r] &{\IPSh_{\UnivHat}(\I{D})} \arrow[d, "\phi_\ast"]\\
			\IPSh_{\Univ}(\I{C})\arrow[r, "p^\ast"]&\IPSh_{\Univ}(\I{P})\arrow[r, "i^\ast"] & \IPSh_{\Univ}(\I{D}).
		\end{tikzcd}
	\end{equation*}
	By applying the Yoneda embedding $\Cat(\BBB)\into\PSh_{\SSS}(\Cat(\BBB))$ to the left square, we obtain a pullback square
	\begin{equation*}
		\begin{tikzcd}[column sep=large]
			\map{\Cat(\BBB)}(-, \I{Z})\arrow[d]\arrow[r] & \map{\Cat(\BBB)}(-\times\I{P}^{\op}, \UnivHat)\arrow[d, "\phi_\ast"]\\
			\map{\Cat(\BBB)}(-\times\I{C}^{\op}, \Univ)\arrow[r, "(\id\times p)^\ast"] & \map{\Cat(\BBB)}(-\times\I{P}^{\op}, \Univ)
		\end{tikzcd}
	\end{equation*}
	of $\SSS$-valued presheaves on $\Cat(\BBB)$. By using Lemma~\ref{lem:mappingGroupoidFunctorCategory} in the case where $\CC=\Delta^1$ and $\DD=\Cat(\BBB)$, we thus obtain an equivalence
	\begin{equation*}
	\map{\Cat(\BBB)}(-,\I{Z})\simeq \map{\LFib^{\bU}}(s_0(-)\times p, \phi).
	\end{equation*}
	As a consequence, the cartesian square
	\begin{equation*}
		\begin{tikzcd}
			\I{P}^\op\arrow[r]\arrow[d, "p"] & \UnivHat\arrow[d]\\
			\I{C}^{\op}\arrow[r, "f"] & \Univ
		\end{tikzcd}
	\end{equation*}
	gives rise to an object $z\colon 1\to \I{Z}$ whose image along $\I{\pi}$ is $f$. 
	
	We still need to show that $z$ is initial.
	To that end, note that by the above equivalence of presheaves on $\Cat(\BBB)$, the identity on $\I{Z}$ corresponds to a commutative square
	\begin{equation*}
	\begin{tikzcd}
	\I{Z}\times\I{P}^\op\arrow[d, "\id\times p"]\arrow[r] & \UnivHat\arrow[d]\\
	\I{Z}\times\I{C}^{\op}\arrow[r, "k"] & \Univ,
	\end{tikzcd}
	\end{equation*}
	and therefore gives rise to a map $\I{Z}\times\I{P}^{\op}\to \int k$ of left fibrations over $\I{Z}\times\I{C}^{\op}$. Using Corollary~\ref{cor:GrothendieckConstructionLarge}, this map corresponds to a morphism $\alpha\colon \pr_1^\ast(f)\to k$ in $\iFun{\I{Z}\times\I{C}^{\op}}{\Univ}$, where $\pr_1\colon \I{Z}\times\I{C}^\op\to\I{C}^\op$ denotes the projection. Let us consider the evident commutative square
	\begin{equation*}
		\begin{tikzcd}
			\pr_1^\ast(f)\arrow[r, "\id"] \arrow[d, "\id"]& \pr_1^\ast(f)\arrow[d, "\alpha"]\\
			\pr_1^\ast(f)\arrow[r, "\alpha"] & k
		\end{tikzcd}
	\end{equation*}
	as a map $\tau\colon \id\to\alpha$ in the $\BB$-category $\iFun{(\Delta^1\otimes\I{Z})\times\I{C}^{\op})}{\Univ}$. By again using Corollary~\ref{cor:GrothendieckConstructionLarge}, this map corresponds to a morphism $(\Delta^1\otimes \I{Z})\times\I{P}^{\op}\to \int \alpha$ of left fibrations over $(\Delta^1\otimes \I{Z})\times\I{C}^{\op}$ and therefore by the above equivalence of presheaves on $\Cat(\BBB)$ to a map $\epsilon\colon\Delta^1\otimes \I{Z}\to \I{Z}$. 
	By functoriality, the map $\I{Z}\to \I{Z}$ that is induced by restricting $\epsilon$ along the inclusion $d^1\colon \I{Z}\to \Delta^1\otimes\I{Z}$ corresponds to the outer square in the commutative diagram
	\begin{equation*}
		\begin{tikzcd}
			\I{Z}\times\I{P}^{\op}\arrow[r, "\pr_1"]\arrow[d, "\id\times p"] & \I{P}^{\op}\arrow[r]\arrow[d, "p"] & \UnivHat\arrow[d]\\
			\I{Z}\times\I{C}^{\op}\arrow[r, "\pr_1"]& \I{C}^{\op}\arrow[r, "f"] & \Univ,
		\end{tikzcd}
	\end{equation*}
	hence this map is equivalent to the constant functor $z\colon\I{Z}\to 1 \to \I{Z}$. Precomposing the map $\Delta^1\otimes\I{Z}\to \I{Z}$ with the inclusion $d^0\colon \I{Z}\to \Delta^1\otimes\I{Z}$, on the other hand, produces the identity on $\I{Z}$. As moreover the restriction of the map $\Delta^1\otimes\I{Z}\to \I{Z}$ along $z\colon 1\to \I{Z}$ recovers the identity on $z$, Lemma~\ref{lem:criterioninitialObject} implies that $z$ is initial.
\end{proof}
Proposition~\ref{prop:YonedaCoreLemmaB} has the following immediate consequence:
\begin{corollary}
	\label{cor:PrecompositionRepresentable}
	Let $g\colon\I{D}\to \I{C}$ be a functor between large $\BB$-categories and let $pi\colon \I{D}\to \I{P}\to\I{C}$ be its factorisation into a final map and a right fibration. Suppose that the right fibration $p$ is small, and let $f\colon \I{C}^{\op}\to \Univ$ be the associated functor. Then there is a cartesian square
	\begin{equation*}
		\begin{tikzcd}
			\Under{{\IPSh_{\Univ}(\I{C})} }{f}\arrow[d]\arrow[r]& {\IPSh_{\UnivHat}(\I{D})}\arrow[d]\\
			{\IPSh_{\Univ}(\I{C})} \arrow[r, "g^\ast"] & {\IPSh_{\Univ}(\I{D})}.
		\end{tikzcd}
	\end{equation*}
\end{corollary}
\begin{proof}
	By Proposition~\ref{prop:YonedaCoreLemmaB}, the $\BB$-category $\I{Z}$ that is defined by the cartesian square
	\begin{equation*}
		\begin{tikzcd}
			\I{Z}\arrow[d]\arrow[r]& {\IPSh_{\UnivHat}(\I{D})}\arrow[d]\\
			{\IPSh_{\Univ}(\I{C})} \arrow[r, "g^\ast"] & {\IPSh_{\Univ}(\I{D})}.
		\end{tikzcd}
	\end{equation*}
	admits an initial object $0_{\I{Z}}\colon 1\to \I{Z}$ whose image in $\IPSh_{\Univ}(\I{C})$ is $f$. By Corollary~\ref{cor:factorisationInternalObject} we obtain a commutative square
	\begin{equation*}
		\begin{tikzcd}
			1\arrow[d, "f"]\arrow[r, "0_{\I{Z}}"] & \I{Z}\arrow[d]\\
			\Under{\IPSh_{\Univ}(\I{C})}{f}\arrow[r, "(\pi_f)_!"] & \IPSh_{\Univ}(\I{C})
		\end{tikzcd}
	\end{equation*}
	in which the two maps starting in the upper left corner are initial and the two maps ending in the lower right corner are left fibrations. When regarded as a lifting problem, the above square thus admits a unique filler $\Under{\IPSh_{\Univ}(\I{C})}{f}\to \I{Z}$ that is both initial and a left fibration and therefore an equivalence.
\end{proof}

\begin{proof}[{Proof of Proposition~\ref{prop:explicitDescriptionUniversalLeftFibration}}]
	Applying Corollary~\ref{cor:PrecompositionRepresentable} to $\I{D}\simeq \I{C}\simeq 1$, the object $1_{\Univ}\colon 1\to \Univ$ gives rise to a cartesian square
	\begin{equation*}
		\begin{tikzcd}
			\Under{\Univ}{1_{\Univ}}\arrow[d]\arrow[r]& \UnivHat\arrow[d]\\
			\Univ \arrow[r, "\id"] & \Univ,
		\end{tikzcd}
	\end{equation*}
	which implies that the upper horizontal map must be an equivalence. Moreover, if $g\colon A\to \Univ$ is an object that classifies a $\Over{\BB}{A}$-groupoid $P\to A$, the equivalence $\map{\Univ}(g, \pi_A^\ast1_{\Univ})\simeq \Over{\iFun{P}{A}}{A}$ from Proposition~\ref{prop:mappingObjectsInternalUniverse} shows together with the dual version of Proposition~\ref{prop:characterisationinitialObject} that $1_{\Univ}$ is a final object of $\Univ$ (as $\Over{\iFun{P}{A}}{A}$ is final in $\Over{\BB}{A}$).
\end{proof}
\begin{remark}
	Proposition~\ref{prop:explicitDescriptionUniversalLeftFibration} shows that the $\CatSS$-valued sheaf that corresponds to $\UnivHat$ is given by the assignment
	\begin{equation*}
		A\mapsto {\BB}_{A\sslash A}
	\end{equation*}
	in which the right-hand side denotes the $\infty$-category of pointed objects in $\Over{\BB}{A}$.
\end{remark}

\begin{corollary}
	\label{cor:GrothendieckConstructionExternalInternal}
	Let $\I{C}$ be a (large) $\BB$-category and let $p\colon \I{P}\to\I{C}$ be a small left fibration that is classified by a functor $f\colon\I{C}\to\Univ$. Then $\Gamma(p)$ is classified by the functor $\Gamma\circ\Gamma(f)\colon\Gamma(\I{C})\to\BB\to\SS$. 
\end{corollary}
\begin{proof}
	By Proposition~\ref{prop:explicitDescriptionUniversalLeftFibration}, there is a commutative diagram
	\begin{equation*}
		\begin{tikzcd}
		\Gamma(\I{P})\arrow[r]\arrow[d, "\Gamma(p)"] & \Under{\BB}{1}\arrow[d] \arrow[r] & \Under{\SS}{1}\arrow[d]\\
		\Gamma(\I{C})\arrow[r, "\Gamma(f)"] & \BB\arrow[r, "\Gamma"] & \SS
		\end{tikzcd}
	\end{equation*}
	in which both squares are cartesian and in which the left square arises from applying the global sections functor $\Gamma$ to the cartesian square in $\Cat(\BBB)$ that exhibits $p$ as the Grothendieck construction of $f$.
\end{proof}

\subsection{Yoneda's lemma}
\label{sec:YonedaLemma} Theorem~\ref{thm:internalGrothendieck} may be used to construct a functorial version of the mapping $\BB$-groupoid construction for $\BB$-categories.

\begin{definition}
	A \emph{locally small} $\BB$-category is a large $\BB$-category $\I{C}$ for which the left fibration $\Tw(\I{C})\to \I{C}^{\op}\times\I{C}$ is small.
\end{definition}
As a consequence of Proposition~\ref{prop:twistedArrowLeftFibration}, the fibre of the functor $\Tw(\I{C})\to \I{C}^{\op}\times\I{C}$ over any pair of objects $(c,d)\colon A\to \I{C}^{\op}\times\I{C}$ in context $A\in \BB$ is a large $\BB$-groupoid, hence the fibre can be computed as the fibre of the induced map of core $\BB$-groupoids, which is simply the pullback
\begin{equation*}
	\begin{tikzcd}
		\map{\I{C}}(c,d)\arrow[r]\arrow[d] & \I{C}_1\arrow[d]\\
		A\arrow[r, "{(c,d)}"] & \I{C}_0\times \I{C}_0.
	\end{tikzcd}
\end{equation*}
Using Proposition~\ref{prop:characterisationSmallFibrations}, we may therefore deduce:
\begin{proposition}
	\label{prop:locallySmallMappingGroupoid}
	A $\BB$-category $\I{C}$ is locally small if and only if for any pair of objects $(c,d)$ in $\I{C}\times\I{C}$ in context $A\in\BB$ the mapping $\BB$-groupoid $\map{\I{C}}(c,d)$ is contained in $\BB$.\qed
\end{proposition}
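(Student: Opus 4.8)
The plan is to obtain the statement as a direct application of proposition~\ref{prop:characterizationSmallFibrations} to the left fibration $\Tw(\I{C})\to\I{C}^{\op}\times\I{C}$ supplied by proposition~\ref{prop:twistedArrowLeftFibration}. By definition $\I{C}$ is locally small precisely when this left fibration is small, and proposition~\ref{prop:characterizationSmallFibrations} rephrases the latter as the requirement that for every object $(c,d)\colon A\to\I{C}^{\op}\times\I{C}$ with $A\in\BB$ the fibre $\Tw(\I{C})\times_{\I{C}^{\op}\times\I{C}} A$ be a small groupoid in $\BBB$. All that remains is to match this fibrewise condition with the condition on mapping groupoids.

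First I would observe that an object of $\I{C}^{\op}\times\I{C}$ in context $A$ is the same datum as a map $A\to\I{C}_0\times\I{C}_0$, since $(\I{C}^{\op})_0\simeq\I{C}_0$; equivalently it is a pair of objects $c,d$ of $\I{C}$ in context $A$. Hence the family over which proposition~\ref{prop:characterizationSmallFibrations} quantifies is exactly the family appearing in the statement. Next, since $\Tw(\I{C})\to\I{C}^{\op}\times\I{C}$ is a left fibration, proposition~\ref{prop:leftFibrationsFibredGroupoids} (or the discussion preceding this proposition) shows that the fibre $\Tw(\I{C})\times_{\I{C}^{\op}\times\I{C}} A$ is a groupoid in $\BBB$; being a groupoid, it is a constant simplicial object and so is determined by its underlying object in $\BBB$. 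Computing this fibre on level $0$, where the structure map restricts to $(d_1,d_0)\colon\I{C}_1\to\I{C}_0\times\I{C}_0$, identifies the underlying object with the pullback $\I{C}_1\times_{\I{C}_0\times\I{C}_0} A$, which is precisely $\map{\I{C}}(c,d)$; this is exactly the computation already carried out in the paragraph preceding the statement.

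To finish, I would use that a groupoid in $\BBB$ is small, i.e.\ lies in the essential image of $\Grpd(\BB)\hookrightarrow\Grpd(\BBB)$, if and only if its underlying object lies in $\BB\hookrightarrow\BBB$. Combined with the previous step, this shows that $\Tw(\I{C})\times_{\I{C}^{\op}\times\I{C}} A$ is a small groupoid if and only if $\map{\I{C}}(c,d)$ is contained in $\BB$, and the proposition follows. Since the argument is essentially a translation through already-established results, I do not anticipate a genuine obstacle; the only point demanding a little care is keeping track of the identification of the fibre of $\Tw(\I{C})$ over $(c,d)$ with the defining pullback of $\map{\I{C}}(c,d)$, which has however already been made explicit just before the statement.
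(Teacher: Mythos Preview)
Your proposal is correct and follows essentially the same approach as the paper: the proposition is stated with a \qed symbol and its justification is entirely contained in the preceding paragraph, which computes the fibres of $\Tw(\I{C})\to\I{C}^{\op}\times\I{C}$ as mapping groupoids and then invokes proposition~\ref{prop:characterizationSmallFibrations}, exactly as you do.
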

\begin{example}
	\label{ex:universeLocallySmall}
	The universe $\Univ$ for $\BB$-groupoids is locally small as its mapping groupoids can be identified with the internal mapping objects in the slice $\infty$-topoi over $\BB$ (cf.\ Proposition~\ref{prop:mappingObjectsInternalUniverse}).
\end{example}
\begin{proposition}
	\label{prop:locallySmallVsSmall}
	A locally small $\BB$-category $\I{C}$ is small if and only if $\I{C}^{\core}$ is a small $\BB$-groupoid.
\end{proposition}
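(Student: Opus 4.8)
The plan is to reduce the statement to a levelwise condition on the underlying simplicial object of $\I{C}$. Recall that, by the construction surrounding proposition~\ref{prop:internalCategoriesSheaves}, the embedding $\Cat(\BB)\hookrightarrow\Cat(\BBB)$ is the restriction of $\Simp\BB\hookrightarrow\Simp\BBB$; since $\BB\hookrightarrow\BBB$ is a fully faithful subcategory, a category $\I{C}$ in $\BBB$ is small if and only if $\I{C}_n\in\BB$ for every $n\geq 0$ (the Segal and univalence conditions are preserved and reflected because $\BB\hookrightarrow\BBB$ preserves finite limits). Similarly, by proposition~\ref{prop:universeEnlargementCategories} the core groupoid functor commutes with the inclusions $\Cat(\BB)\hookrightarrow\Cat(\BBB)$ and $\Grpd(\BB)\hookrightarrow\Grpd(\BBB)$; combined with the fact that a groupoid in $\BBB$ is equivalent to $\iota(\I{C}_0)$ and is therefore small exactly when $\I{C}_0\in\BB$, we get that $\I{C}^{\core}$ is a small groupoid if and only if $\I{C}_0\in\BB$. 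This already settles the "only if" direction: if $\I{C}$ is small then $\I{C}_0\in\BB$, so $\I{C}^{\core}$ is small.

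For the converse, assume $\I{C}$ is locally small and $\I{C}^{\core}$ is small, so $\I{C}_0\in\BB$ and hence $\I{C}_0\times\I{C}_0\in\BB$, as $\BB$ is closed under finite products inside $\BBB$ (proposition~\ref{prop:universeEnlargementStructurePreservation}). The key observation is that the two projections $\pr_0,\pr_1\colon\I{C}_0\times\I{C}_0\to\I{C}_0$ form a pair of objects of $\I{C}$ in context $\I{C}_0\times\I{C}_0$, and the defining pullback square of $\map{\I{C}}(\pr_0,\pr_1)$ has identity along its bottom edge $(\pr_0,\pr_1)\colon\I{C}_0\times\I{C}_0\to\I{C}_0\times\I{C}_0$; hence $\map{\I{C}}(\pr_0,\pr_1)\simeq\I{C}_1$. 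Since the context object $\I{C}_0\times\I{C}_0$ lies in $\BB$, proposition~\ref{prop:locallySmallMappingGroupoid} applies and yields $\I{C}_1\in\BB$.

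It then remains to propagate this to all levels via the Segal conditions: for $n\geq 2$ the comparison map $\I{C}_n\to\I{C}_1\times_{\I{C}_0}\cdots\times_{\I{C}_0}\I{C}_1$ is an equivalence by proposition~\ref{prop:ConditionsInternalCategories}, and since $\BB\hookrightarrow\BBB$ preserves small limits the iterated pullback of $\I{C}_0,\I{C}_1\in\BB$ is again in $\BB$; by full faithfulness of the embedding this forces $\I{C}_n\in\BB$. Thus every $\I{C}_n$ lies in $\BB$, so $\I{C}$ is small. I do not anticipate a genuine obstacle here: the argument is essentially formal, resting on the identification $\map{\I{C}}(\pr_0,\pr_1)\simeq\I{C}_1$ and the stability of $\BB\subseteq\BBB$ under finite limits. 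The only point requiring a little care is ensuring that $\I{C}_0\times\I{C}_0$ really belongs to $\BB$ before invoking local smallness — which is precisely why one must first extract $\I{C}_0\in\BB$ from the smallness of $\I{C}^{\core}$.
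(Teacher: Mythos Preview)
Your proof is correct and follows essentially the same approach as the paper: both identify $\I{C}_1$ with the mapping groupoid $\map{\I{C}}(\pr_0,\pr_1)$ for the tautological pair of objects in context $\I{C}_0\times\I{C}_0$, invoke local smallness to conclude $\I{C}_1\in\BB$, and then use the Segal conditions for the higher levels. Your write-up is slightly more explicit about why the reduction to levelwise smallness is valid and why $\I{C}_0\times\I{C}_0\in\BB$, but the argument is the same.
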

\begin{proof}
	The condition is clearly necessary, so let us assume that $\I{C}$ is locally small and that $\I{C}^{\core}$ is a small $\BB$-groupoid. By making use of the Segal conditions, we need only show that $\I{C}_1$ is contained in $\BB$. Since $\I{C}_0\times\I{C}_0$ is an object of $\BB$, this follows from the observation that $\I{C}_1$ is recovered as the mapping $\BB$-groupoid of the pair $(\pr_0,\pr_1)\colon\I{C}_0\times\I{C}_0\to\I{C}_0\times\I{C}_0$.
\end{proof}
\begin{lemma}
	\label{lem:essentialImageSmall}
	Let  $f\colon \I{C}\to\I{D}$ be a functor such that $\I{C}$ is small and $\I{D}$ is locally small. Then the essential image $\I{E}$ of $f$ is small.
\end{lemma}
\begin{proof}
	Being a full subcategory of $\I{D}$, the $\BB$-category $\I{E}$ is locally small (using Proposition~\ref{prop:characterisationInternalFullyFaithfulMappingGroupoids}), hence Proposition~\ref{prop:locallySmallVsSmall} implies that $\I{E}$ is small whenever $\I{E}^{\core}$ is a small $\BB$-groupoid. 
	By Corollary~\ref{cor:esoCoverCore}, $\I{E}_0$ is the image of the map $f_0\colon \I{C}_0\to\I{D}_0$, hence one finds $\I{E}_0\simeq \colim_{n} \I{C}_0\times_{\I{D}_0}\cdots\times_{\I{D}_0}\I{C}_0$. As $\Delta$ is a small $1$-category, it suffices to show that for each $n\geq 0$ the $(n+1)$-fold fibre product $\I{C}_0\times_{\I{D}_0}\cdots\times_{\I{D}_0}\I{C}_0\in\BBB$ is contained in $\BB$. We may identify this object as the pullback of the map $\I{C}_0^{n+1}\to\I{D}_0^{n+1}$ along the diagonal $\I{D}_0\to\I{D}_0^{n+1}$. Since the map $\I{D}_0\to\I{D}_n$ is a monomorphism in $\BBB$, we obtain a monomorphism
	\begin{equation*}
	\I{C}_0\times_{\I{D}_0}\cdots\times_{\I{D}_0}\I{C}_0\into \map{\I{D}}(\pr_0^\ast f_0,\dots,\pr_n^\ast f_n)
	\end{equation*}
	where $\pr_i\colon \I{C}_0^{n+1}\to \I{C}_0$ denotes the $i$th projection. Since $\I{D}$ is by assumption locally small, the codomain of this map is contained in $\BB$, hence the result follows.
\end{proof}
\begin{proposition}
	\label{prop:functorCategoryLocallySmall}
	For any small $\BB$-category $\I{C}$ and any locally small $\BB$-category $\I{D}$, the functor $\BB$-category $\iFun{\I{C}}{\I{D}}$ is locally small as well.
\end{proposition}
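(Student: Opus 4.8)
The plan is to reduce, in several stages, the local smallness of $\iFun{\I{C}}{\I{D}}$ to an elementary computation of the mapping groupoids of the arrow category $\I{E}^{\Delta^1}$ for $\I{E}$ locally small, passing through a base change reduction and a decomposition of $\I{C}$ into ``cells''.

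First I would carry out the base change reduction. For any $A\in\BB$ one has $\hat\pi_A^\ast\iFun{\I{C}}{\I{D}}\simeq\iFun{\hat\pi_A^\ast\I{C}}{\hat\pi_A^\ast\I{D}}$ by lemma~\ref{lem:etaleMapsAtomic}; moreover $\hat\pi_A^\ast\I{C}$ is small (it corresponds to $\I{C}\times A\to A$ under $\Cat(\Over{\BB}{A})\simeq\Over{\Cat(\BB)}{A}$ of proposition~\ref{prop:etaleBaseChangeInternalCategories}, and $\Cat(\BB)\hookrightarrow\Cat(\BBB)$ preserves products), and $\hat\pi_A^\ast\I{D}$ is locally small: since $\Tw$ is a levelwise reindexing one has $\Tw(\hat\pi_A^\ast\I{D})\simeq\hat\pi_A^\ast\Tw(\I{D})$, and base change along the left exact functor $\hat\pi_A^\ast$ preserves left fibrations (proposition~\ref{prop:fibrationsExternalCharacterization}) and their smallness (proposition~\ref{prop:characterizationSmallFibrations}, the fibres being unchanged). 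Since an object of $\iFun{\I{C}}{\I{D}}$ in context $A$ is a global object of $\iFun{\hat\pi_A^\ast\I{C}}{\hat\pi_A^\ast\I{D}}$ over $\Over{\BB}{A}$, by proposition~\ref{prop:locallySmallMappingGroupoid} it suffices to prove, for an \emph{arbitrary} $\infty$-topos $\BB$, that $\map{\iFun{\I{C}}{\I{D}}}(F,G)\in\BB$ for all global $F,G\colon 1\rightrightarrows\iFun{\I{C}}{\I{D}}$.

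Next I would decompose $\I{C}$. As $\Simp\BB$ is an accessible localisation of a presheaf $\infty$-topos on $\Delta\times\CC$, every simplicial object of $\BB$ is a small colimit of objects $\Delta^n\otimes A$ with $A\in\BB$; such objects are already categories in $\BB$ (the Segal and univalence conditions of proposition~\ref{prop:ConditionsInternalCategories} hold because colimits are universal in $\BB$), so $\I{C}\simeq\colim_{i\in I}\Delta^{n_i}\otimes A_i$ as a small colimit in $\Cat(\BB)$. Since $\Cat(\BB)\hookrightarrow\Cat(\BBB)$ preserves small colimits and $\iFun{-}{\I{D}}\colon\Cat(\BBB)^{\op}\to\Cat(\BBB)$ sends colimits to limits, computed levelwise, we get $\iFun{\I{C}}{\I{D}}_k\simeq\lim_i\iFun{\Delta^{n_i}\otimes A_i}{\I{D}}_k$ in $\BBB$. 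The mapping groupoid $\map{\iFun{\I{C}}{\I{D}}}(F,G)$ is the fibre of $\iFun{\I{C}}{\I{D}}_1\to\iFun{\I{C}}{\I{D}}_0\times\iFun{\I{C}}{\I{D}}_0$ over $(F,G)$, hence equals $\lim_i\map{\iFun{\Delta^{n_i}\otimes A_i}{\I{D}}}(F_i,G_i)$ for the induced compatible systems; since $\BB\hookrightarrow\BBB$ preserves small limits (proposition~\ref{prop:universeEnlargementStructurePreservation}), it is enough to treat the cells $\I{C}=\Delta^n\otimes A$.

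For a cell one has $\iFun{\Delta^n\otimes A}{\I{D}}\simeq\iFun{A}{\I{D}^{\Delta^n}}\simeq\hat\pi_{A\ast}\bigl((\hat\pi_A^\ast\I{D})^{\Delta^n}\bigr)$, using $\iFun{A}{-}\simeq\hat\pi_{A\ast}\hat\pi_A^\ast$ and that $\hat\pi_A^\ast$ commutes with powering by a constant $\infty$-category (lemma~\ref{lem:etaleMapsAtomic} together with $\hat\pi_A^\ast\const\simeq\const$). Global objects correspond by adjunction to global objects $\tilde F,\tilde G$ of $(\hat\pi_A^\ast\I{D})^{\Delta^n}$ over $\Over{\BB}{A}$, and since $\hat\pi_{A\ast}$ is applied levelwise, is left exact, and restricts to $\Over{\BB}{A}\to\BB$, it carries $\map{(\hat\pi_A^\ast\I{D})^{\Delta^n}}(\tilde F,\tilde G)$ to $\map{\iFun{\Delta^n\otimes A}{\I{D}}}(F,G)$; so it suffices that $(\hat\pi_A^\ast\I{D})^{\Delta^n}$ be locally small. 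Writing $\I{E}=\hat\pi_A^\ast\I{D}$ (locally small by the first step), $\I{E}^{\Delta^n}\simeq\I{E}^{\Delta^1}\times_{\I{E}}\cdots\times_{\I{E}}\I{E}^{\Delta^1}$, and a finite limit of locally small categories is locally small (in a finite limit of categories the mapping groupoid between compatible objects is the corresponding finite limit of mapping groupoids). For $\I{E}^{\Delta^1}$ itself, proposition~\ref{prop:simplicialPowering} and $\Delta^1\times\Delta^1\simeq\Delta^2\sqcup_{\Delta^1}\Delta^2$ give $(\I{E}^{\Delta^1})_0\simeq\I{E}_1$ and $(\I{E}^{\Delta^1})_1\simeq\I{E}_2\times_{\I{E}_1}\I{E}_2$, and combining with the Segal identification $\I{E}_2\simeq\I{E}_1\times_{\I{E}_0}\I{E}_1$ the mapping groupoid of $f\colon a\to b$ and $g\colon c\to d$ in $\I{E}^{\Delta^1}$ unwinds to $\map{\I{E}}(a,c)\times_{\map{\I{E}}(a,d)}\map{\I{E}}(b,d)$ (equivalently, this is the instance of Glasman's formula of lemma~\ref{lem:mappingGroupoidFunctorCategory} for the twisted arrow category of $[1]$), a finite limit of mapping groupoids of $\I{E}$, hence in the base topos.

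I expect the main obstacle to be organisational rather than conceptual: verifying cleanly that base change along an \'etale geometric morphism preserves both smallness and local smallness of (large) categories, that $\BB\hookrightarrow\BBB$ and $\Cat(\BB)\hookrightarrow\Cat(\BBB)$ interact with the relevant limits and colimits as used above, and that the mapping groupoid construction commutes with $\hat\pi_{A\ast}$ and with the levelwise limits in play. Each of these follows from results already available (propositions~\ref{prop:etaleBaseChangeInternalCategories}, \ref{prop:characterizationSmallFibrations}, \ref{prop:universeEnlargementStructurePreservation}, \ref{prop:locallySmallMappingGroupoid}, \ref{prop:fibrationsExternalCharacterization} and lemma~\ref{lem:etaleMapsAtomic}), but assembling them in the correct order is the delicate part of the argument.
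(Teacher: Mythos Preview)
Your argument is correct and genuinely different from the paper's. The paper does not decompose $\I{C}$ at all; instead it replaces $\I{D}$ locally by a small category. Given $(f,g)\colon A\rightrightarrows\iFun{\I{C}}{\I{D}}$, the paper forms the essential images $\I{E},\I{E}'\hookrightarrow\I{D}$ of $f,g\colon A\times\I{C}\to\I{D}$, shows (via the preceding lemma~\ref{lem:essentialImageSmall}) that these are small, and then takes the small full subcategory $\I{E}''\hookrightarrow\I{D}$ generated by $\I{E}\sqcup\I{E}'$. Since $(f,g)$ factors through the fully faithful inclusion $\iFun{\I{C}}{\I{E}''}\hookrightarrow\iFun{\I{C}}{\I{D}}$ and $\iFun{\I{C}}{\I{E}''}$ is small, the mapping groupoid lies in $\BB$.

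The paper's route is shorter and more conceptual, but it hinges on lemma~\ref{lem:essentialImageSmall}, whose proof already involves a nontrivial size estimate for the \v Cech nerve of $f_0$ inside $\I{D}_0$. Your approach trades that lemma for a longer chain of reductions (base change, cell decomposition, and the explicit arrow-category formula), but each step is elementary once the infrastructure on $\Cat(\BB)\hookrightarrow\Cat(\BBB)$ is in place. One small point worth tightening: when you write $\I{C}\simeq\colim_i\Delta^{n_i}\otimes A_i$ as a \emph{small} colimit, you should index over a small site for $\BB$ rather than over all of $\BB$ itself (compare the proof of lemma~\ref{lem:coYoneda}); this is harmless but should be said. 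Likewise, the claim that $\Cat(\BB)\hookrightarrow\Cat(\BBB)$ preserves small colimits is true but not stated explicitly in the paper; it follows because the localisation $\Simp{\BBB}\to\Cat(\BBB)$ restricts to the localisation $\Simp{\BB}\to\Cat(\BB)$ (both invert the same internally generated class), together with proposition~\ref{prop:universeEnlargementStructurePreservation}.
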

\begin{proof}
	Using Proposition~\ref{prop:locallySmallMappingGroupoid}, we need to show that for any pair $(f,g)\colon A\to \iFun{\I{C}}{\I{D}}\times\iFun{\I{C}}{\I{D}}$ the mapping $\BB$-groupoid $\map{\iFun{\I{C}}{\I{D}}}(f,g)$ is small. Let $\I{E}$ be the essential image of $f$ when viewed as a functor $A\times \I{C}\to\I{D}$, and let $\I{E}^\prime$ be the essential image of $g$ when viewed as a functor $A\times\I{C}\to\I{D}$. Then both $\I{E}$ and $\I{E}^\prime$ are small $\BB$-categories by Lemma~\ref{lem:essentialImageSmall}. By the same argument, the image $E^{\prime\prime}$ of the functor $E\sqcup E^{\prime}\to \I{D}$ that is induced by the two inclusions is a small $\BB$-category that embeds fully faithfully into $\I{D}$. By construction, the map $(f,g)\colon A\to \iFun{\I{C}}{\I{D}}\times\iFun{\I{C}}{\I{D}}$ factors through the full embedding $\iFun{\I{C}}{\I{E}^{\prime\prime}}\times\iFun{\I{C}}{\I{E}^{\prime\prime}}\into \iFun{\I{C}}{\I{D}}\times\iFun{\I{C}}{\I{D}}$. As $\iFun{\I{C}}{\I{E}^{\prime\prime}}$ is small, the (a priori large) mapping $\BB$-groupoid $\map{\iFun{\I{C}}{\I{D}}}(f,g)\simeq\map{\iFun{\I{C}}{\I{E}^{\prime\prime}}}(f,g)$ must be contained in $\BB$.
\end{proof}

If $\I{C}$ is an arbitrary locally small $\BB$-category, applying Theorem~\ref{thm:internalGrothendieck} to the left fibration $\Tw(\I{C})\to\I{C}^{\op}\times\I{C}$ gives rise the \emph{mapping $\BB$-groupoid functor}
\begin{equation*}
	\map{\I{C}}(-,-)\colon \I{C}^{\op}\times \I{C}\to \Univ.
\end{equation*}
By transposing across the adjunction $\I{C}^{\op}\times -\dashv \iFun{\I{C}^{\op}}{-}$, this functor determines the \emph{Yoneda embedding}
\begin{equation*}
	h\colon \I{C}\to \IPSh_{\Univ}(\I{C})=\iFun{\I{C}^{\op}}{\Univ}.
\end{equation*}
\begin{remark}
	\label{rem:mappingFunctorExplicit}
	Let $\I{C}$ be a locally small $\BB$-category, let $A\in\BB$ be an arbitrary object and let $c$ be an object in $\I{C}$ in context $A$. Let us furthermore fix a map $f\colon d\to e$ in $\I{C}$ in context $A$. Applying the mapping $\BB$-groupoid functor $\map{\I{C}}(-,-)$ to the pair $(\id_{c}, f)$  of maps in $\I{C}$ then results in a morphism $f_!\colon\map{\I{C}}(c,d)\to\map{\I{C}}(c,e)$ in $\Over{\BB}{A}$. Explicitly, this map is given by applying the chain of equivalences $\LFib^{\bU}(\Delta^1\otimes A)\simeq\Fun_{\BB}(\Delta^1\otimes A, \Univ)\simeq \Fun_{\SS}(\Delta^1,\Over{\BB}{A})$ to the left fibration $\I{P}\to \Delta^1\otimes A$ that arises as the pullback
	\begin{equation*}
		\begin{tikzcd}
			\I{P}\arrow[d]\arrow[r] & \Under{\I{C}}{c}\arrow[d]\\
			\Delta^1\otimes A\arrow[r, "{(\pr_1, f)}"] & A\times\I{C} 
		\end{tikzcd}
	\end{equation*}
	in which $\pr_1\colon \Delta^1\otimes A\to A$ denotes the projection. By construction of the equivalence of $\infty$-categories $\LFib^{\bU}(\Delta^1\otimes A)\simeq\Fun_{\SS}(\Delta^1,\Over{\BB}{A})$, one now sees that the map $f_!$ fits into the commutative diagram
	\begin{equation*}
		\begin{tikzcd}[column sep={4em,between origins}, row sep={3em,between origins}]
			& \map{\I{C}}(c,d)\arrow[rrrr, "f_!", bend left]\arrow[from=rr, "\simeq"']\arrow[dd] \arrow[dl]&& Z\arrow[rr]\arrow[dd]\arrow[dl] && \map{\I{C}}(c,e)\arrow[dd]\arrow[dl]\\
			(\Under{\I{C}}{c})_0\arrow[from=rr, "d_1"', crossing over, near start]\arrow[dd] && (\Under{\I{C}}{c})_1\arrow[rr, "d_0", crossing over, near end] \arrow[dd]&& (\Under{\I{C}}{c})_0\arrow[dd] &\\
			& A\arrow[from=rr, "\id"', near start]\arrow[dl, "d"] && A\arrow[rr, "\id", near end]\arrow[dl, "f"] && A\arrow[dl, "e"] \\
			\I{C}_0\arrow[from=rr, "d_1"'] && \I{C}_1\arrow[rr, "d_0"] && \I{C}_0. &
		\end{tikzcd}
	\end{equation*}
	Let $g\colon c\to d$ be an arbitrary map in $\I{C}$ in context $A$, and let $\sigma\colon\Delta^2\otimes A\to \I{C}$ be the $2$-morphism that is encoded by the commutative diagram
	\begin{equation*}
		\begin{tikzcd}
			c\arrow[r, "g"] \arrow[dr, "fg"'] & d\arrow[d, "f"]\\
			& e.
		\end{tikzcd}
	\end{equation*}
	Let furthermore $\tau\colon \Delta^1\otimes A\to\I{C}$ be the $2$-morphism that is determined by the commutative diagram
		\begin{equation*}
		\begin{tikzcd}
			c\arrow[r, "\id"] \arrow[dr, "fg"'] & c\arrow[d, "fg"]\\
			& e.
		\end{tikzcd}
	\end{equation*}
	On account of the decomposition $\Delta^1\times\Delta^1\simeq\Delta^2\sqcup_{\Delta^1}\Delta^2$, the pair $(\tau, \sigma)$ gives rise to a map $(\Delta^1\times\Delta^1)\otimes A\to\I{C}$ that by construction defines a section $A\to Z$. Furthermore, the composition $A\to Z\simeq \map{\I{C}}(c,d)$ recovers $g$ and the composition $A\to Z\to\map{\I{C}}(c,e)$ recovers $fg$. Therefore, the map $f_!$ acts by sending a map $g\colon c\to d$ to the composition $fg\colon c\to d\to e$. By a dual argument, the map $f^\ast\colon\map{\I{C}}(e,c)\to\map{\I{C}}(d,c)$ that is determined by applying the mapping $\BB$-groupoid functor to the pair $(f,\id_{c})$ sends a map $g\colon e\to c$ to the composition $gf\colon d\to e\to c$.
\end{remark}
If $\I{C}$ is a $\BB$-category, let us denote by $\ev\colon \I{C}^{\op}\times \IPSh_{\Univ}(\I{C})\to\Univ$ the evaluation functor, i.e.\  the counit of the adjunction $\I{C}^{\op}\times- \dashv \iFun{\I{C}^{\op}}{-}$.
\begin{theorem}[Yoneda's lemma]
	\label{thm:YonedaLemma}
	For any $\BB$-category $\I{C}$, there is a commutative diagram
	\begin{equation*}
		\begin{tikzcd}
			\I{C}^{\op}\times \IPSh_{\Univ}(\I{C})\arrow[dr, "\ev"'] 
			\arrow[r, "h\times \id"] & {\IPSh_{\Univ}(\I{C})^{\op}}\times\IPSh_{\Univ}(\I{C})\arrow[d, "{\map{\IPSh_{\Univ}(\I{C})}(-,-)}"]  \\
			& \Univ
		\end{tikzcd}
	\end{equation*}
	in $\Cat(\BBB)$.
\end{theorem}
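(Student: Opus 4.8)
The plan is to use the internal Grothendieck construction of Theorem~\ref{thm:internalGrothendieck} to translate the desired identity of $\Univ$-valued functors into an equivalence of left fibrations, and to verify the latter fibrewise by means of Proposition~\ref{prop:equivalenceLeftFibrationsFibrewise}. Set $\I{E}=\I{C}^{\op}\times\iFun{\I{C}^{\op}}{\Univ}$. Since $\I{C}$ is small and $\Univ$ is locally small (Example~\ref{ex:universeLocallySmall}), Proposition~\ref{prop:functorCategoryLocallySmall} shows that $\iFun{\I{C}^{\op}}{\Univ}$ is locally small, so the left fibration $\Tw(\iFun{\I{C}^{\op}}{\Univ})\to\iFun{\I{C}^{\op}}{\Univ}^{\op}\times\iFun{\I{C}^{\op}}{\Univ}$ is small; by the construction of the mapping groupoid functor and the naturality of the equivalence in Theorem~\ref{thm:internalGrothendieck}, the composite $\map{\iFun{\I{C}^{\op}}{\Univ}}(-,-)\circ(h\times\id)\colon\I{E}\to\Univ$ is classified by the small left fibration $(h\times\id)^{\ast}\Tw(\iFun{\I{C}^{\op}}{\Univ})\to\I{E}$. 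On the other hand, Remark~\ref{rem:universalFibration} shows that $\ev$ is classified by the left fibration $\ev^{\ast}\UnivHat\to\I{E}$. Since $\Fun_{\BBB}(\I{E},\Univ)\simeq\LFib^{\bU}(\I{E})$, exhibiting the commutative diagram in the statement amounts to exhibiting an equivalence between these two small left fibrations over $\I{E}$.

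To build the comparison map, I would unwind the first (contravariant) variable: restricting $\Tw(\iFun{\I{C}^{\op}}{\Univ})$ along an object $h(c)$ gives, by Proposition~\ref{prop:fibresTwistedArrowFibration}, the slice $\Under{\iFun{\I{C}^{\op}}{\Univ}}{h(c)}\to\iFun{\I{C}^{\op}}{\Univ}$, and Corollary~\ref{cor:factorisationInternalObject} identifies this with the factorisation of $h(c)$ into an initial functor followed by a left fibration, that is, with the reflection of $h(c)$ into $\Over{\LFib}{\iFun{\I{C}^{\op}}{\Univ}}$, the unit being the canonical section $\id_{h(c)}$. Precomposition with this unit yields a natural map from $(h\times\id)^{\ast}\Tw(\iFun{\I{C}^{\op}}{\Univ})$ to $\ev^{\ast}\UnivHat$; on objects $(c,F)$ this is the internal avatar of ``evaluate a natural transformation $h(c)\to F$ on the identity morphism of the representable $h(c)$''. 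Spelling out this recipe as an honest morphism in $\LFib^{\bU}(\I{E})$, equivalently as a natural transformation $\map{\iFun{\I{C}^{\op}}{\Univ}}(h(-),-)\Rightarrow\ev$, is the most delicate bookkeeping in the proof.

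It then remains to check that the comparison map is an equivalence, which by Proposition~\ref{prop:equivalenceLeftFibrationsFibrewise} can be tested on the fibre over each object $(c,F)\colon A\to\I{E}$; using Proposition~\ref{prop:etaleBaseChangeInternalCategories} and Lemma~\ref{lem:baseChangeUniverse} to base change along $\pi_A$ one reduces to the case $A\simeq 1$. Via Theorem~\ref{thm:internalGrothendieck} the object $F$ corresponds to a small left fibration $\I{P}\to\I{C}^{\op}$ with $\I{P}\vert_c\simeq F(c)$ (Remark~\ref{rem:universalFibration}), while $h(c)$ corresponds to $\Under{\I{C}^{\op}}{c}\to\I{C}^{\op}$; the fibre of the comparison map over $(c,F)$ then becomes the map
\begin{equation*}
\map{\Over{\LFib}{\I{C}^{\op}}}(\Under{\I{C}^{\op}}{c},\I{P})\longrightarrow\map{\Over{\Cat(\BBB)}{\I{C}^{\op}}}(c,\I{P})\simeq \I{P}\vert_c\simeq F(c)
\end{equation*}
induced by restriction along the initial section $\id_c\colon 1\to\Under{\I{C}^{\op}}{c}$. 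Since $(\text{initial functors},\LFib)$ is the factorisation system of Proposition~\ref{prop:factorisationSystemInternallyGenerated} generated by $d^1\colon\Delta^0\into\Delta^1$, the inclusion $\Over{\LFib}{\I{C}^{\op}}\hookrightarrow\Over{\Cat(\BBB)}{\I{C}^{\op}}$ is reflective with reflection given by this factorisation, and by Corollary~\ref{cor:factorisationInternalObject} the reflection of $c\colon 1\to\I{C}^{\op}$ is precisely $\Under{\I{C}^{\op}}{c}\to\I{C}^{\op}$ with unit $\id_c$; hence the displayed map is the canonical equivalence expressing that mapping into a local object only sees the reflection. This completes the fibrewise verification and therefore the proof.

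I expect the main obstacle to be the second paragraph — promoting the conceptual ``evaluate at the identity of the representable'' description to a genuine natural transformation of $\Univ$-valued functors (equivalently, a morphism of the classifying left fibrations over $\I{E}$) that is visibly compatible with the fibrewise restriction used in the last step. The remaining ingredients are either direct invocations of the Grothendieck construction together with the slice and twisted-arrow machinery developed earlier, or the purely formal fact that a reflection unit induces an equivalence on mapping groupoids into local objects.
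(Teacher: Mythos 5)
Your overall plan --- reduce via Theorem~\ref{thm:internalGrothendieck} to an equivalence of small left fibrations over $\I{E}=\I{C}^{\op}\times\iFun{\I{C}^{\op}}{\Univ}$ and check it fibrewise --- agrees with the paper's. But there is a genuine gap at precisely the step you flag: you never construct the comparison map. Describing it conceptually as ``precompose with $\id_{h(c)}$'' does not produce an honest morphism in $\LFib^{\bU}(\I{E})$, and the bookkeeping you defer is exactly the content of the theorem. The paper circumvents this by a different architecture: instead of constructing a direct comparison, it produces \emph{two} initial functors $\Tw(\I{C})\to\int\ev$ and $\Tw(\I{C})\to(h\times\id)^{*}\Tw(\iFun{\I{C}^{\op}}{\Univ})$ --- Lemma~\ref{lem:YonedaCoreLemmaA} reduces initiality to a fibrewise check, and Lemma~\ref{lem:YonedaCoreLemmaB} is the decisive input identifying an initial object in the pullback relating a representable presheaf to the universal left fibration. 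The comparison map then exists for free as the unique lift against the left fibration, and is an equivalence because it is simultaneously initial and a left fibration (Proposition~\ref{prop:propertiesFactorisationSystems}). This is what makes the proof tractable; your route asks for a construction the paper is carefully designed to avoid.

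A second gap lies in your fibrewise verification. You write the fibre of the comparison over $(c,F)$ as a map of \emph{external} mapping $\infty$-groupoids $\map{\Over{\LFib}{\I{C}^{\op}}}(\Under{\I{C}^{\op}}{c},\I{P})\to\I{P}\vert_c$, but the actual fibre of $(h\times\id)^{*}\Tw(\iFun{\I{C}^{\op}}{\Univ})$ over a global object $(c,F)\colon 1\to\I{E}$ is the \emph{internal} mapping groupoid $\map{\iFun{\I{C}^{\op}}{\Univ}}(h(c),F)\in\BBB$. The formal ``a reflection unit induces an equivalence on maps into local objects'' argument lives at the external level; upgrading it to the internal identification you need is not automatic and would itself require most of the machinery. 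The paper's fibrewise comparison stays internal throughout, using the explicit identifications supplied by Lemma~\ref{lem:YonedaCoreLemmaB}, Corollary~\ref{cor:PrecompositionRepresentable}, and Proposition~\ref{prop:fibresTwistedArrowFibration}; it also has to handle the base change subtleties via Lemma~\ref{lem:universalFibrationBaseChange} and Remark~\ref{rem:YonedaCoreLemmaA}, which your reduction to $A\simeq 1$ glosses over.
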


The proof of Theorem~\ref{thm:YonedaLemma} employs a strategy that is similar to the one used by Cisinski in~\cite{cisinski2019a} for a proof of Yoneda's lemma for $\infty$-categories. We begin with the following lemma:
\begin{lemma}
	\label{lem:YonedaCoreLemmaA}
	Let
	\begin{equation*}
		\begin{tikzcd}
			\I{P}\arrow[d, "p"]\arrow[r, "f"] & \I{Q}\arrow[d, "q"]\\
			\I{C}\times\I{C}\arrow[r, "\id\times g"] & \I{C}\times \I{D}
		\end{tikzcd}
	\end{equation*}
	be a commutative diagram in $\Cat(\BB)$ such that the maps $p$ and $q$ are left fibrations, and suppose that for any object $c\colon A\to \I{C}$ the induced map $f\vert_c\colon \I{P}\vert_c\to \I{Q}\vert_c$ is initial. Then $f$ is initial.
\end{lemma}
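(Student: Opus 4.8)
The plan is to recast the statement in the language of covariant equivalences and then reduce it to the hypothesis by slicing the base one coordinate at a time.

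First, since $q\colon\I{Q}\to\I{C}\times\I{D}$ is a left fibration, remark~\ref{rem:covariantEquivalencesExplicitly} shows that $f$ is initial precisely if it is a covariant equivalence over $\I{C}\times\I{D}$. By proposition~\ref{prop:characterisationCovariantEquivalences} this holds if and only if for every object $(c,d)\colon A\to\I{C}\times\I{D}$ (in context $A\in\BB$) the groupoidification $(\Over{f}{(c,d)})^{\gp}$ of the induced functor $\Over{f}{(c,d)}\colon\Over{\I{P}}{(c,d)}\to\Over{\I{Q}}{(c,d)}$ is an equivalence in $\Grpd(\BB)$. So I would fix such an object and write $c\colon A\to\I{C}$ and $d\colon A\to\I{D}$ for its two components, the goal being to deduce the equivalence $(\Over{f}{(c,d)})^{\gp}$ from the initiality of $f\vert_c$.

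The heart of the argument is to perform the slicing over $(c,d)$ coordinatewise. Under the identification $\Over{(\I{C}\times\I{D})}{(c,d)}\simeq\Over{\I{C}}{c}\times_A\Over{\I{D}}{d}$, the structure map to $\I{C}\times\I{D}$ factors through the first--coordinate projection $(\pi_c)_!\colon\Over{\I{C}}{c}\to\I{C}$ and the second--coordinate projection $(\pi_d)_!\colon\Over{\I{D}}{d}\to\I{D}$, both of which are right fibrations (the dual of corollary~\ref{cor:sliceProjectionFibration}) and hence proper (proposition~\ref{prop:rightFibrationProper}). Base change along a proper map preserves covariant equivalences (proposition~\ref{prop:properFunctorCovariantEquivalences}) and, by the very definition of properness, also preserves initial maps; moreover $\Over{\I{C}}{c}$ carries the final object $\id_c$ in context $A$ (the dual of proposition~\ref{prop:initialityCanonicalSection}), so that for any left fibration over $\Over{\I{C}}{c}$ the inclusion of the fibre over $\id_c$ into the total space is final, being a base change of the final map $\id_c\colon A\to\Over{\I{C}}{c}$ along a smooth map (left fibrations are smooth by the dual of proposition~\ref{prop:rightFibrationProper}); consequently groupoidification is insensitive to that slice (remark~\ref{rem:localEquivalences}, applied to the factorisation system of final maps and right fibrations, which also sends initial maps to equivalences). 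Putting these facts together, I would check that $(\Over{f}{(c,d)})^{\gp}$ is an equivalence as soon as the analogous statement holds for the base change of $f$ along $c\times\id_{\I{D}}\colon A\times\I{D}\to\I{C}\times\I{D}$, which is exactly $f\vert_c$; and that analogous statement is guaranteed because $f\vert_c$ is initial by hypothesis, hence a covariant equivalence over $A\times\I{D}$, so that by proposition~\ref{prop:characterisationCovariantEquivalences} its slice--groupoidifications $(\Over{(f\vert_c)}{(a,d')})^{\gp}$ are equivalences for every object $(a,d')$ of $A\times\I{D}$.

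The main obstacle is this last transport step: one must carefully identify the three (families of) slice categories in play — $\Over{\I{P}}{(c,d)}$, the intermediate slice obtained by base--changing only the first coordinate along $(\pi_c)_!$, and the slices of $\I{P}\vert_c$ — together with their $\I{Q}$--counterparts, and verify that the comparison maps among them are assembled from final maps (arising from the final object $\id_c$ of $\Over{\I{C}}{c}$) and base changes along proper right fibrations (arising from $(\pi_c)_!$ and $(\pi_d)_!$), none of which disturbs the covariant equivalences or groupoidifications under consideration. This is a bookkeeping-heavy computation with iterated pullbacks of left and right fibrations; the conceptual content is that slicing a product over a pair of objects decomposes coordinate by coordinate, and that after slicing the first factor of the base effectively drops out because it acquires a final object.
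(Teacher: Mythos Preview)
Your approach is correct and uses the same ingredients as the paper's proof, but the organisation differs. The paper does not go straight to the pair $(c,d)$; instead it first slices only over the second coordinate, i.e.\ pulls back along $\id\times(\pi_d)_!\colon\I{C}\times\Over{\I{D}}{d}\to\I{C}\times\I{D}$ to obtain $\Over{f}{d}$, and reduces to showing that $\Over{f}{d}$ is a covariant equivalence over $\I{C}$ (the passage from this to $(\Over{f}{(c,d)})^{\gp}$ being an equivalence is justified via the right fibration $\Over{(\I{C}\times\I{D})}{(c,d)}\to\Over{(\I{C}\times\I{D})}{c\times d}$). For this reduced claim the paper then invokes proposition~\ref{prop:initialityFibrewiseSmooth}: the structure maps $\Over{\I{P}}{d}\to\I{C}$ and $\Over{\I{Q}}{d}\to\I{C}$ are smooth (being composites of left fibrations with the smooth projection $\I{C}\times\Over{\I{D}}{d}\to\I{C}$), so it suffices that each fibre $(\Over{f}{d})\vert_c$ have an equivalence as groupoidification; and that fibre is a pullback of $f\vert_c$ along a proper map, hence initial.

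What you sketch in your middle paragraph---the finality of the inclusion of the fibre over $\id_c$ into the slice over $\Over{\I{C}}{c}$, via smoothness of left fibrations---is exactly the content of proposition~\ref{prop:initialityFibrewiseSmooth} specialised to this situation, so your ``bookkeeping-heavy computation'' would in effect be reproving that proposition. Invoking it directly, and slicing first over $d$ alone rather than over the pair $(c,d)$, makes the argument considerably shorter and avoids the delicate comparison of $\Over{f}{(c,d)}$ with slices of $f\vert_c$ that you flag as the main obstacle.
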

\begin{proof}
	By Remark~\ref{rem:covariantEquivalencesExplicitly}, $q$ being a left fibration implies that it suffices to show that $f$ is a covariant equivalence over $\I{C}\times\I{D}$. Using Proposition~\ref{prop:characterisationCovariantEquivalences}, it is moreover enough to show that for any object $d\colon A\to \I{D}$ the induced map $\Over{f}{d}$ (that is obtained by pulling back $f$ along $\id\times(\pi_d)_!\colon \I{C}\times\Over{\I{D}}{d}\to\I{C}\times\I{D}$) is a covariant equivalence over $\I{C}$. In fact, if this is the case, then Proposition~\ref{prop:characterisationCovariantEquivalences} implies that for any object $c\colon A\to \I{C}$ the induced map $\Over{(\Over{f}{d})}{c}$ becomes an equivalence after applying the groupoidification functor. Now it is straightfoward to see that this map is equivalently given by the pullback $\Over{f}{(c,d)}$ of $f$ along the right fibration $\Over{(\I{C}\times\I{D})}{(c,d)}\to \I{C}\times\I{D}$ that is determined by the object $(c,d)\colon  A\to \I{C}\times\I{D}$,
	hence another application of Proposition~\ref{prop:characterisationCovariantEquivalences} implies that $f$ is a covariant equivalence over $\I{C}\times\I{D}$.
	
	Now since the projections $\I{C}\times \Over{\I{C}}{d}\to \I{C}$ and $\I{C}\times \Over{\I{D}}{d}\to \I{C}$ are smooth, the diagonal maps in the induced commutative diagram
	\begin{equation*}
		\begin{tikzcd}[column sep=small]
			\Over{\I{P}}{d}\arrow[dr] \arrow[rr, "\Over{f}{d}"] && \Over{\I{Q}}{d}\arrow[dl] \\
			& \I{C}
		\end{tikzcd}
	\end{equation*}
	are smooth (as left fibrations are smooth by the dual of Proposition~\ref{prop:rightFibrationProper}). As the induced map $\left(\Over{f}{d}\right)\vert_c$ on the fibres over $c\colon A\to \I{C}$ is a pullback of the initial functor $f\vert_c$ along a proper map, this functor must be initial as well, hence we may apply~\ref{prop:initialityFibrewiseSmooth} to deduce that $\Over{f}{d}$ is a covariant equivalence over $\I{C}$, as required.
\end{proof}
\begin{remark}
	\label{rem:YonedaCoreLemmaA}
	In the situation of Lemma~\ref{lem:YonedaCoreLemmaA}, let $\pi_A^\ast (f)\vert_c\colon \pi_A^\ast\I{P}\vert_c\to\pi_A^\ast\I{Q}$ be the functor in $\Cat(\Over{\BB}{A})$ that arises as the fibre of the map $\pi_A^\ast(f)\colon\pi_A^\ast(\I{P})\to\pi_A^\ast(\I{Q})$ over the object $c\colon 1\to\pi_A^\ast\I{C}$ that corresponds to $c\colon A\to\I{C}$ by transposition. Then $f\vert_c$ is obtained as the image of $\pi_A^\ast(f)\vert_c$ under the forgetful functor $(\pi_A)_!$, hence Remark~\ref{rem:initialObjectequivalentCondition} implies that $f\vert_c$ is initial if and only if $\pi_A^\ast(f)\vert_c$ is initial. Hence Lemma~\ref{lem:YonedaCoreLemmaA} implies that $f$ is an initial map if and only if for every object $A\in\BB$ the fibre of $\pi_A^\ast(f)$ over every \emph{global} object $c\colon 1\to\pi_A^\ast\I{C}$ is initial.
\end{remark}

\begin{lemma}
	\label{lem:BCComma}
	For any $A\in\BB$ there is a natural equivalence 
	\begin{equation*}
		\pi_A^\ast(\Comma{-}{-}{-})\simeq \Comma{\pi_A^\ast(-)}{\pi_A^\ast(-)}{\pi_A^\ast(-)}
	\end{equation*}
	of functors $\Fun(\Lambda^2_2, \Cat(\BB))\to\Fun(\Lambda^2_0,\Cat(\Over{\BB}{A}))$. Dually, there is a natural equivalence of functors
	\begin{equation*}
			\pi_A^\ast(\Cocomma{-}{-}{-})\simeq \Cocomma{\pi_A^\ast(-)}{\pi_A^\ast(-)}{\pi_A^\ast(-)}.
	\end{equation*}
\end{lemma}
\begin{proof}
	As $\pi_A^\ast$ commutes with both limits and colimits and on account of Lemma~\ref{lem:BCFunctorCategory}, this is immediate.
\end{proof}
\begin{corollary}
	\label{cor:BCUniversalFibration}
	For any $A\in\BB$ the functor $\pi_A^\ast$ carries the universal left fibration in $\BB$ to the universal left fibration in $\Over{\BB}{A}$.
\end{corollary}
\begin{proof}
	Combine Remark~\ref{rem:BCUniverse} with Lemma~\ref{lem:BCComma} and the evident fact that $\pi_A^\ast$ carries $1_{\Univ}$ to the final object $1_{\Univ[\Over{\BB}{A}]}$.
\end{proof}

\begin{lemma}
	\label{lem:BCTw}
	For any $\BB$-category $\I{C}$, there is a canonical commutative square
	\begin{equation*}
	\begin{tikzcd}
	\Tw(\pi_A^\ast\I{C})\arrow[d, "p_{\pi_A^\ast\I{C}}"] \arrow[r, "\simeq"] & \pi_A^\ast\Tw(\I{C})\arrow[d, "\pi_A^\ast(p_{\I{C}})"]\\
	\pi_A^\ast\I{C}^{\op}\times\pi_A^\ast\I{C}\arrow[r, "\simeq"] & \pi_A^\ast(\I{C}^{\op}\times\I{C})
	\end{tikzcd}
	\end{equation*}
	and therefore an equivalence of bifunctors $\pi_A^\ast\map{\I{C}}\simeq \map{\pi_A^\ast\I{C}}$.
\end{lemma}
\begin{proof}
	The first statement follows immediately from Lemma~\ref{lem:BCFunctorCategory}. The second statement follows from the first and Corollary~\ref{cor:BCUniversalFibration}.
\end{proof}

\begin{lemma}
	\label{lem:BCYoneda}
	For any $\BB$-category $\I{C}$ and any object $A\in\BB$, there is a commutative diagram
	\begin{equation*}
		\begin{tikzcd}
		\pi_A^\ast\I{C}\arrow[r, "\pi_A^\ast h_{\I{C}}"]\arrow[dr, "h_{\pi_A^\ast\I{C}}"'] & \pi_A^\ast\IPSh_{\Univ}(\I{C})\arrow[d, "\simeq"]\\
		& \IPSh_{\Univ[\Over{\BB}{A}]}(\pi_A^\ast\I{C}).
		\end{tikzcd}
	\end{equation*}
\end{lemma}
\begin{proof}
	By construction, the Yoneda embedding $h_{\I{C}}$ is the image of $\map{\I{C}}$ under the equivalence
	\begin{equation*}
	\map{\Cat(\BBB)}(\I{C}^{\op}\times\I{C},\Univ)\simeq \map{\Cat(\BBB)}(\I{C},\IPSh_{\Univ}(\I{C})).
	\end{equation*}
	Moreover, if $\epsilon\colon (\pi_A)_!\pi_A^\ast\to\id_{\Cat(\BB)}$ denotes the adjunction counit, one can construct a commutative diagram
	\begin{equation*}
	\begin{tikzcd}
		\map{\Cat(\BBB)}(\I{C}, \IPSh_{\Univ}(\I{C}))\arrow[r, "\simeq"]\arrow[d, "\pi_A^\ast"] & \map{\Cat(\BBB)}(\I{C}^{\op}\times\I{C},\Univ)\arrow[d, "(\id\times\epsilon_{\I{C}})^\ast"]\\
		\map{\Cat(\Over{\BBB}{A})}(\pi_A^\ast\I{C}, \pi_A^\ast\IPSh_{\Univ}(\I{C})) \arrow[r, "\simeq"] \arrow[d, "\simeq"]& \map{\Cat(\BBB)}(\I{C}^{\op}\times(\pi_A)_!\pi_A^\ast\I{C}, \Univ)\arrow[d, "\simeq"]\\
		\map{\Cat(\Over{\BBB}{A})}(\pi_A^\ast\I{C}, \IPSh_{\Univ[\Over{\BB}{A}]}(\pi_A^\ast\I{C}))\arrow[r, "\simeq"] & \map{\Cat(\BBB)}((\pi_A)_!(\pi_A^\ast\I{C}^{\op}\times_A\pi_A^\ast\I{C}), \Univ)
	\end{tikzcd}
	\end{equation*}
	(cf.\ Lemma~\ref{lem:BCFunctorCategory} and Remark~\ref{rem:BCUniverse}). By chasing through this diagram, it therefore suffices to show that the bifunctor $\map{\I{C}}(-, \epsilon_{\I{C}}(-))$ is equivalent to the composition
	\begin{equation*}
	\I{C}^{\op}\times (\pi_A)_!\pi_A^\ast\I{C}\xrightarrow{\simeq} (\pi_A)_!(\pi_A^\ast\I{C}^{\op}\times_A\pi_A^\ast\I{C})\xrightarrow{(\pi_A)_!\map{\pi_A^\ast\I{C}}} (\pi_A)_!\pi_A^\ast\Univ\xrightarrow{\epsilon_{\Univ}} \Univ,
	\end{equation*}
	which follows from Lemma~\ref{lem:BCTw}.
\end{proof}

\begin{lemma}
	\label{lem:BCEvaluation}
	For any two $\BB$-categories $\I{C}$ and $\I{D}$, there is a commutative diagram
	\begin{equation*}
	\begin{tikzcd}
	 \iFun{\pi_A^\ast\I{C}}{\pi_A^\ast\I{D}}\times\pi_A^\ast\I{C}\arrow[dr, "\ev_{\pi_A^\ast\I{C}}"']\arrow[r, "\simeq"] & \pi_A^\ast(\iFun{\I{C}}{\I{D}}\times\I{C})\arrow[d, "\pi_A^\ast\ev_{\I{C}}"]\\
	 & \pi_A^\ast\I{D}.
	\end{tikzcd}
	\end{equation*} 
\end{lemma}
\begin{proof}
	If $\epsilon$ denotes the counit of the adjunction $(\pi_A)_!\dashv\pi_A^\ast$ the transpose of the functor $\pi_A^\ast\ev_{\I{C}}$ is given by the composition
	\begin{equation*}
		(\pi_A)_!\pi_A^\ast(\iFun{\I{C}}{\I{D}}\times\I{C})\xrightarrow{\epsilon} \iFun{\I{C}}{\I{D}}\times\I{C}\xrightarrow{\ev_{\I{C}}} \I{D}.
	\end{equation*}
	Note that functoriality of $\epsilon$ gives rise to a commutative diagram
	\begin{equation*}
	\begin{tikzcd}
		(\pi_A)_!\pi_A^\ast(\iFun{\I{C}}{\I{D}}\times\I{C})\arrow[r, "\simeq"]\arrow[d, "\epsilon"] & (\pi_A)_!(\pi_A^\ast\iFun{\I{C}}{\I{D}}\times_A\pi_A^\ast\I{C})\arrow[d, "\simeq"]\\
		\iFun{\I{C}}{\I{D}}\times\I{C}\arrow[from=r, "\epsilon\times\id_{\I{C}}"] & (\pi_A)_!\pi_A^\ast\iFun{\I{C}}{\I{D}}\times\I{C}.
	\end{tikzcd}
	\end{equation*}
	As a consequence, by chasing the identity $\id_{\iFun{\I{C}}{\I{D}}}$ through the upper horizontal map and the right column in the commutative diagram
	\begin{equation*}
	\begin{tikzcd}
		\map{\Cat(\BB)}(\iFun{\I{C}}{\I{D}},\iFun{\I{C}}{\I{D}})\arrow[r, "\simeq"]\arrow[d, "\pi_A^\ast"] & \map{\Cat(\BB)}(\iFun{\I{C}}{\I{D}}\times\I{C},\I{D})\arrow[d, "(\epsilon\times\id)^\ast"]\\
		\map{\Cat(\Over{\BB}{A})}(\pi_A^\ast\iFun{\I{C}}{\I{D}},\pi_A^\ast\iFun{\I{C}}{\I{D}})\arrow[d, "\simeq"]\arrow[r,"\simeq"] & \map{\Cat(\BB)}((\pi_A)_!\pi_A^\ast\iFun{\I{C}}{\I{D}}\times\I{C},\I{D})\arrow[d, "\simeq"]\\
		\map{\Cat(\Over{\BB}{A})}(\pi_A^\ast\iFun{\I{C}}{\I{D}},\iFun{\pi_A^\ast\I{C}}{\pi_A^\ast\I{D}})\arrow[r, "\simeq"]\arrow[d, "\simeq"] & \map{\Cat(\BB)}((\pi_A)_!(\pi_A^\ast\iFun{\I{C}}{\I{D}}\times_A\pi_A^\ast\I{C}),\I{D})\arrow[d, "\simeq"]\\
		\map{\Cat(\Over{\BB}{A})}(\iFun{\pi_A^\ast\I{C}}{\pi_A^\ast\I{D}},\iFun{\pi_A^\ast\I{C}}{\pi_A^\ast\I{D}})\arrow[r, "\simeq"] & \map{\Cat(\BB)}((\pi_A)_!(\iFun{\pi_A^\ast\I{C}}{\pi_A^\ast\I{D}}\times_A\pi_A^\ast\I{C}),\I{D})
	\end{tikzcd}
	\end{equation*}
	(see Lemma~\ref{lem:BCFunctorCategory}), we end up with the transpose of the composition
	\begin{equation*}
	\iFun{\pi_A^\ast\I{C}}{\pi_A^\ast\I{D}}\times\pi_A^\ast\I{C}\xrightarrow{\simeq} \pi_A^\ast(\iFun{\I{C}}{\I{D}}\times\I{C})\xrightarrow{\pi_A^\ast\ev_{\I{C}}} \pi_A^\ast\I{D}.
	\end{equation*}
	On the other hand, chasing $\id_{\iFun{\I{C}}{\I{D}}}$ through the left column and the lower horizontal map in the above diagram yields the transpose of $\ev_{\pi_A^\ast\I{C}}$, hence these two maps must be equivalent.
\end{proof}

We are finally ready to prove Yoneda's lemma for $\BB$-categories:
\begin{proof}[{Proof of Theorem~\ref{thm:YonedaLemma}}]
	Let $\int \ev\to \I{C}^{\op}\times \IPSh_{\Univ}(\I{C})$ be the left fibration that classifies the evaluation functor $\ev$. Using Theorem~\ref{thm:internalGrothendieck}, it suffices to show that there is a cartesian square
	\begin{equation*}
		\begin{tikzcd}
			\int\ev\arrow[d]\arrow[r] & \Tw(\IPSh_{\Univ}(\I{C}))\arrow[d]\\
			\I{C}^{\op}\times \IPSh_{\Univ}(\I{C})\arrow[r, "h\times\id"] & {\IPSh_{\Univ}(\I{C})}^{\op}\times \IPSh_{\Univ}(\I{C}).
		\end{tikzcd}
	\end{equation*}
	Note that by definition of the evaluation functor, there is a cartesian square
	\begin{equation*}
		\begin{tikzcd}
			\Tw(\I{C})\arrow[d]\arrow[r, "f"] & \int \ev\arrow[d]\\
			\I{C}^{\op}\times\I{C}\arrow[r, "\id\times h"] &\I{C}^{\op}\times \IPSh_{\Univ}(\I{C}).
		\end{tikzcd}
	\end{equation*}
	Moreover, using functoriality of the twisted arrow $\BB$-category construction, we may construct a commutative diagram
	\begin{equation*}
		\begin{tikzcd}
			\Tw(\I{C})\arrow[d]\arrow[r, "g"]\arrow[rr, bend left, "\Tw(h)"] & \I{P}\arrow[d]\arrow[r] & \Tw({\IPSh_{\Univ}(\I{C})})\arrow[d]\\
			\I{C}^{\op}\times\I{C}\arrow[r, "\id\times h"]\arrow[rr, bend right, "h\times h"] &\I{C}^{\op}\times \IPSh_{\Univ}(\I{C}) \arrow[r, "h\times\id"] & {\IPSh_{\Univ}(\I{C})}^{\op}\times \IPSh_{\Univ}(\I{C})
		\end{tikzcd}
	\end{equation*}
	in which the right square is cartesian. As a consequence, one obtains a commutative square
	\begin{equation*}
		\begin{tikzcd}
			\Tw(\I{C})\arrow[d, "g"]\arrow[r, "f"] & \int \ev\arrow[d]\\
			\I{P}\arrow[r] & \I{C}^{\op}\times \IPSh_{\Univ}(\I{C}).
		\end{tikzcd}
	\end{equation*}
	To complete the proof, it therefore suffices to produce a lift $\I{P}\to \int \ev$ in the previous square and to show that this map is an equivalence. This is possible once we verify that the two maps $f$ and $g$ are initial.
	
	In order to show that the map $g$ is initial, note that we are in the situation of Lemma~\ref{lem:YonedaCoreLemmaA}, which means that it suffices to show that for any object $c\colon A\to \I{C}$ the induced functor $g\vert_c\colon \Tw(\I{C})\vert_c\to \I{P}\vert_c$ is initial. By construction of $\I{P}$, this map is equivalent to the map $\Tw(h)\vert_c\colon \Tw(\I{C})\vert_c\to \Tw(\IPSh_{\Univ}(\I{C}))\vert_{h(c)}$, and by using Proposition~\ref{prop:fibresTwistedArrowFibration} this map can be identified with the functor
	\begin{equation*}
		\Under{\I{C}}{c}\to \Under{{\IPSh_{\Univ}(\I{C})}}{h(c)}.
	\end{equation*}
	This map is initial as it sends the initial section $\id_c\colon A\to \Under{\I{C}}{c}$ to the initial section $\id_{{h(c)}}\colon A\to \Under{{\IPSh_{\Univ}(\I{C})}}{h(c)}$.
	
	In order to prove that the map $f\colon \Tw(\I{C})\to \int\ev$ is initial, we employ Lemma~\ref{lem:YonedaCoreLemmaA} once more to conclude that it will be sufficient to show that the map $f\vert_c$ in the induced cartesian square
	\begin{equation*}
		\begin{tikzcd}
			\Under{\I{C}}{c}\arrow[r, "f\vert_c"] \arrow[d]& \int\ev\vert_{c}\arrow[d]\\
			A\times \I{C}\arrow[r, "\id\times h"] & A\times{\IPSh_{\Univ}(\I{C})}
		\end{tikzcd}
	\end{equation*}
	is initial. Remark~\ref{rem:YonedaCoreLemmaA} implies that we may regard this square as a diagram in $\Cat(\Over{\BBB}{A})$. By combining Lemma~\ref{lem:BCTw}, Lemma~\ref{lem:BCYoneda} and Lemma~\ref{lem:BCEvaluation}, we may thus assume without loss of generality $A\simeq 1$.

	Applying Proposition~\ref{prop:YonedaCoreLemmaB} to the factorisation $1\to \Over{\I{C}}{c}\to\I{C}$ of $c$ into a final map followed by a right fibration (cf.\ Corollary~\ref{cor:factorisationInternalObject}), we obtain the cartesian square
	\begin{equation*}
		\begin{tikzcd}
			\Under{{\IPSh_{\Univ}(\I{C})}}{h(c)}\arrow[r]\arrow[d] & {\UnivHat}\arrow[d]\\
			{\IPSh_{\Univ}(\I{C})}\arrow[r, "c^\ast"] & {\Univ}.
		\end{tikzcd}
	\end{equation*}
	Observe that the functor $c^\ast$ is equivalent to the composition $\ev\circ (c\times\id)$. Hence there is an equivalence $\rho\colon \int\ev\vert_{c}\simeq \Under{\IPSh_{\Univ}(\I{C})}{h(c)}$ over $\IPSh_{\Univ}(\I{C})$. In order to show that $f\vert_c$ is initial, it
	now suffices to verify that the image of $\id_c$ along the induced map $\rho f\vert_c\colon\Under{\I{C}}{c}\to\Under{\IPSh_{\Univ}(\I{C})}{h(c)}$ is equivalent to $\id_{h(c)}$. As both objects define sections over $h(c)\colon 1\to\IPSh_{\Univ}(\I{C})$, this follows once we show that they both give rise to equivalent sections $1\rightrightarrows\UnivHat$ over $\map{\I{C}}(c,c)\colon 1\to\Univ$.
	Now the proof of Proposition~\ref{prop:YonedaCoreLemmaB} shows that the image of $\id_{h(c)}$ along $\Over{\IPSh_{\Univ}(\I{C})}{h(c)}\to\UnivHat$ is given by the composition of the two upper horizontal maps in the diagram
	\begin{equation*}
		\begin{tikzcd}
			1\arrow[r, "\id_c"] &\left(\Over{\I{C}}{c}\right)^{\op}\arrow[d]\arrow[r] & \UnivHat\arrow[d]\\
			& \I{C}^{\op}\arrow[r, "h(c)"] & \Univ
		\end{tikzcd}
	\end{equation*}
	in which the square is the pullback diagram that is induced by Proposition~\ref{prop:fibresTwistedArrowFibration}. As the image of $\rho f\vert_c(\id_c)$ in $\UnivHat$ is given by the image of $\id_c$ along the composition $\Under{\I{C}}{c}\to\Tw(\I{C})\to \UnivHat$ in which the first map is the one that is determined by the pullback square from Proposition~\ref{prop:fibresTwistedArrowFibration}, the claim now follows from the observation that \emph{both} the map $\id_c\colon 1\to \left(\Over{\I{C}}{c}\right)^{\op}\to \Tw(\I{C})$ and the map $\id_c\colon 1\to \Under{\I{C}}{c}\to \Tw(\I{C})$ correspond to the composite $s_0 c\colon 1\to \I{C}_0\to \I{C}_1$.
\end{proof}

\begin{corollary}
	\label{cor:YonedaEmbedding}
	For any $\BB$-category $\I{C}$, the Yoneda embedding $h\colon \I{C}\to \IPSh_{\Univ}(\I{C})$ is fully faithful.
\end{corollary}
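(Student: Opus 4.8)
The plan is to deduce full faithfulness of the Yoneda embedding $h\colon\I{C}\to\iFun{\I{C}^{\op}}{\Univ}$ directly from the commutative triangle of theorem~\ref{thm:YonedaLemma}. Recall that by proposition~\ref{prop:characterizationInternalFullyFaithfulMappingGroupoids}, $h$ is fully faithful precisely when, for every $A\in\BB$ and every pair of objects $c_0,c_1\colon A\to\I{C}$ in context $A$, the induced map on mapping groupoids
\begin{equation*}
	\map{\I{C}}(c_0,c_1)\to\map{\iFun{\I{C}^{\op}}{\Univ}}(h(c_0),h(c_1))
\end{equation*}
is an equivalence in $\Over{\BBB}{A}$. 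So the task reduces to identifying this map and showing it is invertible.

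The key observation is that the right-hand side is computed by the mapping groupoid functor of $\iFun{\I{C}^{\op}}{\Univ}$, which by construction (via theorem~\ref{thm:internalGrothendieck} applied to $\Tw(\iFun{\I{C}^{\op}}{\Univ})$) is exactly the functor $\map{\iFun{\I{C}^{\op}}{\Univ}}(-,-)$ appearing in the vertical arrow of the Yoneda triangle. Precomposing with $h\times\id$ and restricting along the diagonal pair $(c_0,c_1)$, the triangle tells us that this composite agrees with $\ev$ evaluated at $(c_0, h(c_1))$ — that is, with $h(c_1)$ applied to $c_0$, which is precisely $\map{\I{C}}(c_1,c_0)^{\op}$-style data; more carefully, $\ev(c_0, h(c_1)) = h(c_1)(c_0)$, and by definition of $h$ as the transpose of the mapping groupoid bifunctor, $h(c_1)(c_0)\simeq\map{\I{C}}(c_0,c_1)$ in $\Over{\BBB}{A}$. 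Thus the composite $\map{\iFun{\I{C}^{\op}}{\Univ}}(-,-)\circ(h\times\id)$, restricted appropriately, \emph{is} the mapping groupoid functor $\map{\I{C}}(-,-)$ of $\I{C}$. First I would make this identification precise: unwind that the map induced by $h$ on mapping groupoids between $c_0$ and $c_1$ is the canonical comparison obtained from the fact that both $\map{\I{C}}(-,-)$ and $\map{\iFun{\I{C}^{\op}}{\Univ}}(h(-),h(-))$ are classified by left fibrations, and the Yoneda triangle exhibits an equivalence between the corresponding left fibrations $\int\ev$ and the pullback of $\Tw(\iFun{\I{C}^{\op}}{\Univ})$ along $h\times\id$.

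Concretely, I would argue at the level of left fibrations. The proof of theorem~\ref{thm:YonedaLemma} produces an equivalence $\I{P}\simeq\int\ev$ of left fibrations over $\I{C}^{\op}\times\iFun{\I{C}^{\op}}{\Univ}$, where $\I{P}$ is the pullback of $\Tw(\iFun{\I{C}^{\op}}{\Univ})\to{\iFun{\I{C}^{\op}}{\Univ}}^{\op}\times\iFun{\I{C}^{\op}}{\Univ}$ along $h\times\id$, and moreover $\Tw(h)\colon\Tw(\I{C})\to\Tw(\iFun{\I{C}^{\op}}{\Univ})$ factors through $\I{P}$ via an initial map $g$. Taking fibres over a pair of objects $(c_0,c_1)\colon A\to\I{C}^{\op}\times\I{C}$, the fibre of $\Tw(\I{C})$ is $\map{\I{C}}(c_0,c_1)$ and the fibre of $\Tw(\iFun{\I{C}^{\op}}{\Univ})$ over $(h(c_0),h(c_1))$ is $\map{\iFun{\I{C}^{\op}}{\Univ}}(h(c_0),h(c_1))$; since these are fibres of left fibrations over objects, and $g$ is initial while $\Tw(\I{C})$ and the fibres are groupoids, $g$ restricted to such fibres becomes an equivalence after groupoidification — but the fibres are already groupoids, so the comparison $\map{\I{C}}(c_0,c_1)\to\map{\iFun{\I{C}^{\op}}{\Univ}}(h(c_0),h(c_1))$ is an equivalence. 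Then invoke proposition~\ref{prop:characterizationInternalFullyFaithfulMappingGroupoids} to conclude.

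The main obstacle I anticipate is the bookkeeping needed to verify that the map on mapping groupoids \emph{induced by $h$ in the sense of proposition~\ref{prop:characterizationInternalFullyFaithfulMappingGroupoids}} genuinely coincides with the comparison map extracted from the Yoneda triangle — i.e., that the equivalence furnished by theorem~\ref{thm:YonedaLemma} is compatible with the canonical maps, not just an abstract equivalence of left fibrations. This amounts to chasing through the definition of $h$ as the transpose of $\map{\I{C}}(-,-)$ and the construction of $\Tw(h)$ by functoriality of the twisted arrow construction, together with the identification $\ev\circ(h\times\id)\simeq\map{\I{C}}(-,-)$ that the triangle encodes; once that identification is in place, the conclusion is essentially formal. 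An alternative, perhaps cleaner, route would be to observe that full faithfulness is equivalent to $h$ being internally right orthogonal to $\Delta^0\sqcup\Delta^0\to\Delta^1$ (definition~\ref{def:fullyFaithfulEssentiallySurjective}), and to reduce this, via proposition~\ref{prop:equivalentConditionsFullyFaithful}, to checking that the square relating $\I{C}_0,\I{C}_1$ to $(\iFun{\I{C}^{\op}}{\Univ})_0,(\iFun{\I{C}^{\op}}{\Univ})_1$ is cartesian; but the mapping-groupoid formulation above seems to align most directly with what the Yoneda triangle provides.
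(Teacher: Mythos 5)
Your high-level plan is workable — theorem~\ref{thm:YonedaLemma} does yield that $\Tw(\I{C})$ is pulled back from $\Tw(\iFun{\I{C}^{\op}}{\Univ})$ along $h^{\op}\times h$, and passing to fibres over $(c_0,c_1)$ then identifies $\map{\I{C}}(c_0,c_1)$ with $\map{\iFun{\I{C}^{\op}}{\Univ}}(h(c_0),h(c_1))$, after which proposition~\ref{prop:characterizationInternalFullyFaithfulMappingGroupoids} finishes. But the specific justification you give for the fibre equivalence — that $g$ is initial and the fibres are groupoids, so $g$ restricted to fibres becomes an equivalence after groupoidification — does not go through as stated. The map $g\colon\Tw(\I{C})\to\I{P}$ does not live over a single base: it sits over $\id\times h\colon \I{C}^{\op}\times\I{C}\to\I{C}^{\op}\times\iFun{\I{C}^{\op}}{\Univ}$. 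The fibre of $\Tw(\I{C})$ over an object $(c_0,h(c_1))$ of the \emph{target} base is $\Tw(\I{C})\times_{\iFun{\I{C}^{\op}}{\Univ}} A$, which is \emph{not} visibly $\map{\I{C}}(c_0,c_1)$ unless $h$ is already known to be a monomorphism — which is exactly what you are trying to prove. Likewise, the smoothness machinery (proposition~\ref{prop:initialityFibrewiseSmooth}) you implicitly invoke requires the two structure maps to land in a common base and be smooth, and $\id\times h$ is not (yet) known to be smooth. So the detour through initiality of $g$ both overcomplicates and quietly assumes the conclusion.

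The simple fix is available inside your own sketch: you already observe that $\I{P}$ is the $(h\times\id)$-pullback of $\Tw(\iFun{\I{C}^{\op}}{\Univ})$ and that $\I{P}\simeq\int\ev$, and the definition of $\ev$ gives that $\Tw(\I{C})$ is the $(\id\times h)$-pullback of $\int\ev$. Pasting these two cartesian squares gives that $\Tw(\I{C})$ is the $(h^{\op}\times h)$-pullback of $\Tw(\iFun{\I{C}^{\op}}{\Univ})$ \emph{on the nose}, with the comparison map being $\Tw(h)$ by functoriality; the fibre over $(c_0,c_1)$ is then a pullback of an equivalence, hence an equivalence, with no appeal to $g$ at all. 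This is exactly the paper's proof; the paper then takes core groupoids and invokes proposition~\ref{prop:equivalentConditionsFullyFaithful} rather than passing through mapping groupoids, but the content is the same. Your ``alternative, perhaps cleaner, route'' at the end is in fact the paper's argument, and is shorter than the route you develop — I would upgrade it from alternative to primary, and drop the initiality-of-$g$ step entirely.
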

\begin{proof}
	By Theorem~\ref{thm:YonedaLemma} the canonical square
	\begin{equation*}
		\begin{tikzcd}
			\Tw(\I{C})\arrow[d]\arrow[r] & \Tw(\IPSh_{\Univ}(\I{C}))\arrow[d]\\
			\I{C}^{\op}\times\I{C}\arrow[r, "h^{\op}\times h"] & \IPSh_{\Univ}(\I{C})^{\op}\times\IPSh_{\Univ}(\I{C})
		\end{tikzcd}
	\end{equation*}
	that is obtained by functoriality of the twisted arrow construction is cartesian, which proves the claim upon applying the core $\BB$-groupoid functor.
\end{proof}

\begin{corollary}
	\label{cor:functorEquivalencesObjectwise}
	Let $\I{C}$ and $\I{D}$ be $\BB$-categories and let $\alpha\colon \Delta^1\otimes A\to \iFun{\I{C}}{\I{D}}$ be a morphism in $\iFun{\I{C}}{\I{D}}$. Then $\alpha$ is an equivalence if and only if for all $c\colon B\to \I{C}$ the map $\alpha(c)\colon \Delta^1\otimes(A\times B)\to \I{D}$ is an equivalence in $\I{D}$.
\end{corollary}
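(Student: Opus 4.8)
The plan is to realise the morphism $\alpha$ as a functor into the arrow category $\I{D}^{\Delta^1}=\iFun{\Delta^1}{\I{D}}$ and then to detect equivalences by checking whether this functor lands in a suitable full subcategory. Transposing $\alpha\colon\Delta^1\otimes A\to\iFun{\I{C}}{\I{D}}$ first across $-\times\I{C}\dashv\iFun{\I{C}}{-}$ and then across $\Delta^1\otimes-\dashv(-)^{\Delta^1}$ produces a functor $\tilde\alpha\colon A\times\I{C}\to\I{D}^{\Delta^1}$; the same transposition carries the component $\alpha(c)\colon\Delta^1\otimes(A\times B)\to\I{D}$, for an object $c\colon B\to\I{C}$, to the composite $\tilde\alpha\circ(\id_A\times c)\colon A\times B\to\I{D}^{\Delta^1}$, since $\alpha(c)$ is by construction the restriction of the natural transformation underlying $\alpha$ along $\Delta^1\otimes(\id_A\times c)$ (in particular, ``$\alpha(c)$ is an equivalence in $\I{D}$'' means exactly that $\alpha(c)$ is an equivalence as a morphism of $\I{D}$ in context $A\times B$). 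I would then set $\I{D}^{\simeq}:=\iFun{E^1}{\I{D}}$, with the functor $\I{D}^{\simeq}\to\I{D}^{\Delta^1}$ induced by the inclusion $\Delta^1\hookrightarrow E^1$. Since $\I{D}$ is univalent, the canonical map $\I{D}\to\iFun{E^1}{\I{D}}$ is an equivalence, and this identifies $\I{D}^{\simeq}\to\I{D}^{\Delta^1}$ with the ``identity arrow'' functor $\I{D}=\iFun{\Delta^0}{\I{D}}\to\I{D}^{\Delta^1}$ induced by $\Delta^1\to\Delta^0$; the latter is fully faithful by a direct inspection of the cartesian square in proposition~\ref{prop:equivalentConditionsFullyFaithful}, so $\I{D}^{\simeq}$ is a full subcategory of $\I{D}^{\Delta^1}$. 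Now by proposition~\ref{prop:walkingEquivalenceClassifiesEquivalences} a morphism of $\I{D}$ in context $B$ is an equivalence precisely when it factors through $E^1\otimes B$, and unwinding the adjunction $E^1\otimes-\dashv\iFun{E^1}{-}$ this says exactly that the corresponding object of $\I{D}^{\Delta^1}$ lies in $\I{D}^{\simeq}$; the same adjunction bookkeeping, applied to contexts $A$ and $A\times B$, yields the two equivalences ``$\alpha$ is an equivalence in $\iFun{\I{C}}{\I{D}}$'' $\iff$ ``$\tilde\alpha$ factors through $\I{D}^{\simeq}$'', and ``$\alpha(c)$ is an equivalence in $\I{D}$'' $\iff$ ``$\tilde\alpha\circ(\id_A\times c)$ factors through $\I{D}^{\simeq}$''.

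Granting this reformulation, the forward implication is immediate: if $\tilde\alpha$ factors through $\I{D}^{\simeq}$, then so does its precomposition $\tilde\alpha\circ(\id_A\times c)$ for every $c$, hence each $\alpha(c)$ is an equivalence. For the converse I would apply proposition~\ref{prop:mappingPropertyFullSubcategory} to the full subcategory $\I{D}^{\simeq}\hookrightarrow\I{D}^{\Delta^1}$: the functor $\tilde\alpha$ factors through $\I{D}^{\simeq}$ if and only if $\tilde\alpha$ sends every object of $A\times\I{C}$ into $\I{D}^{\simeq}$. An object of $A\times\I{C}$ in context $B\in\BB$ is a pair $(a\colon B\to A,\,c\colon B\to\I{C})$, and it factors as $(\id_A\times c)\circ(a,\id_B)$; therefore $\tilde\alpha$ applied to it equals $\bigl(\tilde\alpha\circ(\id_A\times c)\bigr)\circ(a,\id_B)$, and by the hypothesis applied to the object $c$ the functor $\tilde\alpha\circ(\id_A\times c)$ factors through $\I{D}^{\simeq}$, whence so does $\tilde\alpha(a,c)$. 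Thus every object of $A\times\I{C}$ lands in $\I{D}^{\simeq}$, so $\tilde\alpha$ factors through $\I{D}^{\simeq}$ and $\alpha$ is an equivalence.

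The argument is therefore essentially formal once the translation of the first paragraph is in place, and the main work — rather than a real obstacle — is precisely there: verifying that $\iFun{E^1}{\I{D}}\to\iFun{\Delta^1}{\I{D}}$ is fully faithful (using univalence to reduce to the identity-arrow functor and then proposition~\ref{prop:equivalentConditionsFullyFaithful}), and threading the two adjunctions carefully so that ``$\alpha$ is an equivalence'' and ``$\alpha(c)$ is an equivalence'' are correctly recast as factorisation conditions through $\I{D}^{\simeq}$ — in particular confirming that restricting $\alpha$ to its component at $c$ corresponds, on the other side of the adjunction $\Delta^1\otimes-\dashv(-)^{\Delta^1}$, to precomposing $\tilde\alpha$ with $\id_A\times c$.
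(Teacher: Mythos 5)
Your proof is correct, but it takes a genuinely different route from the paper's. The paper reduces to the case $A\simeq 1$ by slicing, then uses the Yoneda embedding (corollary~\ref{cor:YonedaEmbedding}) to reduce to $\I{D}\simeq\Univ$, and finally invokes proposition~\ref{prop:equivalenceLeftFibrationsFibrewise}, which asserts that maps of left fibrations are equivalences if and only if they are so on fibres. Your argument instead recasts the morphism $\alpha$ as a functor $\tilde\alpha\colon A\times\I{C}\to\I{D}^{\Delta^1}$, exploits univalence to identify $\I{D}^{\simeq}:=\iFun{E^1}{\I{D}}$ with $\I{D}$ and thereby with the full subcategory of $\I{D}^{\Delta^1}$ spanned by identity arrows, and then reduces the statement to the objectwise factorisation criterion of proposition~\ref{prop:mappingPropertyFullSubcategory}. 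This is a clever and genuinely more elementary route: it bypasses Yoneda's lemma entirely and uses only chapter~3 material (proposition~\ref{prop:walkingEquivalenceClassifiesEquivalences}, the subcategory machinery from section~\ref{sec:subcategories}, and the fact that fully faithful functors are monomorphisms), so it would have been available much earlier in the development. It also handles general contexts $A$ directly through the full strength of proposition~\ref{prop:mappingPropertyFullSubcategory}, rather than reducing to $A\simeq 1$ as the paper does. The one step you gloss over slightly is verifying that $\I{D}\to\I{D}^{\Delta^1}$ is fully faithful; the ``direct inspection'' of the square in proposition~\ref{prop:equivalentConditionsFullyFaithful} is not completely immediate, and it is smoother to invoke condition~(3) of proposition~\ref{prop:characterizationInternalFullyFaithfulMappingGroupoids} and reduce to the classical fact that the identity-arrow functor $\CC\to\Fun(\Delta^1,\CC)$ of $\infty$-categories is fully faithful (being, for instance, a right adjoint with invertible counit); but this is a matter of presentation, not a gap.
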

\begin{proof}
	The condition is clearly necessary, so suppose that $\alpha(c)$ is an equivalence for every object $c\colon B\to \I{C}$. By replacing $\BB$ with $\Over{\BB}{A}$, we may assume without loss of generality $A\simeq 1$. By Corollary~\ref{cor:YonedaEmbedding}, the functor $h_\ast\colon\iFun{\I{C}}{\I{D}}\into\iFun{\I{C}}{\IPSh_{\Univ}(\I{D})}\simeq\iFun{\I{C}\times\I{D}^{\op}}{\Univ}$ is fully faithful and therefore in particular conservative. It therefore suffices to show that the map $h_\ast\alpha$ is an equivalence. For any $(c,d)\colon 1\to\I{C}\times\I{D}^{\op}$, the map $h_\ast\alpha(c,d)$ corresponds to the image of $\alpha(c)$ along the functor $\I{D}\into\IPSh_{\Univ}(\I{D})\to\Univ$
	in which the second arrow is given by evaluation at $d$. As a consequence, the map $h_\ast\alpha(c,d)$ must be an equivalence in $\Univ$. By replacing $\BB$ with $\Over{\BB}{B}$ and using Lemma~\ref{lem:BCEvaluation} and Lemma~\ref{lem:BCYoneda}, the same is true when $(c,d)$ is in arbitrary context $B\in\BB$. By replacing $\I{C}$ with $\I{C}\times\I{D}^{\op}$, we may therefore assume without loss of generality $\I{D}\simeq\Univ$. In this case, the desired result follows from Proposition~\ref{prop:equivalenceLeftFibrationsFibrewise}.
\end{proof}
\begin{definition}
	\label{def:representablePresheaf}
	Let $\I{C}$ be a $\BB$-category. Then a presheaf $f\colon A\times\I{C}^{\op}\to\Univ$ is said to be \emph{representable} by an object $c\colon A\to \I{C}$ if there is an equivalence $f\simeq h(c)$, where $h\colon \I{C}\into\IPSh_{\Univ}(\I{C})$ denotes the Yoneda embedding.
\end{definition}
\begin{remark}
	\label{rem:representableFibration}
	If $p\colon\I{P}\to A\times\I{C}$ is a right fibration between $\BB$-categories (i.e.\ an object $A\to \IRFib_{\I{C}}$), we may say that $p$ is \emph{representable} if the associated presheaf $A\times\I{C}^{\op}\to \Univ$ that classifies $p$ is representable in the sense of Definition~\ref{def:representablePresheaf}. Equivalently, this means that there is an object $c\colon A\to \I{C}$ and an equivalence $\Over{\I{C}}{c}\simeq \I{P}$ over $A\times\I{C}$.
\end{remark}
\begin{proposition}
	\label{prop:representabilityFibration}
	A left fibration $p\colon\I{P}\to\I{C}\times A$ between $\BB$-categories is representable by an object $c\colon A\to \I{C}$ if and only if there is an initial section $A\to \I{P}$ over $A$.
\end{proposition}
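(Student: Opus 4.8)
The plan is to derive both implications from the essential uniqueness of factorisations in the (initial functors, left fibrations) factorisation system on $\Cat(\BB)$, leaning on two facts already established: the canonical section $\id_c\colon A\to\Under{\I{C}}{c}$ of a slice category is an initial functor (Proposition~\ref{prop:initialityCanonicalSection}), and conversely the factorisation of an object $c\colon A\to\I{C}$ into an initial functor followed by a left fibration is $A\xrightarrow{\id_c}\Under{\I{C}}{c}\xrightarrow{(\pi_c)_!}\I{C}$ (Corollary~\ref{cor:factorisationInternalObject}). Throughout, ``$p$ is representable by $c\colon A\to\I{C}$'' is understood, dually to Remark~\ref{rem:representableFibration}, as the existence of an equivalence $\Under{\I{C}}{c}\simeq\I{P}$ over $\I{C}\times A$ (identifying $A\times\I{C}\simeq\I{C}\times A$), and an ``initial section $A\to\I{P}$ over $A$'' as a section $s\colon A\to\I{P}$ of the structure map $\I{P}\to\I{C}\times A\to A$ that is moreover an initial functor in $\Cat(\BB)$; via Proposition~\ref{prop:etaleBaseChangeInternalCategories} and the fact that $(\pi_A)_!$ creates initial functors (Remark~\ref{rem:initialObjectequivalentCondition}) this is the same as asking that $s$ define an initial object of $\I{P}$ regarded as a category over $A$, a condition one should be careful to distinguish from $s$ being an initial object of $\I{P}$ in the sense of Definition~\ref{def:initialFinalObject} (cf.\ Remark~\ref{rem:warningInitialObject}).

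For the ``only if'' direction, suppose $\psi\colon\Under{\I{C}}{c}\simeq\I{P}$ is an equivalence over $\I{C}\times A$, and put $s:=\psi\circ\id_c$, where $\id_c\colon A\to\Under{\I{C}}{c}$ is the canonical section, which lies over $(\id_A,c)$. Then $s$ lies over $(c,\id_A)\colon A\to\I{C}\times A$, so it is a section of $\I{P}\to A$; and $s$ is an initial functor since $\id_c$ is one by Proposition~\ref{prop:initialityCanonicalSection}, $\psi$ is an equivalence and hence an initial functor, and initial functors are closed under composition (Proposition~\ref{prop:propertiesFactorisationSystems}).

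For the ``if'' direction, let $s\colon A\to\I{P}$ be an initial section over $A$ and set $c:=\pr_{\I{C}}\circ p\circ s\colon A\to\I{C}$, so that $p\circ s=(c,\id_A)$. The composite $q:=\pr_{\I{C}}\circ p\colon\I{P}\to\I{C}$ is a left fibration, because $\I{C}\times A\to\I{C}$ is one (the map $A\to\I{1}$ is a map of groupoids, hence a left fibration by Remark~\ref{rem:groupoidificationFibrantReplacement}) and left fibrations are closed under composition; and $s$ is initial. Hence $A\xrightarrow{s}\I{P}\xrightarrow{q}\I{C}$ is a factorisation of $c$ into an initial functor followed by a left fibration, so Corollary~\ref{cor:factorisationInternalObject} together with the essential uniqueness of factorisations yields an equivalence $\phi\colon\I{P}\simeq\Under{\I{C}}{c}$ over $\I{C}$ satisfying $\phi\circ s\simeq\id_c$. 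The remaining --- and only non-formal --- point is to upgrade $\phi$ to an equivalence over $\I{C}\times A$, i.e.\ to show that $\phi$ also respects the projections to $A$. Since $s$ is initial, its groupoidification $s^{\gp}\colon A\simeq A^{\gp}\to\I{P}^{\gp}$ is an equivalence (Remark~\ref{rem:localEquivalences}, Remark~\ref{rem:groupoidificationFibrantReplacement}), so --- using the adjunction between $(-)^{\gp}$ and the inclusion $\Grpd(\BB)\into\Cat(\BB)$ and the fact that $A$ is a groupoid --- precomposition with $s$ induces an equivalence $\map{\Cat(\BB)}(\I{P},A)\simeq\map{\Cat(\BB)}(A,A)$. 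Both $\pr_A\circ p$ and the composite of $\phi$ with the projection $\Under{\I{C}}{c}\to A$ restrict along $s$ to $\id_A$ (using $p\circ s=(c,\id_A)$ for the first and $\phi\circ s\simeq\id_c$ for the second), so they agree; hence $\phi$ is an equivalence over $\I{C}\times A$ and $p$ is representable by $c$.

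The main obstacle is precisely this last propagation from ``over $\I{C}$'' to ``over $\I{C}\times A$''; the rest is an assembly of Proposition~\ref{prop:initialityCanonicalSection}, Corollary~\ref{cor:factorisationInternalObject}, and the standard uniqueness of orthogonal factorisations. An alternative that sidesteps the groupoid argument is to factorise the object $(c,\id_A)\colon A\to\I{C}\times A$ directly --- obtaining $\I{P}\simeq\Under{(\I{C}\times A)}{(c,\id_A)}$ over $\I{C}\times A$ --- and then identify $\Under{(\I{C}\times A)}{(c,\id_A)}$ with $\Under{\I{C}}{c}$ over $\I{C}\times A$ using the compatibility of comma categories with products and the equivalence $\Under{A}{\id_A}\simeq A$, which holds because $A$ is a groupoid and so $A^{\Delta^1}\simeq A$ with endpoint maps given by the diagonal; I would present this as a remark.
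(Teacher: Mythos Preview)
Your argument is correct, but the paper's proof of the ``if'' direction is more direct and avoids the upgrade step you flag as the main obstacle. Rather than factoring $c\colon A\to\I{C}$ and obtaining an equivalence over $\I{C}$ that then has to be promoted to one over $\I{C}\times A$, the paper solves the lifting problem
\begin{equation*}
\begin{tikzcd}
A\arrow[d, "s"]\arrow[r, "\id_c"] & \Under{\I{C}}{c}\arrow[d]\\
\I{P}\arrow[ur, dotted] \arrow[r, "p"] & \I{C}\times A
\end{tikzcd}
\end{equation*}
directly: the right vertical map is already a left fibration (it is the pullback of the twisted arrow fibration, Proposition~\ref{prop:fibresTwistedArrowFibration}), and $s$ is initial, so a unique lift exists and is automatically a map over $\I{C}\times A$. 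The lift is then initial (since $\id_c$ and $s$ are) and a left fibration (being a map between left fibrations over a common base), hence an equivalence. Your route works equally well and the groupoidification trick you use to compare the two maps $\I{P}\rightrightarrows A$ is a nice general observation, but the paper's setup sidesteps the issue entirely by building the target $\I{C}\times A$ into the lifting square from the start. Your proposed alternative of factoring $(c,\id_A)\colon A\to\I{C}\times A$ is a third variant, closer in spirit to the paper's but still involving an extra identification.
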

\begin{proof}
	If $p$ is representable by an object $c\colon A\to \I{C}$ then there is an equivalence $\I{P}\simeq \Under{\I{C}}{c}$ over $\I{C}\times A$, hence Proposition~\ref{prop:initialityCanonicalSection} implies that there is an initial section $A\to \I{P}$ over $A$. Conversely, if there is such an initial section $s\colon A\to \I{P}$ and if $c\colon A\to \I{C}$ denotes the image of $s$ along the functor $\pr_0p\colon\I{P}\to\I{C}\times A\to \I{C}$, the lifting problem
	\begin{equation*}
		\begin{tikzcd}
			A\arrow[d, "s"]\arrow[r, "\id_c"] & \Under{\I{C}}{c}\arrow[d, "(\pi_c)_!"]\\
			\I{P}\arrow[ur, dotted] \arrow[r, "p"] & \I{C}\times A
		\end{tikzcd}
	\end{equation*}
	admits a unique solution which is necessarily an equivalence since $\id_c$ is initial and $p$ is a left fibration. 
\end{proof}

\bibliographystyle{halpha}
\bibliography{references.bib}

\end{document}